\documentclass[a4paper,12pt]
%{svjour3}
{amsart}
\usepackage[utf8x]{inputenc}
\usepackage{amssymb,latexsym}
\usepackage{mathrsfs}
\usepackage{euscript}
\usepackage{hyperref}
\usepackage{amsmath}

\newtheorem{theorem}{Theorem}[section]
\newtheorem{definition}[theorem]{Definition}
\newtheorem{lemma}[theorem]{Lemma}
\newtheorem{corollary}[theorem]{Corollary}
\newtheorem{remark}[theorem]{Remark}
\newtheorem{proposition}[theorem]{Proposition}
\newtheorem{notation}[theorem]{Notation}

\def\cal{\mathcal}

\def\R{{\mathbb R}}

\def\<{\left<}
\def\>{\right>}

\def\S{{\cal S}}

\def\j{\jmath}

\def\BMO{{\rm BMO}}

\def\CMO{{\rm CMO}}

\def\({( \hspace{-0.335em}(}
\def\){) \hspace{-0.335em})}
\def\supp{{\rm supp\,}}

\def\fu2{\frac{n}{2}}

\def\l({\left(}
\def\r){\right)}
\def\be{\begin{enumerate}}
\def\ee{\end{enumerate}}

\def\sign{{\rm sign}}

%Paco:

\def \diam {{\rm diam}}

\def \dist {{\rm dist}}

\def \rdist {{\rm \, rdist}}
\def \ec {{\rm ec}}

%opening
\title{A characterization of compactness for singular integrals}
\author{Paco Villarroya}
\address{Centre for Mathematical Sciences, University of Lund, PO Box 118, 22100 Lund, Sweden}
\email{paco.villarroya@maths.lth.se}

\thanks{The last author has been partially supported by project MTM2011-23164}

\subjclass[2010]{Primary 42B20, 42B25, 42C40; Secondary 47G10}
\keywords{Singular integral, Calder\'on-Zygmund operator, Compact operator, Cauchy integral}

\begin{document}

\maketitle

\begin{abstract}
We prove a $T(1)$ Theorem to completely characterize compactness of Calder\'on-Zygmund operators. 
%whose kernels satisfy additional, but arbitrary in size, decay and smoothness conditions. 
The result provides sufficient and necessary conditions for the compactness of singular integral operators acting on $L^{p}(\mathbb R)$ with $1<p<\infty$.
%, and also on the endpoint case $L^{\infty }(\mathbb R)$.
%The generality of the result is illustrated by proving the compactness of certain perturbations of the Cauchy integral on curves with normal derivatives satisfying a $\VMO$-condition. 

\end{abstract}

\section{Introduction}
The theory of singular integrals started with the study of singular convolution operators such as the Hilbert and Riesz transforms. David and Journ\'e's famous $T(1)$ theorem \cite{DJ} marked significant progress in the theory, characterizing the boundedness of a larger class of singular integral operators, including operators of non-convolution type. In contrast with convolution operators, some of these operators are compact. Well known examples include certain Hankel operators \cite{Roch}, commutators of singular integrals and multiplication operators \cite{Uchi}, and layer potential operators \cite{Fabes}. The study of compactness of singular integral operators has been an area of active research, and continues to be so today with a variety of applications, many in the field of elliptic partial differential equations \cite{hmtimb}.  

The purpose of this paper is to develop a general theory of
compactness for a large class of singular integrals that, in
the spirit of David and Journ\'e, describes when a Calder\'on-Zygmund operator is compact
in terms of its action on special families of functions. We actually 
present a new $T(1)$ theorem which characterizes
the compactness of singular integral operators, in analogy with the
characterization of their boundedness given by the classical $T(1)$
theorem. More precisely, we show that a singular integral operator $T$
is compact on $L^p(\R)$ with $1<p<\infty $ 
if and only if its kernel satisfies the definition of what we call a compact 
Calder\'on-Zygmund kernel, the operator satisfies a new property of
\textit{weak compactness}, analogue to the classical \textit{weak boundedness},  
and the functions $T(1)$ and $T^*(1)$ belong to the space
$\CMO(\R)$, the appropriate substitute of $\BMO(\R)$.  

Our hypotheses
impose additional smoothness bounds on the kernel of $T$, beyond it being
of Calder\'on-Zygmund type. However, it is important to notice that these additional smoothness and decay 
conditions
are, on the one side, necessary and, on the other side, they can be of arbitrary size, and hence fully singular kernels are
within the scope of our theorem. 
%As an illustration, we prove the
%compactness of certain perturbations of the Cauchy integral operator
%on Lipschitz curves with $\CMO$-smooth normal derivatives.

%We would like to highlight that 
%
%We notice that 
%we do not only prove the necessity of our definitions of weak compactness and the space $\CMO(\mathbb R)$. 
%Moreover, we also proof the necessity of our definition of compact Calder\'on-Zygmund kernel. In particular, we show that every Calder\'on-Zygmund operator which can be compactly extended to all $L^{p}(\mathbb R)$ has 
%a compact Calder\'on-Zygmund kernel. In addition, we also show in the same section, that the smoothness decay of the kernel and weak boundedness

The paper is structured as follows. In Section 2 we give necessary definitions and state our main result, Theorem \ref{Mainresult}.  
%in Section 3 we provide the application to perturbations of the Cauchy integral; 
In Section 3 we prove the necessity of the hypotheses of Theorem \ref{Mainresult}: in Proposition 
\ref{necessityofacompactCZkernel} we show that compact Calder\'on-Zygmund operators are associated with compact Calder\'on-Zygmund kernels; 
in Proposition 
\ref{necessityofweakcompactness} we prove necessity of 
the weak compactness condition; and in Proposition 
\ref{necessity2} we show 
the membership of $T(1)$ and $T^{*}(1)$ in $\CMO(\mathbb R)$. 
All remaining sections are devoted to prove their sufficiency. In Section 4 we prove a fundamental lemma describing the action of the operator on bump functions. In Section 5 we demonstrate compactness on $L^{p}$, $1<p<\infty$, under special cancellation conditions. Finally, in Section 6 we construct the paraproducts which allow to show compactness in full generality. 
%Finally, in Section 8, we extend the results to the $L^{\infty} 
%%\cap \CMO 
%\to \CMO$ endpoint case, which is needed for the application in Section 3.

We would like to highlight here two surprising facts. The first one is the lack of use of the kernel decay of a standard 
Calder\'on-Zygmund kernels in our calculations. As proved in Lemmata \ref{smoothimpliesdecay1} and 
\ref{smoothimpliesdecay2}, 
this is because the kernel smoothness and the weak boundedness condition imply the decay condition. The second one is that, unlike other compact operators like Hilbert-Schimdt operators for instance,
compact Calder\'on-Zygmund operators are associated with kernels that satisfy precise pointwise decay estimates in the directions perpendicular and parallel to the diagonal 
(see Proposition \ref{necessityofacompactCZkernel}).

I express my gratitude in chronological order to Karl-Mikael Perfekt, Sandy Davie, Xavier Tolsa, Christoph Thiele, Tuomas Hyt\"onen, Fernando Soria,  Ignacio Uriarte and Andrei Stoica for their suggestions and comments which led to substantial improvements of this paper.  Further, I would like to especially thank Ja Young Kim for many interesting conversations. 

\section{Definitions and statement of the main result}
\subsection{Compact Calder\'on-Zygmund kernel}
\begin{definition}
We say that three bounded functions $L,S, D: [0,\infty )\rightarrow [0,\infty )$
%$D: [1,\infty )\rightarrow [0,1]$
%$S:[0,1)\rightarrow [0,1]$ and 
%$S:[0,1)\rightarrow [0,1]$
constitute a set of admissible functions if 
%satisfy that the product $L\cdot S$ and $D$ 
the following limits hold
%there is a $\theta >0$ such that the following conditions hold:
\begin{equation}\label{limits}
\lim_{x\rightarrow \infty }L(x)=\lim_{x\rightarrow 0}S(x)=\lim_{x \rightarrow \infty }D(x)=0
%\item $F(2x)\leq C2^{\theta }F(x)$ and $F(2^{-1}x)\leq C2^{\theta }F(x)$ for all $x>0$
\end{equation}
%and that $L,D$ are monotone non-increasing while $S$ is monotone non-decreasing. 

%that 
%$$
%\lim_{x\rightarrow 0 }L(x)S(x)=\lim_{x\rightarrow \infty }L(x)S(x)=0
%$$
%in a monotone non-decreasing and non-creasing way respectively and that
%\item $S(2x)\leq C2^{\theta }S(x)$ and $S(2^{-1}x)\leq C2^{\theta }S(x)$ for all $x>1$
%\end{enumerate}
%$$
%\lim_{x \rightarrow \infty }D(x)=0
%$$ 
%in a monotone non-creasing way, 

\end{definition}

%An example of these functions which we will use in the sequel are the following ones
%$$
%L(x)=(1+x)^{-\alpha }
%\hskip30pt S(x)=(1+x^{-1})^{-\beta }
%\hskip30pt D(x)=(1+x)^{-N}
%$$
%for some 
%%$1/2<\beta<1$, $3/2<\alpha <2$ 
%$0<\alpha, \beta $
%and $N\in \mathbb N$. 
%%In this case, $\theta =\beta$.
%%$$
%%S(y)=\frac{1}{y^{N}}
%%$$
%%We notice that in this case $L$, $S$ and $D$ are bounded with 
%%$$
%%\lim_{x\rightarrow \infty }L(x)=\lim_{x\rightarrow 0}S(x)=\lim_{x \rightarrow \infty }D(x)=0
%%$$

\begin{remark}\label{constants}
Since any fixed dilation of an admissible function $L_{\lambda}(x)=L(\lambda^{-1}x)$ is again admissible, we will often omit all universal constants appearing in the argument of these functions. 
\end{remark}

\begin{definition}\label{prodCZ}
Let $\Delta $ be the diagonal of $\mathbb R^{2}$.
Let $L,S,D$ be admissible functions. 

A function $K:(\R^{2} \setminus \Delta )\to \mathbb C$ is called a
compact Calder\'on-Zygmund kernel if it is bounded on compact sets of $\mathbb R^{2} \setminus \Delta$
%$
%\displaystyle{\lim_{|t-x|\to\infty} K(t,x) = 0},
%$ 
%for all $(t,x)\in  \R^{2} \setminus \Delta $
%we have 
%$$
%\lim_{|\lambda |\rightarrow \infty }|K(x+\lambda ,t-\lambda)|=0
%$$
%and 
and for some $0<\delta \leq 1$ and $C>0$, we have
%$$
%\begin{array}{l}
%|K(t,x)|\le C {\displaystyle \frac{1}{|t-x|}} \\
\begin{multline*}
|K(t,x)-K(t',x')|
\le 
%\frac{1}{(|x|+|t|)^{\theta}} 
C\frac{(|t-t'|+|x-x'|)^\delta}{|t-x|^{1+\delta}}
L(|t-x|)S(|t-x|)
D(|t+x|)
\end{multline*}
whenever $2(|t-t'|+|x-x'|)<|t-x|$. 
%and $|x|+|t|>M$.
\end{definition}

%We remark that there is an equivalent definition of compact Calder\'on-Zygmund kernels  
%which is more convenient to use in applications of the theory (see Section 3). 
%As we will see in greater detail in Section \ref{kernelandadmissible}, 
%$K$ fulfilling the smoothness hypothesis of Definition \ref{prodCZ} 
% is equivalent to the existence of a bounded function 
%$B: \mathbb R^{2}\rightarrow [0,\infty )$ such that 
%$$
%\lim_{|t-x|\rightarrow \infty }B(t,x)=\lim_{|t-x|\rightarrow 0}B(t,x)=\lim_{|x+t| \rightarrow \infty }B(t,x)=0
%$$
%and for some $0<\delta \leq 1$ and $C>0$, 
%$$
%%\begin{array}{l}
%%|K(t,x)|\le C {\displaystyle \frac{1}{|t-x|}} \\
%|K(t,x)-K(t',x')|
%\le 
%%\frac{1}{(|x|+|t|)^{\theta}} 
%C\frac{(|t-t'|+|x-x'|)^\delta}{|t-x|^{1+\delta}}B(t,x)
%$$
%whenever $2(|t-t'|+|x-x'|)<|t-x|$. 
%%In section , we explain in more detail the equivalence between both formulations. 

In order to simplify notation, we will often write 
$$
F(t,x)=L(|t-x|)S(|t-x|) D(|t+x|)
%D\Big(1+\frac{|x+t|}{1+|t-x|}\Big)
$$
%so that 
%$$
%L(|t-x|)S(|t-t'|+|x-x'|) %D(|x+t|)
%D\Big(1+\frac{|x+t|}{1+|t-x|}\Big)
%\leq F(t,x)
%$$

To prove our main result, we follow the scheme of the original proof of the $T(1)$ Theorem. 
%Namely, we first assume that the special cancellation property $T(1)=0$ holds, and then we tackle the general case with the use of paraproducts. 
%Moreover, some of the methods appearing in our paper are classical and well-known in the field. 
However, the way we organize these classical ideas and actually deliver our proof is relatively new. This is mostly evident in the surprising fact that we never use the decay condition of a standard Calder\'on-Zygmund kernel. And yet this does not imply that we are dealing with a larger class of kernels because, as we prove in the following technical lemma, the smoothness condition of the kernel implies the expected decay condition. %Moreover, the lemma also explains why our definition of compact kernel does not involve a decay condition %stronger than convergence to zero when $|t-x|$ tends to infinity. 
Note that the proof in fact works for classical Calder\'on-Zygmund kernels as well. 

\begin{lemma}\label{smoothimpliesdecay1}
Let $K$ be a compact Calder\'on-Zygmund kernel as given in Definition \ref{prodCZ} such that 
$
\displaystyle{\lim_{|t-x|\to\infty} K(t,x) = 0}
$. 
Then
$$|K(t,x)|
\le \frac{1}{|t-x|}\tilde{F}(t,x)
$$
for all $(t,x)\in  \R^{2} \setminus \Delta $, with a possibly different function $\tilde{F}$ of the same type as $F$.
\end{lemma}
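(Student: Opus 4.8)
The plan is to obtain the pointwise bound on $K$ by integrating the smoothness estimate of Definition \ref{prodCZ} along a path starting from a point far from the diagonal, where we know $K$ is small, down to the point $(t,x)$ of interest. Fix $(t,x)\in\R^2\setminus\Delta$ and set $r=|t-x|$. The idea is to move the first variable: consider the points $(t_s,x)$ with $t_s$ chosen so that $|t_s-x|=s$, moving along the ray through $(t,x)$ away from the diagonal, for $s$ ranging from $r$ up to $\infty$. Along this path, consecutive points at scales $s$ and $2s$ satisfy the admissibility condition $2|t_{2s}-t_s| < |t_s-x|$ needed to apply the smoothness estimate (after adjusting constants as licensed by Remark \ref{constants}), so a dyadic telescoping argument gives
\[
|K(t,x)| \le \sum_{j\ge 0} |K(t_{2^j r},x)-K(t_{2^{j+1}r},x)| + \lim_{s\to\infty}|K(t_s,x)|.
\]
The limit term vanishes by hypothesis. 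For each difference, $|t_{2^{j+1}r}-t_{2^j r}| \sim 2^j r$, $|t_{2^j r}-x|\sim 2^j r$, so the smoothness bound contributes roughly
\[
\frac{(2^j r)^\delta}{(2^j r)^{1+\delta}} L(2^j r)S(2^j r)D(|t_{2^j r}+x|) = \frac{1}{2^j r}\,L(2^j r)S(2^j r)D(|t_{2^j r}+x|).
\]

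The main point is then to check that summing these terms produces a bound of the claimed form $\frac1r \tilde F(t,x)$ with $\tilde F$ admissible. The factor $\sum_j 2^{-j} = 2$ handles the overall $1/r$ and gives a convergent geometric series; the subtlety is the behavior of the admissible factors. Since $L$ is bounded and $L(x)\to 0$ as $x\to\infty$, the quantity $\sup_{s\ge r} L(s)$ is a bounded function of $r$ tending to $0$ as $r\to\infty$, so it can be absorbed into a new admissible $\tilde L(r)$. The factor $S$ is bounded with $S(x)\to 0$ as $x\to 0$; but along the path $s$ only increases, so $S(2^j r)\le \sup_{s\ge r}S(s)$, which is bounded but need not vanish as $r\to 0$ — however it is still bounded, and one can keep a factor $\tilde S(r)$ that captures the correct small-$r$ decay by a slightly more careful accounting (keeping, say, the $j=0$ term's $S(r)$ separate, or noting that the telescoping can instead be run so that the scale closest to $r$ is retained). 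The factor $D(|t+x|)$: here $|t_{2^j r}+x|$ may differ from $|t+x|$, but by the triangle inequality $|t_{2^j r}+x| \ge |t+x| - C 2^j r$ and also one has control from above, and since $D$ is bounded one can replace $D(|t_{2^j r}+x|)$ by $\sup\{D(u): u \ge c|t+x| \text{ or } u\le C(|t+x|+2^jr)\}$ type expressions; the cleanest route is to observe $\sum_j 2^{-j} D(|t_{2^j r}+x|)$ is dominated by a bounded function of $|t+x|$ that still tends to $0$ as $|t+x|\to\infty$, using boundedness of $D$ for the terms where $2^j r$ is comparable to or larger than $|t+x|$ and $D(|t+x|)\to 0$ for the rest.

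The expected main obstacle is precisely this bookkeeping: verifying that the three suprema/averages of $L$, $S$, $D$ over the relevant ranges of scales still satisfy the three limits in \eqref{limits}, so that their product is genuinely of the same type as $F$. The $L$ factor is easy; the $D$ factor requires splitting the sum at $2^j r \sim |t+x|$; the $S$ factor is the most delicate because the integration path runs \emph{away} from the diagonal and so never probes scales smaller than $r$, meaning the small-$r$ decay of the output must come from the retained endpoint term rather than from the series. Once these three points are handled, collecting the factors yields $|K(t,x)|\le \frac1{|t-x|}\tilde F(t,x)$ with $\tilde F$ admissible, as claimed. A symmetric remark: one could equally move the second variable $x$, and either choice works since the estimate and the hypothesis are symmetric in the two arguments.
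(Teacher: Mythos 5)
Your overall strategy (telescope the smoothness estimate along a path running to infinity, where $K$ vanishes by hypothesis, and sum a geometric series) is the same as the paper's, but there is a concrete error in the step you rely on: with only the first variable moving along the ray, the dyadic step from scale $s$ to $2s$ gives $|t_{2s}-t_s|=s$ while $|t_s-x|=s$ and $|t_{2s}-x|=2s$, so the required constraint $2(|t-t'|+|x-x'|)<|t-x|$ fails no matter which of the two points you take as the base point ($2s<s$ and $2s<2s$ are both false). Remark \ref{constants} cannot repair this: it only licenses discarding dilation constants \emph{inside the arguments of} $L,S,D$, not relaxing the hypothesis under which the smoothness bound holds at all. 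The fix is easy (take steps of ratio strictly less than $2$, say $3/2$, with the estimate based at the farther point, or equivalently many small steps), but as written the telescoping inequality is not justified. A second, smaller issue is your treatment of the $S$ factor: the ``more careful accounting'' you gesture at (keeping only the $j=0$ term's $S(r)$) is not needed and would not by itself be valid; the clean argument is that for each fixed $j$ the term carries a factor $S(c_j|t-x|)$ which tends to $0$ as $|t-x|\to 0$, and the whole series is dominated by a convergent geometric series, so Lebesgue's dominated convergence theorem gives all three required limits of $\tilde F$ at once.

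For comparison, the paper avoids your $D$-factor bookkeeping entirely by moving \emph{both} variables symmetrically, $t_{n}=t_{n-1}-\frac{1-\epsilon}{4}|t_{n-1}-x_{n-1}|$ and $x_{n}=x_{n-1}+\frac{1-\epsilon}{4}|t_{n-1}-x_{n-1}|$, so that each step stays inside the admissible ball $B_{t_{n-1},x_{n-1}}$, the gap $|t_n-x_n|$ grows geometrically, and crucially $t_n+x_n=t+x$ is invariant; then $D(|t+x|)$ factors out of every term and no splitting of the sum at $2^jr\sim|t+x|$ is needed. Your one-variable path can be made to work (your splitting argument for $D$ is essentially correct), but it buys nothing over the symmetric path and costs the extra case analysis; if you keep your route, repair the step-size constraint and replace the vague $S$ discussion with the dominated convergence argument.
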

%\begin{remark}
%The following proof will moreover show that if $K$ is a kernel satisfying Definition \ref{prodCZ} with $L\equiv S\equiv D\equiv 1$, then $K$ is a classical Calder\'on-Zygmund kernel. 
%\end{remark}
\proof
For every fixed $(t,x)\in  \R^{2} \setminus \Delta $ we denote by $B_{t,x}$ the set 
the points $(t',x')\in  \R^{2} \setminus \Delta $
such that $2(|t-t'|+|x-x'|)<|t-x|$, that is, the closed $l^{1}$-norm ball with centre 
$(t,x)$ and radius $|t-x|/2$.

%First we prove that 
%$$
%\sup_{B_{x,t}} |K(t,x)|\leq \inf_{B_{x,t}} |K(t',x')|+\frac{C}{|t-x|}F(t,x)
%$$
Then, by the definition of a compact Calder\'on-Zygmund kernel, 
\begin{align*}
|K(t,x)| 
%&\leq |K(t',x')|+|K(t,x)-K(t',x')| \\
&\leq |K(t',x')|+ \frac{(|t-t'|+|x-x'|)^\delta}{|t-x|^{1+\delta}}F(t,x)\\
%L(|t-x|)S(|t-x|)D(|x+t|) \\
%D\Big(1+\frac{|x+t|}{1+|t-x|}\Big)
%$$
%\leq |K(t',x')|+ \frac{(2^{-1}|t-x|)^\delta}{|t-x|^{1+\delta}}F(t,x)
%%L(|t-x|)S(2^{-1}|t-x|) 
%%D\Big(1+\frac{|x+t|}{1+|t-x|}\Big)
%$$
&\leq |K(t',x')|+ 2^{-\delta }\frac{1}{|t-x|}F(t,x)
\end{align*}
for all $(t',x')\in B_{t,x}$. Therefore 
$$
|K(t,x)|\leq \inf_{B_{t,x}} |K(t',x')|+\frac{C}{|t-x|}F(t,x)
$$

%On the other hand, 

Now we fix $(t,x)\in  \R^{2} \setminus \Delta $. By symmetry we can assume that $t<x$. 
For fixed $0<\epsilon <1/3$, we consider the sequence 
$\{(t_{n},x_{n})\}_{n\geq 0}\subset \R^{2} \setminus \Delta $ defined by $t_{0}=t$, $x_{0}=x$ and
$$
t_{n}=t_{n-1}-(1-\epsilon )/4|t_{n-1}-x_{n-1}|
$$ 
$$
x_{n}=x_{n-1}+(1-\epsilon )/4|t_{n-1}-x_{n-1}|
$$
for all $n\geq 1$. Notice that this way 
$(t_{n}, x_{n})\in B_{t_{n-1}, x_{n-1}}$ and so, by iterating previous calculations,
we get
%$$
%|K(t,x)|
%%\leq \inf_{B_{x,t}} |K(t',x')|+\frac{C}{|t-x|}F(t,x)
%\leq  |K(x_{1},t_{1})|+\frac{C}{|x_{0}-t_{0}|}F(x_{0},t_{0})
%$$
%%$$
%%\leq  \inf_{B_{x_{1},t_{1}}} |K(t',x')|+\frac{C}{|x_{1}-t_{1}|}+\frac{C}{|x_{0}-t_{0}|}F(x_{0},t_{0})
%%$$
%$$
%\leq  |K(x_{2},t_{2})|+\frac{C}{|x_{1}-t_{1}|}F(x_{1},t_{1})+\frac{C}{|x_{0}-t_{0}|}F(x_{0},t_{0})
%$$
%and by recursion,
$$
|K(t,x)|\leq |K(t_{n},x_{n})|+C\sum_{k=0}^{n-1}\frac{1}{|t_{k}-x_{k}|}F(t_{k},x_{k})
$$

Moreover
$
|t_{n}-x_{n}|
%=|x_{n-1}-t_{n-1}+(1-\epsilon )/2|x_{n-1}-t_{n-1}||
%$$
%$$
%=(3-\epsilon)/2|x_{n-1}-t_{n-1}|
=((3-\epsilon)/2)^{n}|t_{0}-x_{0}|>(4/3)^{n}|t-x|
$
while
$
|t_{n}+x_{n}|
%=|x_{n-1}+t_{n-1}|
%=|x_{0}+t_{0}|
=|t+x|
$. 
%Then, due to the monotonicity properties of $F$ 
%$L(|x_{n}-t_{n}|)S(|x_{n}-t_{n}|)\leq L(|t-x|)S(|t-x|)$, $D(|x_{n}+t_{n}|)=D(|x+t|)$, 
%for $n$ sufficiently large. 
%
%we also have that 
%$$
%F(x_{k},t_{k})\leq F(x_{k-1},t_{k-1})\leq F(x_{0},t_{0})=F(t,x)
%$$
%$$
%F(x_{k},t_{k})=L(|x_{k}-t_{k}|)S(|x_{k}-t_{k}|)D(|x_{k}+t_{k}|)
%=L((3-\epsilon)/2)^{n}|x_{k}-t_{k}|)S((3-\epsilon)/2)^{n}|x_{k}-t_{k}|)D(|x_{k}+t_{k}|)
%$$
This way,  
$$
|K(t,x)|\leq \lim_{n\rightarrow \infty }|K(t_{n},x_{n})|+C\sum_{k=0}^{\infty}
\frac{1}{(4/3)^{k}|t-x|}F(t_{k},x_{k})
$$
with 
$$
F(t_{k},x_{k})
%=L(|x_{k}-t_{k}|)S(|x_{k}-t_{k}|)D(|x_{k}+t_{k}|)
%$$
%$$
=L((3-\epsilon)/2)^{k}|t-x|)S((3-\epsilon)/2)^{k}|t-x|)D(|t+x|)
$$
This finally shows that
\begin{equation*}\label{decay}
|K(t,x)|\leq \frac{1}{|t-x|}\tilde{F}(t,x)
\end{equation*}
with $\tilde{F}(t,x)=C\sum_{k=0}^{\infty}
\frac{1}{(4/3)^{k}}F(t_{k},x_{k})$. 
Notice that an application of Lebesgue's Dominated Convergence Theorem shows that 
the function $\tilde{F}$ satisfies the same limit properties as $F$. 

%Moreover, notice that for $|t-x|$ large enough, we have that $F(x_{k},t_{k})\leq F(t,x)$.

%On the other hand, we can also get lower bounds by working in a symmetrical way:
%$$
%|K(t,x)|\geq |K(t',x')|-|K(t,x)-K(t',x')|
%$$
%$$
%\geq K(x_{n},t_{n})|-C\sum_{k=0}^{n-1}\frac{1}{|x_{k}-t_{k}|}F(x_{k},t_{k})
%$$
%$$
%\geq \limsup_{n\rightarrow \infty }|K(x_{n},t_{n})|+\frac{C}{|t-x|}F(t,x)
%$$
%Therefore, 
%$$
%\limsup_{n\rightarrow \infty }|K(x_{n},t_{n})|\leq 
%\lim_{\lambda \rightarrow \infty }|K(x+\lambda ,t-\lambda )|
%\leq \liminf_{n\rightarrow \infty }|K(x_{n},t_{n})|
%$$
%which proves the existence of the limit and together with (\ref{decay}) proves %the stated decay. 
%since the limit of the first term tends to zero.
%\end{proof}

\begin{remark}
%When the kernel $K$ is differentiable, the given condition is equivalent to 
%$$
%|\partial_{x}K(t,x)|\lesssim \frac{1}{|t-x|^{1+\delta}}
%L(|t-x|)\varliminf_{|x-x'|\rightarrow 0}\frac{S(|x-x'|)}{|x-x'|^{1-\delta}} D\Big(1+\frac{|x+t|}{1+|t-x|}\Big)
%$$
%and similar condition for $\partial_{t}K(t,x)$. This is why the case $\delta =1$ is excluded. 

%When $\delta =1$ and the kernel $K$ is differentiable away from the diagonal, the smoothness condition is satisfied if we assume the stronger hypothesis
%%$$
%%\begin{array}{l}
%%|\partial_{t}K(t,x)|
%%+|\partial_{x}K(t,x)|\lesssim |t-x|^{-2}\\
%%\hskip80pt L(|t-x|)S(|t-x|^{-1})D(\frac{|x|+|t|}{|t-x|}|t-x|^{-1}(|x|+|t|))
%%%(\max(|t-x|,2^{M})^{-1}|x|+1)^{N}
%%\end{array}
%%$$
%$$
%|\partial_{t}K(t,x)|
%+|\partial_{x}K(t,x)|\leq \frac{1}{|t-x|^{2}}
%F(t,x)
%%L(|t-x|)S(|t-t'|+|x-x'|) D\Big(1+\frac{|x+t|}{\max(|t-x|,1)}\Big)
%$$
We note that we do not need to impose the condition $\lim_{|t-x|\rightarrow \infty }K(t,x)=0$ in the Definition 
\ref{prodCZ} of a compact Calder\'on-Zygmund kernel because this limit is a consequence of the boundedness (or even weak boundedness) of the operator $T$ associated with the kernel $K$. We will show the proof of this fact at the end of subsection \ref{necessityofCCZkernel}.

\end{remark}

\begin{definition}\label{SchwartzN} 
%In a similar way the space of Schwartz functions $\S(\R)$ is constructed,
For every $N\in \mathbb N$, $N\geq 1$, we define $\S_{N}(\R)$ to be the set of 
all functions 
%$f$ differentiable up to order $N$, 
$f\in {\mathcal C}^{N}(\mathbb R)$
such that 
$$
\| f\|_{m,n}=\sup_{x\in \mathbb R}|x|^{m}|f^{(n)}(x)|<\infty
$$ 
for all $m,n\in \mathbb N$ with $m,n\leq N$. Clearly, $\S_{N}(\R)$ 
equipped with the family of seminorms $\| \cdot \|_{m,n}$
is a Fr\'echet space. Then,   
we can also define its dual space $\S_{N}'(\R)$ equipped with the dual topology which turns out to be
a subspace of the space of tempered distributions. 
\end{definition}

\begin{definition}\label{intrep}
Let $T:\S_{N}(\R)\to \S_{N}'(\R)$ be a linear operator which is continuous with respect the topology of 
$\S_{N}(\R)$ for a fixed $N\geq 1$.
%Let $T:\S(\R)\to \S'(\R)$ be a linear operator which is continuous,
%where $\S(\R)$ is equipped with the Schwartz topology and $\S'(\R)$ with the dual topology of tempered %distributions. 

We say that 
$T$ is associated with a compact Calder\'on-Zygmund kernel $K$ if 
the action of $T(f)$ as a distribution satisfies the following 
integral representation
$$
\langle T(f), g\rangle =\int_{\R}\int_{\R} f(t)g(x) K(t,x)\, dt \, dx
$$
for all functions $f,g\in {\cal S_{N}(\R)}$
with disjoint compact supports.
\end{definition}

\begin{remark}
The hypothesis that $K$ is bounded on compact sets of $\mathbb R^{2} \setminus \Delta$ guarantees that the integral is absolutely convergent for functions with disjoint compact supports. 
\end{remark}
\subsection{Weak compactness condition}

\begin{definition}\label{defbump}
For $0<p\leq \infty $ and $N\in \mathbb N$, we say that a function $\phi \in {\mathcal S}_{N}(\mathbb R)$ is 
an $L^p(\mathbb R)$-normalized bump function adapted to $I$ with constant $C>0$ and order $N$,
if it satisfies 
$$|\phi^{(n)}(x)|\le C|I|^{-1/p-n}(1+|I|^{-1}|x-c(I)|)^{-N}, \ \ \ 0\leq  n \le N$$
for every interval $I\subset \mathbb R$, where we denote its centre by $c(I)$ and its length by $|I|$.
\end{definition}

%Observe that these bump functions are normalized to be uniformly bounded in $L^p(\mathbb R)$. 
The order of the bump functions
will always be denoted by $N$, even though its value might change from line to line.
We will often use the greek letters $\phi $, $\varphi $ for general bump functions while
we reserve the use of $\psi $ to denote bump functions with mean zero.
If not otherwise stated, we will usually assume that bump functions are $L^2(\mathbb R)$-normalized. 

%We notice that, due to the continuity of $T$,  
%the action $\langle T(\phi ), \varphi \rangle $ for any two bump functions of order $N$,
%can be defined as the limit of the same dual pair acting over two sequences of Schwartz functions 
%approximating $\phi$ and $\varphi$. 

%In a similar way, we will often say that for $0<p\leq \infty $, a Schwartz function $\phi $ is a 
%$L^p(\mathbb R)$-normalized bump function adapted to $I$ with constant $C>0$ and order $N\in \mathbb N$,
%if it satisfies
%$$|\phi^{(n)}(x)|\le C|I|^{-1/p-n}(1+|I|^{-1}|x-c(I)|)^{-N}, \ \ \ 0\leq  n \le N$$

In Proposition \ref{symmetricspecialcancellation}, we use the same concept for functions of two variables. By this we simply mean  
that the function satisfies the corresponding bounds of product tensor type:
$$|\partial_{1}^{n_{1}}\partial_{2}^{n_{2}}\phi(x_{1},x_{2})|\le C\prod_{i=1,2}|I_{i}|^{-1/2-n_{i}}(1+|I_{i}|^{-1}|x_{i}-c(I_{i})|)^{-N}, \ \ \ 0\leq  n \le N$$

A result we will often use is the following property of bump functions whose 
proof can be found in \cite{TLec}:

\begin{lemma}\label{decayofidentity} Let $I$, $J$ be intervals and let $\phi_I $, $\varphi_J$ be bump functions $L^2$-adapted to
$I$ and $J$ respectively with order $N$ and constant $C>0$. Then,
$$
|\langle \phi_I,\varphi_J\rangle|\leq C \left(\frac{\min(|I|,|J|)}{\max (|I|,|J|)}\right)^{1/2}
\left( \max(|I|,|J|)^{-1}{\rm diam}(I\cup J) \right)^{-N}
$$
Moreover, if $|J|\leq |I|$ and $\psi_J$ has mean zero then 
$$
|\langle \phi_I,\psi_J\rangle|\leq C \left(\frac{|J|}{|I|}\right)^{3/2}
\left( |I|^{-1}{\rm diam}(I\cup J) \right)^{-(N-1)}
$$
\end{lemma}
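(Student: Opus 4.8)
The plan is to estimate the pairing $\langle \phi_I,\varphi_J\rangle = \int_{\R}\phi_I(x)\varphi_J(x)\,dx$ directly, splitting into the standard cases according to the relative position and size of the intervals. Without loss of generality assume $|J|\le |I|$, so $\max(|I|,|J|)=|I|$. The two quantities to control are the \emph{size ratio} factor $(|J|/|I|)^{1/2}$ and the \emph{separation} factor $(|I|^{-1}\diam(I\cup J))^{-N}$; the product of these is what we must beat.

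\medskip

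For the first inequality I would argue as follows. Using the bump bounds with $n=0$, $|\phi_I(x)|\le C|I|^{-1/2}(1+|I|^{-1}|x-c(I)|)^{-N}$ and similarly for $\varphi_J$. First record the crude bound $|\langle\phi_I,\varphi_J\rangle|\le \|\phi_I\|_\infty\|\varphi_J\|_1\le C|I|^{-1/2}|J|^{1/2}$ (since $\varphi_J$ is $L^2$-normalized, $\|\varphi_J\|_1\le C|J|^{1/2}$ by integrating its decay), which already gives the size-ratio factor. So it suffices to prove, \emph{in addition}, the separation gain $(|I|^{-1}\diam(I\cup J))^{-N}$ whenever $\diam(I\cup J)\ge 2|I|$, say; for $\diam(I\cup J)\lesssim|I|$ that factor is $\gtrsim 1$ and the crude bound already suffices (after adjusting $C$). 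When $I$ and $J$ are far apart, for every $x$ in the effective support of $\varphi_J$ we have $|x-c(I)|\gtrsim \diam(I\cup J)$, hence $(1+|I|^{-1}|x-c(I)|)^{-N}\lesssim (|I|^{-1}\diam(I\cup J))^{-N}$; pulling this out of the integral against $|\varphi_J|$ and using $\|\varphi_J\|_1\le C|J|^{1/2}$ gives the claim. One has to be slightly careful because the tail of $\varphi_J$ also reaches toward $I$; there one uses the decay of $\varphi_J$ itself to see that the contribution of the region $\{|x-c(J)|\gtrsim\diam(I\cup J)\}$ is controlled by the same power, and on the complementary region one uses the decay of $\phi_I$ as above. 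Combining the two estimates (crude bound for the size ratio, separated estimate for the distance) and taking geometric mean, or simply multiplying the two localized contributions appropriately, yields the stated bound.

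\medskip

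For the second inequality, assume additionally $\int\psi_J=0$. The gain from $(|J|/|I|)^{1/2}$ to $(|J|/|I|)^{3/2}$, at the cost of one order of decay, is the standard cancellation trick: write
$$
\langle\phi_I,\psi_J\rangle=\int_{\R}\big(\phi_I(x)-\phi_I(c(J))\big)\psi_J(x)\,dx,
$$
legitimate since $\int\psi_J=0$. Now $|\phi_I(x)-\phi_I(c(J))|\le \|\phi_I'\|_{\infty\text{, near }J}\,|x-c(J)|$; using $|\phi_I'(x)|\le C|I|^{-3/2}(1+|I|^{-1}|x-c(I)|)^{-N}$ and that on the bulk of the support of $\psi_J$ we have $|x-c(J)|\lesssim|J|$, the integrand is bounded by $C|I|^{-3/2}|J|\,|\psi_J(x)|$, and integrating against $\|\psi_J\|_1\le C|J|^{1/2}$ gives $C|I|^{-3/2}|J|^{3/2}=C(|J|/|I|)^{3/2}$. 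To also capture the separation factor with exponent $N-1$ one splits as before: on the tail of $\psi_J$ one loses the convenient bound $|x-c(J)|\lesssim|J|$, so instead one absorbs one factor of $|I|^{-1}|x-c(J)|$ into reducing the decay exponent of $\psi_J$ from $N$ to $N-1$ (this is exactly where the drop by one order comes from), while on the region far from $J$ the decay of $\phi_I'$ supplies the factor $(|I|^{-1}\diam(I\cup J))^{-(N-1)}$ directly.

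\medskip

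The main obstacle is purely bookkeeping: partitioning $\R$ into the regions ``close to $J$'', ``between $I$ and $J$'', and ``far from both'' so that on each region one has a clean pointwise bound, and then checking that in every region the product of the size-ratio power and the separation power comes out with the right exponents. There is no conceptual difficulty — everything reduces to the elementary fact that a function with $(1+|I|^{-1}|x-c|)^{-N}$ decay has $L^1$ norm $\sim|I|$ and that its tails beyond distance $R$ contribute $\sim |I|(|I|/R)^{N-1}$ — but organizing the case analysis so the two extreme cases ($I\approx J$ and $I$ far from $J$) and the intermediate ones all yield the same final bound takes a little care. Since this is a known result, I would cite \cite{TLec} for the detailed computation rather than reproduce it, and only indicate the cancellation step above as the one idea that is not completely mechanical.
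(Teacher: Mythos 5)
The paper itself gives no proof of this lemma — it simply cites \cite{TLec} — and your sketch is the standard argument from that reference, so in outline you are doing exactly what the paper does: crude size bound near the diagonal case, decay of the bump functions for separated intervals, and for the second estimate the subtraction of $\phi_I(c(J))$ against the mean zero of $\psi_J$, with the tail of $\psi_J$ accounting for the drop from $N$ to $N-1$. One point needs sharpening, though: in the first inequality, on the region $\{|x-c(J)|\gtrsim \mathrm{diam}(I\cup J)\}$ your stated plan (bound $\phi_I$ by its sup and integrate the tail of $\varphi_J$) only yields a factor $\bigl(|J|^{-1}\mathrm{diam}(I\cup J)\bigr)^{-(N-1)}$, which for $|J|\approx|I|$ and large separation falls one power short of the claimed exponent $N$. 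The fix stays within your framework: on that region pull the full pointwise decay of $\varphi_J$ out, namely $\bigl(1+|J|^{-1}|x-c(J)|\bigr)^{-N}\lesssim\bigl(|J|^{-1}\mathrm{diam}(I\cup J)\bigr)^{-N}$, and integrate $|\phi_I|$ over all of $\mathbb R$ (contributing $\lesssim|I|^{1/2}$); since $|J|\le|I|$, the resulting $\bigl(\frac{|I|}{|J|}\bigr)^{1/2}\bigl(|J|^{-1}\mathrm{diam}(I\cup J)\bigr)^{-N}$ is dominated by the stated bound. Equivalently, one can note that $\bigl(1+|I|^{-1}|x-c(I)|\bigr)\bigl(1+|J|^{-1}|x-c(J)|\bigr)\gtrsim |I|^{-1}\mathrm{diam}(I\cup J)$ for every $x$ and then integrate the remaining decay. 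With that adjustment (and the analogous care in the mean-zero case, where your accounting is fine), the proof closes; citing \cite{TLec} for the bookkeeping, as you propose, is precisely what the paper does.
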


%Notice that the last two conditions in the definition of an admissible function imply the following lemma
%\begin{lemma}
%$$
%\max (F(x),F(y))\leq \max(x/y,y/x)^{\theta }\min(F(x),F(y))
%$$
%whenever $x,y\in (0,\infty )$.
% \end{lemma}
% \proof
% Let $x/y=2^{s}$. Then, 
%$
%F(x)=F(2^{s}y)\leq 2^{|s|\theta }F(y)
%$ 
%which implies 
%$
%\max (F(x),F(y))\leq 2^{|s|\theta }F(y)
%$.
%
%Now, changing the roles of $x$ and $y$ we have 
%$
%F(y)=F(2^{-s}x)\leq 2^{|s|\theta }F(x)
%$ 
%which implies 
%$
%\max (F(x),F(y))\leq 2^{|s|\theta }F(x)
%$
%and so, the statement. 

\begin{notation}\label{ecandrdist}
We introduce now some notation which will be frequently used throughout the paper.  
We denote by $\mathbb B=[-1/2,1/2]$ and $\mathbb B_{\lambda }=\lambda \mathbb B=[-\lambda/2,\lambda /2]$.
Given two intervals $I,J\subset \mathbb R$, 
we define $\langle I,J\rangle$ as the smallest interval containing $I\cup J$ and  
we denote its measure by $\diam(I\cup J)$. Notice that 
\begin{eqnarray*}
\diam(I\cup J) & \approx & |I|/2+|c(I)-c(J)|+|J|/2\\
& \approx & |I|+\dist(I,J)+|J|
%\\ & \approx & \max(|I|,|J|)+\dist(I,J)
\end{eqnarray*}
where $\dist(I,J)$ denotes the set distance between $I$ and $J$. Actually, 
$$
|I|/2+|c(I)-c(J)|+|J|/2\leq \diam(I\cup J)\leq 2(|I|/2+|c(I)-c(J)|+|J|/2)
$$ 
and
$$
2^{-1}(|I|+\dist(I,J)+|J|)\leq \diam(I\cup J)\leq |I|+\dist(I,J)+|J|
$$ 

We define
the relative distance between $I$ and $J$ by
$$\rdist(I,J)=\frac{\diam(I\cup J)}{\max(|I|,|J|)}$$ which is comparable $\max(1,n)$ where $n$ is the minumum number of times the larger interval needs to be shifted so that it contains the smaller one. Notice that 
\begin{eqnarray*}
\rdist(I,J) & \approx & 1+\max(|I|,|J|)^{-1}|c(I)-c(J)|\\
& \approx & 1+\max(|I|,|J|)^{-1}\dist(I,J)
\end{eqnarray*}

Finally, we define the eccentricity $I$ and $J$ as 
$$
\ec(I,J)=\frac{\min(|I|,|J|)}{\max (|I|,|J|)}
$$

\end{notation}

\begin{definition}\label{WB}
A linear operator $T : \S_{N}(\R) \to \S_{N}'(\R)$ with $N\geq 1$ satisfies the weak compactness condition, if 
there exist admissible functions $L, S, D$ 
%and a constant $C_T > 0$ 
such that: for every $\epsilon>0$ there 
exists $M\in \mathbb N$ so that for any interval $I$ and every pair $\phi_I, \varphi_I$ of
$L^2$-normalized bump functions adapted to $I$ with constant $C>0$ and order $N$, we have
\begin{equation}\label{weakcompactnessformula}
|\langle T(\phi_I),\varphi_I)\rangle |\lesssim C(L(2^{-M}|I|)S(2^{M}|I|)
D(M^{-1}\rdist(I,\mathbb B_{2^{M}}))+\epsilon )
\end{equation}
where the implicit constant only depends on the operator $T$. 
%Notice that $N$ is a fixed integer depending on $T$. 

%\end{equation}
\end{definition}

\begin{remark}
In the main results of the paper, like Theorem \ref{Mainresult}, Proposition 
\ref{symmetricspecialcancellation} or Theorem \ref{L2bounds}, when we say that $T$ satisfies the weak compactness condition, we mean that there is an integer $N\geq 1$ sufficiently large depending on the operator or its kernel so that the operator can be defined $T : \S_{N}(\R) \to \S_{N}'(\R)$, it is continuous with respect the topology in 
$\S_{N}(\R) $ and it satisfies Definition \ref{WB} for that value of $N$. 
\end{remark}

%As it happens with the Definition \ref{prodCZ} of a compact Calder\'on-Zygmund kernel, 
There exist other equivalent formulations of the concept of weak compactness. Maybe the simplest one would 
be stating 
%that there is a bounded function $B$ with $\lim_{x\rightarrow 0} B(x)=0$ so 
that $|\langle T(\phi_I),\varphi_I)\rangle |\leq C$ and 
$$
\lim_{N\rightarrow 0}\sup_{I\notin {\mathcal I}_{M}}|\langle T(\phi_I),\varphi_I)\rangle |=0
$$
%that for all $\epsilon >0$ there is $M_{0}\in \mathbb N$, depending on $\epsilon$ and $T$, so that 
%$|\langle T(\phi_I),\varphi_I)\rangle |\leq C\epsilon $ for every $I\notin {\mathcal I}_{M}$ and all $M>M_{0}$. 
where the definition of ${\mathcal I}_{M}$ is given in Definition \ref{Imdef}. 

However, we decided to keep the more detailed formulation in Definition \ref{WB} because 
it explicitly distinguishes the contribution of the operator given by the term $\epsilon>0$ in (\ref{weakcompactnessformula}) from 
the contribution of the lagom intervals (see the definition of the concept in Definition \ref{Imdef}) by means of the admissible functions (which are independent of $\epsilon $). Notice that (\ref{weakcompactnessformula}) is exactly the expression we find in Proposition 
\ref{necessityofweakcompactness}, where we prove the necessity of the weak compactness condition, with admissible functions which are also independent of the operator. 

\vskip10pt
From now on, we will denote
$$
F_{K}(I)=L_{K}(|I|)S_{K}(|I|)D_{K}(\rdist(I,\mathbb B))
$$
and 
$$
F_{W}(I;M)
=L_{W}(2^{-M}|I|)S_{W}(2^{M}|I|)D_{W}(M^{-1}\rdist(I,\mathbb B_{2^{M}})) 
$$
where $L_{K}$, $S_{K}$ and $D_{K}$ are the functions appearing in the definition of a compact 
Calder\'on-Zygmund kernel,
while $L_{W}$, $S_{W}$, $D_{W}$ and the constant $M$ are as in the definition of the weak compactness condition. Note 
that the value $M = M_{T,\epsilon}$ depends not only on $T$ but also on $\epsilon$. 
%that the dependence of $\epsilon $ in $F_{T}$ is due to the fact that 

We will also denote $F(I;M)=F_{K}(I)+F_{W}(I;M)$,
$$
F_{K}(I_{1},\cdots, I_{n})=\big( \sum_{i=1}^{n}L_{K}(|I_{i}|)\big) \big( \sum_{i=1}^{n}S_{K}(|I_{i}|)\big)
\big(\sum_{i=1}^{n}D_{K}(\rdist(I_{i},\mathbb B))\big)
$$
\begin{eqnarray*}
\hspace{-1.5cm}
F_{W}(I_{1},\cdots, I_{n};M)&=&\big(\sum_{i=1}^{n}L_{W}(2^{-M}|I_{i}|)\big) \big(\sum_{i=1}^{n}S_{W}(2^{M}|I_{i}|)\big)\\
&&\big(\sum_{i=1}^{n}D_{W}(M^{-1}\rdist(I_{i},\mathbb B_{2^{M}}))\big)
\end{eqnarray*}
and 
$
F(I_{1},\cdots, I_{n};M)=F_{K}(I_{1},\cdots, I_{n})+F_{W}(I_{1},\cdots, I_{n};M)
$.

\subsection{Characterization of compactness}
To prove our results on compact singular integrals, we use the following 
characterization of compact operators in a Banach space with a Schauder basis (see \cite{Fab}).

\begin{theorem}\label{charofcompact}
Suppose that $\{e_{n}\}_{n\in \mathbb N}$ is a Schauder basis of a Banach space $E$. For each positive integer $k$, let $P_{k}$ be the canonical projection,
$$
P_{k}(\sum_{n\in \mathbb N}\alpha_{n}e_{n})=\sum_{n\leq k}\alpha_{n}e_{n}
$$
Then, a bounded linear operator $T:E \to E$ is compact if and only if $P_{k}\circ T$ converges to $T$ in operator norm.
\end{theorem}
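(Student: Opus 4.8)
The plan is to prove both implications of the characterization, working with the canonical projections $P_k$ associated to the given Schauder basis $\{e_n\}$. Recall that by the definition of a Schauder basis, each $x\in E$ has a unique expansion $x=\sum_n \alpha_n(x)e_n$, the coordinate functionals $\alpha_n$ are continuous, and the partial sum operators $S_k=P_k$ are uniformly bounded: $\sup_k\|P_k\|=:B<\infty$ (this is the Banach--Steinhaus consequence of $P_k x\to x$ for every $x$). We will use $B$ repeatedly.

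For the easy direction, suppose $P_k\circ T\to T$ in operator norm. Each $P_k\circ T$ has finite rank (its range lies in $\mathrm{span}\{e_1,\dots,e_k\}$), hence is compact; a norm limit of compact operators on a Banach space is compact, so $T$ is compact. This step is essentially immediate and I expect no obstacle here.

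For the main direction, assume $T$ is compact; we must show $\|T-P_k\circ T\|\to 0$, i.e. $\|(I-P_k)\circ T\|\to 0$. First I would observe that $(I-P_k)x\to 0$ pointwise for every $x\in E$ (definition of Schauder basis), and that the operators $I-P_k$ are uniformly bounded by $1+B$. Now let $\epsilon>0$. Since $T$ is compact, $T(\overline{B_E(0,1)})$ is relatively compact, so it admits a finite $\epsilon/(2(1+B))$-net: there are $y_1,\dots,y_m\in E$ such that every $Tx$ with $\|x\|\le 1$ lies within $\epsilon/(2(1+B))$ of some $y_j$. By pointwise convergence, choose $K$ so large that $\|(I-P_k)y_j\|<\epsilon/2$ for all $j$ and all $k\ge K$. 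Then for $\|x\|\le 1$ and $k\ge K$, picking $j$ with $\|Tx-y_j\|<\epsilon/(2(1+B))$,
\[
\|(I-P_k)Tx\|\le \|(I-P_k)(Tx-y_j)\|+\|(I-P_k)y_j\|\le (1+B)\cdot\frac{\epsilon}{2(1+B)}+\frac{\epsilon}{2}=\epsilon .
\]
Taking the supremum over $\|x\|\le1$ gives $\|(I-P_k)\circ T\|\le\epsilon$ for $k\ge K$, which is the desired norm convergence.

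The only genuine point requiring care — and the step I would flag as the main (modest) obstacle — is the uniform boundedness $\sup_k\|P_k\|<\infty$, since without it the estimate $\|(I-P_k)(Tx-y_j)\|\le(1+B)\|Tx-y_j\|$ fails and the $\epsilon/3$-type argument collapses. This is the classical theorem that the partial-sum projections of a Schauder basis are uniformly bounded (equivalently, the basis constant is finite); it follows from the open mapping / Banach--Steinhaus theorems applied to $E$ renormed by $|\!|\!|x|\!|\!|=\sup_k\|P_kx\|$. Since this is standard and is precisely the content cited from \cite{Fab}, I would simply invoke it. Everything else is a routine $\epsilon/3$ compactness argument combined with pointwise convergence of $P_k$ on the fixed finite net, together with the observation that finite-rank operators are compact and the space of compact operators is norm-closed. \epf
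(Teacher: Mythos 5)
Your proof is correct: the forward direction (finite rank plus norm-closedness of the compacts) and the converse ($\epsilon$-net argument using the uniform bound $\sup_k\|P_k\|<\infty$, which you rightly flag as the classical basis-constant theorem) together give the standard argument for this characterization. The paper itself does not prove this theorem but simply quotes it from \cite{Fab}, and your argument is precisely the standard one found there, so there is nothing further to reconcile.
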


\begin{definition} \label{Imdef}
For every $M\in \mathbb N$, let ${\cal I}_{M}$ be the family of intervals such that 
$2^{-M}\leq |I|\leq 2^{M}$ and 
$\rdist(I,\mathbb B_{2^{M}})\leq M$. Let ${\mathcal D}$ be the family of dyadic intervals of the real line. We also define ${\cal D}_{M}$ as the intersection of ${\cal I}_{M}$ with ${\mathcal D}$. 

For every fixed $M$, we will call the intervals in ${\cal I}_{M}$ and ${\cal D}_{M}$ as lagom intervals and dyadic lagom intervals respectively.
\end{definition}

\begin{remark}
Notice that $I\in {\cal D}_{M}$ implies that   $2^{-M}(2^{M}+|c(I)|)\leq M$ and then
$|c(I)|\leq (M-1)2^{M}$.  
Therefore, $I\subset \mathbb B_{M2^{M}}$ with
$2^{-M}\leq |I|$.  

On the other hand, $I\notin {\cal D}_{M}$ implies either $|I|>2^{M}$ or 
$|I|<2^{-M}$ or $2^{-M}\leq |I|\leq 2^{M}$ with $|c(I)|>(M-1)2^{M}$.
%Notice that it is possible that $I\notin {\cal D}_{M}$ and $I\subset \mathbb B_{(M+1)2^{M}}$ arbitrarily close to %the origin.
\end{remark}

Let $E$ be one of the following Banach spaces: the Lebesgue space $L^{p}(\mathbb R)$, $1<p<\infty $, the Hardy space $H^{1}(\mathbb R)$, or the space $\CMO(\R)$, to be introduced later as the closure in $\BMO(\mathbb R)$ of continuous functions vanishing at infinity. In each case, $E$ is equipped with smooth wavelet bases which are also Schauder bases (see \cite{HerWeiss} and Lemma \ref{lem:cmochar}). Moreover, in all cases we have at our disposal smooth and compactly supported wavelet bases. 
 
\begin{definition}\label{lagom}
Let $E$ be one of the previously mentioned Banach spaces.  
Let $(\psi_{I})_{I\in {\mathcal D}}$ be a wavelet basis of $E$. 
Then, for every $M\in \mathbb N$,
we define the lagom projection operator $P_{M}$ by 
$$
P_{M}(f)=\sum_{I\in {\cal D}_{M}}\langle f,\psi_{I}\rangle \psi_{I}
$$
where $\langle f,\psi_{I}\rangle =\int_{\mathbb R}f(x)\overline{\psi(x)}dx$.

We also define the orthogonal lagom projection operator as $P_{M}^{\perp }(f)=f-P_{M}(f)$. 
\end{definition}

\begin{remark}
Without explicit mention, we will let the wavelet basis defining $P_M$ vary from proof to proof to suit our technical needs.

We also note the use of the same notation for the action of $T(f)$ as a distribution
and the inner product.
%, although both expressions are different. 
We hope that this will not cause confusion. 
\end{remark}

It is easy to see that both $P_M$ and $P_{M}^{\perp }$ are self-adjoint operators.

We observe that in $E$, the equality
\begin{equation}\label{ortho}
P_{M}^{\perp}(f)=\sum_{I\in {\cal D}_{M}^{c}}\langle f,\psi_{I}\rangle \psi_{I} 
\end{equation}
is to be interpreted in its Schauder basis sense,
$$
\lim_{M'\rightarrow \infty }\| P_{M}^{\perp}(f)
-\sum_{I\in {\cal D}_{M'}\backslash {\cal D}_{M}}\langle f,\psi_{I}\rangle \psi_{I}  \|_{E}=0
$$
Note that according to Theorem \ref{charofcompact}, an operator $T: E \to E$, is compact if and only if 
$$
\lim_{M\rightarrow \infty }\| P_{M}^{\perp}\circ T\|=0
$$
where $\| \cdot \|$ is the operator norm.

%Let 
%$F_{\alpha}:[0,\infty )\rightarrow [0,\infty )$, $S_{\beta}:[0,\infty )\rightarrow [0,\infty )$ and 
%$D_{\delta}:[1,\infty )\rightarrow [0,\infty )$
%given by 
%$$
%F_{\alpha}(x)=\frac{1}{(1+x)^{\alpha}} \hskip30pt
%S_{\beta}(x)=\frac{1}{(1+x^{-1})^{\beta}} \hskip30pt
%D_{\delta}(x)=\frac{1}{x^{\delta}}
%$$

\subsection{The space $\CMO$}
%We end this section by providing the following definition. 
We provide now the definition 
%and main properties 
of the space to which the function $T(1)$ must belong if $T$ is compact.
%\begin{definition}
%Sarason in \cite{S} introduced $\VMO(\mathbb R)$ as 
%the closure in $\BMO(\mathbb R)$ of uniformly continuous and bounded functions. 
%\end{definition}
%
%He then proved that 
%$f\in \VMO(\mathbb R)$ if and only if
%$f\in \BMO(\mathbb R)$ and
%\begin{equation}\label{VMO}
%\lim_{\epsilon \rightarrow 0} \sup_{|I|\leq \epsilon } \frac{1}{|I|}\int_{I}|f(x)-f_{I}|dx =0
%\end{equation}
%with $f_{I}=\frac{1}{|I|}\int_{I}f(x)dx$.
%
%It is obvious, that for all $f\in L^{p}$ ($f\in \VMO(\mathbb R)$)
%$$
%\lim_{\lambda \rightarrow \infty } \sup_{|I|\geq \lambda }  \frac{1}{|I|}\int_{I}|f(x)-f_{I}|^{p}dx =0
%$$
%and
%$$
%\lim_{\lambda \rightarrow \infty } \sup_{|c(I)|\geq \lambda }  \frac{1}{|I|}\int_{I}|f(x)-f_{I}|^{p}dx =0
%$$
%
%
%We will use the following wavelet characterization of $\VMO(\mathbb R)$:
%$f\in \VMO(\mathbb R)$ if and only if 
%$f\in \BMO(\mathbb R)$ and
%\begin{equation}\label{VMO2}
%\lim_{\epsilon \rightarrow 0} \sup_{|I|\leq \epsilon} \frac{1}{|I|}\| P_{I}(f)\|_{2}^{2}
%=\lim_{\epsilon \rightarrow 0} \sup_{|I|\leq \epsilon} \frac{1}{|I|}\sum_{J\subset I}\langle f,\psi_{J}\rangle ^{2}=0
%\end{equation}

\begin{definition}
We define $\CMO(\mathbb R)$ as the closure in $\BMO(\mathbb R)$ of the space of continuous functions vanishing at infinity. 
\end{definition}

The next lemma gives two characterizations of $\CMO(\mathbb R)$: the first in terms of the average deviation from the mean (see \cite{Roch} for a proof), and the second in terms of a  
wavelet decomposition (see  \cite{LTW}).

\begin{lemma} \label{lem:cmochar}
i) $f\in \CMO(\mathbb R)$ if and only if 
$f\in \BMO(\mathbb R)$ and
\begin{equation}\label{CMO}
\lim_{M\rightarrow \infty } \sup_{I\notin {\mathcal I}_{M}} \frac{1}{|I|}\int_{I}\Big|f(x)-\frac{1}{|I|}\int_{I}f(y)dy\Big|dx =0\\
\end{equation}
%where $f_{I}=\frac{1}{|I|}\int_{I}f(x)dx$.

ii)
$f\in \CMO(\mathbb R)$ if and only if 
$f\in \BMO(\mathbb R)$ and
\begin{equation}\label{CMO2}
\lim_{M\rightarrow \infty} 
%\| P_{M}^{\perp}(f)\|_{\BMO(\mathbb R)}
\sup_{\Omega \subset \mathbb R} \Big(\frac{1}{|\Omega |}
\sum_{\tiny \begin{array}{c}I\notin {\cal D}_{M}\\ I\subset \Omega \end{array}}|\langle f,\psi_{I}\rangle |^{2}\Big)^{1/2}=0
\end{equation}
%where 
%$$
%\| P_{M}^{\perp}(f)\|_{\BMO(\mathbb R)}
%%=\sup_{I} \frac{1}{|I|}\| P_{I}(f-P_{M}(f))\|_{2}^{2}
%=\sup_{\Omega \subset \mathbb R} \frac{1}{|\Omega |^{1/2}}\| P_{\Omega }(P_{M}^{\perp}(f))\|_{2}
%$$
%$$
%=\sup_{\Omega \subset \mathbb R} \Big(\frac{1}{|\Omega |}
%\sum_{\tiny \begin{array}{c}I\notin {\cal D}_{M}\\ I\subset \Omega \end{array}}|\langle f,\psi_{I}\rangle |^{2}\Big)^{1/2}
%$$
%As a consequence, $(\psi_I)_{I\in \mathcal{D}}$ is a Schauder basis for $\CMO(\R)$.
\end{lemma}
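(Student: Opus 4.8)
The plan is to prove both characterizations by passing through the defining density property of $\CMO$: a function $f\in\BMO(\mathbb R)$ lies in $\CMO(\mathbb R)$ precisely when it can be approximated in $\BMO$-norm by continuous functions vanishing at infinity, and each such approximant trivially satisfies the vanishing conditions in (\ref{CMO}) and (\ref{CMO2}). So the real content is two-fold: first, that the quantities on the left of (\ref{CMO}) and (\ref{CMO2}) are continuous with respect to the $\BMO$ norm and vanish for a dense generating class; second, the converse, that a function satisfying either vanishing condition can be $\BMO$-approximated by nice functions. Since both statements are attributed to references (\cite{Roch} for part i, \cite{LTW} for part ii), I will only sketch the structure rather than grinding through the estimates.

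For part i), I would argue as follows. Define, for $f\in\BMO$, the functional $\Phi_M(f)=\sup_{I\notin\mathcal I_M}\frac{1}{|I|}\int_I|f-\langle f\rangle_I|\,dx$, where $\langle f\rangle_I$ is the mean of $f$ over $I$. Since each term $\frac{1}{|I|}\int_I|f-\langle f\rangle_I|$ is bounded by $2\|f\|_{\BMO}$ and the map $f\mapsto \frac{1}{|I|}\int_I|f-\langle f\rangle_I|$ is a seminorm, $\Phi_M$ is $1$-Lipschitz (up to the constant $2$) in the $\BMO$ norm and is non-increasing in $M$. Hence $\Phi(f):=\lim_{M\to\infty}\Phi_M(f)$ exists, is $\BMO$-continuous, and vanishes on any $f$ for which it vanishes. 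One then checks directly that $\Phi(g)=0$ for $g$ continuous and vanishing at infinity: given $\epsilon$, split the supremum over $I\notin\mathcal I_M$ into intervals with $|I|$ large, $|I|$ small, or $I$ far from the origin; uniform continuity near infinity controls the small-scale and far-away intervals, and the decay of $g$ plus its boundedness handles the large-scale ones. By $\BMO$-continuity, $\Phi$ vanishes on the closure, which is $\CMO$, giving the "only if" direction. For the converse, given $f$ with $\Phi(f)=0$, one constructs explicit approximants — for instance by truncating $f$ spatially and mollifying, or by using the characterization $f=g+\text{(bounded)}$ arguments as in \cite{Roch} — to show $f\in\CMO$; this is the part I expect to be the main obstacle, as it requires an actual construction rather than a soft continuity argument.

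For part ii), the strategy is to transfer between the mean-oscillation description and the wavelet-square-function description. Recall that for a fixed smooth wavelet basis $(\psi_I)_{I\in\mathcal D}$, the $\BMO$ norm of $f$ is comparable to $\sup_\Omega\big(\frac{1}{|\Omega|}\sum_{I\subset\Omega}|\langle f,\psi_I\rangle|^2\big)^{1/2}$ (the Carleson-measure characterization of $\BMO$ via wavelet coefficients). The quantity in (\ref{CMO2}) is obtained by restricting the inner sum to $I\notin\mathcal D_M$, i.e. deleting the lagom dyadic intervals. As in part i), the functional $\Psi_M(f)=\sup_\Omega\big(\frac{1}{|\Omega|}\sum_{I\notin\mathcal D_M,\,I\subset\Omega}|\langle f,\psi_I\rangle|^2\big)^{1/2}$ is sublinear and bounded by $C\|f\|_{\BMO}$, hence $\BMO$-continuous, and non-increasing in $M$; so again it suffices to show $\lim_M\Psi_M$ vanishes on continuous functions decaying at infinity. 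For such $g$, smoothness and decay give rapid decay of $\langle g,\psi_I\rangle$ for $I$ of very small or very large scale or located far from the origin, so the tail sum over $I\notin\mathcal D_M$ is small uniformly in $\Omega$; I would use the standard estimates $|\langle g,\psi_I\rangle|\lesssim |I|^{1/2}\omega_g(|I|)$ for small $|I|$ (with $\omega_g$ the modulus of continuity), $|\langle g,\psi_I\rangle|\lesssim |I|^{-3/2}\|g\|_{L^1}$ or a decay bound for large $|I|$, and spatial decay of $g$ together with Lemma \ref{decayofidentity}-type estimates for far-away $I$. The converse direction — a function satisfying (\ref{CMO2}) belongs to $\CMO$ — follows by reconstructing $f$ from its wavelet series and noting that the partial sums over lagom intervals are continuous, compactly supported, and converge to $f$ in $\BMO$ by hypothesis; equivalently one invokes the equivalence of (\ref{CMO}) and (\ref{CMO2}), which can itself be derived by comparing the two Carleson-type functionals above on the complement of $\mathcal I_M$ versus $\mathcal D_M$, using that $\mathcal I_M$ and $\mathcal D_M$ are cofinal in a compatible way. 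The main technical point throughout is the uniformity of all estimates over the supremum in $\Omega$ (resp. in $I$), which is exactly what makes the limit in $M$ behave well; beyond that the argument is a routine combination of the wavelet characterization of $\BMO$ with decay estimates for bump functions, so I would defer the details to \cite{Roch} and \cite{LTW}.
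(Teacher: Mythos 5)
The paper itself does not prove this lemma: it is stated with citations, part i) to \cite{Roch} and part ii) to \cite{LTW}, so there is no in-paper argument to compare yours against. Judged on its own, your outline follows the standard route of those references. The halves you actually argue are sound: the functionals $\Phi_M$ and $\Psi_M$ are sublinear and dominated by (a constant times) the $\BMO$ norm, hence $\BMO$-continuous, and they are non-increasing in $M$ because ${\mathcal I}_M\subset{\mathcal I}_{M+1}$ and ${\mathcal D}_M\subset{\mathcal D}_{M+1}$; checking that the limits vanish on the dense class then gives the ``only if'' directions. Your converse for ii) is essentially complete once phrased as: by the Carleson-measure (wavelet) characterization of $\BMO$, $\Psi_M(f)\approx\|P_M^{\perp}(f)\|_{\BMO}$, while $P_M(f)$ is a finite sum of smooth compactly supported wavelets and hence lies in $C_0$; so $\Psi_M(f)\to0$ exhibits $f$ as a $\BMO$-limit of $C_0$ functions.

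Two caveats. First, the converse of part i) is the genuinely hard step (a Neri/Uchiyama-type approximation construction), and your proposal does not supply it — you defer it to \cite{Roch}, exactly as the paper does; that is acceptable as a citation, but it is the one place where your outline is not a proof. (An alternative within your own framework is to deduce the i)-converse from the ii)-converse, which you did prove, by comparing the oscillation functional with the wavelet functional on the complements of ${\mathcal I}_M$ and ${\mathcal D}_M$; that comparison still requires an argument of Lemma \ref{decayofidentity} type, but it is more elementary than the direct construction.) Second, a small technical slip in the ``only if'' of ii): for large $|I|$ you invoke $|\langle g,\psi_I\rangle|\lesssim |I|^{-3/2}\|g\|_{L^1}$, but a continuous function vanishing at infinity need not be integrable. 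It is cleaner to test on $g\in C_c^{\infty}$, which is dense in $\CMO$ because the $\BMO$ norm is dominated by the sup norm, or to split $g$ into a compactly supported piece plus a piece of small sup norm; with that adjustment the scale-by-scale estimates you list do give the required uniform smallness in $\Omega$.
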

We will mainly be using the latter formulation.

\subsection{Main result}
We now give the statement of our main result, leaving for forthcoming sections the exact meaning of $T(1)$ and $T^*(1)$. 

\begin{theorem}\label{Mainresult}
Let
%$K$ be a Calderon-Zygmund kernel with parameter $\delta$ and
$T$ be a linear operator associated with a standard Calder\'on-Zygmund kernel. 
%with associated kernel $K$ and
%satisfying the mixed WB-CZ condition.

Then, $T$ extends to a compact operator on $L^p(\mathbb R)$ for all $p$ with $1<p<\infty$ if and only if $T$ 
is associated with a compact Calder\'on-Zygmund kernel and satisfies
the weak compactness condition
and the cancellation conditions
$T(1), T^{*}(1) \in \CMO(\mathbb R)$. 
\end{theorem}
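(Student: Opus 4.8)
The plan is to follow the David--Journ\'e template in two directions. For necessity, I would first invoke Proposition \ref{necessityofacompactCZkernel} to show that a compact Calder\'on--Zygmund operator has a compact Calder\'on--Zygmund kernel, Proposition \ref{necessityofweakcompactness} to obtain the weak compactness condition, and Proposition \ref{necessity2} to deduce $T(1), T^*(1) \in \CMO(\R)$; together these give the ``only if'' direction immediately, once we record the elementary fact that a compact operator on $L^p$ is in particular bounded on $L^p$, so that the classical $T(1)$ machinery applies to produce a (standard, and by hypothesis compact) kernel and to make sense of $T(1)$ and $T^*(1)$ as $\BMO$ functions. The substance of the theorem is the ``if'' direction.

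For sufficiency, I would proceed by reduction to the special cancellation case. First, using Lemmata \ref{smoothimpliesdecay1} and \ref{smoothimpliesdecay2}, the compact kernel hypothesis already yields the pointwise decay, so $T$ is a bona fide Calder\'on--Zygmund operator and, by the classical $T(1)$ theorem together with $T(1),T^*(1)\in\BMO$ and weak boundedness (which follows from weak compactness), $T$ is bounded on every $L^p$, $1<p<\infty$. The main step is then to show $\|P_M^\perp \circ T\|_{L^p\to L^p}\to 0$ as $M\to\infty$, which by Theorem \ref{charofcompact} is equivalent to compactness. I would first treat the case $T(1)=T^*(1)=0$ using Proposition \ref{symmetricspecialcancellation} / Theorem \ref{L2bounds}: here the fundamental lemma of Section 4 describing $T$ acting on bump functions, combined with the almost-orthogonality estimates of Lemma \ref{decayofidentity}, lets one expand $\langle P_M^\perp T \psi_I, \psi_J\rangle$ in a wavelet basis and bound it by $F(I;M)+F(J;M)$ times the usual eccentricity/relative-distance decay factors; summing via Schur's test and using that the admissible functions $L_W,S_W,D_W$ and $L_K,S_K,D_K$ tend to zero (so that $\sup_{I\notin\mathcal I_M}(F_K(I)+F_W(I;M))\to 0$ together with the $\epsilon$ in \eqref{weakcompactnessformula}) produces an operator-norm bound on $P_M^\perp T$ that vanishes as $M\to\infty$. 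Interpolation and duality extend this from $L^2$ to all $L^p$.

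To remove the cancellation assumption I would, as announced in Section 6, subtract paraproduct operators: write $T = \widetilde T + \Pi_{T(1)} + \Pi_{T^*(1)}^*$ where $\Pi_b$ is a paraproduct with symbol $b$, chosen so that $\widetilde T(1)=\widetilde T^*(1)=0$ and $\widetilde T$ still has a compact Calder\'on--Zygmund kernel and satisfies weak compactness; then $\widetilde T$ is compact by the special case. It remains to show that the paraproduct $\Pi_b$ is compact on $L^p$ whenever $b\in\CMO(\R)$. This is where the characterization \eqref{CMO2} of $\CMO$ via the wavelet coefficients does the work: $\langle P_M^\perp \Pi_b f, g\rangle$ is controlled, through a Carleson-embedding / stopping-time argument, by $\sup_{\Omega}\big(|\Omega|^{-1}\sum_{I\notin\mathcal D_M,\,I\subset\Omega}|\langle b,\psi_I\rangle|^2\big)^{1/2}\,\|f\|_{L^p}\|g\|_{L^{p'}}$, and the right-hand supremum tends to $0$ as $M\to\infty$ precisely because $b\in\CMO$. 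Adding the three pieces and applying Theorem \ref{charofcompact} once more gives compactness of $T$ on every $L^p$, $1<p<\infty$.

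I expect the main obstacle to be the sufficiency direction's bookkeeping of the three admissible-function factors through the wavelet expansion: one must verify that after applying Schur's test the surviving ``bad'' intervals are exactly those in $\mathcal I_M$ (equivalently, that the non-lagom intervals contribute only through the vanishing tails of $L,S,D$ and the $\epsilon$ term), uniformly in the wavelet basis used to define $P_M$. The paraproduct construction and the proof that $b\in\CMO$ forces $\Pi_b$ compact — rather than merely bounded — is the other delicate point, since it is the step where the specific geometry of $\mathcal D_M$ in Definition \ref{Imdef} and the sharpness of \eqref{CMO2} are genuinely used.
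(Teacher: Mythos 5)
Your proposal is correct and follows essentially the same route as the paper: necessity via Propositions \ref{necessityofacompactCZkernel}, \ref{necessityofweakcompactness} and \ref{necessity2}, and sufficiency by proving $\lim_{M\to\infty}\| P_{M}^{\perp}\circ T\|=0$ first under $T(1)=T^{*}(1)=0$ (Proposition \ref{symmetricspecialcancellation} and Theorem \ref{L2bounds}, then Krasnoselskii-type interpolation to all $L^{p}$), and then subtracting paraproducts with $\CMO$ symbols as in Proposition \ref{paraproducts1}. The only cosmetic difference is in the paraproduct step: you argue compactness of $\Pi_{b}$ by a direct Carleson-embedding estimate using \eqref{CMO2}, whereas the paper observes the identity $P_{M}^{\perp}\circ T_{b}=T_{P_{M}^{\perp}(b)}$ and invokes $\BMO$-boundedness of paraproducts together with $\| P_{M}^{\perp}(b)\|_{\BMO}\to 0$, which amounts to the same use of the $\CMO$ hypothesis.
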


\begin{remark} The proof of Theorem \ref{Mainresult}, carried out mainly in Theorem \ref{L2bounds} and Proposition \ref{paraproducts1}, shows more than it is explicitly stated. It actually proves that the operator $T$ can be approximated by finite range operators $P_{M}\circ T$ which also Calder\'on-Zygmund operators.  

Moreover, we prove that for every $\epsilon > 0$ there is an $M_{0}\in \mathbb N$ such that for every $M>M_{0}$, 
$$
\| P_{M}^{\perp}\circ T\|_{p\rightarrow p}\lesssim C_{p}\Big(
C_{\delta}(C_{K}+C_{W})\big( C(\sup_{I\notin {\mathcal D}_{M}}F(I;M_0) + \epsilon)
+M^{-\delta}\|F\|_{\infty }\big)
$$
$$
+\| (P_{M}^{\perp}\circ T)(1)\|_{\BMO}+\| (P_{M}^{\perp}\circ T^{*})(1)\|_{\BMO}\Big)
$$
where the implicit constant is universal, $C_{p}$ is the constant of boundedness of the square function on $L^{p}(\mathbb R)$,  $C_{\delta}$ a constant depending on $\delta $, the exponent in the Definition \ref{prodCZ} of a compact Calder\'on-Zygmund kernel, and 
$C_{K}$ and $C_{W}$ are the constants appearing in the definitions of a compact Calder\'on-Zygmund kernel and the weak compactness condition, respectively.
% and $0<\delta'<\delta \leq 1$, $\delta $ being the exponent appearing in the Definition \ref{prodCZ} of a compact Calder\'on-Zygmund kernel.  
\end{remark}

\section{Necessity of the hypotheses}

We prove the necessity of the hypotheses of Theorem \ref{Mainresult}:
the compact Calder\'on-Zygmund kernel, 
the weak compactness condition and the membership of $T(1)$ and $T^{*}(1)$ in $\CMO(\mathbb R)$. We note that, in analogy with the classical theory, necessity of the weak compactness condition holds for any compact operator on $L^{p}(\mathbb R)$ whose adjoint also defines a bounded operator on $L^p$. On the other hand, the necessity of the space $\CMO(\mathbb R)$ can only be shown for operators associated with compact Calder\'on-Zygmund kernels. 
%for which we will be able to give meaning to $T(1)$ and $T^{*}(1)$. 
%The sufficiency of these conditions will be proven in the forthcoming sections.
%Sections 4, 5 and 6.

\subsection{Necessity of a compact Calder\'on-Zygmund kernel}\label{necessityofCCZkernel}
We show that all Calder\'on-Zygmund operators that extend compactly on $L^{p}(\mathbb R)$ are associated with a compact Calder\'on-Zygmund kernel as defined in 
Definition \ref{prodCZ}.
%We prove that compact Calder\'on-Zygmund kernels as defined in 
%Definition \ref{prodCZ} 
%completely characterize all singular integrals which can be extended to compact operators. 
We start the proof with a technical lemma. 

\begin{lemma}Let $\Phi$ be a positive smooth function such that it is 
supported and $L^{\infty}$-adapted to $[-1/2,1/2]$ and 
$\int \Phi (x)dx=1$.

Let $K$ be a standard Calder\'on-Zygmund kernel with constant $C>0$ and parameter $0<\delta\leq 1$ 
and let $0<\epsilon <C/2^{\delta }$. 
Let $(t,x)\in \mathbb R^{2}\backslash \Delta $
and $t'\in \mathbb R$ such that $3|t-t'|<|t-x|$.

Then, for every $0<\lambda_{i}<(C^{-1}\epsilon)^{1/\delta} |t-t'|$, we have
\begin{equation}\label{approximation'}
\Big|\int \int {\mathcal T}_{t}{\mathcal D}_{\lambda_{1}}^{1}\Phi(u)
{\mathcal T}_{x}{\mathcal D}_{\lambda_{2}}^{1}\Phi(y)K(u,y)dudy-K(t,x)\Big|
<\epsilon \frac{|t-t'|^{\delta }}{|t-x|^{1+\delta }}
\end{equation}
\begin{align}\label{approximation'2}
|\langle T({\mathcal T}_{t}{\mathcal D}_{\lambda_{1} }^{1}\Phi)-T({\mathcal T}_{t'}{\mathcal D}_{\lambda_{1}}^{1}\Phi),
{\mathcal T}_{x}{\mathcal D}_{\lambda_{2} }^{1}\Phi\rangle |
\lesssim C\frac{|t-t'|^{\delta }}{|t-x|^{1+\delta }}
\end{align}

\end{lemma}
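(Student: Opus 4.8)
The plan is to prove the two estimates by exploiting that $\mathcal{T}_t\mathcal{D}_{\lambda_1}^1\Phi$ and $\mathcal{T}_x\mathcal{D}_{\lambda_2}^1\Phi$ are $L^1$-normalized bump functions (mass one) supported in tiny intervals $I_1 = [t - \lambda_1/2, t+\lambda_1/2]$ and $I_2=[x-\lambda_2/2, x+\lambda_2/2]$ respectively, which are disjoint from each other and well-separated from the diagonal because of the hypotheses $3|t-t'|<|t-x|$ and $\lambda_i < (C^{-1}\epsilon)^{1/\delta}|t-t'| < |t-t'|$. On their supports, the kernel $K(u,y)$ is therefore close to the constant value $K(t,x)$ by the standard Calderón–Zygmund smoothness estimate. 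First I would verify the separation: for $u\in I_1$ and $y\in I_2$ one has $|u-t|,|y-x|\le |t-t'|/2$ (crudely), so $|u-t|+|y-x| < |t-t'| < |t-x|/3$, which keeps $(u,y)$ safely away from $\Delta$ and inside the region where the kernel smoothness bound applies relative to the base point $(t,x)$.

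For \eqref{approximation'}, since $\iint \mathcal{T}_t\mathcal{D}_{\lambda_1}^1\Phi(u)\,\mathcal{T}_x\mathcal{D}_{\lambda_2}^1\Phi(y)\,du\,dy = 1$, I would write
$$
\iint \mathcal{T}_t\mathcal{D}_{\lambda_1}^1\Phi(u)\,\mathcal{T}_x\mathcal{D}_{\lambda_2}^1\Phi(y)\,K(u,y)\,du\,dy - K(t,x)
= \iint \mathcal{T}_t\mathcal{D}_{\lambda_1}^1\Phi(u)\,\mathcal{T}_x\mathcal{D}_{\lambda_2}^1\Phi(y)\,\big(K(u,y)-K(t,x)\big)\,du\,dy,
$$
and then bound $|K(u,y)-K(t,x)| \le C\,(|u-t|+|y-x|)^\delta\,|t-x|^{-1-\delta}$ using the smoothness of $K$ (here I use that the displacement $|u-t|+|y-x| \le \lambda_1/2 + \lambda_2/2 < (C^{-1}\epsilon)^{1/\delta}|t-t'|$, so $C(|u-t|+|y-x|)^\delta < \epsilon |t-t'|^\delta$). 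Since both bump functions have $L^1$-mass one and are nonnegative, integrating this pointwise bound immediately gives the factor $\epsilon |t-t'|^\delta |t-x|^{-1-\delta}$, as required. The constraint on $\lambda_i$ was chosen precisely to absorb the constant $C$ into $\epsilon$.

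For \eqref{approximation'2}, the idea is to realize the left side as a kernel integral by Definition \ref{intrep}. The functions $\mathcal{T}_t\mathcal{D}_{\lambda_1}^1\Phi$ and $\mathcal{T}_{t'}\mathcal{D}_{\lambda_1}^1\Phi$ are supported near $t$ and $t'$, while $\mathcal{T}_x\mathcal{D}_{\lambda_2}^1\Phi$ is supported near $x$; since $3|t-t'|<|t-x|$ and the $\lambda_i$ are much smaller than $|t-t'|$, all three supports are pairwise disjoint and disjoint from where they would meet, so the pairing becomes
$$
\iint \mathcal{T}_x\mathcal{D}_{\lambda_2}^1\Phi(y)\,\big(\mathcal{T}_t\mathcal{D}_{\lambda_1}^1\Phi(u) - \mathcal{T}_{t'}\mathcal{D}_{\lambda_1}^1\Phi(u)\big)\,K(u,y)\,du\,dy.
$$
Now $\mathcal{T}_t\mathcal{D}_{\lambda_1}^1\Phi - \mathcal{T}_{t'}\mathcal{D}_{\lambda_1}^1\Phi$ has mean zero, so I can subtract $K(t,y)$ from $K(u,y)$ inside the $u$-integral for free; then for $u$ in the support, $|K(u,y)-K(t,y)| \lesssim C|u-t|^\delta |t-x|^{-1-\delta} \lesssim C\,|t-t'|^\delta |t-x|^{-1-\delta}$ by the smoothness of $K$ in the first variable, and integrating against the two $L^1$-normalized bumps (total mass $\le 2$) gives the claimed bound $C|t-t'|^\delta |t-x|^{-1-\delta}$.

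The main obstacle, and the only point that requires genuine care rather than bookkeeping, is \emph{justifying the use of the integral representation in \eqref{approximation'2}}: Definition \ref{intrep} only gives the kernel formula for pairs of functions with disjoint supports, and here we have a difference of two functions tested against a third, so one must split the pairing additively into $\langle T(\mathcal{T}_t\mathcal{D}_{\lambda_1}^1\Phi), \mathcal{T}_x\mathcal{D}_{\lambda_2}^1\Phi\rangle$ and $\langle T(\mathcal{T}_{t'}\mathcal{D}_{\lambda_1}^1\Phi), \mathcal{T}_x\mathcal{D}_{\lambda_2}^1\Phi\rangle$, check that each of the two support pairs is disjoint (this is where $3|t-t'|<|t-x|$ is used, guaranteeing that even the support of $\mathcal{T}_{t'}\mathcal{D}_{\lambda_1}^1\Phi$, which is the farther one, stays away from the support near $x$), and only then recombine into the mean-zero integral. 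Everything else is the routine "$K$ is nearly constant on a small set" computation outlined above, with the precise choice of the threshold $(C^{-1}\epsilon)^{1/\delta}|t-t'|$ engineered to make the constants come out as stated.
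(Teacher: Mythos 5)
Your proposal is correct and follows essentially the same route as the paper: both estimates come from the kernel representation on disjoint supports together with the H\"older smoothness of $K$ at displacement $\lesssim |t-t'|$ and separation $\approx |t-x|$, with the threshold $\lambda_i<(C^{-1}\epsilon)^{1/\delta}|t-t'|$ absorbing the constant into $\epsilon$ exactly as in the paper. The only cosmetic difference is in \eqref{approximation'2}, where the paper bounds $|K(u+t,y+x)-K(u+t',y+x)|$ directly after a common change of variables, while you invoke the mean zero of the difference of translated bumps to subtract $K(t,y)$; both reduce to the same first-variable smoothness estimate and yield the stated bound.
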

\proof 
For every $0<\lambda_{i} <|t-x|/2$ 
%and every $0<\lambda_{1},\lambda_{2} <(C^{-1}\epsilon)^{1/\delta} |t-t'|$, 
and $u,v \in \supp {\mathcal D}_{\lambda}^{1}\Phi $, 
we have that  
$2(|u|+|y|)<2(\lambda_{1}+{\lambda_{2})/2}
%_{1}+\lambda_{2}
%<2|t-t'|
<|t-x|$ and so, 
\begin{align*}
%\label{intminusK}
\Big|\int \int {\mathcal T}_{t}&{\mathcal D}_{\lambda}^{1}\Phi(u)
{\mathcal T}_{x}{\mathcal D}_{\lambda}^{1}\Phi(y)K(u,y)dudy-K(t,x)\Big|
\\
&=\Big|\int \int {\mathcal D}_{\lambda}^{1}\Phi(u){\mathcal D}_{\lambda}^{1}\Phi(y)(K(u+t,y+x)-K(t,x))dudy\Big|
\\
&\leq C\int \int {\mathcal D}_{\lambda}^{1}\Phi(u){\mathcal D}_{\lambda}^{1}\Phi(y)
\frac{(|u|+|y|)^{\delta }}{|t-x|^{1+\delta}}dudy
%\leq \frac{(2|I|)^{\delta }}{|t-x|^{1+\delta}}
\leq C\frac{((\lambda_{1}+\lambda_{2})/2)^{\delta }}{|t-x|^{1+\delta }}
\end{align*}
%we have  
%$$
%\lim_{|I|\rightarrow 0}\int \int {\mathcal D}_{|I|}^{1}\Phi(t){\mathcal D}_{|I|}^{1}\Phi(x)K(t+m,x+k)dtdx
%=K(m,k)
%$$
%for every $m,k\in \mathbb N$, $m\neq k$. 
%Then, 
%for all $\epsilon>0$, all $0<\lambda <\eta=\min(\epsilon C^{-1}d^{1+\delta })^{1/\delta},d/2)$ and 
%$|t-x|>d$, we have 
This implies that, for $0<\lambda_{i}<(C^{-1}\epsilon)^{1/\delta} |t-t'|$,
we get \eqref{approximation'}.

%Notice that $\eta $ depends on the distance of $(t,x)$ to the diagonal but it is independent of $(t,x)$ otherwise.
On the other hand, for $3|t-t'|<|t-x|$, $0<\lambda_{1},\lambda_{2}<(C^{-1}\epsilon)^{1/\delta }|t-t'|$ and 
$u\in \supp {\mathcal D}_{\lambda_{1}}^{1}\Phi $ ,$y \in \supp {\mathcal D}_{\lambda_{2}}^{1}\Phi $, 
%$|u+t-(y+x)|\geq |t-x|-|u|-|y|\geq |t-x|-(\lambda_{1}+\lambda_{2})/2 \geq |t-x|/2\geq 2|t-t'|$ (?)
we have 
\begin{align*}
|u+t-(y+x)|&\geq |t-x|-|u|-|y|\geq |t-x|-(\lambda_{1}+\lambda_{2})/2 
\\
&\geq (3-(C^{-1}\epsilon)^{1/\delta })|t-t'|>
2|t-t'|
\end{align*} 
by the choice of $\epsilon $.
Then, $\supp {\mathcal T}_{t}{\mathcal D}_{\lambda_{1} }^{1}\Phi$ and 
$\supp {\mathcal T}_{t'}{\mathcal D}_{\lambda_{1} }^{1}\Phi$ are disjoint with 
$\supp {\mathcal T}_{x}{\mathcal D}_{\lambda_{2} }^{1}\Phi$ and hence, 
%in a similar way,
\begin{align*}
%\label{approximation'2}
|\langle &T({\mathcal T}_{t}{\mathcal D}_{\lambda_{1} }^{1}\Phi)-T({\mathcal T}_{t'}{\mathcal D}_{\lambda_{1}}^{1}\Phi),
{\mathcal T}_{x}{\mathcal D}_{\lambda_{2} }^{1}\Phi\rangle |
\\
&= \Big|\int \int (
{\mathcal T}_{t}{\mathcal D}_{\lambda_{1} }^{1}\Phi(u)-{\mathcal T}_{t'}{\mathcal D}_{\lambda_{1} }^{1}\Phi(u))
{\mathcal T}_{x}{\mathcal D}_{\lambda_{2} }^{1}\Phi(y)K(u,y)dudy\Big|
\\
&\leq \int \int {\mathcal D}_{\lambda_{1} }^{1}\Phi(u){\mathcal D}_{\lambda_{2} }^{1}\Phi(y)
|K(u+t,y+x)-K(u+t',y+x)|dudy
\\
&\leq C\int \int {\mathcal D}_{\lambda_{1} }^{1}\Phi(u){\mathcal D}_{\lambda_{2} }^{1}\Phi(y)
\frac{|t-t'|^{\delta }}{|u+t-(y+x)|^{1+\delta}}dudy
\lesssim C\frac{|t-t'|^{\delta }}{|t-x|^{1+\delta }}
\end{align*}
The last inequality holds because $\lambda_{1},\lambda_{2}<|t-x|/3$ and so, 
$|u+t-(y+x)|\geq |t-x|-(\lambda_{1}+\lambda_{2})/2 \geq |t-x|/2$.

Now we prove the main result. It is well known that all Hilbert-Schmidt operators are compact on $L^{2}(\mathbb R)$ and that, obviously, the fact that the kernel $K$ belongs to $L^{2}(\mathbb R^{2})$ does not imply any pointwise decay. However, if $K$ is a standard Calder\'on-Zygmund and so, it decays pointwise in the direction perpendicular to the diagonal then, 
Proposition \ref{necessityofacompactCZkernel} tells us that 
compactness implies also pointwise decay in the direction parallel to the diagonal.
\begin{proposition}\label{necessityofacompactCZkernel}
Let $T$ be a linear operator associated with a standard Calder\'on-Zygmund kernel $K$ with parameter $0<\delta \leq 1$. Let $1<p<\infty $. If $T$ extends compactly on 
$L^{p}(\mathbb R)$, then $K$ is a compact Calder\'on-Zygmund kernel.
\end{proposition}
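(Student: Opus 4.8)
The plan is to upgrade the standard Calder\'on-Zygmund smoothness estimate into the compact version by showing that, \emph{because} $T$ is compact, the kernel smoothness acquires the extra decay factors $L(|t-x|)$, $S(|t-x|)$ and $D(|t+x|)$. The starting point is the preliminary lemma above: for a point $(t,x)$ off the diagonal and a point $t'$ with $3|t-t'|<|t-x|$, the difference quotient $|t-t'|^{-\delta}|t-x|^{1+\delta}\,|K(t,x)-K(t',x)|$ is, up to an error $\epsilon$ controlled by the dilation parameters $\lambda_i$, equal to the pairing $|t-t'|^{-\delta}|t-x|^{1+\delta}\,\langle T(\mathcal T_t\mathcal D^1_{\lambda_1}\Phi)-T(\mathcal T_{t'}\mathcal D^1_{\lambda_1}\Phi),\mathcal T_x\mathcal D^1_{\lambda_2}\Phi\rangle$. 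So pointwise kernel information is reduced to quantitative information about $T$ acting on normalized bumps that are either very far apart, very eccentric, or very far from the origin.

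Next I would exploit compactness of $T$ on $L^p$ (and, via duality, on $L^{p'}$) to say that $T$ maps the unit ball of $L^p$ into a precompact subset of $L^p$. The key mechanism is that a precompact set is \emph{equismall at infinity and equicontinuous under translations/dilations in an averaged sense}; concretely, if one takes a sequence of normalized bumps $\phi_n$ adapted to intervals $I_n$ whose length, eccentricity against a fixed interval, or distance to the origin degenerates, then $T(\phi_n)\to 0$ weakly, and combined with precompactness of $\{T(\phi_n)\}$ one gets $\|T(\phi_n)\|_{L^p}\to 0$; testing against a comparably normalized $\varphi_n$ then forces the pairing to tend to $0$. Running this argument in each of the three ``directions'' — $|t-x|\to\infty$ (controls the factor $S$), $|t-x|\to 0$ i.e. the bumps become extremely eccentric relative to their separation scale (controls $L$), and $|t+x|\to\infty$ with $|t-x|$ fixed (controls $D$) — produces, for each $\epsilon$, a threshold beyond which the rescaled pairing is $\le \epsilon$. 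One then packages these thresholds into three bounded functions $L,S,D$ on $[0,\infty)$ tending to $0$ at the appropriate endpoints (as in \eqref{limits}), by a standard ``take the sup over the bad region past a threshold and let the threshold move'' construction, absorbing the $\epsilon$ error from the preliminary lemma (which can be made as small as we like by shrinking $\lambda_1,\lambda_2$, since the constraint is only $\lambda_i<(C^{-1}\epsilon)^{1/\delta}|t-t'|$).

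Having obtained the estimate for the one-sided difference $K(t,x)-K(t',x)$ (and symmetrically $K(t,x)-K(t,x')$ by testing in the other variable), I would combine the two via the triangle inequality to get the full two-variable smoothness estimate $|K(t,x)-K(t',x')|\lesssim (|t-t'|+|x-x'|)^\delta|t-x|^{-1-\delta}F(t,x)$ whenever $2(|t-t'|+|x-x'|)<|t-x|$, which is exactly Definition \ref{prodCZ}; here one must check that moving from $(t,x)$ to $(t',x')$ keeps all of $|t-x|$, $|t+x|$ comparable to the original values (true under the hypothesis $2(|t-t'|+|x-x'|)<|t-x|$, possibly after shrinking $F$ by an admissible dilation, cf. Remark \ref{constants}), and that boundedness on compact subsets of $\R^2\setminus\Delta$ is automatic from $K$ being a standard Calder\'on-Zygmund kernel.

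\textbf{Main obstacle.} The routine parts are the bump-function bookkeeping and the admissible-function construction; the genuine difficulty is extracting \emph{quantitative} decay from the \emph{qualitative} statement of compactness. Compactness per se only gives that $P_M^\perp\circ T\to 0$ in operator norm, or equivalently that $T$ maps bounded sets to totally bounded sets, with no rate. The argument must therefore be by contradiction / sequential compactness: assume the desired uniform smallness fails along some sequence of configurations escaping to the boundary of the parameter space, extract the corresponding normalized bumps, show they go to $0$ weakly in $L^p$ (using the explicit form $\mathcal T_t\mathcal D^1_{\lambda}\Phi$ and the kernel representation on disjoint supports, so that $\langle T\phi_n,g\rangle\to 0$ for $g$ in a dense class), invoke precompactness of $\{T\phi_n\}$ to promote weak to strong convergence, and conclude. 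Making each of the three limiting regimes ($|t-x|\to\infty$, eccentricity $\to 0$, $|t+x|\to\infty$) fit this template — in particular verifying the weak convergence $T\phi_n\rightharpoonup 0$ in the eccentric-bumps regime, where the bumps do not simply translate off to infinity — is the technical heart of the proof.
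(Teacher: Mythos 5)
Your overall strategy is the paper's: approximate $K(t,x)-K(t',x)$ by pairings of $T$ against $L^1$-normalized bumps at scale comparable to $|t-t'|$ (your preliminary lemma is the paper's), then exploit compactness along sequences of degenerating configurations, and finally package the resulting smallness into admissible functions. Your mechanism for exploiting compactness differs from the paper's and, in the regime where it applies, is cleaner: you use complete continuity, i.e.\ $f_n\rightharpoonup 0$ in $L^p$ plus compactness gives $\|T(f_n)\|_{L^p}\to 0$, whereas the paper only extracts a Cauchy subsequence $(T(f_{n_k}))_k$ and then kills the crossed term $\langle T(f_{n_l}),g_{n_k}\rangle$ either by boundedness of $T$ on an auxiliary $L^q$ with $q\neq p$ (the cases $|t_n-x_n|\to\infty$ and $|t_n-x_n|\to 0$) or by a second appeal to the kernel representation (the case $|t_n+x_n|\to\infty$). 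Your route avoids that case analysis, and the weak-null claims are easy for the explicit bumps: $\|f_n\|_{\infty}\to 0$, $\|f_n\|_{1}\to 0$, or supports escaping to infinity, respectively.

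There is, however, a genuine gap: you never reduce to the regime $|t-t'|\approx|t-x|$, and without that reduction the bump-pairing step cannot control the full supremum in \eqref{supremum}. The bumps of the preliminary lemma live at scale $\lambda_i\lesssim |t-t'|$, so the only normalization making $(f_n)$ and $(g_n)$ bounded in $L^p\times L^{p'}$ is by $\lambda^{1/p'}$ and $\lambda^{1/p}$, and smallness of $\langle T(f_n),g_n\rangle$ then yields only $|K(t,x)-K(t',x)|=o(1)\,|t-t'|^{-1}$. What must be made small is $\frac{|t-x|^{1+\delta}}{|t-t'|^{\delta}}|K(t,x)-K(t',x)|$, which exceeds $|t-t'|\,|K(t,x)-K(t',x)|$ by the factor $(|t-x|/|t-t'|)^{1+\delta}$; when the near-extremal $t'$ satisfies $|t-t'|\ll|t-x|$ — which occurs inside the regimes $|t-x|\to 0$ (needed for the factor $S$) and $|t-x|\to\infty$ (needed for $L$; note you have these two labels swapped) — this factor is unbounded and your argument gives nothing. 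The missing step is exactly the paper's first reduction: fix $\delta'<\delta$ and observe that the standard smoothness estimate alone gives $C(t,x)\le C(|t-t'|/|t-x|)^{\delta-\delta'}$, which already tends to zero whenever $|t_n-t_n'|/|t_n-x_n|\to 0$; hence only sequences satisfying \eqref{bothequivalent} require compactness, and there the scales match and your weak-to-strong mechanism does close the argument. The price, which must be recorded, is that the conclusion is a compact Calder\'on--Zygmund kernel with the smaller exponent $\delta'$ (still sufficient for Definition \ref{prodCZ}); with the exponent $\delta$ itself the supremum over $t'$ with $|t-t'|\ll|t-x|$ is merely bounded, not small.
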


\proof By symmetry, in order to prove that $K$ is a compact Calder\'on-Zygmund kernel, it suffices to show that there is $0<\delta' <\delta $ such that 
$$
|K(t,x)-K(t',x)|
\lesssim \frac{|t-t'|^{\delta' }}{|t-x|^{1+\delta'}}L(|t-x|)S(|t-t'|)D\Big(1+\frac{|t+x|}{1+|t-x|}\Big)
$$
when $2|t-t'|<|t-x|$.
% and 
%$$
%|K(t,x)-K(t,x')|
%\lesssim \frac{|x-x'|^{\delta' }}{|t-x|^{1+\delta'}}L(|t-x|)S(|x-x'|)D\Big(1+\frac{|t+x|}{1+|t-x|}\Big)
%$$
%when $2|x-x'|<|t-x|$. By symmetry, it is enough to check the first condition. 
Actually, we are going to prove that 
the expression 
\begin{equation}\label{supremum}
\sup_{\tiny \begin{array}{c}t'\in \mathbb R \\ 0<2|t-t'|<|t-x|\end{array}}\hspace{-.5cm}
\frac{|t-x|^{1+\delta'}}{|t-t'|^{\delta' }}|K(t,x)-K(t',x)|
\end{equation}
tends to zero when $|t-x|$ tends to infinity or when $|t-t'|$ tends to zero or when $|t+x|$ tends to infinity while 
$|t-x|$ is bounded above and below. 
For every $0<\epsilon <1$ there exists $t'=t'(t,x,\epsilon )$ such that $2|t-t'|<|t-x|$ and the expression in \eqref{supremum} is bounded by
$$
\frac{|t-x|^{1+\delta'}}{|t-t'|^{\delta' }}|K(t,x)-K(t',x)|+\epsilon
$$
Then, we can always assume that $t'$ depends on $t, x$ and we just need to prove that 
\begin{equation}\label{nonsupremum}
C(t,x)=\frac{|t-x|^{1+\delta'}}{|t-t'|^{\delta' }}|K(t,x)-K(t',x)|
\end{equation}
tends to zero in the three stated cases for any $0<\delta'<\delta$. We note that we can assume, 
without loss of generality, that $t'\neq t$ even if the supremum is attained when $t'=t$.
We also note that 
$C(t,x)$ and the function defined by \eqref{supremum} are bounded functions.

%With these inequalities at our disposal, we now 
%start the proof of the limit properties of $C(t,x)$.
Since $K$ is a standard Calder\'on-Zygmund kernel with constant $C>0$ and parameter $0<\delta\leq 1$, 
we have
$$
C(t,x)
\leq C\frac{|t-t'|^{\delta-\delta' }}{|t-x|^{\delta-\delta'}}
%L(|t-x|)S(|t-t'|)
$$
for $2|t-t'|<|t-x|$. 

Therefore, since $\delta'<\delta $, we trivially get $\lim_{n\rightarrow \infty }C(t_{n}, x_{n})=0$
%where $L(x)=|x|^{-(\delta -\delta')}$ and $S(x)=|x|^{\delta -\delta'}$. 
for every sequence $(t_{n},x_{n})_{n\in \mathbb N}$ and $t_{n}'=t'(t_{n},x_{n})$ as described before, such that 
\begin{equation}\label{quotient}
\lim_{n\rightarrow \infty }\frac{|t_{n}-t_{n}'|}{|t_{n}-x_{n}|}=0
\end{equation}
and one of the following limits hold: 
\begin{enumerate}
\item$\lim_{n\rightarrow \infty }|t_{n}-x_{n}|=\infty$,
\item or 
$\lim_{n\rightarrow \infty }|t_{n}-t_{n}'|=0$,
\item or $\lim_{n\rightarrow \infty }|t_{n}+x_{n}|=\infty $ with $d\leq |t_{n}-x_{n}|\leq 2d$ for some $d>0$
\end{enumerate} 
Hence, we only need to calculate the limit $\lim_{n\rightarrow \infty }C(t_{n}, x_{n})$
when (\ref{quotient}) does not hold. In this case we can assume, taking a subsequence of $(t_{n},x_{n})_{n\in \mathbb N}$ if necessary, that 
there is $\tilde{C}>2$ so that 
\begin{equation}\label{bothequivalent}
\tilde{C}^{-1}|t_{n}-x_{n}|\leq |t_{n}-t_{n}'|\leq |t_{n}-x_{n}|/2
\end{equation}
for all $n\in \mathbb N$. 
In this situation, 
the second limit $\lim_{n\rightarrow \infty }|t_{n}-t_{n}'|=0$ can be changed without loss of generality by 
$\lim_{n\rightarrow \infty }|t_{n}-x_{n}|=0$. 

Let $(t_{n},x_{n})_{n\in \mathbb N}$ and $t_{n}'=t'(t_{n},x_{n})$ satisfying \eqref{bothequivalent} and one of the three previous limits. 
Then, 
$$
C(t_{n},x_{n})\leq \tilde{C}^{\delta'}|t_{n}-x_{n}||K(t_{n},x_{n})-K(t_{n}',x_{n})|
$$
Let $0<\epsilon <C/2^{\delta }$ be fixed and arbitrarily small. We fix
a sequence $(\lambda_{n})_{n\in \mathbb N}$ so that 
$(C^{-1}\epsilon)^{1/\delta}|t_{n}-t_{n}'|/2\leq \lambda_{n}\leq (C^{-1}\epsilon)^{1/\delta}|t_{n}-t_{n}'|$. 
%We notice that in the region $2|t-t'|<|t-x|$, the function $L(|t-x|)S(|t-t'|)$ is bounded.
By \eqref{approximation'} and the second inequality in \eqref{bothequivalent}, we get
\begin{align*}
C(t_{n},x_{n})\leq \tilde{C}^{\delta'}|t_{n}-x_{n}|
\Big|\int \int {\mathcal T}_{t_{n}}{\mathcal D}_{\lambda_{n} }^{1}\Phi(u)
{\mathcal T}_{x_{n}}{\mathcal D}_{\lambda_{n} }^{1}\Phi(y)K(u,y)dudy
\\
-\int \int {\mathcal T}_{t_{n}'}{\mathcal D}_{\lambda_{n} }^{1}\Phi(u)
{\mathcal T}_{x_{n}}{\mathcal D}_{\lambda_{n} }^{1}\Phi(y)K(u,y)dudy \Big|
+2\tilde{C}^{\delta'}\epsilon 2^{-\delta}
\end{align*}
Moreover, $|u-y|\geq |t_{n}-x_{n}|-|u-t_{n}|-|y-x_{n}|\geq |t_{n}-x_{n}|-\lambda_{n} \geq |t_{n}-x_{n}|/2>0$ and similarly in the second integral changing $t_{n}$ by $t_{n}'$. Then, 
${\mathcal T}_{t_{n}}{\mathcal D}_{\lambda_{n} }^{1}\Phi$ and
${\mathcal T}_{x_{n}}{\mathcal D}_{\lambda_{n} }^{1}\Phi$ have disjoint support and, 
by the integral representation, we can write
\begin{align*}
C(t_{n},x_{n})\leq \tilde{C}^{\delta'}|t_{n}-x_{n}|
|\langle T({\mathcal T}_{t_{n}}{\mathcal D}_{\lambda_{n} }^{1}\Phi)-T({\mathcal T}_{t_{n}'}{\mathcal D}_{\lambda_{n} }^{1}\Phi),
{\mathcal T}_{x_{n}}{\mathcal D}_{\lambda_{n} }^{1}\Phi\rangle |+2\tilde{C}^{\delta'}\epsilon 
%\\
%\leq \frac{|t-x|^{1+\delta }}{|t-t'|^{\delta }}
%\| T({\mathcal T}_{t}{\mathcal D}_{\lambda }^{1}\Phi-{\mathcal T}_{t'}{\mathcal D}_{\lambda }^{1}\Phi)\|_{p}
%\| {\mathcal T}_{x}{\mathcal D}_{\lambda }^{1}\Phi\|_{p'}+3\epsilon
\end{align*}

We remind that $T$ is compact on 
$L^{p}(\mathbb R)$ with $1<p<\infty $. Then, 
let $f_{n}=|t_{n}-x_{n}|^{1/p'}({\mathcal T}_{t_{n}}{\mathcal D}_{\lambda_{n}}^{1}\Phi-{\mathcal T}_{t_{n}'}{\mathcal D}_{\lambda_{n}}^{1}\Phi)$ and
$g_{n}=|t_{n}-x_{n}|^{1/p}{\mathcal T}_{x_{n}}{\mathcal D}_{\lambda_{n}}^{1}\Phi$
for every $n\in \mathbb N$.
This way, 
$$
C(t_{n},x_{n})
\leq \tilde{C}^{\delta' }
|\langle T(f_{n}),g_{n}\rangle |+2\tilde{C}^{\delta'}\epsilon 
$$

%We fix $\epsilon_{1}>0$ and take $\eta=\min(\epsilon_{1} C^{-1}d^{1+\delta })^{1/\delta},d/2)$. Let now 
%$0<\lambda <\min(\eta, 1/4)$ and $p'>\log\lambda^{-1}>2$. 

%Let $1<\log\lambda_{n}^{-1}<p'_{n}$ with $p'_{n}\leq 2\log\lambda_{n}^{-1}$ if $\lambda_{n}$ does not tend to %infinity and $p_{n}=2$ otherwise.
By \eqref{bothequivalent} and the choice of $\lambda_{n}$, we have
%${\mathcal T}_{t_{n}}{\mathcal D}_{\lambda_{n} }^{1}\Phi$ and ${\mathcal T}_{t_{n}'}{\mathcal D}_{\lambda_{n} }^{1}\Phi$ have also disjoint support, we have 
\begin{align*}
\| f_{n}\|_{L^{p}(\mathbb R)}&\leq |t_{n}-x_{n}|^{\frac{1}{p'}}2\lambda_{n}^{-\frac{1}{p'}}
\\
&\leq |t_{n}-x_{n}|^{\frac{1}{p'}}2((C^{-1}\epsilon)^{\frac{1}{\delta}} |t_{n}-t_{n}'|/2)^{-\frac{1}{p'}}
\\
&\leq |t_{n}-x_{n}|^{\frac{1}{p'}}2^{1+\frac{1}{p'}}(C\epsilon^{-1})^{\frac{1}{p'\delta}}
(\tilde{C}^{-1} |t_{n}-x_{n}|)^{-\frac{1}{p'}}
\\
&= \tilde{C}^{\frac{1}{p'}}2^{1+\frac{1}{p'}}(C\epsilon^{-1})^{\frac{1}{p'\delta}}
\end{align*}
%\begin{align*}
%\| f_{n}\|_{p}&\leq |t_{n}-x_{n}|^{\frac{1}{p'}}2\lambda_{n}^{-\frac{1}{p'}}
%\\
%&\leq (\tilde{C}|t_{n}-t_{n}'|)^{\frac{1}{p'}}2((C^{-1}\epsilon)^{1/\delta} |t_{n}-t_{n}'|/2)^{-\frac{1}{p'}}
%\\
%&\leq \tilde{C}^{\frac{1}{p'}}2^{1+\frac{1}{p'}}(C\epsilon^{-1})^{\frac{1}{p'\delta}}
%\end{align*}
Hence, $(f_{n})_{n\in \mathbb N}$ is a bounded sequence and, by compactness of $T$ on 
$L^{p}(\mathbb R)$, there is a convergent subsequence $(T(f_{n_{k}}))_{k\in \mathbb N}$. 
Then, for 
%$0<\tilde{\epsilon }<\min(\lambda^{2/p},C_{p}^{-4},\epsilon^{4})$ 
$0<\tilde{\epsilon}<\epsilon^{1+\frac{1}{p\delta }}\tilde{C}^{-(\delta' +\frac{1}{p})}2^{-\frac{1}{p}}
%C_{p}^{-1}
C^{-\frac{1}{p\delta}}$, 
there is $k_{\epsilon }\in \mathbb N$ such that 
for all $k,l>k_{\epsilon }$, we have 
\begin{equation*}\label{uscomp}
\| T(f_{n_{k}})-T(f_{n_{l}})\|_{L^{p}(\mathbb R)}
<\tilde{\epsilon } 
%C_{p}
%\| f_{n_{k}}\|_{p}\leq \epsilon_{2} C_{p}\lambda^{-1/p'}
\end{equation*}
%where $C_{p}\approx \max(p-1,(p-1)^{-1})$. 
%is a constant depending only on $p$ such that 
%$\lim_{p\rightarrow 1}C_{p}=\lim_{p\rightarrow \infty }C_{p}=\infty $. 
With this, we write
\begin{align*}
%\label{cseq}
\nonumber
&C(t_{n_{k}},x_{n_{k}})
\leq \tilde{C}^{\delta' }
|\langle T(f_{n_{k}}),g_{n_{k}}\rangle |+2\tilde{C}^{\delta'}\epsilon
\\
&\leq \tilde{C}^{\delta' }
(|\langle T(f_{n_{k}})-T(f_{n_{l}}),g_{n_{k}}\rangle |+|\langle T(f_{n_{l}}),g_{n_{k}}\rangle |)
+2\tilde{C}^{\delta'}\epsilon
\end{align*}
for all $k,l>k_{\epsilon }$.
% to be chosen later.  
Then, the first term 
%in \eqref{cseq} 
can be bounded by
\begin{align*}
\tilde{C}^{\delta' }\| T(f_{n_{k}})-T(f_{n_{l}})\|_{L^{p}(\mathbb R)}\| g_{n_{k}}\|_{L^{p'}(\mathbb R)}
&\leq  \tilde{C}^{\delta' }\tilde{\epsilon}
%C_{p}
|x_{n_{k}}-t_{n_{k}}|^{\frac{1}{p}}\lambda_{n_{k}}^{-\frac{1}{p}}
\\
&\leq  \tilde{C}^{\delta' }\tilde{\epsilon}
%C_{p}
\tilde{C}^{\frac{1}{p}}2^{\frac{1}{p}}(C\epsilon^{-1})^{\frac{1}{p\delta}}
<\epsilon
%|x_{n_{k}}-t_{n_{k}}|\tilde{\epsilon}C_{p}\lambda_{n_{k}}^{-1}\lambda^{1/p'}
%\lambda^{1/p'}
%\\
%\leq  2\Big(\frac{|x_{n_{k}}-t_{n_{k}}|}{|t_{n_{k}}-t_{n_{k}}'|}\Big)^{1+\delta }\epsilon_{2}C_{p}
%\epsilon_{1}^{-1}\lambda^{1/p'}
%\leq 2\tilde{C}^{1+\delta }\epsilon_{2}C_{p}\epsilon_{1}^{-1}
%<\epsilon_{2}\epsilon_{2}^{-\frac{1}{4}}\epsilon_{2}^{-\frac{1}{2}}<\epsilon_{1}
\end{align*}
by the choice of $\tilde{\epsilon }$. 

On the other hand, we rewrite the second term 
%of \eqref{cseq} 
as
\begin{align}\label{secterm}
\nonumber
\tilde{C}^{\delta' }|t_{n_{l}}-x_{n_{l}}|^{\frac{1}{p'}}&|t_{n_{k}}-x_{n_{k}}|^{\frac{1}{p}}
\\
|\langle T&({\mathcal T}_{t_{n_{l}}}{\mathcal D}_{\lambda_{n_{l}} }^{1}\Phi)-T({\mathcal T}_{t_{n_{l}}'}{\mathcal D}_{\lambda_{n_{l}} }^{1}\Phi),
{\mathcal T}_{x_{n_{k}}}{\mathcal D}_{\lambda_{n_{k}} }^{1}\Phi\rangle |
%\\
%\leq \tilde{C}^{\delta' }|t_{n_{k}}-x_{n_{k}}|\Big(&
%|\langle T({\mathcal T}_{t_{n_{l}}}{\mathcal D}_{\lambda_{n_{l}} }^{1}\Phi),
%{\mathcal T}_{x_{n_{k}}}{\mathcal D}_{\lambda_{n_{k}}}^{1}\Phi\rangle -K(t_{n_{l}},x_{n_{k}})|
%\\
%\nonumber
%&+|\langle T({\mathcal T}_{t_{n_{l}}'}{\mathcal D}_{\lambda_{n_{l}} }^{1}\Phi),
%{\mathcal T}_{x_{n_{k}}}{\mathcal D}_{\lambda_{n_{k}} }^{1}\Phi\rangle -K(t_{n_{l}}',x_{n_{k}})|
%\\
%\nonumber
%&+|K(t_{n_{l}},x_{n_{k}})-K(t_{n_{l}}',x_{n_{k}})|\Big)
\end{align}
and we bound this expression
in different ways accordingly with the properties of the sequence 
$(t_{n},x_{n})_{n\in \mathbb N}$. But first, we remind that since $T$ is compact on $L^{p}(\mathbb R)$, it is a Calder\'on-Zygmund operator bounded 
on $L^{p}(\mathbb R)$ and, therefore, by the classical theory, $T$ is bounded on $L^{q}(\mathbb R)$
for any $1<q<\infty$. 

1) We first consider the case when $(t_{n},x_{n})_{n\in \mathbb N}$ satisfies \eqref{bothequivalent} and 
$\lim_{n\rightarrow \infty }|t_{n}-x_{n}|=\infty $. 
%which, as explained before, implies also  $\lim_{n\rightarrow \infty }|t_{n}-t_{n}'|=\infty $. 

We choose $q>p$. Then, by denoting $C_{T,q}$ the constant of boundedness of $T$ on $L^{q}(\mathbb R)$, we can bound the last factor in \eqref{secterm} by 
\begin{align*}
C_{T,q}(\| {\mathcal T}_{t_{n_{l}}}&{\mathcal D}_{\lambda_{n_{l}} }^{1}\Phi\|_{L^{q}(\mathbb R)}+
\| {\mathcal T}_{t_{n_{l}}'}{\mathcal D}_{\lambda_{n_{l}} }^{1}\Phi\|_{L^{q}(\mathbb R)})
\| {\mathcal T}_{x_{n_{k}}}{\mathcal D}_{\lambda_{n_{k}} }^{1}\Phi\|_{L^{q'}(\mathbb R)}
\\
&
\leq  2C_{T,q}
\lambda_{n_{l}}^{-\frac{1}{q'}}\lambda_{n_{k}}^{-\frac{1}{q}}
\\
&\leq 2C_{T,q}
((C^{-1}\epsilon)^{\frac{1}{\delta }}|t_{n_{l}}-t_{n_{l}}'|/2)^{-\frac{1}{q'}}
((C^{-1}\epsilon)^{\frac{1}{\delta }}|t_{n_{k}}-t_{n_{k}}'|/2)^{-\frac{1}{q}}
\\
&\leq C_{T,q}(C\epsilon^{-1})^{\frac{1}{\delta }}
(\tilde{C}^{-1}|t_{n_{l}}-x_{n_{l}}|)^{-\frac{1}{q'}}
(\tilde{C}^{-1}|t_{n_{k}}-x_{n_{k}}|)^{-\frac{1}{q}}
\end{align*}
This way, \eqref{secterm} is bounded by
$$
\tilde{C}^{1+\delta' }C_{T,q}(C\epsilon^{-1})^{\frac{1}{\delta }}
\frac{1}{|t_{n_{l}}-x_{n_{l}}|^{\frac{1}{q'}-\frac{1}{p'}}}|t_{n_{k}}-x_{n_{k}}|^{\frac{1}{p}-\frac{1}{q}}
$$

Now, since $\frac{1}{q'}-\frac{1}{p'}\geq 0$, we fix $k>k_{\epsilon }$ and choose $l>k$ such that
$|t_{n_{l}}-x_{n_{l}}|^{\frac{1}{q'}-\frac{1}{p'}}>\epsilon^{-1}\tilde{C}^{1+\delta' }C_{T,q}(C\epsilon^{-1})^{\frac{1}{\delta }}|t_{n_{k}}-x_{n_{k}}|^{\frac{1}{p}-\frac{1}{q}}$. This implies that last expression and \eqref{secterm} are bounded by $\epsilon $.

2) The second case we study is when $(t_{n},x_{n})_{n\in \mathbb N}$ satisfies \eqref{bothequivalent} and 
$\lim_{n\rightarrow \infty }|t_{n}-x_{n}|=0$. 
This time, we choose $q<p$. As before, by boundedness of $T$ on $L^{q}(\mathbb R)$, we can bound \eqref{secterm} by
$$
\tilde{C}^{1+\delta' }C_{T,q}(C\epsilon^{-1})^{\frac{1}{\delta }}
|t_{n_{l}}-x_{n_{l}}|^{\frac{1}{p'}-\frac{1}{q'}}\frac{1}{|t_{n_{k}}-x_{n_{k}}|^{\frac{1}{q}-\frac{1}{p}}}
$$

Now, since $\frac{1}{p'}-\frac{1}{q'}\geq 0$, we fix $k>k_{\epsilon }$ and choose $l>k$ such that
$|t_{n_{l}}-x_{n_{l}}|^{\frac{1}{p'}-\frac{1}{q'}}<\epsilon (\tilde{C}^{1+\delta' }C_{T,q}(C\epsilon^{-1})^{\frac{1}{\delta }}|t_{n_{k}}-x_{n_{k}}|^{-(\frac{1}{q}-\frac{1}{p})})^{-1}$. This implies than \eqref{secterm} is 
again bounded by $\epsilon $.

3) Finally, we consider the case when $(t_{n},x_{n})_{n\in \mathbb N}$ satisfies  \eqref{bothequivalent} and
$\lim_{n\rightarrow \infty }|t_{n}+x_{n}|=\infty $ with
$d\leq |t_{n}-x_{n}|\leq 2d$ for some $d>0$.
%in addition to $\tilde{C}^{-1}|t_{n}-x_{n}|\leq  |t_{n}-t_{n}'|\leq |t_{n}-x_{n}|/2$. 
These hypotheses imply 
that $\lim_{n\rightarrow \infty }|t_{n}|=
%\lim_{n\rightarrow \infty }|t_{n}'|=
\infty $ and so,
$\lim_{l\rightarrow \infty }|t_{n_{l}}-x_{n_{k}}|=
%\lim_{l\rightarrow \infty }|t_{n_{l}}'-x_{n_{k}}|=
\infty $ for any fixed $n_{k}$. 

Now, we fix $k>k_{\epsilon}$ and choose $l>k$ such that 
%$|t_{n_{l}}|>|x_{n_{k}}|$, 
$3d< |t_{n_{l}}-x_{n_{k}}|$
and $\tilde{C}^{\delta' }C2d^{1+\delta }
|t_{n_{l}}-x_{n_{k}}|^{-1}<\epsilon$. 
The first condition implies that $3|t_{n_{l}}-t_{n_{l}}'|<3|t_{n_{l}}-x_{n_{l}}|/2<3d<|t_{n_{l}}-x_{n_{k}}|$. 

On the other hand, we note that all previous reasoning holds with the choice 
$\tilde{C}^{-1}(C^{-1}\epsilon)^{1/\delta }|t_{n_{l}}-t'_{n_{l}}|/2\leq \lambda_{n}\leq \tilde{C}^{-1}(C^{-1}\epsilon)^{1/\delta }|t_{n_{l}}-t'_{n_{l}}|$. 
Notice that the bounds are strictly smaller. This way, we obviously have 
$
\lambda_{n_{l}}\leq (C^{-1}\epsilon)^{1/\delta }|t_{n_{l}}-t'_{n_{l}}|
$
but also
\begin{align*}
\lambda_{n_{k}}&\leq \tilde{C}^{-1}(C^{-1}\epsilon)^{1/\delta }|t_{n_{k}}-t'_{n_{k}}|
\leq \tilde{C}^{-1}(C^{-1}\epsilon)^{1/\delta }|t_{n_{k}}-x_{n_{k}}|/2
\\
&\leq \tilde{C}^{-1}(C^{-1}\epsilon)^{1/\delta }d
\leq \tilde{C}^{-1}(C^{-1}\epsilon)^{1/\delta }|t_{n_{l}}-x_{n_{l}}|
\\
&\leq (C^{-1}\epsilon)^{1/\delta }|t_{n_{l}}-t'_{n_{l}}|
\end{align*}
applying \eqref{bothequivalent} in the last inequality. Then, 
%since the extra factor $\tilde{C}$ can be controlled by an small enough $\epsilon $, 
we can apply \eqref{approximation'2} and so, bound \eqref{secterm} by a constant times
$$
%\tilde{C}^{\delta }2d\frac{|t_{n_{l}}-t_{n_{l}}'|^{\delta }}{|t_{n_{l}}-x_{n_{k}}|^{1+\delta }}\epsilon_{1}\leq 
\tilde{C}^{\delta' }|t_{n_{l}}-x_{n_{l}}|^{\frac{1}{p'}}|t_{n_{k}}-x_{n_{k}}|^{\frac{1}{p}}\frac{C|t_{n_{l}}-t_{n_{l}}'|^{\delta }}{|t_{n_{l}}-x_{n_{k}}|^{1+\delta}}
\leq \tilde{C}^{\delta' }2d\frac{Cd^{\delta}}{|t_{n_{l}}-x_{n_{k}}|^{1+\delta}}
\leq \epsilon
$$
by the choice of $l$. Notice that in the second last inequality we have used that 
$|t_{n_{l}}-t_{n_{l}}'|<|t_{n_{l}}-x_{n_{l}}|/2\leq d$.
\vskip5pt
We end this subsection proving that in the classical setting, the smoothness of the kernel and the weak boundedness condition
implies the off-diagonal decay of the kernel. We briefly remind the very well known definitions:

\begin{definition}A function $K:(\R^{2} \setminus \Delta )\to \mathbb C$ satisfies the smoothness condition of a 
standard Calder\'on-Zygmund kernel if 
%$
%\displaystyle{\lim_{|t-x|\to\infty} K(t,x) = 0},
%$ 
%for all $(t,x)\in  \R^{2} \setminus \Delta $
%we have 
%$$
%\lim_{|\lambda |\rightarrow \infty }|K(x+\lambda ,t-\lambda)|=0
%$$
%and 
for some $0<\delta \leq 1$ and $C>0$, we have
%$$
%\begin{array}{l}
%|K(t,x)|\le C {\displaystyle \frac{1}{|t-x|}} \\
\begin{equation}\label{defsmoothCZ}
|K(t,x)-K(t',x')|
\le 
%\frac{1}{(|x|+|t|)^{\theta}} 
C\frac{(|t-t'|+|x-x'|)^\delta}{|t-x|^{1+\delta}}
\end{equation}
whenever $2(|t-t'|+|x-x'|)<|t-x|$. And $K$ satisfies the decay condition of a 
standard Calder\'on-Zygmund kernel if there is some $C>0$ such that 
\begin{equation}\label{defdecayCZ}
|K(t,x)|\le C {\displaystyle \frac{1}{|t-x|}} \
\end{equation} 

Finally, a linear operator $T : \S_{N}(\R) \to \S_{N}'(\R)$ satisfies the weak boundedness condition if 
for any interval $I$ and every pair $\phi_I, \varphi_I$ of
$L^2$-normalized bump functions adapted to $I$ with constant $C>0$ and order $N$, we have
%\begin{equation}\label{weakcompactness}
$$
|\langle T(\phi_I),\varphi_I)\rangle |\lesssim C
$$
where the implicit constant only depends on the operator $T$.

\end{definition}

\begin{lemma}\label{smoothimpliesdecay2}
Let $K$ satisfying the smoothness condition (\ref{defsmoothCZ}) and such that the operator $T$ associated with $K$ is weakly bounded. Then, $K$ satisfies the decay condition (\ref{defdecayCZ}). 
\end{lemma}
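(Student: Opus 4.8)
The plan is to run the same telescoping argument as in Lemma \ref{smoothimpliesdecay1}, but to observe that the role played there by the hypothesis $\lim_{|t-x|\to\infty}K(t,x)=0$ is now supplied by the weak boundedness condition. First I would fix $(t,x)\in\R^2\setminus\Delta$ and, exactly as before, use the smoothness condition (\ref{defsmoothCZ}) on the $l^1$-ball $B_{t,x}$ of radius $|t-x|/2$ to get
$$
|K(t,x)|\le \inf_{(t',x')\in B_{t,x}}|K(t',x')| + \frac{C}{|t-x|}.
$$
Iterating along the same sequence $(t_n,x_n)$ — which moves away from the diagonal geometrically, $|t_n-x_n|=((3-\epsilon)/2)^n|t-x|$, while keeping $|t_n+x_n|=|t+x|$ fixed — gives, after summing the convergent geometric series,
$$
|K(t,x)|\le \limsup_{n\to\infty}|K(t_n,x_n)| + \frac{C'}{|t-x|}.
$$
So everything reduces to showing $\limsup_{n\to\infty}|K(t_n,x_n)|=0$, i.e.\ that $K$ is bounded near a ``point at infinity'' perpendicular to the diagonal and in fact vanishes there.

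To control $|K(s,y)|$ for a point with $|s-y|$ large, I would test the associated operator against two $L^2$-normalized bump functions adapted to a small interval. Concretely, let $\Phi$ be a fixed smooth bump $L^\infty$-adapted to $[-1/2,1/2]$ with unit integral, pick a scale $\lambda$ comparable to a small fixed multiple of $|s-y|$ (say $\lambda = |s-y|/8$), and set $\phi = {\mathcal T}_s{\mathcal D}_\lambda^1\Phi$, $\varphi={\mathcal T}_y{\mathcal D}_\lambda^1\Phi$; after $L^2$-renormalization these are bump functions adapted to an interval $I$ of length $\sim|s-y|$ with a universal constant, so weak boundedness gives $|\langle T(\phi),\varphi\rangle|\lesssim |I|^{-1}\sim |s-y|^{-1}$. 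Since the supports of $\phi$ and $\varphi$ are disjoint (they sit in intervals of length $\lambda$ around $s$ and $y$, at distance $\ge |s-y|-\lambda>0$), the integral representation of Definition \ref{intrep} applies, and by the computation already carried out in the first lemma of this subsection (the estimate (\ref{approximation'}), valid for classical Calder\'on-Zygmund kernels with $L=S=D\equiv 1$),
$$
\Big|\langle T(\phi),\varphi\rangle - K(s,y)\Big|\le C\frac{\lambda^\delta}{|s-y|^{1+\delta}}\lesssim \frac{C}{|s-y|}.
$$
Combining the two displays yields $|K(s,y)|\lesssim 1/|s-y|$ for all $(s,y)$ with $|s-y|$ large, which is already the desired decay (\ref{defdecayCZ}) in that regime, and in particular $|K(t_n,x_n)|\lesssim ((3-\epsilon)/2)^{-n}|t-x|^{-1}\to 0$. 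Feeding this back into the telescoping bound closes the argument, giving $|K(t,x)|\le C/|t-x|$ for all $(t,x)\notin\Delta$.

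The only delicate point is bookkeeping with the normalization constants of the bump functions and the scale $\lambda$: one must choose $\lambda$ a small enough fixed fraction of $|s-y|$ so that (i) the supports stay disjoint with room to spare (so the smoothness estimate in (\ref{approximation'}) is usable), and (ii) the $L^2$-renormalized functions $\lambda^{1/2}\phi$, $\lambda^{1/2}\varphi$ are genuinely adapted to an interval of length comparable to $|s-y|$ with a constant independent of $(s,y)$, so that the implicit constant in the weak boundedness estimate does not blow up. With $\lambda=|s-y|/8$ both are immediate, and there is no real obstacle beyond this routine verification; the argument is, as the authors note, insensitive to whether $K$ is a compact or a classical Calder\'on-Zygmund kernel.
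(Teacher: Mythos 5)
Your proposal is correct, and its core step --- testing $T$ against the $L^2$-renormalized bumps $\lambda^{1/2}\phi$, $\lambda^{1/2}\varphi$ at scale $\lambda\sim|t-x|$ via weak boundedness, then comparing $\langle T(\phi),\varphi\rangle$ with $K(t,x)$ through the smoothness condition --- is exactly the paper's proof (which takes $\lambda=|t-x|/2$ and $I=[\min(t,x),\max(t,x)]$). The telescoping reduction borrowed from Lemma \ref{smoothimpliesdecay1} is redundant: since the weak boundedness estimate holds for intervals of every length, your second argument already yields $|K(t,x)|\le C|t-x|^{-1}$ at all scales, not only for $|t-x|$ large.
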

%\begin{remark}
%We notice that the thesis of this lemma obviously implies that the limit $\lim_{|t-x|\rightarrow \infty}K(t,x)=0$
%holds. 
%\end{remark}
\proof As in the previous result, let $\Phi$ be a positive smooth function such that it is 
supported and $L^{\infty}$-adapted to $[-1/2,1/2]$ and satisfies $\int \Phi (x)dx=1$.

%Let $(t_{n},x_{n})_{n\in \mathbb N}$ be a sequence in $R\backslash \{ (x,x): x\in \mathbb R\}$ such that 
%$\lim_{n\rightarrow \infty }|t_{n}-x_{n}|=\infty $. 

For every $(t,x)\in \mathbb R^{2}\backslash \Delta $, let 
$f={\mathcal T}_{t}{\mathcal D}_{\lambda }^{1}\Phi$ and 
$g={\mathcal T}_{x}{\mathcal D}_{\lambda }^{1}\Phi$ with 
$\lambda =|t-x|/2$. We denote the interval $I=[\min(t,x),\max(t,x)]$. Since $\lambda^{1/2}f$ and $\lambda^{1/2}g$ are 
both $L^{2}$-adapted to $I$ with the same constant and order, by the weak boundedness condition we have
$$
|\langle T(f),g\rangle|\leq C\lambda^{-1}=C|t-x|^{-1}
$$

On the other hand, since $2(|u|+|y|)<2\lambda=|t-x|$
and $K$ satisfies the smoothness condition, we get
%for every $(t,x)\in \mathbb R\backslash \{ (x,x): x\in \mathbb R\}$, with $|t-x|>d$ and every 
%$0<\lambda <d/2$, we have 
$$
|\langle T(f),g\rangle -K(t,x)|\leq 
\Big|\int \int {\mathcal D}_{\lambda}^{1}\Phi(u){\mathcal D}_{\lambda}^{1}\Phi(y)(K(u+t,y+x)-K(t,x))dudy\Big|
$$
$$
\leq C\int \int {\mathcal D}_{\lambda }^{1}\Phi(u){\mathcal D}_{\lambda}^{1}\Phi(y)
\frac{(|u|+|y|)^{\delta }}{|t-x|^{1+\delta}}dudy
%\leq \frac{(2|I|)^{\delta }}{|t-x|^{1+\delta}}
\leq C\lambda^{\delta }|t-x|^{-(1+\delta )}\leq C|t-x|^{-1}
$$
%and $C$ the constant of the standard Calder\'on-Zygmund kernel $K$.
Therefore, 
$$
|K(t,x)|\leq C|t-x|^{-1}
$$

\subsection{Weak compactness condition} We prove now that every linear operator compact on $L^p(\mathbb R)$ and bounded on $L^{p'}(\mathbb R)$, with $1 < p < \infty$ and $\frac{1}{p} + \frac{1}{p'}=1$, satisfies 
the weak compactness condition. 
%(\ref{weakcompactness}).

\begin{proposition}\label{necessityofweakcompactness} Let $1<p<\infty $. 
Let $T$ be a bounded operator on $L^{p}(\mathbb R)$ and on $L^{p'}(\mathbb R)$. 
%such that $T^{*}$ also defines a bounded operator on $L^{p}(\mathbb R)$. 
Then, 
for every $M\in \mathbb N$ and every $N\in \mathbb N$, $T$ satisfies that 
for all intervals $I\subset \mathbb R$ and all bump functions $\phi_I,\varphi_I$ $L^{2}$-adapted to $I$
%of order $N$ large enough (depending on $M$) 
with constant $C>0$ and order $N$, we have
\begin{align*}
|\langle T(\phi_I),\varphi_I\rangle |
&\leq  C_{p,T}C \Big(1+\frac{|I|}{2^{M}}\Big)^{-\alpha}\Big(1+\frac{2^{-M}}{|I|}\Big)^{-\alpha}
%\frac{2^{M\alpha }}{(2^{M}+|I|)^{\alpha}}\frac{|I|^{\beta}}{(|I|+2^{-M})^{\beta}}
\Big(1+\frac{\rdist(I,\mathbb B_{2^{M}})}{M}\Big)^{-N}\\
&+C\| P_{M}^{\perp}\circ T\|
\end{align*}
with $\alpha=|1/2-1/p|+1/2$.
% and $\beta=|1/2-1/p|+1/2$. 
\end{proposition}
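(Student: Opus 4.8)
The plan is to split $T=P_M\circ T+P_M^{\perp}\circ T$, where $P_M$ is the lagom projection associated to a smooth, compactly supported wavelet basis $(\psi_J)_{J\in{\mathcal D}}$ of $L^p(\R)$; recall that ${\cal D}_M$ is a \emph{finite} index set, so $P_M$ has finite rank. Then
$$\langle T\phi_I,\varphi_I\rangle=\langle (P_M\circ T)\phi_I,\varphi_I\rangle+\langle (P_M^{\perp}\circ T)\phi_I,\varphi_I\rangle,$$
and the second term is disposed of at once: for $L^2$-normalized bumps one has $\|\phi_I\|_{L^p}\lesssim C|I|^{1/p-1/2}$ and $\|\varphi_I\|_{L^{p'}}\lesssim C|I|^{1/p'-1/2}$, so by H\"older's inequality
$$|\langle (P_M^{\perp}\circ T)\phi_I,\varphi_I\rangle|\le \|P_M^{\perp}\circ T\|\,\|\phi_I\|_{L^p}\|\varphi_I\|_{L^{p'}}\lesssim C\,\|P_M^{\perp}\circ T\|,$$
using $1/p-1/2+1/p'-1/2=0$; this is the last term of the asserted bound. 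It remains to prove that $|\langle (P_M\circ T)\phi_I,\varphi_I\rangle|\lesssim C\,G(I;M)$, where I abbreviate $G(I;M)=(1+2^{-M}|I|)^{-\alpha}(1+(2^M|I|)^{-1})^{-\alpha}(1+M^{-1}\rdist(I,\mathbb B_{2^M}))^{-N}$ and the implicit constant is allowed to depend on $p$ and on $\max(\|T\|_{L^p\to L^p},\|T\|_{L^{p'}\to L^{p'}})$.

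Since $P_M$ is self-adjoint and $P_M\varphi_I=\sum_{J\in{\cal D}_M}\langle\varphi_I,\psi_J\rangle\psi_J$ is a finite sum,
$$|\langle (P_M\circ T)\phi_I,\varphi_I\rangle|=\Big|\sum_{J\in{\cal D}_M}\overline{\langle\varphi_I,\psi_J\rangle}\,\langle T\phi_I,\psi_J\rangle\Big|\le\sum_{J\in{\cal D}_M}|\langle\varphi_I,\psi_J\rangle|\,|\langle T\phi_I,\psi_J\rangle|.$$
For $|\langle T\phi_I,\psi_J\rangle|$ I would use the boundedness of $T$ on \emph{both} $L^p(\R)$ and $L^{p'}(\R)$: bounding the pairing once by $\|T\phi_I\|_{L^p}\|\psi_J\|_{L^{p'}}$ and once by $\|T\phi_I\|_{L^{p'}}\|\psi_J\|_{L^p}$, and using $\|\psi_J\|_{L^r}\lesssim C|J|^{1/r-1/2}$, one gets
$$|\langle T\phi_I,\psi_J\rangle|\lesssim C\,\min\big(|I|^{1/p-1/2}|J|^{1/p'-1/2},\ |I|^{1/p'-1/2}|J|^{1/p-1/2}\big)=C\,\ec(I,J)^{\alpha-1/2},$$
since $1/p'-1/2=-(1/p-1/2)$ and $\alpha-\tfrac12=|1/2-1/p|$. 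For $|\langle\varphi_I,\psi_J\rangle|$ I would apply Lemma~\ref{decayofidentity}, which gives $|\langle\varphi_I,\psi_J\rangle|\lesssim C\,\ec(I,J)^{3/2}\rdist(I,J)^{-(N-1)}$ when $|J|\le|I|$ (using that $\psi_J$ has mean zero) and $|\langle\varphi_I,\psi_J\rangle|\lesssim C\,\ec(I,J)^{1/2}\rdist(I,J)^{-N}$ when $|J|>|I|$. Multiplying the two estimates reduces the problem to showing
$$\sum_{\substack{J\in{\cal D}_M\\ |J|\le|I|}}\ec(I,J)^{\alpha+1}\rdist(I,J)^{-(N-1)}\ +\ \sum_{\substack{J\in{\cal D}_M\\ |J|>|I|}}\ec(I,J)^{\alpha}\rdist(I,J)^{-N}\ \lesssim\ G(I;M).$$

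This last estimate is where the work lies. I would organize the sum by the dyadic length $2^j\in[2^{-M},2^M]$ of $J$. At a fixed scale $2^j$, the admissible centres $c(J)$ form a $2^j$-separated set inside an interval of length $\lesssim M2^M$ about the origin, so there are $\lesssim M2^M/2^j$ of them; summing the spatial factor $\rdist(I,J)^{-(N-1)}$ (or $\rdist(I,J)^{-N}$) over these centres does two things at once. Its geometric tail in $|c(I)-c(J)|$ localizes the effective number of contributing intervals at scale $2^j$ to $\lesssim \min(|I|,M2^M)/2^j$; and when $\rdist(I,\mathbb B_{2^M})$ is large — i.e. $I$ is far from $\mathbb B_{2^M}$ — one has $\rdist(I,J)\gtrsim\rdist(I,\mathbb B_{2^M})$ for \emph{every} $J\in{\cal D}_M$ (since $|J|\le 2^M$ and $J$ lies in an interval of length $\lesssim M2^M$ about the origin), which lets one pull out a factor $\rdist(I,\mathbb B_{2^M})^{-(N-1)}$ and absorb the power of $M$ coming from the interval count against it, yielding the factor $(1+M^{-1}\rdist(I,\mathbb B_{2^M}))^{-N}$ (here I take $N$ large enough, which is the relevant case). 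Once the spatial variable is summed out one is left with a geometric series over $2^j\in[2^{-M},2^M]$ of $\ec(I,J)^{\alpha}=(\min(2^j,|I|)/\max(2^j,|I|))^{\alpha}$: the terms with $2^j<|I|$ produce the factor $(1+2^{-M}|I|)^{-\alpha}$ (nontrivial only if $|I|>2^M$, the series being summed at the endpoint scale $2^M$), and the terms with $2^j>|I|$ produce $(1+(2^M|I|)^{-1})^{-\alpha}$ (nontrivial only if $|I|<2^{-M}$). Multiplying these three factors gives $G(I;M)$ up to a universal constant, which together with the first two paragraphs completes the proof.

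I expect this scale-by-scale summation over ${\cal D}_M$ to be the only real obstacle. The delicate point is that the number $\sim M2^M/2^j$ of lagom intervals at each scale introduces powers of $M$; these are harmless because $\rdist(I,\mathbb B_{2^M})\ge 1$ always and because the order $N$ of the bump functions (which in the applications may be taken as large as needed, cf.\ the remark after Definition~\ref{WB}) exceeds the power of $M$ produced by the count. This is precisely the reason why the relative distance is normalized by $M$ inside the admissible function $D_W$ in Definition~\ref{WB}.
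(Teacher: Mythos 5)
Your strategy is the one the paper itself uses: split $T=P_{M}\circ T+P_{M}^{\perp}\circ T$, dispose of the second piece by $\| P_{M}^{\perp}\circ T\|\,\|\phi_I\|_{p}\|\varphi_I\|_{p'}\lesssim C\| P_{M}^{\perp}\circ T\|$, expand $P_{M}(\varphi_I)$ over the finite lagom family, bound $|\langle T(\phi_I),\psi_J\rangle|\lesssim C\,\ec(I,J)^{|1/2-1/p|}$ by boundedness on both $L^{p}$ and $L^{p'}$, bound $|\langle \varphi_I,\psi_J\rangle|$ by Lemma \ref{decayofidentity}, and then sum over $J\in{\cal D}_{M}$ scale by scale using the count $\sim M2^{M}/2^{j}$ of lagom intervals per scale and a comparison between $\rdist(I,J)$ and $\rdist(I,\mathbb B_{2^{M}})$. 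Up to the level of detail in the final summation, this is exactly the paper's proof.

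There is, however, one concrete overstatement in the reduction you display. The inequality asserting that the sum over $J\in{\cal D}_{M}$ is bounded by $G(I;M)$ with a universal constant is false when $I$ itself is (close to) a lagom interval: take $I\in{\cal D}_{M}$ with $\rdist(I,\mathbb B_{2^{M}})\approx M$ and the single term $J=I$; the left-hand side is at least $1$ while $G(I;M)\approx 2^{-N}$. Relatedly, your claim that $\rdist(I,J)\gtrsim\rdist(I,\mathbb B_{2^{M}})$ for every $J\in{\cal D}_{M}$ holds only in the far regime $\rdist(I,\mathbb B_{2^{M}})\gg M$; in general one can only prove the $M$-normalized comparison $1+\max(|I|,|J|)^{-1}|c(I)-c(J)|\gtrsim 1+M^{-1}\rdist(I,\mathbb B_{2^{M}})$, which is precisely why the factor $D_W$ in Definition \ref{WB} carries the $M^{-1}$. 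The paper sidesteps the degenerate case by first treating $I\in{\cal D}_{M+2}$ with the trivial bound $|\langle (P_{M}\circ T)(\phi_I),\varphi_I\rangle|\leq C_{p}C\|T\|$, and running the wavelet-sum argument only for $I\notin{\cal D}_{M+2}$, split into the regimes $|I|>2^{M+2}$, $|I|<2^{-(M+2)}$, and $2^{-(M+2)}<|I|<2^{M+2}$ with $\rdist(I,\mathbb B_{2^{M+2}})>M+2$, where the needed lower bounds on $\rdist(I,J)$ are genuinely available. With that extra case split (or, equivalently, if you allow the implicit constant in your final display to depend on $N$, as the paper's $\lesssim$ implicitly does in its subcases), your argument closes and coincides with the paper's.
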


\begin{remark} We have that 
$C_{p,T} \lesssim \max(\| T\|_{p\rightarrow p}, \| T\|_{p'\rightarrow p'})C_{p}$, where 
$C_{p}$ is the constant of boundedness on $L^{p}(\mathbb R)$ of the lagom projection operator $P_{M}$
(which in turn depends on the boundedness constant on $L^{p}(\mathbb R)$ of the square function). 
\end{remark}

\begin{corollary}
Let $T$ be an operator compact on $L^{p}(\mathbb R)$ and bounded on $L^{p'}(\mathbb R)$. Then, $T$ satisfies the weak compactness condition for all $N>1$.
We note that, in this case, we can take 
$L(x)=(1+x)^{-\alpha}$, $S(x)=(1+x^{-1})^{-\alpha}$ and 
$D(x)=(1+x)^{-1}$.
%with the variable of the last function always bigger than one. 
% for every $\epsilon >0$ there is $M\in \mathbb N$ such that  
%$$
%|\langle T(\phi_I),\varphi_I\rangle |
%\leq C (\Big(1+\frac{|I|}{2^{M}}\Big)^{-\alpha}\Big(1+\frac{2^{-M}}{|I|}\Big)^{-\beta}
%\frac{2^{M\alpha }}{(2^{M}+|I|)^{\alpha}}\frac{|I|^{\beta}}{(|I|+2^{-M})^{\beta}}
%\Big(\frac{\rdist(I,\mathbb B_{2^{M}})}{M^{2}}\Big)^{-N}+\epsilon )
%$$
%ii) Let $T$ to be compact on $L^{2}(\mathbb R)$ then, 
%$T$ satisfies the weak compactness condition. Actually, 
%$$
%|\langle T(\phi_I),\varphi_I\rangle |
%\leq C (\Big(1+\frac{|I|}{2^{M}}\Big)^{-3/2}\Big(1+\frac{2^{-M}}{|I|}\Big)^{-1/2}
%\frac{2^{M\alpha }}{(2^{M}+|I|)^{\alpha}}\frac{|I|^{\beta}}{(|I|+2^{-M})^{\beta}}
%\Big(\frac{\rdist(I,\mathbb B_{2^{M}})}{M^{2}}\Big)^{-N}+\epsilon )
%$$
\end{corollary}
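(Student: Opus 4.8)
The plan is to read the corollary off Proposition~\ref{necessityofweakcompactness} combined with the characterization of compactness recalled just after Definition~\ref{lagom}. First I would check that the three functions $L(x)=(1+x)^{-\alpha}$, $S(x)=(1+x^{-1})^{-\alpha}$, $D(x)=(1+x)^{-1}$ are indeed admissible: all three are bounded and take values in $[0,\infty)$, and since $1<p<\infty$ forces $\alpha=|1/2-1/p|+1/2\in[1/2,1)$, in particular $\alpha>0$, one has $\lim_{x\to\infty}L(x)=0$, $\lim_{x\to0}S(x)=0$ (because $x^{-1}\to\infty$ as $x\to0$) and $\lim_{x\to\infty}D(x)=0$, so the limits \eqref{limits} hold.

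Next I would rewrite the estimate of Proposition~\ref{necessityofweakcompactness} in these terms. The first two factors there are exactly $(1+2^{-M}|I|)^{-\alpha}=L(2^{-M}|I|)$ and $(1+2^{-M}|I|^{-1})^{-\alpha}=S(2^{M}|I|)$; and since $1+M^{-1}\rdist(I,\mathbb B_{2^{M}})\ge1$ and $N>1$, we may estimate the last factor by $(1+M^{-1}\rdist(I,\mathbb B_{2^{M}}))^{-N}\le(1+M^{-1}\rdist(I,\mathbb B_{2^{M}}))^{-1}=D(M^{-1}\rdist(I,\mathbb B_{2^{M}}))$. Hence for every $M\in\mathbb N$ and every pair $\phi_I,\varphi_I$ of $L^2$-normalized bump functions adapted to $I$ with constant $C$ and order $N$,
$$
|\langle T(\phi_I),\varphi_I\rangle|\le C_{p,T}\,C\,F_W(I;M)+C\,\|P_M^{\perp}\circ T\|,
$$
where $F_W(I;M)=L(2^{-M}|I|)S(2^{M}|I|)D(M^{-1}\rdist(I,\mathbb B_{2^{M}}))$; by the remark following the proposition, $C_{p,T}$ is controlled by $\max(\|T\|_{p\to p},\|T\|_{p'\to p'})$ times the boundedness constant of the wavelet projections $P_M$ on $L^p$, hence is independent of $M$.

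Finally, since $T$ is compact on $L^p(\mathbb R)$, the characterization recalled after Definition~\ref{lagom} (i.e.\ Theorem~\ref{charofcompact}) gives $\lim_{M\to\infty}\|P_M^{\perp}\circ T\|=0$. So, given $\epsilon>0$, I would pick $M=M_\epsilon\in\mathbb N$ with $\|P_{M_\epsilon}^{\perp}\circ T\|<\epsilon$; the displayed inequality then becomes $|\langle T(\phi_I),\varphi_I\rangle|\lesssim C(F_W(I;M_\epsilon)+\epsilon)$ with implicit constant $\max(C_{p,T},1)$ depending only on $T$, which is exactly Definition~\ref{WB}. I expect no real obstacle here: the only points needing a little attention are that $\alpha>0$ for every $p\in(1,\infty)$ (so the admissibility limits genuinely hold), that $C_{p,T}$ does not blow up as $M\to\infty$, and that one should fix a single wavelet basis throughout so that the projections $P_M$ appearing in Proposition~\ref{necessityofweakcompactness} and in the compactness characterization are the same.
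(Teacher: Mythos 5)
Your argument is correct and is exactly the derivation the paper intends: the corollary is left as an immediate consequence of Proposition \ref{necessityofweakcompactness} combined with the fact (Theorem \ref{charofcompact}, as restated after Definition \ref{lagom}) that compactness gives $\lim_{M\to\infty}\|P_{M}^{\perp}\circ T\|=0$, with the three factors in the proposition identified as $L(2^{-M}|I|)$, $S(2^{M}|I|)$ and, after the harmless estimate $(1+M^{-1}\rdist(I,\mathbb B_{2^{M}}))^{-N}\le D(M^{-1}\rdist(I,\mathbb B_{2^{M}}))$, the function $D$. Your side remarks (admissibility via $\alpha\ge 1/2$, uniformity of $C_{p,T}$ in $M$, fixing one wavelet basis) are the right points to check and cause no difficulty.
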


\begin{proof}
%We work to prove
%$$
%\lim_{M\rightarrow \infty }\sup_{I\notin {\cal D}_{M}}|\langle T(\phi_I),\varphi_I\rangle | =0
%$$
%that is, that for every $\epsilon >0$ there exists $M>0$ such that 
%$$
%|\langle T(\phi_I),\varphi_I\rangle |<\epsilon
%$$
%for all $I\notin {\cal D}_{M}$. The stated formula will be a consequence of such work. 
We start with the estimate
$$
|\langle T(\phi_I),\varphi_I\rangle |
\leq |\langle (P_{M}\circ T)(\phi_I),\varphi_I\rangle |
+|\langle (T-P_{M}\circ T)(\phi_I),\varphi_I\rangle |
$$
whose second term can be quickly bounded by 
$$
\| T-P_{M}\circ T\| \| \phi_I\|_{p} \|\varphi_I\|_{p'}
\leq  C\| P_{M}^{\perp}\circ T\||I|^{1/p-1/2}|I|^{1/p'-1/2}= C \| P_{M}^{\perp}\circ T\|
$$
%and so, by hypothesis, for any $\epsilon >0$ there exists $M\in \mathbb N$ such that 
%$$
%|\langle (T-P_{M}\circ T)(\phi_I),\varphi_I\rangle |
%\leq \| P_{M}^{\perp}\circ T\|<\epsilon 
%$$
%independently of $I$. 

The control of the first term requires more work. If $I\in {\cal D}_{M+2}$, by boundedness of $T$ and $P_{M}$
on $L^{p}(\mathbb R)$, we can bound it by
$$
|\langle (P_{M}\circ T)(\phi_I),\varphi_I\rangle |
\leq  C_{p}\| T\|\| \phi_I\|_{p}\| \varphi_I\|_{p'}\leq C_{p}C\| T\|
$$
This estimate is compatible with the statement since $2^{-(M+2)}\leq |I|\leq 2^{(M+2)}$ and 
$\rdist(I,\mathbb B_{2^{M+2}})<M+2$.

If $I\notin {\cal D}_{M+2}$, we denote by $(\psi_{J})_{J\in {\cal D}}$ the wavelet basis defining $P_{M}$. 
Then, the first term can be bounded by
\begin{equation}\label{sumofdualpair}
|\langle T(\phi_I),P_{M}(\varphi_I)\rangle |
\leq \sum_{J\in {\cal D}_{M}} |\langle T(\phi_I),\psi_J\rangle | |\langle \varphi_I,\psi_{J}\rangle |
\end{equation}
Now, by boundedness of $T$ on $L^{p}(\mathbb R)$ and $L^{p'}(\mathbb R)$, we can bound the first factor inside the sum by 
\begin{align*}
|\langle T(\phi_I),&\psi_J\rangle | \leq \min(\| T\|_{p\rightarrow p} \| \phi_I\|_{p} \|\psi_J\|_{p'},
\| T\|_{p'\rightarrow p'} \| \phi_I\|_{p'} \|\psi_J\|_{p})
\\
&\leq C\max(\| T\|_{p,p},\| T\|_{p',p'} ) 
\min (|I|^{\frac{1}{p}-\frac{1}{2}}|J|^{\frac{1}{p'}-\frac{1}{2}}, |I|^{\frac{1}{p'}-\frac{1}{2}}|J|^{\frac{1}{p}-\frac{1}{2}}) 
\\
&\lesssim \min \Big(\big(\frac{|I|}{|J|}\big)^{\frac{1}{p}-\frac{1}{2}},\big(\frac{|J|}{|I|}\big)^{\frac{1}{p}-\frac{1}{2}}\Big) 
=\Big(\frac{\min(|I|,|J|)}{\max(|I|,|J|)}\Big)^{|\frac{1}{2}-\frac{1}{p}|}
\end{align*}
We denote $\theta_{p}=|1/2-1/p|$. 

To bound the second factor inside the sum at the right hand side of (\ref{sumofdualpair}) we consider three cases: 1)
$|I|>2^{M+2}$, 2) $|I|<2^{-(M+2)}$ and 3) $2^{-(M+2)}<|I|<2^{M+2}$ with 
$\rdist (I, \mathbb B_{2^{M+2}})>M+2$. 

%First case: $|I|>2^{M+1}$. 
{\bf 1)} Since $|I|>2^{M}\geq |J|$, by the mean zero of $\psi_{J}$ we know from Lemma \ref{decayofidentity} that
$$
|\langle \varphi_I,\psi_{J}\rangle |
\leq C \Big(\frac{|J|}{|I|}\Big)^{\frac{3}{2}}(1+|I|^{-1}|c(I)-c(J)|)^{-N}
$$
which leads to 
bound (\ref{sumofdualpair}) by
\begin{equation}\label{sumparametrized1}
|\langle T(\phi_I),P_M(\varphi_I)\rangle |
\lesssim \sum_{J\in {\mathcal D}_{M}} \Big(\frac{|J|}{|I|}\Big)^{\theta_p+\frac{3}{2}}(1+|I|^{-1}|c(I)-c(J)|)^{-N}
\end{equation}

Now, since $J\in {\mathcal D}_{M}$ implies $J\subset \mathbb B_{M2^{M}}$ with $2^{-M}\leq |J|\leq 2^{M}$, we can parametrize these intervals 
%$J\in {\mathcal D}_{M}$ 
by their size:  
$|J|=2^{-k}2^{M}$ with $0\leq k\leq 2M$, in such a way that for every fixed $k$ there are 
$M2^{M}/|J|=M2^{k}$ intervals. Moreover, for all $J\in {\cal D}_{M}$, the value of $1+|I|^{-1}|c(I)-c(J)|$ can be bounded from below to obtain
$$
1+|I|^{-1}|c(I)-c(J)|\gtrsim 1+M^{-1}\rdist (I, \mathbb B_{2^{M}})
$$
as we show:
\begin{enumerate}
\item[a)] when 
$\rdist (I, \mathbb B_{M2^{M}})>2$, we have 
$\dist (I, \mathbb B_{M2^{M}})>M2^{M}$ and so, 
$
\dist (I, \mathbb B_{2^{M}})\leq  \dist (I, \mathbb B_{M2^{M}})+M2^{M}\leq 2\dist (I, \mathbb B_{M2^{M}})
$. 
%$I\cap \mathbb B_{M2^{M}}=\emptyset $
%$\diam(I,\mathbb B_{M2^{M}})\geq 2|I|>2M2^{M}$ 
%and so
%$\dist(I,\mathbb B_{M2^{M}})\geq \dist(I,\mathbb B_{2^{M}})$. 
This implies
\begin{align*}
\hskip 25pt |c(I)-c&(J)|\geq |I|/2+\dist(I,\mathbb B_{M2^{M}})
\\
&\geq |I|/4+2^{M}+\dist(I,\mathbb B_{2^{M}})/2
\\
&
\geq 1/4(|I|+2^{M}+\dist(I,\mathbb B_{2^{M}}))
\geq 1/4\diam(I,\mathbb B_{2^{M}})
\end{align*}
%$$
%\geq 1/4(|I|+2^{M+1}+\dist(I,J))
%\geq 1/4(|I|+M2^{M}-2^{-M}+\dist(I,J))
%$$
%$$
%\geq 1/4(|I|+\dist(I,{\mathbb B}_{2^{M}}))
%\geq 1/4\diam(I,{\mathbb B}_{2^{M}})
%$$
and so
\begin{align*}
1+|I|^{-1}|c(I)-c(J)|&\geq 1+1/4|I|^{-1}\diam(I,\mathbb B_{2^{M}})
\\
&\geq  1/4(1+\rdist(I,{\mathbb B}_{2^{M}}))
%\leq |I|^{-1}\diam(I,\mathbb B_{2^{M}})+2
\end{align*}

\item[b)] when $\rdist (I, \mathbb B_{M2^{M}})\leq 2$, we have
\begin{align*}
\diam (I, &\mathbb B_{2^{M}})\leq \diam (I, \mathbb B_{M2^{M}})
\\
=&\rdist (I, \mathbb B_{M2^{M}})\max(|I|,M2^{M})
%=2M\max(M^{-1}|I|,2^{M})
\leq 2M\max(|I|,2^{M}) 
\end{align*}
%$\dist (I, \mathbb B_{M2^{M}})\leq M2^{M}$. With this, we obtain  
%$\dist (I, \mathbb B_{2^{M}})\leq 2M2^{M}\leq 2M|I|$, which 
which implies
$\rdist (I, \mathbb B_{2^{M}})\leq 2M$. 
%$$
%\diam (I, \mathbb B_{2^{M}})\leq \diam (I, \mathbb B_{M2^{M}})
%$$
%$$
%=\rdist (I, \mathbb B_{M2^{M}})\max(|I|,M2^{M})
%\leq 2|I|\max(1,M2^{M}/|I|)
%$$
%$$
%\leq 2\max(|I|,M2^{M})\max(1,M2^{M}/|I|)
%$$ 
%and so,
%$\rdist (I, \mathbb B_{2^{M}})\leq 2\max(1,M2^{M}/|I|)$. 
Therefore,
%$
%1+M^{-1}\rdist (I, \mathbb B_{2^{M}})\leq 3
%$
%and then, 
$$
1+|I|^{-1}|c(I)-c(J)|\geq  1\geq 1/3(1+M^{-1}\rdist (I, \mathbb B_{2^{M}}))
$$
\end{enumerate}

Finally, we also notice that for every fixed $k$ and for every 
$n\geq 4^{-1}(1+M^{-1}\rdist (I, \mathbb B_{2^{M}}))$ there are less than 
$\frac{|I|}{|J|}=\frac{|I|}{2^{-k}2^{M}}$ intervals $J\in {\mathcal D}_{M}$ such that 
$n\leq 1+|I|^{-1}|c(I)-c(J)|< n+1$. But, as said before, there also less the 
$2^{k}M$ of such intervals.

As a result, we can parametrize and bound the sum in (\ref{sumparametrized1}) in the following way
$$
|\langle T(\phi_I),P_M(\varphi_I)\rangle |
\lesssim \hspace{-.2cm}\sum_{0\leq k\leq 2M}\hspace{-.3cm}
 \sum_{\tiny \begin{array}{c}J\in {\mathcal D}_{M}\\ |J|=2^{-k}2^{M}\end{array}}
\hspace{-.3cm}\Big(\frac{2^{-k}2^{M}}{|I|}\Big)^{\theta_p+\frac{3}{2}}\Big(1+\frac{|c(I)-c(J)|}{|I|}\Big)^{-N}
$$
$$
\lesssim \Big(\frac{2^{M}}{|I|}\Big)^{\theta_p+\frac{3}{2}}\hspace{-.3cm}\sum_{0\leq k\leq 2M} 
%\hspace{-.2cm}\sum_{\tiny \begin{array}{c}J\in {\mathcal D}_{M}\\ |J|=2^{-k}2^{M}\end{array}}
2^{-k(\theta_p+\frac{3}{2})}\min (\frac{|I|}{2^{-k}2^{M}},2^{k}M)
\hspace{-.5cm}\sum_{n\geq \frac{1}{4}(1+M^{-1}\rdist (I, \mathbb B_{2^{M}}))}n^{-N}
$$
$$
\lesssim \Big(\frac{2^{M}}{|I|}\Big)^{\theta_p+\frac{3}{2}}\min (\frac{|I|}{2^{M}},M)
%\max(1,M2^{M}/|I|)^{N}
\Big(1+\frac{\rdist(I,\mathbb B_{2^{M}})}{M}\Big)^{-N}
%(1+M^{-1}\rdist (I, \mathbb B_{2^{M}}))^{-N}
\sum_{0\leq k\leq 2M} 2^{-k(\theta_p+\frac{1}{2})} 
$$
$$
\lesssim \Big(\frac{2^{M}}{|I|}\Big)^{\theta_p+\frac{3}{2}}\frac{|I|}{2^{M}}
%\min (\frac{|I|}{2^{M}},M)
\Big(1+\frac{\rdist(I,\mathbb B_{2^{M}})}{M}\Big)^{-N}
%(1+M^{-1}\rdist (I, \mathbb B_{2^{M}}))^{-N}
%\max(1,M2^{M}/|I|)^{N}
%\rdist(I,{\mathbb B}_{2^{M}})^{-N}
$$

%Now, we divide into two cases:
%$|I|> M2^{M}$ and $|I|\leq M2^{M}$. In the first case, we have
%$$
%|\langle T(\phi_I),P_M(\varphi_I)\rangle |
%\lesssim \Big(\frac{2^{M}}{|I|}\Big)^{\theta_p+3/2}M(1+\rdist(I,{\mathbb B}_{2^{M}}))^{-N}
%$$
%$$
%\leq \Big(\frac{2^{M}}{|I|}\Big)^{\theta_p+1/2}(1+\rdist(I,{\mathbb B}_{2^{M}}))^{-N}
%$$
%%because the number of terms in the inner sum is exactly $2^{k}$. Now this expression can be bounded by
%%$$
%%\leq  \Big(\frac{2^{M}}{|I|}\Big)^{\theta_p+1/2}(M^{-1}\rdist(I,{\mathbb B}_{2^{M}}))^{-N}\sum_{k\in \mathbb N} 
%%2^{-k(\theta_p+1/2)}
%%$$
%
%On the other hand, when $|I|\leq M2^{M}$, we have
%$$
%|\langle T(\phi_I),P_M(\varphi_I)\rangle |
%\lesssim \Big(\frac{2^{M}}{|I|}\Big)^{\theta_p+3/2}\frac{|I|}{2^{M}}\Big(\frac{2^{M}}{|I|}\Big)^{N}
%\Big(\frac{2^{M}}{|I|}+M^{-1}\rdist(I,{\mathbb B}_{2^{M}})\Big)^{-N}
%$$
%$$
%= \Big(\frac{2^{M}}{|I|}\Big)^{\theta_p+3/2}\Big(\frac{2^{M}}{|I|}\Big)^{N-1}
%(M^{-1}\rdist(I,{\mathbb B}_{2^{M}}))^{-N}
%$$

Then, we finally obtain 
\begin{equation}\label{1}
|\langle T(\phi_I),P_M(\varphi_I)\rangle |
\lesssim \Big(\frac{2^{M}}{|I|}\Big)^{\theta_p+\frac{1}{2}}
\Big(1+\frac{\rdist(I,\mathbb B_{2^{M}})}{M}\Big)^{-N}
\end{equation}
%The right hand side of this inequality can be done smaller than $\epsilon $ for 
%$|I|$ or $|I|^{-1}\diam(I,\mathbb B_{2^{M}})$ big enough.

%$$
%|c(I)-c(J)|\geq |I|/2+\dist(I,\mathbb B_{2^{M}}) \geq 1/2(|I|/2+2^{M-1}+2\dist(I,\mathbb B_{2^{M}}))
%$$
%and so
%$$
%|I|^{-1}|c(I)-c(J)|\geq 1/2|I|^{-1}\diam(I,\mathbb B_{2^{M}}) 
%%\leq |I|^{-1}\diam(I,\mathbb B_{2^{M}})+2
%$$
%Therefore, previous sum can be bounded by
%$$
%\sum_{k\in \mathbb N} \sum_{\tiny \begin{array}{c}J\in {\cal D}_{N}\\ |J|=2^{-k}2^{M}\end{array}}
%\Big(\frac{2^{-k}2^{M}}{|I|}\Big)^{\theta_p+3/2}(|I|^{-1}\diam(I,\mathbb B_{2^{M}}))^{-N}
%$$
%$$
%= \Big(\frac{2^{M}}{|I|}\Big)^{\theta_p+3/2}(|I|^{-1}\diam(I,\mathbb B_{2^{M}}))^{-N}
%\sum_{k\in \mathbb N} 2^{-k(\theta_p+3/2)} 2^{k}
%$$
%%because the number of terms in the inner sum is exactly $2^{k}$. Now this expression can be bounded by
%$$
%=\Big(\frac{2^{M}}{|I|}\Big)^{\theta_p+3/2}(|I|^{-1}\diam(I,\mathbb B_{2^{M}}))^{-N}\sum_{k\in \mathbb N} 
%2^{-k(\theta_p+1/2)}
%$$
%and so 
%\begin{equation}\label{1}
%|\langle T(\phi_I),P_M(\varphi_I)\rangle |
%\lesssim \Big(\frac{2^{M}}{|I|}\Big)^{\theta_p+3/2}\rdist(I,\mathbb B_{2^{M}})^{-N}
%\end{equation}
%%The right hand side of this inequality can be done smaller than $\epsilon $ for 
%%$|I|$ or $|I|^{-1}\diam(I,\mathbb B_{2^{M}})$ big enough.

%Second case: $|I|<2^{-(M+1)}$.  
{\bf 2)} In this case, since $|I|<2^{-M}\leq |J|$, we have by Lemma \ref{decayofidentity} 
$$
|\langle \varphi_I,\psi_{J}\rangle |
\leq C \Big(\frac{|I|}{|J|}\Big)^{\frac{1}{2}}\Big(1+\frac{|c(I)-c(J)|}{|J|}\Big)^{-N}
$$
which leads to bound (\ref{sumofdualpair}) by 
\begin{equation}\label{sumparametrized2}
|\langle T(\phi_I),P_M(\varphi_I)\rangle |
\lesssim \sum_{J\in {\mathcal D}_{M}} \Big(\frac{|I|}{|J|}\Big)^{\theta_p+\frac{1}{2}}\Big(1+\frac{|c(I)-c(J)|}{|J|}\Big)^{-N}
\end{equation}

As before, we parametrize the intervals $J\in {\mathcal D}_{M}$ by their size 
$|J|=2^{-k}2^{M}$ with $0\leq k\leq 2M$, all of them satisfying  
$J\subset \mathbb B_{M2^{M}}$. We work now to bound $1+|J|^{-1}|c(I)-c(J)|$ from above and below. 
Since $I\notin {\cal D}_{M+2}$, we have 
$$
\dist(I,\mathbb B_{M2^{M}})\leq |c(I)-c(J)|
\leq  |c(I)|+|c(J)|
$$
$$
\leq  |c(I)|+M2^{M}\leq 2(|I|/2+|c(I)|+M2^{M-1})
\leq 2\diam(I,\mathbb B_{M2^{M}})
$$
and so, for every $k$
$$
2^{k}
2^{-M}\dist(I,\mathbb B_{M2^{M}})
\leq |J|^{-1}|c(I)-c(J)|\leq 2^{k+1}2^{-M}\diam(I,\mathbb B_{M2^{M}})
$$

Moreover, since $|I|\leq |J|$, for fixed $k$, the $M2^{k}$ intervals $J\in \mathcal D_{M}$ can be parametrized in such a 
way that $1+|J|^{-1}|c(I)-c(J)|=j$ with all $j$  pairwise different and such that
\begin{equation}\label{index}
1+2^{k}2^{-M}\dist(I,\mathbb B_{M2^{M}})\leq j
%\leq 3/2+2^{k}(1+|\mathbb B_{2^{M}}|^{-1}\dist(I,\mathbb B_{2^{M}}))
%$$
%$$
\leq 1+2^{k}M\rdist(I,\mathbb B_{M2^{M}})
\end{equation}
%Notice we have used that $\diam(I,\mathbb B_{M2^{M}})=M2^{M}\rdist(I,\mathbb B_{M2^{M}})$.

%satisfy 
%$$
%|J|^{-1}|c(I)-c(J)|\geq 2^{k}2^{-M}(2^{M}+n)=2^{k}|K_{M}|^{-1}(|K_{M}|+\dist (I,K_{M+1}))
%$$
%$$
%%=2^{k}|K_{M}|^{-1}\dist (I,K_{M})
%\approx 2^{k}|K_{M}|^{-1}\diam (I,K_{M})
%=2^{k}n
%$$
%for fixed $n\in \mathbb N$. 
%Then, they 

We now separate into two subcases in a similar way as we did before but by using the distance instead of the relative distance: $\dist(I,\mathbb B_{M2^{M}})>2|\mathbb B_{M2^{M}}|$ and 
$\dist(I,\mathbb B_{M2^{M}})\leq 2|\mathbb B_{M2^{M}}|$. 
In the first case, we have 
$$
\diam(I,\mathbb B_{2^{M}})\leq \diam(I,\mathbb B_{M2^{M}})\leq |I|+\dist(I,\mathbb B_{M2^{M}})+M2^{M}
$$
$$
\leq 2M2^{M}+\dist(I,\mathbb B_{M2^{M}})\leq 2\dist(I,\mathbb B_{M2^{M}})
$$
%and so
%$\dist(I,\mathbb B_{M2^{M}})>\diam(I,\mathbb B_{M2^{M}})/2$. 
Then, by (\ref{index}),
$$
j\geq 1+2^{k}2^{-M}\dist(I,\mathbb B_{M2^{M}})
\geq 1+2^{k-1}2^{-M}\diam(I,\mathbb B_{2^{M}})
$$
$$
\geq 2^{k-1}\rdist(I,\mathbb B_{2^{M}})
\geq 2^{k-2}(1+\rdist(I,\mathbb B_{2^{M}}))
$$ 
Therefore, we parametrize and bound the terms in the sum of (\ref{sumparametrized2})
that correspond to this case as follows:
$$
|\langle T(\phi_I),P_M(\varphi_I)\rangle |
\lesssim \sum_{0\leq k\leq 2M} \hspace{-.3cm}\sum_{\tiny \begin{array}{c}J\in {\mathcal D}_{M}\\ |J|=2^{-k}2^{M}\end{array}}
 \hspace{-.3cm} \Big(\frac{|I|}{2^{-k}2^{M}}\Big)^{\theta_p+\frac{1}{2}}\Big(1+\frac{|c(I)-c(J)|}{|J|}\Big)^{-N}
$$
$$
\leq \Big(\frac{|I|}{2^{M}}\Big)^{\theta_p+\frac{1}{2}}\sum_{0\leq k\leq 2M} 2^{k(\theta_p+\frac{1}{2})}
\sum_{j\geq 2^{k-2}(1+\rdist(I,\mathbb B_{2^{M}}))}j^{-N}
$$
%$$
%\leq C\Big(\frac{|I|}{2^{M}}\Big)^{1/p}\sum_{1\leq k\leq 2M} 2^{k/p}2^{-kN}n^{-N}
%$$
$$
\lesssim \Big(\frac{|I|}{2^{M}}\Big)^{\theta_p+\frac{1}{2}}
\sum_{0\leq k\leq 2M} 2^{-k(N-(\theta_p+\frac{1}{2}))}(1+\rdist (I,\mathbb B_{2^{M}}))^{-N}
$$
$$
\lesssim \Big(\frac{|I|}{2^{M}}\Big)^{\theta_p+\frac{1}{2}}(1+\rdist (I,\mathbb B_{2^{M}}))^{-N}
$$
$$
= \Big(\frac{|I|}{2^{-M}}\Big)^{\theta_p+\frac{1}{2}}2^{-2M(\theta_p+\frac{1}{2})}(1+\rdist (I,\mathbb B_{2^{M}}))^{-N}
$$
which is better than the stated bound. 

On the other hand, when $\dist(I,\mathbb B_{M2^{M}})\leq 2|\mathbb B_{M2^{M}}|$, we have that 
$$
\diam(I,\mathbb B_{M2^{M}})\leq |I|+\dist(I,\mathbb B_{M2^{M}})+M2^{M}
$$
$$
\leq 2M2^{M}+\dist(I,\mathbb B_{M2^{M}})\leq 4M2^{M}
$$
and so 
$|\mathbb B_{M2^{M}}|\leq \diam(I,\mathbb B_{M2^{M}})\leq 4|\mathbb B_{M2^{M}}|$, that is, 
$1\leq \rdist(I,\mathbb B_{M2^{M}})\leq 4$.
Then, by (\ref{index}), we have 
$1\leq j\leq 1+2^{k+2}M$.
Moreover, 
$$
\rdist(I,\mathbb B_{2^{M}})=|\mathbb B_{2^{M}}|^{-1}\diam(I,\mathbb B_{2^{M}})
$$
$$
\leq M|\mathbb B_{M2^{M}}|^{-1}\diam(I,\mathbb B_{M2^{M}})
\leq 4M
$$
and so, $1+M^{-1}\rdist(I,\mathbb B_{2^{M}})\leq 5$.

Thereby, we can now bound the the terms in the sum of (\ref{sumparametrized2})
that correspond to this case as follows:
%previous sum can be rewritten as 
$$
|\langle T(\phi_I),P_M(\varphi_I)\rangle |
\lesssim \sum_{0\leq k\leq 2M}  \hspace{-.1cm}\sum_{\tiny \begin{array}{c}J\in {\mathcal D}_{M}\\ |J|=2^{-k}2^{M}\end{array}}
 \hspace{-.3cm}\Big(\frac{|I|}{2^{-k}2^{M}}\Big)^{\theta_p+\frac{1}{2}}\Big(1+\frac{|c(I)-c(J)|}{|J|}\Big)^{-N}
$$
$$
\leq \Big(\frac{|I|}{2^{M}}\Big)^{\theta_p+\frac{1}{2}}\sum_{0\leq k\leq 2M} 2^{k(\theta_p+\frac{1}{2})}
\sum_{1\leq j\leq 1+2^{k+2}M}j^{-N}
$$
%$$
%\leq C\Big(\frac{|I|}{2^{M}}\Big)^{1/p}\sum_{1\leq k\leq 2M} 2^{k/p}2^{-kN}n^{-N}
%$$
$$
\lesssim \Big(\frac{|I|}{2^{M}}\Big)^{\theta_p+\frac{1}{2}}
\sum_{0\leq k\leq 2M} 2^{k(\theta_p+\frac{1}{2})}
\lesssim \Big(\frac{|I|}{2^{M}}\Big)^{\theta_p+\frac{1}{2}}2^{2M(\theta_p+\frac{1}{2})}
$$
$$
\lesssim  \Big(\frac{|I|}{2^{-M}}\Big)^{\theta_p+\frac{1}{2}}(1+M^{-1}\rdist (I,\mathbb B_{2^{M}}))^{-N}
$$
%since $\diam (I,\mathbb B_{2^{M}})\approx |\mathbb B_{2^{M}}|$. 
%which is 
%larger than the previous bound, but 
%also better than the stated bound.

Therefore, in both subcases we obtain
\begin{equation}\label{2}
|\langle T(\phi_I),P_M(\varphi_I)\rangle |
\lesssim \Big(\frac{|I|}{2^{-M}}\Big)^{\theta_p+\frac{1}{2}}(1+M^{-1}\rdist (I,\mathbb B_{2^{M}}))^{-N}
\end{equation}
%whose right hand side tends to zero when $|I|$ tends to zero or 
%$|\mathbb B_{2^{M}}|^{-1}\diam (I,\mathbb B_{2^{M}})$ tends to infinity. 

%Third case: 
{\bf 3)} To deal with case $2^{-(M+2)}<|I|<2^{M+2}$ and $\rdist (I,\mathbb B_{2^{M+2}})>M+2$,  
 we will mix the techniques of the two previous cases: 
%Let $I$ be such that $2^{-(M+1)}<|I|<2^{M+1}$ with $\rdist (I,\mathbb B_{2^{M+1}})>M+1$. 
for those $J\in {\cal D}_{M}$ such that $|J|\leq |I|$ we will reason as we did to deduce (\ref{1}), while for those 
$J\in {\cal D}_{M}$ such that  $|J|\geq |I|$ we will reason in a similar way as we did to obtain (\ref{2}). 

%In both cases, since $\diam (I\cup \mathbb B_{2^{M}})>2^{M+1}$ we have 
%that  $1+\max(|I|,|J|)^{-1}|c(I)-c(J)|\geq 1+2^{M}$. (not use?)

In the subcase $|I|\geq |J|$, we first prove the bound 
\begin{equation}\label{lowbound}
1+|I|^{-1}|c(I)-c(J)|\gtrsim 1+ \rdist (I,\mathbb B_{2^{M}})
\end{equation}

We start by checking that $\rdist (I,\mathbb B_{2^{M+2}})>M+2$ implies that 
 $I\cap \mathbb B_{M2^{M}}=\emptyset$. 
Since 
$$
|I|+\dist(I,\mathbb B_{2^{M+2}})+2^{M+2}\geq \diam(I,\mathbb B_{2^{M+2}})
$$
$$
=\rdist (I,\mathbb B_{2^{M+2}})2^{M+2}>(M+2)2^{M+2}
$$ 
and $|I|\leq 2^{M+2}$, we have $\dist(I,\mathbb B_{2^{M+2}})>M2^{M+2}$. Therefore, 
we have $I\cap \mathbb B_{M2^{M+2}}=\emptyset$. 
%and so, $I\cap \mathbb B_{M2^{M}}=\emptyset$. 

This, together with $J\subset \mathbb B_{M2^{M}}$, implies 
$$
|c(I)-c(J)|\geq \dist (I,\mathbb B_{M2^{M}})
\geq \dist (I,\mathbb B_{2^{M}})-\dist (\mathbb B_{2^{M}},\mathbb B_{M2^{M}}^{c})
$$
that is,
\begin{equation}\label{dife}
|c(I)-c(J)|\geq \dist (I,\mathbb B_{2^{M}})-2^{M}(M-1)
\end{equation}

Moreover, 
%$I\cap \mathbb B_{2^{M+1}}=\emptyset $ also implies 
since $\dist (I,\mathbb B_{2^{M+2}})\leq \dist (I,\mathbb B_{2^{M}})$, we have
$$
(M+2)2^{M+2}\leq \diam (I,\mathbb B_{2^{M+2}})
$$
$$
\leq |I|+\dist (I,\mathbb B_{2^{M+2}})+2^{M+2}
\leq \dist (I,\mathbb B_{2^{M}})+2^{M+3}
$$
and so, %$(M-1)2^{M+1}\leq \dist (I,\mathbb B_{2^{M}})$, that is, 
\begin{equation}\label{M}
M2^{M}\leq 1/4\dist (I,\mathbb B_{2^{M}})
\end{equation}
With this and (\ref{dife}), we get
$
|c(I)-c(J)|\geq 3/4\dist (I,\mathbb B_{2^{M}})
$.

Furthermore, since (\ref{M}) implies $|I|\leq 2^{M+2}\leq M^{-1}\dist (I,\mathbb B_{2^{M}})$, 
%and $2^{M}\leq 2^{-1}(M-1)^{-1}\dist (I,\mathbb B_{2^{M}})$, 
we also have 
$$
\diam (I,\mathbb B_{2^{M}})\leq \dist (I,\mathbb B_{2^{M}})+|I|+2^{M}
$$
$$
\leq (1+2M^{-1})\dist (I,\mathbb B_{2^{M}})
\leq 3\dist (I,\mathbb B_{2^{M}})
$$
which implies
$
|c(I)-c(J)|\geq 1/4\diam(I,\mathbb B_{2^{M}})
$.
%$$
%|I|^{-1}\diam (I,\mathbb B_{2^{M}})|I|^{-1}\diam (I,\mathbb B_{2^{M}})
%\geq \rdist(I,\mathbb B_{2^{M}})>
%$$
%$$
%\geq \dist (I,\mathbb B_{2^{M}})+|I|\geq  \diam (I,\mathbb B_{2^{M}})
%$$
Finally then,
$$
1+|I|^{-1}|c(I)-c(J)|\geq 1/4( 1+|I|^{-1}\diam (I,\mathbb B_{2^{M}}))
$$
$$
\geq 1/4(1+ \max(|I|,2^{M})^{-1}\diam (I,\mathbb B_{2^{M}}))
=1/4(1+\rdist (I,\mathbb B_{2^{M}}))
$$
as we wanted. 

Now, similar as before, we notice that for every fixed $k$ and for every 
$n\geq 4^{-1}(1+ \rdist(I, \mathbb B_{2^{M}}))$ there are at most 
$|I|/|J|=\frac{|I|}{2^{-k}2^{M}}$ intervals $J\in {\mathcal D}_{M}$ such that 
$n\leq 1+|I|^{-1}|c(I)-c(J)|< n+1$. With this, 
%and inequality (\ref{lowbound}), 
the part under consideration of the sum in (\ref{sumparametrized1}) can be 
parametrized and estimated in the 
following way:
%$$
%|\langle T(\phi_I),P_{M}(\varphi_I)\rangle |
%$$
$$
\sum_{0\leq k\leq 2M} \sum_{\tiny \begin{array}{c}J\in {\cal D}_{M}\\ |J|=2^{-k}2^{M}\leq |I|\end{array}}
\Big(\frac{2^{-k}2^{M}}{|I|}\Big)^{\theta_{p}+\frac{3}{2}}(1+|I|^{-1}|c(I)-c(J)|)^{-N}
$$
$$
\lesssim \Big(\frac{2^{M}}{|I|}\Big)^{\theta_{p}+\frac{3}{2}}
\sum_{\max(0,M+\log|I|^{-1})\leq k\leq 2M} 2^{-k(\theta_{p}+\frac{3}{2})} 
\frac{|I|}{2^{-k}2^{M}}\sum_{n\geq \frac{1}{4}(1+\rdist(I, \mathbb B_{2^{M}}))}\hspace{-.5cm}n^{-N}
$$
$$
\lesssim \Big(\frac{2^{M}}{|I|}\Big)^{\theta_{p}+\frac{1}{2}}
\sum_{\max(0,M+\log|I|^{-1})\leq k} 2^{-k(\theta_{p}+\frac{1}{2})}(1+\rdist(I, \mathbb B_{2^{M}}))^{-N}
$$
%$$
%\leq \Big(\frac{2^{M}}{|I|}\Big)^{\theta_{p}+3/2}M\rdist(I, \mathbb B_{2^{M}})^{-N}
%\sum_{M+\log|I|^{-1}\leq k\leq 2M} 2^{-k(\theta_{p}+1/2)}
%$$
%$$
%\leq C\Big(\frac{2^{M}}{|I|}\Big)^{\theta_{p}+3/2}
%\Big(\min(1,\Big(\frac{|I|}{2^{M}}\Big))^{\theta_{p}+1/2}-2^{-2M(\theta_{p}+1/2)}\Big)
%(M^{-1}\rdist(I, \mathbb B_{2^{M}}))^{-N}
%$$
%$$
%\leq C\Big(\frac{2^{M}}{|I|}\Big)^{\theta_{p}+3/2}
%\max(0,(\min(2^{2M},\Big(\frac{|I|}{2^{-M}}\Big))^{\theta_{p}+1/2}-1))2^{-2M(\theta_{p}+1/2)}
%\rdist(I, \mathbb B_{2^{M}})^{-N}
%$$
%$$
%\leq C\Big(\frac{2^{M}}{|I|}\Big)^{\theta_{p}+3/2}
%\max(0,(\min(1,\Big(\frac{|I|}{2^{M}}\Big))^{\theta_{p}+1/2}-2^{-2M(\theta_{p}+1/2)}))
%\rdist(I, \mathbb B_{2^{M}})^{-N}
%$$
%$$
%\leq C\Big(\frac{2^{M}}{|I|}\Big)^{\theta_{p}+3/2}
%\max(0,(\min(1,\Big(\frac{|I|}{2^{M}}\Big))^{\theta_{p}+1/2}-2^{-2M(\theta_{p}+1/2)}))
%\rdist(I, \mathbb B_{2^{M}})^{-N}
%$$
%in our case
$$
\lesssim \Big(\frac{2^{M}}{|I|}\Big)^{\theta_{p}+\frac{1}{2}}
(1+\rdist(I, \mathbb B_{2^{M}}))^{-N}
\min(\Big(\frac{|I|}{2^{M}}\Big)^{\theta_{p}+\frac{1}{2}},1)
%-2^{-2M(\theta_{p}+1/2)}\bigg)
$$
%$$
%\leq C\rdist (I,\mathbb B_{2^{M}})^{-N}
%$$
%$$
%\leq C2^{3M(2-1/p)}2^{-2MN}+C2^{3M/p}(2^{M}+2^{2M})^{-N}
%$$
%which can be taken smaller than $\epsilon $ for $|\mathbb B_{2^{M}}|^{-1}\diam (I,\mathbb B_{2^{M}})$ big %enough. 
\begin{equation}\label{thirdcase0}
\lesssim (1+\rdist(I, \mathbb B_{2^{M}}))^{-N}
\end{equation}

For the second subcase, $|I|\leq |J|\leq 2^{M}$, we parametrize and bound the relevant subsum of (\ref{sumparametrized2}) as
%in a similar way as we had before
%$$
%\sum_{0\leq k\leq 2M} \sum_{\tiny \begin{array}{c}J\in {\cal D}_{M}\\ |J|=2^{-k}2^{M}\geq |I|\end{array}}
%\Big(\frac{|I|}{2^{-k}2^{M}}\Big)^{\theta_{p}+1/2}(1+|J|^{-1}|c(I)-c(J)|)^{-N}
%$$
\begin{equation}\label{twocasestogether}
\Big(\frac{|I|}{2^{M}}\Big)^{\theta_{p}+\frac{1}{2}}
\sum_{0\leq k\leq 2M}\hspace{-.5cm}
\sum_{\tiny \begin{array}{c}J\in {\cal D}_{M}\\ |J|=2^{-k}2^{M}\geq |I|\end{array}}
\hspace{-.3cm}2^{k(\theta_{p}+\frac{1}{2})}\Big(1+\frac{|c(I)-c(J)|}{|J|}\Big)^{-N}
\end{equation}
We now decompose the sums in the same way we did to obtain \eqref{2}:
$$
\sum_{0\leq k\leq \min (2M,M+\log|I|^{-1})}\hspace{-.5cm} 2^{k(\theta_{p}+\frac{1}{2})}
\Big(\sum_{j\geq 2^{k-2}(1+\rdist(I, \mathbb B_{2^{M}}))}j^{-N}
+\sum_{1\leq j\leq 1+2^{k+2}M}j^{-N}\Big)
$$
$$
\lesssim \hspace{-.3cm}\sum_{0\leq k\leq M+\log|I|^{-1}}\hspace{-.4cm} 2^{k(\theta_{p}+\frac{1}{2})}
\Big(2^{-Nk}(1+\rdist (I,\mathbb B_{2^{M}}))^{-N}+\Big(1+\frac{\rdist (I,\mathbb B_{2^{M}})}{M}\Big)^{-N}\Big)
$$
$$
\lesssim 
\Big(\hspace{-.5cm}\sum_{0\leq k\leq M+\log|I|^{-1}}\hspace{-.5cm} 2^{-k(N-(\theta_{p}+\frac{1}{2}))}\\
+\hspace{-.5cm}
\sum_{0\leq k\leq M+\log|I|^{-1}}2^{k(\theta_{p}+\frac{1}{2})}\Big)\Big(1+\frac{\rdist (I,\mathbb B_{2^{M}})}{M}\Big)^{-N}
$$
$$
\lesssim 
%\max(0,(1-(2^{-2M}\max(1,2^{M}|I|))^{N-1/p}))\Big)
\Big(1+\Big(\frac{2^{M}}{|I|}\Big)^{\theta_{p}+\frac{1}{2}}\Big)
\Big(1+\frac{\rdist (I,\mathbb B_{2^{M}})}{M}\Big)^{-N}
$$
%$$
%\lesssim \Big(\frac{|I|}{2^{-M}}\Big)^{\theta_{p}+1/2}2^{-2M(\theta_{p}+1/2)}
%%\max(0,(1-(2^{-2M}\max(1,2^{M}|I|))^{N-1/p}))\Big)
%2^{(M+\log|I|^{-1})(\theta_{p}+1/2)}
%(M^{-1}\rdist (I,\mathbb B_{2^{M}}))^{-N}
%$$
%$$
%\leq \Big(\frac{|I|}{2^{-M}}\Big)^{\theta_{p}+1/2}
%%\max(0,(1-(2^{-2M}\max(1,2^{M}|I|))^{N-1/p}))\Big)
%2^{-(M-\log|I|^{-1})(\theta_{p}+1/2)}
%(M^{-1}\rdist (I,\mathbb B_{2^{M}}))^{-N}
%$$
\begin{equation}\label{thirdcase}
\lesssim 
%\max(0,(1-(2^{-2M}\max(1,2^{M}|I|))^{N-1/p}))\Big)
\Big(\frac{2^{M}}{|I|}\Big)^{\theta_{p}+\frac{1}{2}}
\Big(1+\frac{\rdist (I,\mathbb B_{2^{M}})}{M}\Big)^{-N}
\end{equation}
since $|I|\leq 2^{M}$.
Notice here that the splitting of the sum into two parts is merely a notational device to express the division into two further sub-subcases along the lines of the subcases considered in the case 2).  In particular, note that in the considerations from which the sum $\sum_{1 \leq j \leq 1 + 2^{k+2}M} j^{-N}$ arises, we showed that $\rdist (I,\mathbb B_{2^{M}})/M \lesssim 1$ and therefore, 
$$
\sum_{1 \leq j \leq 1 + 2^{k+2}M} j^{-N}\lesssim 1\lesssim \Big(1+\frac{\rdist (I,\mathbb B_{2^{M}})}{M}\Big)^{-N}
$$

Then, with (\ref{thirdcase}), we can bound (\ref{twocasestogether}) by 
$$
\Big(\frac{|I|}{2^{M}}\Big)^{\theta_{p}+\frac{1}{2}}
%\max(0,(1-(2^{-2M}\max(1,2^{M}|I|))^{N-1/p}))\Big)
\Big(\frac{2^{M}}{|I|}\Big)^{\theta_{p}+\frac{1}{2}}
\Big(1+\frac{\rdist (I,\mathbb B_{2^{M}})}{M}\Big)^{-N}
=\Big(1+\frac{\rdist (I,\mathbb B_{2^{M}})}{M}\Big)^{-N}
$$

Putting \eqref{thirdcase0} and \eqref{thirdcase} together, we get in the third case that
\begin{equation}\label{3}
|\langle T(\phi_I),P_M(\varphi_I)\rangle | \lesssim \Big(1+\frac{\rdist (I,\mathbb B_{2^{M}})}{M}\Big)^{-N}
\end{equation}
%$$
%\leq C\Big(\Big(\frac{2^{M}}{|I|}\Big)^{\theta_{p}+3/2}
%(\Big(\frac{|I|}{2^{-M}}\Big)^{\theta_{p}+1/2}-1)\frac{1}{2^{2M(\theta_{p}+1/2)}}
%+1\Big) \rdist (I,\mathbb B_{2^{M}})^{-N}
%$$
%$$
%\leq C\rdist (I,\mathbb B_{2^{M}})^{-N}
%$$
%which can be taken smaller than $\epsilon $ for $|\mathbb B_{2^{M}}|^{-1}\diam (I,\mathbb B_{2^{M}})$ big %enough. 

%Notice that we obtained 
%$$
%|\langle T(\phi_I),P_{M}(\varphi_I)\rangle |
%\lesssim \Big(\frac{2^{M}}{|I|}\Big)^{2-1/p}(|I|^{-1}\diam (I,K_{M}))^{-N}\chi_{\{|I|>2^{M+1}\}}
%$$
%$$
%+\Big(\frac{|I|}{2^{M}}\Big)^{1/p}(|K_{M}|^{-1}\diam (I,K_{M}))^{-N}\chi_{\{|I|\leq 2^{-(M+1)}\}}
%$$
%$$
%\lesssim \Big(\Big(\frac{2^{M}}{|I|}\Big)^{2-1/p}\chi_{\{|I|>2^{M}\}}
%+ \Big(\frac{|I|}{2^{M}}\Big)^{1/p}\chi_{\{|I|\leq 2^{-M}\}}\Big)(|R|^{-1}\diam (I,K_{M}))^{-N}
%$$
%$$
%\lesssim \Big(\frac{|I|^{1/p}}{(|I|+1)^{2}}\Big)(|R|^{-1}\diam (I,K_{M}))^{-N}
%$$
%with $R=\max (|I|,|K_{M}|)$. 

Finally, with (\ref{1}), (\ref{2}), (\ref{3}) and using $\min (a,b)\approx 1/(a^{-1}+b^{-1})$, we have 
\begin{equation}\label{4}
|\langle T(\phi_I),P_M(\varphi_I)\rangle |
\end{equation}
$$
\lesssim
\min(1,\Big(\frac{2^{M}}{|I|}\Big))^{\theta_p+\frac{1}{2}}
\min(1,\Big(\frac{|I|}{2^{-M}}\Big))^{\theta_p+\frac{1}{2}}
\Big(1+\frac{\rdist (I,\mathbb B_{2^{M}})}{M}\Big)^{-N}
$$
$$
\lesssim
\Big(1+\frac{|I|}{2^{M}}\Big)^{-(\theta_p+\frac{1}{2})}
\Big(1+\frac{2^{-M}}{|I|}\Big)^{-(\theta_p+\frac{1}{2})}
\Big(1+\frac{\rdist (I,\mathbb B_{2^{M}})}{M}\Big)^{-N}
$$
%$$
%= C
%\frac{2^{M(\theta_p+3/2)}}{(2^{M}+|I|)^{\theta_p+3/2}}
%\frac{|I|^{\theta_p+1/2}}{(|I|+2^{-M})^{\theta_p+1/2}}
%\rdist (I,\mathbb B_{2^{M}})^{-N}
%$$
finishing the proof.

%which is equivalent to 
%$$
%|\langle T(\phi_I),P_{M}(\varphi_I)\rangle |
%\lesssim \frac{1}{|I|^{2-1/p}}n^{-N}\chi_{|I|>2}
%+|I|^{1/p}(1+n)^{-N}\chi_{|I|\leq 2}
%$$
%$$
%\lesssim \frac{|I|^{1/p}}{(|I|+1)^{2}}n^{-N}
%$$
%which is an admissible function of the form 
%$
%\frac{x^{\alpha }}{(x+1)^{\alpha +\beta}}
%$
%with $\alpha=1/p$ and $\beta =2/p'$.
\end{proof}

\subsection{Interlude on compact Calder\'on-Zygmund kernels.}\label{kernelandadmissible} 
%as a function of $\CMO(\mathbb R)$}\label{T(1)}

%\begin{definition}
%An $H^1$ atom on an interval $I$ is
%a function $f\in L^2(I)$ with mean zero and
%$$\|f\|_2\le |I|^{-1/2}$$
%and hence $\|f\|_1\le 1$.
%\end{definition}

Before proving the necessity of the third hypothesis, we devote some time to justify the assumption of some extra properties of the compact Calder\'on-Zygmund kernels and the admissible functions appearing in their definition. 
%\begin{remark} \rm
%Paco: The argument shows that we may as well assume that $L,S,D \in L^\infty$. So no special assumption on the product $LS$ is needed. For notation, please also assume that $D$ is defined on all of $[0,\infty)$. It doesn't matter.

We will start by assuming that the admissible functions satisfy that 
$L$ and $D$ are monotone non-increasing while $S$ is monotone non-decreasing. 
This is possible because if $L$, $S$ and $D$ are admissible and as in Definition \ref{prodCZ}, then the functions 
$$
L_{1}(x)=\sup_{y\in [x,\infty )}L(y)
\hskip20pt
S_{1}(x)=\sup_{y\in [0,x]}S(y)
\hskip20pt
D_{1}(x)=\sup_{y\in [x,\infty )}D(y)
$$
are admissible, give the same kernel bounds as in Definition \ref{prodCZ} and satisfy the monotonicity requirements. 

More importantly, in all forthcoming proofs, 
we will not be using the smoothness hypothesis of the kernel as it is stated in Definition \ref{prodCZ}. Instead, 
we will adopt the alternative formulation provided by the following lemma:

\begin{lemma}\label{compactCZ}
Let $\Delta $ be the diagonal of $\mathbb R^{2}$.
Let $K:(\R^{2} \setminus \Delta )\to \mathbb C$ be a
compact Calder\'on-Zygmund kernel. Then, 
%$
%\displaystyle{\lim_{|t-x|\to\infty} K(t,x) = 0}
%$ 
%for all $(t,x)\in  \R^{2} \setminus \Delta $
%we have 
%$$
%\lim_{|\lambda |\rightarrow \infty }|K(x+\lambda ,t-\lambda)|=0
%$$ and 
there exists $0 < \delta' < 1$ and admissible functions $L_{1},S_{1},D_{1}$ such that
\begin{multline}\label{compCZ}
|K(t,x)-K(t',x')|
\le C 
%\frac{1}{(|x|+|t|)^{\theta}} 
\frac{(|t-t'|+|x-x'|)^{\delta'}}{|t-x|^{1+\delta' }}\\
L_{1}(|t-x|)S_{1}(|t-t'|+|x-x'|)
D_{1}\Big(1+\frac{|t+x|}{1+|t-x|}\Big)
\end{multline}
whenever $2(|t-t'|+|x-x'|)<|t-x|$. 
\end{lemma}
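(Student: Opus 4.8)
The statement asserts that the "product" smoothness bound in Definition \ref{prodCZ}, where the admissible functions are all evaluated using $|t-x|$ and $|t+x|$, can be replaced by an apparently stronger-looking bound in which $S_1$ is evaluated at the increment $|t-t'|+|x-x'|$ and $D_1$ is evaluated at $1+\frac{|t+x|}{1+|t-x|}$, at the modest cost of lowering the exponent from $\delta$ to some $\delta'<\delta$. The idea is to spend a little of the decay power $\delta$ to absorb the change of variables in the arguments of $S$ and $D$; this is exactly the kind of trade-off already performed in Lemma \ref{smoothimpliesdecay1}. Throughout I will use the normalization from subsection \ref{kernelandadmissible}, so that $L,D$ are non-increasing and $S$ is non-decreasing.

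\medskip
\textbf{Step 1: the $S$-argument.} Fix $(t,x),(t',x')$ with $2(|t-t'|+|x-x'|)<|t-x|$, and write $h=|t-t'|+|x-x'|$, so $2h<|t-x|$. Starting from the bound in Definition \ref{prodCZ},
$$
|K(t,x)-K(t',x')|\le C\frac{h^\delta}{|t-x|^{1+\delta}}L(|t-x|)S(|t-x|)D(|t+x|),
$$
I split the factor $h^\delta/|t-x|^{1+\delta}$. Pick $0<\delta'<\delta$ and write
$$
\frac{h^\delta}{|t-x|^{1+\delta}}
=\frac{h^{\delta'}}{|t-x|^{1+\delta'}}\Bigl(\frac{h}{|t-x|}\Bigr)^{\delta-\delta'}.
$$
Since $2h<|t-x|$, we have $h/|t-x|\le 1/2$, so $(h/|t-x|)^{\delta-\delta'}$ is bounded; but more to the point I want to convert it into decay in $h$. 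Define $S_1(u)=\sup_{0\le v\le u}\bigl(v^{\delta-\delta'}S(2v)\bigr)^{?}$ — more carefully: set $G(u)=u^{\delta-\delta'}$ and note $G(h)\le G(|t-x|/2)\lesssim |t-x|^{\delta-\delta'}$, which combined with the $|t-x|^{-(\delta-\delta')}$ gained from the display above leaves an $O(1)$ factor, so on its own this does not produce $S$ evaluated at $h$. Instead I use that $S$ is non-decreasing and $2h<|t-x|$ to write $S(|t-x|)=S(|t-x|)$ and, wanting $S_1(h)$, I \emph{cannot} simply bound $S(|t-x|)$ by something in $h$. The correct move is the reverse: I only need $S_1(h)\ge$ (something), i.e. I need the \emph{right-hand side} of \eqref{compCZ} to dominate. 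So I take $S_1$ with $S_1(u)\ge \sup_{v\ge u}$ of the relevant quantity; concretely I will show that the original right side is $\le C h^{\delta'}|t-x|^{-1-\delta'}L_1(|t-x|)S_1(h)D_1(\cdots)$ by choosing $S_1(u)=\sup_{v\ge 2u}\bigl((u/v)^{?}S(v)\bigr)$. I will arrange the powers so that $(h/|t-x|)^{\delta-\delta'}S(|t-x|)\le S_1(h)$ and check $\lim_{u\to 0}S_1(u)=0$ using boundedness of $S$ together with the gained power of $h$; here Lebesgue dominated convergence / a direct $\sup$ estimate as in Lemma \ref{smoothimpliesdecay1} does the job.

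\medskip
\textbf{Step 2: the $D$-argument.} It remains to replace $D(|t+x|)$ by $D_1\bigl(1+\frac{|t+x|}{1+|t-x|}\bigr)$. The point is that on the region where $|t-x|$ is bounded above and below, $1+\frac{|t+x|}{1+|t-x|}\approx |t+x|$, so $D\bigl(c|t+x|\bigr)$ is controlled by $D_1$ of the new argument after an admissible dilation (Remark \ref{constants}); and on the region where $|t-x|$ is large, the extra power of $|t-x|^{-1}$ gained in Step 1 (or still available from $\delta'<\delta$) is used to kill any discrepancy, since then $D_1$ of the new argument need not be small but the prefactor already decays. Concretely, set $D_1(u)=\sup_{v\ge u}D(cv)$ for a suitable constant $c$; then $D_1$ is admissible and non-increasing, and a short case split on whether $|t+x|\gtrsim 1+|t-x|$ or not, combined with absorbing at most one more factor $(h/|t-x|)^{\delta-\delta'}\le 1$, yields $D(|t+x|)\le C\,D_1\bigl(1+\tfrac{|t+x|}{1+|t-x|}\bigr)\cdot(\text{bounded})$. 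Finally set $L_1(u)=\sup_{v\ge u}L(v)$, which is admissible and non-increasing and dominates $L(|t-x|)$ trivially.

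\medskip
\textbf{Step 3: assembling and the limits.} Combining Steps 1 and 2 gives \eqref{compCZ} with $\delta'$, $L_1$, $S_1$, $D_1$ as constructed, and a new constant $C$ depending only on the original data and $\delta-\delta'$. The remaining check is that $L_1,S_1,D_1$ are genuinely admissible, i.e. $\lim_{u\to\infty}L_1(u)=\lim_{u\to 0}S_1(u)=\lim_{u\to\infty}D_1(u)=0$; for $L_1$ and $D_1$ this is immediate from the corresponding limits of $L,D$ and monotonicity of the sup, and for $S_1$ it follows because $S$ is bounded and the extra factor $u^{\delta-\delta'}$ (or the ratio $(u/v)^{\delta-\delta'}$) tends to $0$ as $u\to 0$ uniformly in the relevant range — the same dominated-convergence argument used at the end of the proof of Lemma \ref{smoothimpliesdecay1}. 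The main obstacle is purely bookkeeping: choosing the exponents in the definitions of $S_1$ and $D_1$ so that the \emph{direction} of each monotonicity/sup is the one that makes the target right-hand side dominate the source, while still keeping all three functions admissible; once the splitting $h^\delta/|t-x|^{1+\delta}=(h^{\delta'}/|t-x|^{1+\delta'})(h/|t-x|)^{\delta-\delta'}$ is in place, each individual estimate is elementary.
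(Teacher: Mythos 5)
Your core mechanism is the same as the paper's: split $\frac{h^{\delta}}{|t-x|^{1+\delta}}=\frac{h^{\delta'}}{|t-x|^{1+\delta'}}\bigl(\frac{h}{|t-x|}\bigr)^{\delta-\delta'}$ with $h=|t-t'|+|x-x'|$ and spend the factor $\bigl(\frac{h}{|t-x|}\bigr)^{\delta-\delta'}$ to gain smallness at small increments (the paper does this by absorbing that factor into a single function $F$ and then defining $L_1,S_1,D_1$ as suprema of $F^{1/3}$ over the three half-regions, so each factor automatically dominates $F^{1/3}$ at the given configuration). However, your execution has genuine gaps. First, the admissibility of $S_1(u)=\sup_{v\ge 2u}(u/v)^{\delta-\delta'}S(v)$ is not established by the argument you describe: at $v\approx 2u$ the ratio $(u/v)^{\delta-\delta'}$ is $\approx 2^{-(\delta-\delta')}$, so it does not tend to zero ``uniformly in the relevant range,'' and boundedness of $S$ then gives nothing. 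What is needed is the dichotomy the paper runs as a sequence argument: either $v$ is much larger than $u$ (ratio small), or $v$ is comparable to $u$, in which case $v\to 0$ and one must invoke $\lim_{v\to 0}S(v)=0$ --- a hypothesis your sketch never uses. The check is short, but ``dominated convergence plus boundedness of $S$'' is not it.

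Second, and more seriously, Step 2 would fail as written. Passing from $D(|t+x|)$ to $D_1\bigl(1+\frac{|t+x|}{1+|t-x|}\bigr)$ is not a dilation, so Remark \ref{constants} does not apply: when $|t-x|$ is large, $|t+x|$ can exceed any fixed dilate of the new argument. With your choice $D_1(u)=\sup_{v\ge u}D(cv)$ the required domination $D(|t+x|)\lesssim D_1\bigl(1+\frac{|t+x|}{1+|t-x|}\bigr)$ can simply be false; for an admissible $D$ vanishing outside a bounded set the right-hand side can even vanish identically on the relevant range while the left-hand side does not. The proposed rescue for large $|t-x|$, ``the prefactor already decays,'' cannot work either: the target inequality \eqref{compCZ} is a pointwise domination whose right-hand side contains $D_1$ as a multiplicative factor, so no smallness of the common prefactor (or of $L_1$) compensates a deficient or vanishing $D_1$. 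The correct and much simpler move, implicit in the paper, is the inequality $\frac{|t+x|}{1+|t-x|}\le |t+x|$, which lets you take a shifted envelope such as $D_1(y)=\sup_{w\ge y-1}D(w)$; then $|t+x|\ge \bigl(1+\frac{|t+x|}{1+|t-x|}\bigr)-1$ gives the domination for every configuration, with no case split at all. With these two repairs your per-factor strategy does yield the lemma, but as proposed the $S_1$ limit is unjustified and the $D_1$ step is wrong.
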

\proof
%A function $K:(\R^{2} \setminus \Delta )\to \mathbb C$ is called a
%compact Calder\'on-Zygmund kernel if 
%$
%\displaystyle{\lim_{|t-x|\to\infty} K(t,x) = 0},
%$ 
%%for all $(t,x)\in  \R^{2} \setminus \Delta $
%%we have 
%%$$
%%\lim_{|\lambda |\rightarrow \infty }|K(x+\lambda ,t-\lambda)|=0
%%$$
%and for some $0<\delta< 1$ and $C>0$, we have
%%$$
%%\begin{array}{l}
%%|K(t,x)|\le C {\displaystyle \frac{1}{|t-x|}} \\
%\begin{multline*}
%|K(t,x)-K(t',x')|
%\le C 
%%\frac{1}{(|x|+|t|)^{\theta}} 
%\frac{(|t-t'|+|x-x'|)^\delta}{|t-x|^{1+\delta}}\\
%L(|t-x|)S(|t-t'|+|x-x'|)
%D\Big(1+\frac{|x+t|}{1+|t-x|}\Big)
%\end{multline*}
%whenever $2(|t-t'|+|x-x'|)<|t-x|$. 
%%and $|x|+|t|>M$.
%Since $2(|t-t'|+|x-x'|)<|t-x|$ and $\frac{|t+x|}{1+|t-x|}\leq |t+x|$
%It is clear that \eqref{compCZ} implies Definition \ref{prodCZ}. On the other hand, 
We need to show that 
the condition in Definition \ref{prodCZ} implies this new condition (\ref{compCZ}), with any $\delta' < \delta$. 
To see this, pick $0 < \varepsilon < \delta $, let $\delta' = \delta - \varepsilon$, and 
\begin{equation*}
F(x,t,x',t') = \frac{(|t-t'|+|x-x'|)^{\varepsilon}}{|t-x|^\varepsilon}L(|t-x|)S(|t-x|)
D(|x+t|),
\end{equation*}
defined for tuples $(x,t,x',t')$ such that $2(|t-t'|+|x-x'|) < |t-x|$. 
Then, by Definition \ref{prodCZ}, 
$$
|K(t,x)-K(t',x')|
\le C 
%\frac{1}{(|x|+|t|)^{\theta}} 
\frac{(|t-t'|+|x-x'|)^{\delta'}}{|t-x|^{1+\delta' }}
F(x,t,x',t')
$$
with $F$ bounded and such that $\lim_{|t-x|\to \infty} F = 0$. 

We prove now that also $\lim_{|t-t'|+|x-x'|\to 0} F = 0$. 
Suppose there exists a sequence $(x_n,t_n,x_n',t_n')$ such that 
$2(|x_n-x_n'|+|t_n-t_n'|) < |x_n-t_n|$ and $\lim_{n\to\infty} |x_n-x_n'|+|t_n-t_n'| = 0$, 
but $\inf_n F(x_n,t_n,x_n',t_n') > 0$. 

If there would exist a constant $C > 0$ such that
\begin{equation*}
|x_n-x_n'|+|t_n-t_n'| \geq C|x_n-t_n|
\end{equation*}
then, by monotonicity of $S$, we would have 
$$
S(|x_{n}-t_{n}|)\leq S(C^{-1}(|x_{n}-x_{n'}|+|t_{n}-t_{n'}|))
$$
The upper bound tends to zero and so, also $F(x_{n},t_{n},x_{n'},t_{n})$ would tend to zero, giving a contradiction.

Hence, we can assume
\begin{equation*}
\varliminf_n \frac{|x_n-x_n'|+|t_n-t_n'|}{|x_n-t_n|} = 0,
\end{equation*}
which immediately gives $\varliminf_{n} F(x_n,t_n,x_n',t_n') = 0$, also a contradiction.

%Let $(x_n,t_n,x_n',t_n')_{n\in \mathbb N}$ be a  sequence such that 
%$2(|x_n-x_n'|+|t_n-t_n'|) \leq |x_n-t_n|$ and $\lim_{n\to\infty} |x_n-x_n'|+|t_n-t_n'| = 0$. 
%If
%\begin{equation}\label{quotient}
%\lim_{n\rightarrow \infty} \frac{|x_n-x_n'|+|t_n-t_n'|}{|x_n-t_n|} = 0,
%\end{equation}
%then, due to the first factor in $F$, we have that $\lim_n F(x_n,t_n,x_n',t_n') = 0$
%
%Otherwise, there is a  
%subsequence $(x_{n_{k}},t_{n_{k}},x_{n'_{k}},t_{n'_{k}})_{k\in \mathbb N}$ such that 
%$$
%\inf_{k}\frac{|x_{n_{k}}-x_{n_{k}'}|+|t_{n_{k}}-t_{n_{k}'}|}{|x_{n_{k}}-t_{n_{k}}|}>0
%$$
%%there is a subsequence 
%%$(x_{n_{k}},t_{n_{k}},x_{n'_{k}},t_{n'_{k}})_{k\in \mathbb N}$ and 
%there is a constant $C>0$
%such that 
%\begin{equation*}
%|x_{n_{k}}-x_{n_{k}'}|+|t_{n_{k}}-t_{n_{k}'}| \geq C|x_{n_{k}}-t_{n_{k}}|
%\end{equation*}
%then, by monotonicity of $S_{1}$ we have 
%$$
%S(|x_{n_{k}}-t_{n_{k}}|)\leq S(C^{-1}(|x_{n_{k}}-x_{n_{k}'}|+|t_{n_{k}}-t_{n_{k}'}|))
%$$
%The upper bound tends to zero and so, also $F(x_{n_{k}},t_{n_{k}},x_{n_{k}'},t_{n_{k}})$ tends to zero. 

It is also clear that $\lim_{\frac{|x+t|}{1+|t-x|}\to \infty} F = 0$, in view of the inequality $\frac{|x+t|}{1+|t-x|}\leq |x+t|$ and the limit satisfied by $D$.

%For if $|t-x| \to \infty$ we already know that $F \to 0$. On the other hand, if $|t-x| \leq C < \infty$, then $D(|x+t|) \leq D(C'\frac{|x+t|}{1+|t-x|})$ for some constant $C'$. Arguing with sequences as in the previous paragraph gives the desired result.
%Since in this case, also $|x+t|$ tends to infinity, we can assume $|x+t|>1$. This way, 
%$$
%1+\frac{|x+t|}{1+|t-x|}\leq 2|x+t|
%$$
%and, by the monotonicity of $D$, we have 
%$$
%D(|x+t|)\leq D\Big(2^{-1}\Big(1+\frac{|x+t|}{1+|t-x|}\Big)\Big)
%$$

Finally, we define for $y \geq 0$,
\begin{align*}
L_{1}(y) &= \sup_{|t-x| \geq y} F(x,t,x',t')^{1/3}, \\
S_{1}(y) &= \sup_{|t-t'|+|x-x'| \leq y} F(x,t,x',t')^{1/3}, \\ 
D_{1}(y) &= \sup_{1+\frac{|x+t|}{1+|t-x|} \geq y} F(x,t,x',t')^{1/3},
\end{align*}
%and $D(y) = D_2(y-1)$ for $y \geq 1$. 
This way, $L_{1}$,  $S_{1}$ and $D_{1}$ constitute a set of admissible functions
and the corresponding smoothness condition holds since 
$$
L_{1}(|t-x|)S_{1}(|t-t'|+|x-x'|)D_{1}\Big(1+\frac{|x+t|}{1+|t-x|}\Big)\geq F(x,t,x',t')
$$
finishing the proof.

\vskip5pt
From now on we will assume that a compact Calder\'on-Zygmund kernel is associated with a $\delta'$ (which we will denote again by $\delta$) and admissible, monotone functions $L_1$, $S_1$ and $D_1$ (denoted $L, S$ and $D$) satisfying the inequality (\ref{compCZ}).
%\end{remark}
%\begin{remark}\label{modout}
%We notice first that 
%$$
%|x'+t'|\leq |x+t|+(|x'-x|+|t'-t|)\leq |x+t|+1/2|t-x|
%$$
%$$
%|x'+t'|\geq |x+t|-(|x'-x|+|t'-t|)\geq |x+t|-1/2|t-x|
%$$
%and then 
%$$
%1+\frac{|x'+t'|}{\max(|t-x|,1)}
%\leq 3/2+\frac{|x+t|}{\max(|t-x|,1)}
%$$
%$$
%\leq 3/2(1+\frac{|x+t|}{\max(|t-x|,1)})
%$$
%and also 
%$$
%1+\frac{|x'+t'|}{\max(|t-x|,1)}
%\geq 1/2+\frac{|x+t|}{\max(|t-x|,1)}
%$$
%$$
%\geq 1/2(1+\frac{|x+t|}{\max(|t-x|,1)})
%$$
One reason for this change of notation is that, 
%More important, we also notice that 
since 
%$$
%|x|\leq |x|+|t| \approx |t-x|+|x+t|\leq 1+|t-x|+|x+t|
%$$  
%we have 
%$$
%1+\frac{|x|}{1+|t-x|}\leq 2\Big(1+\frac{|x+t|}{1+|t-x|}\Big)
%$$
$
|x|\leq (1+|t-x|+|x+t|)/2
$,
we have 
$$
1+\frac{|x|}{1+|t-x|}\leq \frac{3}{2}\Big(1+\frac{|x+t|}{1+|t-x|}\Big)
$$
and by the monotonicity of $D_{1}$,   
%$$
%\max(|t-x|,2^{M})^{-1}|x+t|+1=\max(|t-x|,1)^{-1}(|x+t|+\max(|t-x|,1))
%$$
%$$
%\geq \max(|t-x|,1)^{-1}(|x|+|t|)
%$$
%Therefore, by the monotonicity of $D$, the stated condition can be rewritten as
%\begin{multline*}
%|K(t,x)-K(t',x')|
%\le C \frac{(|t-t'|+|x-x'|)^\delta}{|t-x|^{1+\delta}}\\
%L(|t-x|)S(|t-t'|+|x-x'|)
%%D(|x|+|t|)
%D\Big(\displaystyle{1+\frac{|x|+|t|}{1+|t-x|}}\Big).
%\end{multline*}
%This will often be used. 
$$
D_{1}\Big(\displaystyle{1+\frac{|x+t|}{1+|t-x|}}\Big)
%\leq D_{1}\Big(\displaystyle{2^{-1}\Big(1+\frac{|x|+|t|}{1+|t-x|}\Big)}\Big)
\leq D_{1}\Big(\displaystyle{\frac{2}{3}\Big(1+\frac{|x|}{1+|t-x|}\Big)}\Big).
$$
This fact will be crucially used in our proofs. 

Finally, we note that,  
under the condition $2(|t-t'|+|x-x'|)<|t-x|$,
$$
1+\frac{\min(|x+t|,|x'+t'|)}{1+|t-x|}
\approx 1+\frac{|x+t|}{1+|t-x|}.
$$

%\begin{remark} \label{xtremark}
%In order to make notation symmetrical among all conditions, we will assume that 
%that there exist admissible functions $L,S,D$ such that
%\begin{eqnarray*}
%|K(t,x)-K(t',x')|
%&\le &C 
%%\displaystyle{
%%\frac{1}{(|x|+|t|)^{\theta}} 
%\frac{(|t-t'|+|x-x'|)^\delta}{|t-x|^{1+\delta}}\\
%%\frac{L(|t-x|)}{(\max(|t-x|,2^{M})^{-1}|x|+1)^{N}}
%%\hskip150pt 
%&&L(|t-x|)S(|t-t'|+|x-x'|)
%%D(|x+t|)
%D\Big(1+\frac{|x+t|}{1+|t-x|}\Big)
%%\end{array}
%\end{eqnarray*}

%\begin{remark}\label{xtremark}

%Furthermore, it is easy to verify that previous smoothness condition 
%%in Definition \ref{prodCZ} 
%is equivalent to the statement that there exist admissible functions $L_1,S_1,D_1$ such that
%\begin{multline*}
%|K(t,x)-K(t',x')|
%\le C
%\frac{(|t-t'|+|x-x'|)^\delta}{|t-x|^{1+\delta}}\\
%L_1(|t-x|)S_1(|t-t'|+|x-x'|)
%D_1(|x+t|)
%\end{multline*}
%This formulation will be used in the applications of our main theorem. 
%
%We finally that, in applications, we won't consider the monotonicity properties of the admissible functions. 

%\end{remark}

\subsection{Necessity of $T(1) \in \CMO$.}\label{T(1)} 
Now, in order to prove necessity of the third condition, we need to give a rigorous definition of $T(1)$. Since $T$ is a associated with a standard Calder\'on-Zygmund kernel, the actual definition of $T(1)$ is already known. However, in our proofs we need stronger estimates than the ones provided by the classical theory. 
This is the reason why we state and proof Lemma \ref{definecmo}, which will allow us to define $T(1)$ 
%for every bounded smooth function $b$, 
as a functional in the dual of the space of smooth functions with compact support and mean zero. 

%$\BMO(\mathbb R)$ and $\CMO(\mathbb R)$. 

Once this is done, 
%distribution modulo constants which belongs to $\CMO(\mathbb R)$.
%, and then prove that compactness of the operator implies that $T(1)$ belongs 
%to $\CMO(\mathbb R)$. 
%and prove that the restricted boundedness property implies that they are
%elements of BMO.
%The approach is similar to the uni-parametric case and so we will follow some of the arguments in \cite{ST}.
%We start with the technical lemma that gives meaning to $T(1)$ (and also the adjoint $T^{*}(1)$).
the hypothesis that $T(1)\in \BMO(\mathbb R)$ means that the inequality
$$
|\langle T(1), f\rangle |\leq C\| f\|_{H^1(\mathbb R)}
$$
holds for a dense subspace of $H^{1}(\mathbb R)$. In particular, we will verify the estimate for all smooth functions $f$ with compact support and mean zero, which are dense in $H^{1}(\mathbb R)$. 
%that belongs to a dense subset of $H^{1}(\mathbb R)$. In our case, such dense subset will be
%the family of Schwartz functions $f$ compactly supported with mean zero. 
Furthermore, the hypothesis 
$T(1)\in \CMO(\mathbb R)$ means that
$$
 \lim_{M\rightarrow \infty} |\langle P_{M}^{\perp}(T(1)),f\rangle |=0
$$ 
holds uniformly in a dense subset of the unit ball of $H^{1}(\mathbb R)$. The necessity of $T(1) \in \CMO$ when $T$ is a compact operator will be proven in Proposition \ref{necessity2}.

%We remind that an atom in $\mathbb R$ is any function $f\in {\mathcal C}^{1}(\mathbb R)$ such that it is supported in a measurable set $\Omega $ with $\| f\|_{L^{2}(\mathbb R^{2})}\leq |\Omega|^{-1/2}$ and has mean zero.

%The way to apply such reduction is the following. We approximate any atom $f$  by a Schwartz function $f_{n}$
%with the described properties such that $f_{n}$ converges to $f$ in the topology of ${\mathcal S}(\mathbb R^{2})$
%and $\| f_{n}\|_{L^{1}(\mathbb R)}\leq \| f\|_{L^{1}(\mathbb R)}$. The, we prove the below stated lemmata 
%\ref{definecmo} and \ref{} for $f_{n}$ but without the use of any smoothness property of $f_{n}$. Finally, the continuity of 
%$\Lambda $
%allow to conclude the same results for $f$. We will not give further details. 

For all $a\in \mathbb R$ and $\lambda >0$, $p>0$,   
we define the translation operator as $ {\cal T}_{a} (f)(x)=f(x-a)$ and the dilation operators as
${\cal D}_{\lambda }(f)(x)=f(x/\lambda)$ and ${\cal D}_{\lambda }^{p}(f)(x)=\lambda^{-1/p}f(x/\lambda)$. 
Let also $\Phi\in {\cal S}(\R)$ be a smooth cut-off function such that 
$\Phi(x)=1$ for $|x|\leq 1$, $0<\Phi(x)<1$ for $1<|x|<2$ 
and $\Phi(x)=0$ for $|x|>2$. 
%Similarly, in the next section we will use the notation 
%$\Phi_{I}$ for a $L^\infty$-normalized function adapted to $I$ such that 
%$\Phi_{I}=1$ in $2I$ and $\Phi_{I}=0$ in $(4I)^{c}$. 

In order to give meaning to $T(1)$ (and also to $T^{*}(1)$), we
use the following technical lemma:

\begin{lemma}\label{definecmo}
Let $T$ be a linear operator associated with a compact Calder\'on-Zygmund kernel $K$ with parameter $\delta >0$ (as in Section \ref{kernelandadmissible}).
Let $I\subset \mathbb R$ be an interval and let $f\in \S(\R)$ have compact support in $I$ and mean zero.
%$H^1$-atom on $S$
Then, the limit
$${\mathcal L}(f)=\lim_{k\to \infty} \langle T({\cal T}_{a} {\mathcal D}_{2^{k}|I|}\Phi ),f\rangle $$
exists 
and it is independent of the translation parameter $a\in \mathbb R$ and the cut-off function $\Phi$.

Moreover, for all $k\in \mathbb N$ such that 
$2^{k}\geq 1+|I|^{-1}|a-c(I)|$, we have the error bound
\begin{multline*}
|{\mathcal L}(f)- \langle T({\cal T}_{a} {\cal D}_{2^{k}|I|}\Phi ),f\rangle |
\le C2^{-\delta k} (1+|I|^{-1}|a-c(I)|)^{\delta}\\
\sum_{k'=0}^{\infty }2^{-\delta k'}
L_K(2^{k'+k}|I|)S_K(2^{k'+k}|I|)D_K(1+(1+2^{k'+k}|I|)^{-1}|a|)
%\frac{1}{( \max(2^{k}|I|,|K_{M}|)^{-1}\diam(2^{k}I\cup K_{M}))^{N}}
\| f\|_{L^1(\mathbb R)}
\end{multline*}
%$2^{k}\geq \max(2(1+|I|^{-1}|a-c(I)|),|I|^{-1}x_{0})$ where $x_{0}$ is the minimal value such that 
%$L\cdot S$ is non-creasing in $[x_{0},\infty )$. 
where the constant $C$ depends only on $\Phi$ and $T$.

%The functional $L$ extends to a bounded linear operator on the Hardy space $H^1$

\end{lemma}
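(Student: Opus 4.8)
The plan is to show that the sequence $\langle T(\mathcal{T}_a \mathcal{D}_{2^k|I|}\Phi), f\rangle$ is Cauchy by estimating the difference between consecutive terms and summing a geometric-type series. The key point is that $f$ has mean zero and compact support in $I$, while $\mathcal{T}_a \mathcal{D}_{2^k|I|}\Phi$ equals $1$ on a large neighbourhood of $I$ once $2^k$ exceeds $1+|I|^{-1}|a-c(I)|$. Writing $\Phi_k = \mathcal{T}_a \mathcal{D}_{2^k|I|}\Phi$, I would first record that $\Phi_{k+1}-\Phi_k$ is supported in the annular region where $|x-a|\sim 2^{k}|I|$ (more precisely $2^{k-1}|I| \lesssim |x-a| \lesssim 2^{k+2}|I|$), and that on this support $f$ vanishes identically, so the pairing $\langle T(\Phi_{k+1}-\Phi_k), f\rangle$ can be expressed via the kernel representation of Definition \ref{intrep}:
$$
\langle T(\Phi_{k+1}-\Phi_k), f\rangle = \int\!\!\int (\Phi_{k+1}-\Phi_k)(t)\, f(x)\, K(t,x)\, dt\, dx.
$$

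Next, I would exploit the mean zero of $f$ to subtract a constant in the $x$-variable: replace $K(t,x)$ by $K(t,x)-K(t,c(I))$ and apply the smoothness estimate \eqref{compCZ} from Lemma \ref{compactCZ}. On the support of the integrand we have $|x-c(I)|\lesssim |I|$ and $|t-x|\sim |t-a|\sim 2^k|I|$, so the smoothness condition $2(|x-c(I)|)<|t-x|$ holds for $k$ large, and it yields a factor
$$
\frac{|x-c(I)|^{\delta}}{|t-x|^{1+\delta}} L(|t-x|) S(|x-c(I)|) D\Big(1+\tfrac{|t+x|}{1+|t-x|}\Big) \lesssim \frac{|I|^{\delta}}{(2^k|I|)^{1+\delta}} L(2^k|I|) S(2^k|I|) D\big(1+(1+2^k|I|)^{-1}|a|\big),
$$
using monotonicity of $S$ and the comparisons of Section \ref{kernelandadmissible} to pass from $|t+x|$ to $|a|$ and from $|x-c(I)|\le |I|$ to the argument $2^k|I|$ inside $S$. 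Integrating $\Phi_{k+1}-\Phi_k$ over $t$ costs a factor $\sim 2^k|I|$ (its $L^1$ norm, since it is $L^\infty$-normalized on a set of that measure) and integrating $|f|$ over $x$ gives $\|f\|_{L^1}$; after also absorbing the factor $(1+|I|^{-1}|a-c(I)|)$ coming from the fact that $\Phi_k$ only stabilizes to $1$ once $2^k \gtrsim 1+|I|^{-1}|a-c(I)|$, the difference of consecutive terms is bounded by
$$
C\, 2^{-\delta k}(1+|I|^{-1}|a-c(I)|)^{\delta}\, L_K(2^k|I|)S_K(2^k|I|)D_K\big(1+(1+2^k|I|)^{-1}|a|\big)\,\|f\|_{L^1}.
$$
Summing this over $k' \ge k$ (reindexing $2^{k'}|I| = 2^{(k'-k)+k}|I|$) gives a convergent series dominated by the right-hand side in the statement, which simultaneously proves the sequence is Cauchy, identifies the error bound, and — since admissible functions are bounded — shows the tail goes to zero, so $\mathcal{L}(f)$ exists.

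Finally, independence of $a$ and $\Phi$: given two choices $(a,\Phi)$ and $(a',\Phi')$, for $k$ large both $\mathcal{T}_a\mathcal{D}_{2^k|I|}\Phi$ and $\mathcal{T}_{a'}\mathcal{D}_{2^k|I|}\Phi'$ equal $1$ on a common neighbourhood of $\mathrm{supp}\, f$, so their difference is supported away from $\mathrm{supp}\, f$ and the same kernel-plus-mean-zero argument shows $\langle T(\mathcal{T}_a\mathcal{D}_{2^k|I|}\Phi - \mathcal{T}_{a'}\mathcal{D}_{2^k|I|}\Phi'), f\rangle \to 0$ as $k\to\infty$; hence the two limits coincide. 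I expect the main obstacle to be the careful bookkeeping in the third displayed estimate above — keeping track of exactly which variable enters each admissible function, verifying the smoothness condition's domain restriction $2(|t-t'|+|x-x'|)<|t-x|$ is met on the relevant supports (which forces the hypothesis $2^k \ge 1+|I|^{-1}|a-c(I)|$), and correctly converting the geometric bound into the stated sum over $k'$ with the factor $(1+|I|^{-1}|a-c(I)|)^{\delta}$ pulled out; the rest is a routine mean-zero subtraction argument.
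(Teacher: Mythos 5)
Your plan follows the paper's own route: telescope the sequence through differences of dilated cut-offs, use disjointness of supports to invoke the kernel representation of Definition \ref{intrep}, exploit the mean zero of $f$ to subtract a value of the kernel, apply the smoothness estimate in the form of Lemma \ref{compactCZ} together with the monotonicity of $L,S,D$, sum a geometric series, and prove independence of $a$ and $\Phi$ by the same disjoint-support argument. The gap is quantitative but genuine, and it sits exactly at the borderline scales the lemma is required to cover. With your unshifted differences $\Phi_{k+1}-\Phi_k$, the $t$-support only satisfies $|t-a|>2^{k}|I|$, while $x\in I$ only gives $|x-a|\le |I|/2+|a-c(I)|$, which under the hypothesis $2^{k}\ge 1+|I|^{-1}|a-c(I)|$ is merely $\le 2^{k}|I|-|I|/2$. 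Hence $|t-x|$ is bounded below only by $|I|/2$, not by a multiple of $2^{k}|I|$: when $2^{k}$ is comparable to $1+|I|^{-1}|a-c(I)|$ (precisely the smallest admissible $k$, for which the error bound is asserted) neither the comparability $|t-x|\sim 2^{k}|I|$ nor the admissibility condition $2|x-c(I)|<|t-x|$ for the smoothness estimate holds on all of the integration region, so your third display is not justified there; your own caveat ``for $k$ large'' concedes this. Relatedly, with your subtraction of $K(t,c(I))$ the factor $(1+|I|^{-1}|a-c(I)|)^{\delta}$ never actually emerges from the computation --- you ``absorb'' it by fiat --- whereas it is exactly what must compensate the failure of comparability at these small $k$.

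The paper avoids the problem by shifting the annulus one scale outward and subtracting at $a$: it estimates $\Psi_{k}={\cal T}_{a}{\cal D}_{2^{k+2}|I|}\Phi-{\cal T}_{a}{\cal D}_{2^{k+1}|I|}\Phi$, supported where $|t-a|>2^{k+1}|I|$, while $|x-a|\le |I|(1+|I|^{-1}|a-c(I)|)\le 2^{k}|I|<\tfrac12|t-a|$, so the condition $2|x-a|<|t-a|$ holds for every admissible $k$ and the denominator is genuinely of size $(2^{k}|I|)^{1+\delta}$; the numerator $|x-a|^{\delta}\le |I|^{\delta}(1+|I|^{-1}|a-c(I)|)^{\delta}$ is then the honest source of the factor in the statement. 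You can repair your argument by making the same index shift (or by treating separately the at most one borderline value of $k$ where your separation fails), but as written the stated error bound is only established for $k$ strictly beyond the threshold $2^{k}\ge 1+|I|^{-1}|a-c(I)|$, while the existence of the limit and the independence of $a$ and $\Phi$ are fine.
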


\begin{remark}
Notice that when $|a|\geq |c(I)|$ we have the bound
$$
|{\mathcal L}(f)- \langle T({\cal T}_{a} {\cal D}_{2^{k}|I|}\Phi ),f\rangle |
\leq C 2^{-k\delta }
\sum_{k'=0}^{\infty }2^{-\delta k'}F_{K}(2^{k'+k}I)
%L(2^{k}|I|)S(2^{-k}|I|^{-1})D(\rdist(2^{k}I,\mathbb B_{M_{K}}))
\| f\|_{L^1(\mathbb R)}
$$

This is due to the fact that when $|a|\geq |c(I)|$ we have 
$$
%\max(|t-c(I)|,2^{M})^{-1}|c(I)|+1\approx 
1+(1+2^{j}|I|)^{-1}|a|
\geq 1+(1+2^{j}|I|)^{-1}|c(I)|
$$
$$
\geq (1+2^{j}|I|)^{-1}(1+2^{j}|I|+|c(I)|)
\geq 1/2\max(2^{j}|I|,1)^{-1}\diam(2^{j}I\cup \mathbb B)
$$
$$
=1/2\rdist(2^{j}I,\mathbb B)
$$
%$$
%\geq 1/2(1+\max(2^{j}|I|,1)^{-1}|c(I)|)
%$$
%$$
%\geq 1/4\max(2^{j}|I|,1)^{-1}(|c(I)|+2^{j}|I|+1)
%$$
%$$
%\geq 1/4\max(2^{j}|I|,1)^{-1}\diam(2^{j}I\cup \mathbb B)
%=1/4\rdist(2^{j}I,\mathbb B)
%$$
and, since $D$ is non-increasing
$$
L(2^{j}|I|)S(2^{j}|I|)D(1+(1+2^{j}|I|)^{-1}|a|)
$$
$$
\lesssim L(2^{j}|I|)S(2^{j}|I|)D(\rdist(2^{j}I,\mathbb B))
=F_{K}(2^{j}I)
$$
\end{remark}

Notice that, by Lebesgue's Dominated Convergence Theorem, the function 
$$
\tilde{F}(2^{k}I)=\sum_{k'=0}^{\infty }2^{-\delta k'}F_{K}(2^{k'+k}I)
$$
has the same limit behavior as $F$, that is, 
$$
\lim_{\tiny \begin{array}{c}M\rightarrow \infty \\ 2^{k}I\in {\mathcal D}_{M}^{c}\end{array}}\tilde{F}(2^{k}I)=0
$$
and we can furthermore assume analogous monotonicity properties. 
Then, with some abuse of notation to maintain its symmetry, we will denote $\tilde{F}(2^{k}I)$ again by $F(2^{k}I)$. 

With this convention the inequality we will frequently use in later proofs is written as
\begin{equation}\label{error}
|{\mathcal L}(f)- \langle T({\cal T}_{a} {\cal D}_{2^{k}|I|}\Phi ),f\rangle |
\leq C 2^{-k\delta }
F_{K}(2^{k}I)
%L(2^{k}|I|)S(2^{-k}|I|^{-1})D(\rdist(2^{k}I,\mathbb B_{M_{K}}))
\| f\|_{L^1(\mathbb R)}
\end{equation}

%\begin{corollary}\label{erroroftail}
%%If we define 
%%$
%%L_{M}(f)=L(P_{M}^{\perp}(f))
%%%\lim_{k\to \infty} \langle P_{M}^{\perp}(T({\mathcal T}_{a} {\mathcal D}_{2^{k}|I|}\Phi )),f\rangle 
%%$
%In particular, we have 
%$$
%|L(P_{M}^{\perp}(f))- \langle P_{M}^{\perp}(T({\cal T}_{a} {\cal D}_{2^{k}|I|}\Phi )),f\rangle |
%\le C 2^{-\delta k}
%$$
%$$
%L(2^{k}|I|)S(2^{k}|I|)D(\max(2^{k}|I|,1)^{-1}|a|+1)
%\| P_{M}^{\perp}(f)\|_{L^1(\mathbb R)}
%$$
%and, for fixed $f$, the last factor tends to zero when $M$ tends to infinity. 
%\end{corollary}

\proof
%For simplicity of notation we shall assume that $I$ is centered at the origin.

%For $k=0$, we have that $D^\infty_{|J|}\Phi $ and $f$ are adapted to $I$ and so
%we use the weak compactness condition to get (?)
%$$
%|\langle T({\cal T}_{c(I)}{\cal D}^\infty_{|I|}\Phi), f\rangle |
%\leq CF_{M}(|I|)(\max(|I|,2^{M})^{-1}\diam (I\cup 2^{M}\mathbb B))^{-N}
%%\leq C\| D^\infty_{|I|}\Phi \|_2 \|f \|_2\leq C|I|^{1/2}|I|^{-1/2}=C
%$$
In the proof, we drop the subindex $K$ in the notation of $F$ and the admissible functions. 

For $k\in \mathbb N$ with $2^{k}\geq 1+|I|^{-1}|a-c(I)|$
%,|I|^{-1}x_{0})$
, we introduce the smooth cut-off
$\Psi_k={\cal T}_{a}D_{2^{k+2}|I|}\Phi-{\cal T}_{a}D_{2^{k+1}|I|}\Phi$. We aim to estimate
$|\langle T(\Psi_{k}), f\rangle |$.

In view of the supports of $\Psi_k$ and $f$, we may restrict ourselves to $t$ and $x$ satisfying $2^{k+1}|I|<|t-a|<2^{k+3}|I|$ and $|x-c(I)|\leq |I|/2$. Then,
\begin{align*}
|x-a|&\leq |x-c(I)|+|c(I)-a|\leq |I|/2+|c(I)-a|
\\
&\leq |I|(1+|I|^{-1}|c(I)-a|)<|I|2^{k}<2^{-1}|t-a|
\end{align*}

Therefore, the supports of $\Psi_k$ and $f$
are disjoint and so, we can use the kernel
representation and write
\begin{align*}
\langle T(\Psi_k), f\rangle 
%&=\int \Psi_k(t)f(x) K(t,x)\, dtdx
%\\
%Moreover, as we have seen we may restrict the domain of integration to $|x-c(I)|<|I|/2$
%and  $2^{k-1}|I|<|t-a|<2^{k+1}|I|$.
%for two large constants $C_1,C_2$ .
%&
=\int \Psi_{k}(t)f(x)(K(t,x)-K(t,a))\, dtdx
\end{align*}
%where the second equality is 
due to the zero mean of $f$.
Now, since $2|x-a|<|t-a|$, we can use the definition of a compact Calder\'on-Zygmund kernel to obtain
\begin{align*}
|\langle T(\Psi_k), &f\rangle |\leq \int |\Psi_{k}(t)||f(x)| \frac{|x-a|^{\delta }}{|t-a|^{1+\delta}}
\\
&L(|t-a|)S(|x-a|)
D(1+(1+|t-a|)^{-1}|a|)
%\frac{1}{(\max(|t-c(I)|,2^{M})^{-1}|c(I)|+1)^{N}}
\, dtdx
\end{align*}
%Now, $|x+t|\geq |t-a|-|x-a|\geq (2^{k-1}-2^{-1})|I|\geq 2^{k-2}|I|$ and $|c(I)|\geq \diam(I\cup \{0\})$.
%$$
%\frac{|t-c(I)|^{1/p}}{(|t-c(I)|+1)^{2}}
%\leq \frac{(2^{k}|I|)^{1/p}}{(2^{k-1}|I|+1)^{2}}
%\leq 4\frac{(2^{k}|I|)^{1/p}}{(2^{k}|I|+1)^{2}}
%$$
Notice that we are using the conventions of Section \ref{kernelandadmissible}.

Since $|x-c(I)|<|I|/2$, $2^{k+1}|I|<|t-a|<2^{k+3}|I|$ and 
$|x-a|\leq |I|+|a-c(I)|$, we have
%since $|t-a|>2^{k}|I|>x_{0}$ and 
by the monotonicity properties of $L$, $S$ and $D$:
%can bound by 
%Therefore, using this and the previous restrictions on the variables we estimate now by
\begin{align*}
|\langle T(\Psi_k), f\rangle |
&\lesssim L(2^{k+3}|I|)S(2^{k+1}|I|)D(1+(1+2^{k+3}|I|)^{-1}|a|)
%\frac{1}{(\max(2^{k}|I|,|K_{M}|)^{-1}\diam(2^{k}I\cup K_{M}))^{N}}
\\
\int&_{|x-c(I)|<\frac{|I|}{2}}\int_{|t-a|<2^{k+3}|I|} |f(x)| 
\frac{|I|^{\delta}(1+|I|^{-1}|a-c(I)|)^{\delta }}{2^{k(1+\delta)}|I|^{(1+\delta )}}
%\frac{1}{2^{k(1+\delta)}}\frac{1}{|I|}
\, dtdx
\\
&\lesssim 2^{-k\delta }(1+|I|^{-1}|a-c(I)|)^{\delta }
\\
L(&2^{k}|I|)S(2^{k}|I|)D(1+(1+2^{k}|I|)^{-1}|a|)
%\frac{1}{( \max(2^{k}|I|,|K_{M}|)^{-1}\diam(2^{k}I\cup K_{M}))^{N}}
\| f\|_{L^1(\mathbb R)}
\end{align*}
where we used that $1+(1+2^{k+3}|I|)^{-1}|a|>2^{-3}(1+(1+2^{k}|I|)^{-1}|a|)$ and Remark \ref{constants} 
to hide all constants in the argument of the admissible functions. 

This way, the trivial bound 
$$
|\langle T(\Psi_k), f\rangle |\lesssim 2^{-k\delta }(1+|I|^{-1}|a-c(I)|)^{\delta }\| f\|_{L^1(\mathbb R)}
$$ 
proves that the sequence $(\langle T({\cal T}_{a}D_{2^{k}|I|}\Phi ),f\rangle )_{k>0}$ is Cauchy and thus, the existence of the limit, which we will momentarily denote by ${\mathcal L}_{a}(f)$.

Finally, the explicit rate of convergence stated in
the lemma follows by summing a geometric series. For 
every $k\in \mathbb N$,
%Since 
%$L$, $S$ are such that $\displaystyle{\lim_{x\rightarrow \infty }L(x)S(x)=0}$ in a non-creasing way, there is 
%$x_{0}\in\mathbb R$ such that $L(x)S(x)$ is non-creasing for all $x>x_{0}$. Then, for every 
%$k\in \mathbb N$ such that $2^{k}|I|>x_{0}$, we have 
%%and every $0<\epsilon <2^{-k\delta }\| f\|_{L^1(\mathbb R)}$, 
%%let $m\in \mathbb N$ be big enough so that 
%%$|{\mathcal L}(f)-\langle T({\cal T}_{a}D_{2^{m}|I|}\Phi ),f\rangle |\leq 2^{-m\delta }\| f\|_{L^1(\mathbb R)}<\epsilon $; then,
\begin{align*}
|{\mathcal L}_{a}(f)&-\langle T({\cal T}_{a}{\mathcal D}_{2^{k}|I|}\Phi ),f\rangle |
\\
&\leq \lim_{m\rightarrow \infty }|{\mathcal L}_{a}(f)-\langle T({\cal T}_{a}{\mathcal D}_{2^{m}|I|}\Phi ),f\rangle |
+\sum_{k'=k}^{\infty }|\langle T(\Psi_{k'} ),f\rangle |
\\
&\lesssim \sum_{k'=k}^{\infty }2^{-k'\delta }
L(2^{k'}|I|)S(2^{k'}|I|)D(1+(1+2^{k'}|I|)^{-1}|a|)
%\frac{1}{( \max(2^{k}|I|,|K_{M}|)^{-1}\diam(2^{k}I\cup K_{M}))^{N}}
\| f\|_{L^1(\mathbb R)}
\end{align*}
which is the stated bound.

\vskip10pt
%Notice that the definition of $T(1)$ is independent of the translation selected and therefore, $L$ is invariant under scaling and translation (?).
We show now that ${\mathcal L}_{a}$ is independent of the translation parameter. 
%and the cut-off function selected.
Let $a,b\in \mathbb R$
and 
$\psi_{k,a,b}={\cal T}_{a}D_{2^{k}|I|}\Phi-{\cal T}_{b}D_{2^{k}|I|}\Phi $. Then, for $k$ large enough so that 
$|a-b|<2^{k}|I|$, the support of 
$\psi_{k,a,b}$ is contained in $(2^{k-1}+1)|I|<|t-(a+b)/2|\leq (2^{k+1}+1/2)|I|$. Whence, we can repeat previous reasoning to show that also 
$|\langle T(\psi_{k,a,b}), f\rangle |\lesssim
2^{-k\delta }\| f\|_{L^1(\mathbb R)}$.

This way,  
\begin{align*}
|{\mathcal L}_{a}(f)-{\mathcal L}_{b}(f)|
&\leq |{\mathcal L}_{a}(f)-\langle T({\cal T}_{a}{\mathcal D}_{2^{k}|I|}\Phi ),f\rangle |
\\
&+|\langle T({\cal T}_{a}{\mathcal D}_{2^{k}|I|}\Phi ),f\rangle - \langle T({\cal T}_{b}{\mathcal D}_{2^{k}|I|}\Phi ),f\rangle |
\\
&+|{\mathcal L}_{b}(f)-\langle T({\cal T}_{b}{\mathcal D}_{2^{k}|I|}\Phi ),f\rangle |
\lesssim 3\cdot 2^{-k\delta }\| f\|_{L^1(\mathbb R)}
\end{align*}

Moreover, a similar argument works to prove that ${\mathcal L}$ is also independent of the chosen cut-off function $\Phi $.
This is because if we take two such smooth cut-off functions $\Phi$ and $\tilde{\Phi}$, then 
${\cal T}_{a}D_{2^{k}|I|}\Phi-{\cal T}_{a}D_{2^{k}|I|}\tilde{\Phi} $ has support included in 
$2^{k}|I|<|t-a|\leq 2^{k+1}|I|$.

%We prove now that the definition of $T(1)$ given above is independent of the chosen cutoff function $\Phi $.
%
%\begin{corollary}
%Let $f$ be again an atom on $[-1/2,1/2]$ and $k>0$.
%Then for every $L^\infty$ normalized bump function $\phi $ adapted to
%the square
%$$[-2^k,2^k]\times [-2^k,2^k]$$
%such that $\phi(0)=1$ we have the estimate
%$$|L(f)-\Lambda(\phi,f)|\le C2^{-\$delta k}$$
%\end{corollary}
%\begin{proof}
%Considering $\psi=D_{2^k} \Phi -\phi $ instead, we need to prove
%$$|\Lambda(\psi,f)|\le C2^{-\delta k}$$
%Define
%$$\psi_{k'}= (D_{2^{k'}}\Phi-D_{2^{k'-1}}\Phi)\psi$$
%Then we can apply the reasoning of the previous lemma to $\psi_{k'}$ to
%obtain
%$$|\Lambda(\psi_{k'},f)|\le C2^{-|k-k'|} 2^{-k'\delta}$$
%where the extra gain of the factor $2^{-|k-k'|}$ comes from the normalization
%of $\psi_{k'}$. Summing over $k'$ proves the corollary.
%\end{proof}
%

%We prove now that if $T$ is a compact operator, then $T(1)\in \CMO(\mathbb R)$. We fix $M$ and take 
%$k$ big enough so that $2^{k}|I|>2^{M}$. Then, 
%$$
%|\langle (P_{M}\circ T)(D_{2^{k}|I|}\Phi ),f\rangle |
%=|\langle T(D_{2^{k}|I|}\Phi ),P_{M}f\rangle |
%$$
%$$
%\leq C \| T\| \sum_{J\in D_{M}} \Big(\frac{|J|}{|I|}\Big)^{2-1/p}(1+|I|^{-1}|c(I)-c(J)|)^{-N}
%$$
%$$
%\leq C\Big(\frac{2^{M}}{2^{k}|I|}\Big)^{2-1/p}((2^{k}|I|)^{-1}\diam(2^{k}I,\mathbb B_{2^{M}}))^{-N}
%$$
%and 
%$$
%|L(f)-\langle T(D_{2^{k}|I|}\Phi ),f\rangle |
%\leq C2^{-k\delta }L(2^{k}|I|)S(|c(I_{2})|)
%%\frac{1}{( \max(2^{k}|I|,|K_{M}|)^{-1}\diam(2^{k}I\cup K_{M}))^{N}}
%\| f\|_{L^1(\mathbb R)}
%$$
%and both terms tend to zero when $M$ tends to infinity. 

\vskip10pt
We state the following technical lemma whose proof can be found in 
\cite{TLec} and \cite{OV}, Lemma 3.2.
% yet, for the sake of completeness, we include here a more detailed version. 

\begin{lemma}\label{lowoscillation}
Let $I$ be some interval and f be an integrable function supported
in $I$ with mean zero. For each dyadic interval $J$ let $\phi_{J}$ be a bump function adapted
to $J$ with constant $C>0$ and order $N$. 

Then, for all dyadic intervals $J$ such that 
%$J\not \subset 3I$ we have that if $|J|<|I|$ we have, 
%%$$
%%\sum_{I:I\not \subset 3I'} |\langle f, \phi_{I}\rangle | |I|^{1/2}\leq C\| f\|_{1}
%%$$
%$$
%|\langle f, \phi_{J} \rangle | 
%\leq C\| f\|_1 \frac{1}{|J|^{1/2}}\Big( 1+\frac{|c(I)-c(J)|}{|J|}\Big)^{-N}
%$$
%while if 
$|I|\leq |J|$, we have
$$
|\langle f, \phi_{J} \rangle |
\leq C\| f\|_{1}\frac{|I|}{|J|^{3/2}}\Big( 1+\frac{|c(I)-c(J)|}{|J|}\Big)^{-(N-1)}
$$
%Here $3I'$ denotes the interval that shares the center with $I'$ and is of length $3|I'|$.
\end{lemma}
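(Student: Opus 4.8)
Lemma \ref{lowoscillation}: if $f$ is integrable, supported in $I$, with mean zero, and $\phi_J$ is a bump adapted to dyadic $J$ with $|I|\le|J|$, then $|\langle f,\phi_J\rangle|\le C\|f\|_1\,|I|\,|J|^{-3/2}\,(1+|c(I)-c(J)|/|J|)^{-(N-1)}$.

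\textbf{Approach.} The proof exploits the mean–zero hypothesis on $f$ together with a first–order Taylor estimate on $\phi_J$, which is legitimate because $|I|\le|J|$, so $\phi_J$ is essentially affine on the scale of $I$. The plan is first to replace $\phi_J$ by its difference with a constant, then to control that difference by $|I|$ times the supremum of $|\phi_J'|$ over $I$, and finally to convert the decay weight of $\phi_J'$ centred at $c(J)$ into the weight centred at $c(I)$ appearing in the statement.

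\textbf{Step 1: use the cancellation of $f$.} Since $\int f=0$ and $\supp f\subset I$, for any constant $\kappa$ one has $\langle f,\phi_J\rangle=\int_I f(x)\,(\phi_J(x)-\kappa)\,dx$. I would choose $\kappa=\phi_J(c(I))$, so that
$$
|\langle f,\phi_J\rangle|\le\int_I|f(x)|\,|\phi_J(x)-\phi_J(c(I))|\,dx .
$$

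\textbf{Step 2: Taylor estimate at scale $|I|$.} For $x\in I$, writing $\phi_J(x)-\phi_J(c(I))=\int_{c(I)}^{x}\phi_J'(t)\,dt$ and noting that the segment joining $x$ and $c(I)$ lies inside $I$ (as $I$ is an interval), I would bound $|\phi_J(x)-\phi_J(c(I))|\le |x-c(I)|\sup_{t\in I}|\phi_J'(t)|\le |I|\sup_{t\in I}|\phi_J'(t)|$. By the adaptedness of $\phi_J$ (Definition \ref{defbump} with $p=2$, $n=1$), $|\phi_J'(t)|\le C|J|^{-3/2}\bigl(1+|J|^{-1}|t-c(J)|\bigr)^{-N}$ for all $t$.

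\textbf{Step 3: transfer the weight to $c(I)$, and integrate.} The only genuinely substantive point is comparing the weight at a point $t\in I$ with the weight at $c(I)$. For $t\in I$ one has $|t-c(I)|\le|I|\le|J|$, hence $|J|+|t-c(J)|\ge |c(I)-c(J)|-|I|+|J|\ge |c(I)-c(J)|$; combined with the trivial $|J|+|t-c(J)|\ge|J|$ this gives $|J|+|t-c(J)|\ge\tfrac12\bigl(|J|+|c(I)-c(J)|\bigr)$, so
$$
\sup_{t\in I}\bigl(1+|J|^{-1}|t-c(J)|\bigr)^{-N}\le 2^{N}\bigl(1+|J|^{-1}|c(I)-c(J)|\bigr)^{-N}\le 2^{N}\bigl(1+|J|^{-1}|c(I)-c(J)|\bigr)^{-(N-1)} .
$$
Inserting this into Steps 1--2 and pulling the (now $x$–independent) bound out of the integral yields
$$
|\langle f,\phi_J\rangle|\lesssim \|f\|_{1}\,\frac{|I|}{|J|^{3/2}}\Bigl(1+\frac{|c(I)-c(J)|}{|J|}\Bigr)^{-(N-1)},
$$
which is the claim. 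I do not expect any real obstacle here; the estimate appears in \cite{TLec} and \cite{OV}, and the weight–transfer inequality of Step 3 is the only place where one must be slightly careful with constants.
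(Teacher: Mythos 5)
Your proof is correct and is exactly the standard argument for this estimate (which the paper does not prove itself but cites from \cite{TLec} and \cite{OV}): use the mean zero of $f$ to subtract $\phi_J(c(I))$, bound the difference by $|I|\sup_{t\in I}|\phi_J'(t)|$ with the $L^2$-normalized derivative bound $|\phi_J'(t)|\le C|J|^{-3/2}(1+|J|^{-1}|t-c(J)|)^{-N}$, and transfer the weight from $t$ to $c(I)$ using $|I|\le|J|$. The only cosmetic difference is that you pick up a harmless factor $2^N$ in the constant and do not even need the relaxation from exponent $N$ to $N-1$, which the stated form allows.
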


We can prove now the necessity of the third condition. 
\begin{proposition}\label{necessity2}
Let $T$ be a linear operator associated with a standard Calder\'on-Zygmund kernel.
If $T$ can be extended to a compact operator on $L^{p}(\mathbb R)$ for $1<p<\infty $ then, $T(1),T^{*}(1)\in \CMO(\mathbb R)$.
\end{proposition}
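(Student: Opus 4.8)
The plan is to use the wavelet characterization of $\CMO$ from Lemma \ref{lem:cmochar}(ii), so that proving $T(1)\in\CMO(\mathbb R)$ amounts to showing
$$
\lim_{M\rightarrow\infty}\sup_{\Omega\subset\mathbb R}\Big(\frac{1}{|\Omega|}\sum_{\substack{I\notin{\cal D}_{M}\\ I\subset\Omega}}|\langle T(1),\psi_{I}\rangle|^{2}\Big)^{1/2}=0,
$$
where $(\psi_I)_{I\in\mathcal D}$ is a smooth, compactly supported wavelet basis. So the first step is to produce a usable formula for $\langle T(1),\psi_I\rangle$ when $I$ is \emph{not} lagom. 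Since $\psi_I$ is a smooth compactly supported function with mean zero, I would apply Lemma \ref{definecmo} with $f=\psi_I$: choosing the translation parameter $a=c(I)$ and letting $k\to\infty$ gives $\langle T(1),\psi_I\rangle:={\mathcal L}(\psi_I)$, together with the crucial error bound \eqref{error}, i.e. for each $k$,
$$
\big|\langle T(1),\psi_I\rangle-\langle T({\cal T}_{c(I)}{\cal D}_{2^{k}|I|}\Phi),\psi_I\rangle\big|\lesssim 2^{-k\delta}F_K(2^{k}I)\|\psi_I\|_{L^1}\lesssim 2^{-k\delta}F_K(2^{k}I)|I|^{1/2}.
$$
The second, "main-term" step is to estimate $\langle T({\cal T}_{c(I)}{\cal D}_{2^{k}|I|}\Phi),\psi_I\rangle$ directly. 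Since $\psi_I$ is adapted to $I$ and $\Phi_{k,I}:={\cal T}_{c(I)}{\cal D}_{2^{k}|I|}\Phi$ is a (non-$L^2$-normalized) bump on an interval $\sim 2^{k}|I|\supset I$, after $L^2$-normalizing $\Phi_{k,I}$ this pairing is of the form $\langle T(\phi_J),\psi_I\rangle$ with $|I|\le|J|$, which is controlled by the weak compactness condition (Definition \ref{WB}) together with the kernel estimate. Concretely, I expect to split $\Phi_{k,I}=\Phi_{k,I}\mathbf 1_{3I}+\Phi_{k,I}\mathbf 1_{(3I)^c}$: on $3I$ the function is (essentially) constant, so one pairs a bump adapted to $I$ against $\psi_I$ and invokes Definition \ref{WB} to get the admissible-function decay in $|I|$ and in $\rdist(I,\mathbb B_{2^M})$; on $(3I)^c$ one uses the kernel representation (disjoint supports) exactly as in the proof of Lemma \ref{definecmo}, picking up $F_K$-type decay and a factor from the mean zero of $\psi_I$. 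Summing over $k$ the geometric series $\sum_k 2^{-k\delta}F_K(2^kI)$ (which by the remark after Lemma \ref{definecmo} has the same limit behaviour as $F_K$) yields a bound of the shape
$$
|\langle T(1),\psi_I\rangle|\lesssim |I|^{1/2}\big(F(I;M_0)+\epsilon\big)
$$
for $I\notin{\cal D}_{M}$, with the admissible-function part small when $I$ is far from being lagom and $\epsilon$ coming from the weak compactness constant; plus a term $|I|^{1/2}\|P_M^{\perp}\circ T\|$ if one wants the explicit operator-norm dependence, but here we only need the bound above.

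The third step is to insert this into the Carleson-box sum. For fixed $\Omega$ and $M$,
$$
\frac{1}{|\Omega|}\sum_{\substack{I\notin{\cal D}_{M}\\ I\subset\Omega}}|\langle T(1),\psi_I\rangle|^{2}\lesssim \frac{1}{|\Omega|}\sum_{\substack{I\notin{\cal D}_{M}\\ I\subset\Omega}}|I|\,\big(F(I;M_0)+\epsilon\big)^{2}.
$$
The $\epsilon^2$ part is handled by the standard Carleson estimate $\sum_{I\subset\Omega}|I|\lesssim|\Omega|$ (this is where smooth compactly supported wavelets and the finite-overlap of dyadic generations matter), giving a contribution $\lesssim\epsilon^2$, which is small since $\epsilon$ was arbitrary. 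For the $F(I;M_0)^2$ part, I would exploit that $I\notin{\cal D}_M$ forces $I$ to be either very small ($|I|<2^{-M}$), very large ($|I|>2^{M}$), or eccentrically placed ($\rdist(I,\mathbb B_{2^M})>M$), and in each regime one of the admissible factors $L(2^{-M_0}|I|)$, $S(2^{M_0}|I|)$, $D(M_0^{-1}\rdist(I,\mathbb B_{2^{M_0}}))$, $L_K(|I|)$, $S_K(|I|)$, $D_K(\rdist(I,\mathbb B))$ is uniformly small once $M$ is large relative to the fixed $M_0$; combined again with the Carleson packing bound $\sum_{I\subset\Omega}|I|\lesssim|\Omega|$ this gives a contribution that tends to $0$ as $M\to\infty$. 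Letting $M\to\infty$ and then $\epsilon\to0$ proves \eqref{CMO2}, hence $T(1)\in\CMO(\mathbb R)$. The identical argument applied to the adjoint $T^{*}$, which is also compact on $L^{p'}(\mathbb R)$ and associated with the transposed compact Calder\'on-Zygmund kernel, gives $T^{*}(1)\in\CMO(\mathbb R)$.

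The main obstacle I anticipate is the careful bookkeeping in the second step: one must track three separate scales — the scale $|I|$ of the wavelet, the scale $2^k|I|$ of the truncated constant, and the lagom threshold $2^M$ — and show that the decay produced by the weak compactness condition (which references $\rdist(I,\mathbb B_{2^M})$ with $M$ depending on $\epsilon$) and by the kernel estimate (which references $\rdist(I,\mathbb B)$ and $|t+x|$) can be reconciled into a single bound involving $F(I;M_0)$ that is summable in the Carleson sense. The geometric-series summation in $k$ and the passage between $\rdist(2^kI,\mathbb B)$ and $\rdist(I,\mathbb B)$ (using monotonicity of the admissible functions, as set up in Section \ref{kernelandadmissible}) are the technical heart; the rest is the by-now-standard Carleson-measure argument.
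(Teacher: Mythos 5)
There is a genuine gap, and it sits exactly where you wave at "the standard Carleson estimate". The inequality $\sum_{I\subset\Omega}|I|\lesssim|\Omega|$ over \emph{all} dyadic $I\subset\Omega$ is false: each dyadic generation inside $\Omega$ already contributes total length $\approx|\Omega|$, and there are infinitely many generations, so the sum diverges. Consequently your Step 3 collapses: the per-wavelet bound $|\langle T(1),\psi_I\rangle|\lesssim|I|^{1/2}\big(F(I;M_0)+\epsilon\big)$, which is indeed what Lemma \ref{definecmo} (with $k=0$, $a=c(I)$) plus the weak compactness condition give you interval by interval, does not imply the Carleson-type condition \eqref{CMO2}, and it does not even imply $T(1)\in\BMO(\mathbb R)$ — which Lemma \ref{lem:cmochar}(ii) also requires. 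This is not a bookkeeping issue but a structural one: a bound $|\langle b,\psi_I\rangle|\lesssim|I|^{1/2}$ for every single $I$ characterizes a space strictly larger than $\BMO$ (it is exactly the per-coefficient information one has for paraproduct-type kernels without any Carleson packing), so no argument that estimates $\langle T(1),\psi_I\rangle$ one wavelet at a time and then sums $|I|$ over $I\subset\Omega$ can reach $\BMO$, let alone $\CMO$. Relatedly, your scheme only invokes the weak compactness condition and the compact kernel, i.e.\ hypotheses that in the main theorem are \emph{supplemented} by the $T(1)\in\CMO$ condition; if they alone implied $T(1)\in\CMO$, that hypothesis would be redundant, which it is not.

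The paper's proof avoids this by never trying to control individual wavelet coefficients. It first proves $T(1)\in\BMO$ by showing the functional $\mathcal L$ of Lemma \ref{definecmo} is bounded on smooth $H^1$-atoms, splitting $\mathcal L(f)$ into $\langle T({\cal T}_{c(I)}{\cal D}_{|I|}\Phi),f\rangle$ (handled by $L^p$-boundedness of $T$) plus the annular pieces $\langle T(\Psi_{k'}),f\rangle$ and the tail (handled by the compact-kernel estimates), and then invokes $H^1$--$\BMO$ duality. For the $\CMO$ part it pairs $P_M^\perp(T(1))$ against $H^1$ functions and uses the full strength of compactness through the operator-norm decay $\|P_M^\perp\circ T\|_{p\to p}\to0$ (via Theorem \ref{charofcompact}), together with Lemma \ref{definecmo}, Lemma \ref{lowoscillation} and the projection identity $\langle P_M^\perp(T(1)),f\rangle={\mathcal L}(P_M^\perp(f))$. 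That operator-norm ingredient, which your proposal never uses, is what replaces the false packing inequality: it controls sums over arbitrarily many wavelets at once. To repair your approach you would have to prove the genuine Carleson measure condition for the coefficients $\langle T(1),\psi_I\rangle$ (with vanishing Carleson constant over non-lagom boxes), and that is essentially equivalent to the statement being proved; it cannot be extracted from the interval-by-interval estimates you propose.
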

\proof
We will only show the result for $T(1)$. The argument for $T^*(1)$ is analogous.

We start by proving membership in $\BMO(\mathbb R)$. Although the argument is classical, we include it here to clarify the calculations at the end of the proof. We show that $\mathcal L$, defined in  Lemma \ref{definecmo}, is a bounded linear functional on $H^{1}(\mathbb R)$. Since linearity is trivial, we prove 
its continuity on $H^{1}(\mathbb R)$. 
By standard arguments, it is enough to prove the result for $p'$-atoms. Moreover, due to density, we can also assume the atoms to be smooth. 
Then, let $f$ be a smooth atom in 
$H^{1}(\mathbb R)$ supported in an
%and $L^{\infty }$-adapted to an
interval $I$ with $\| f\|_{L^{p'}(\mathbb R)}\lesssim |I|^{-1/p}$ and mean zero.

For all  $k\in \mathbb N$, let $\Psi_{k}={\cal T}_{c(I)}{\mathcal D}_{2^{k+1}|I|}\Phi-{\cal T}_{c(I)}{\mathcal D}_{2^{k}|I|}\Phi $ be as in the proof of Lemma \ref{definecmo}. This way, for any $k\in \mathbb N$
we have
%we can deduce that $L(f)\in \CMO(\mathbb R)$, as we see.
\begin{align*}
|{\mathcal L}(f)|&\leq |\langle T({\cal T}_{c(I)}{\mathcal D}_{|I|}\Phi ),f\rangle |
+\sum_{k'=0}^{k-1}|\langle T(\Psi_{k'} ),f\rangle |
\\
&+|{\mathcal L}(f)-\langle T({\cal T}_{c(I)}{\mathcal D}_{2^{k}|I|}\Phi ),f\rangle |
\end{align*}

%The function $|I|^{-1/p}{\mathcal T}_{c(I)}D_{|I|}\Phi $ 
%is trivially $L^{p}$-adapted to $I$ with constant one while 
%$|I|^{1/p}f$ is also $L^{p}$-adapted to $I$ with constant one. 
%Therefore, since 
By using boundedness of $T$ on $L^{p}(\mathbb R)$
we can bound the first term by
%we can 
%apply even weak boundedness condition to 
%bound the first term by
$$
%|\langle T({\cal T}_{c(I)}D_{|I|}\Phi ),f\rangle |
\|T\| \| {\mathcal T}_{c(I)}D_{|I|}\Phi \|_{L^{p}(\mathbb R)}
\| f\|_{L^{p'}(\mathbb R)}
\lesssim |I|^{1/p}\| f\|_{L^{p'}(\mathbb R)}
\lesssim 1
%S(2^{-M_{T}}|I|^{-1})D(\max(|I|,|\mathbb B_{2^{M_{T}}}|)^{-1}|c(I)|+1)\| |I|^{1/2}f\|_{2}
%\leq C\| f\|_{H^{1}(\mathbb R)}
$$
From the proof of Lemma \ref{definecmo}, rather than from the result itself, 
the second term can be bounded by a constant times
$$
%\sum_{k'=1}^{m}|\langle T(\psi_{k'} ),f\rangle | \leq 
\sum_{k'=0}^{k-1}2^{-k'\delta }F_{K}(2^{k'}I)\| f\|_{L^{1}(\mathbb R)}
\lesssim \sum_{k'=0}^{k-1}2^{-k'\delta }|I|^{1/p}\| f\|_{L^{p'}(\mathbb R)}
\lesssim 1
%F_{T}(I)
$$
We note that since $T$ is compact, the kernel is actually a compact Calder\'on-Zygmund kernel
wit parameter $\delta$. 
Applying the result of Lemma \ref{definecmo}, we bound the last term by a constant times
$$
2^{-k\delta }F_{K}(2^{k}I)\| f\|_{L^{1}(\mathbb R)}
\lesssim 1
%\| f\|_{H^{1}(\mathbb R)}
$$
 
These three estimates
show
%on any interval 
the bound
$
|{\mathcal L}(f)|\lesssim 1
%= C\|f\|_{H^1(\mathbb R)}
$ 
for every atom $f$. This proves, as claimed, that 
%Since 
${\mathcal L}$ 
%is also evidently linear, it 
defines
a bounded linear functional on 
%the dual of the space of smooth functions with compact support and mean zero, which, as said, is dense in 
$H^1(\mathbb R)$. Hence, 
by the $H^1$-$\rm BMO$ duality, the functional ${\mathcal L}$ is represented
by a $\rm BMO(\mathbb R)$ function denoted by  $T(1)$, that is, 
${\mathcal L}(f)=\langle T(1),f\rangle $.

%We notice that, since we have only worked with smooth atoms, strictly speaking we haven't finished the definition of $T(1)$. 
%To do it rigorously, we should prove using the same ideas as before, that the sequence $(T(D_{2^{k}|I|}\Phi ))_{k\in \mathbb Z}$
%is uniformly bounded in $\BMO(\mathbb R)$. Then, using that the unit ball of the dual of Banach space is weak$^{*}$-compact, 
%we can extract a subsequence of previous sequence which converges to ${\mathcal L}(f)$ for  
%functions $f$ in $C^{\infty }(\mathbb R)$ with compact 
%support. Finally, since these functions are dense in $H^1(\mathbb R)$, we can deduce that previous functional can properly 
%been extended to all $H^1(\mathbb R)$ and that $T(1)$ is the unique limit in $\BMO(\mathbb R)$ of the 
%previous sequence. We will not get into any further detail about this. 

\vskip10pt
In order to prove membership in $\CMO(\mathbb R)$, we need to show that 
there is a lagom projection operator $P_{M}$ such that 
$$
\lim_{M\rightarrow \infty}\langle P_{M}^{\perp}(T(1)),f\rangle =0
$$
uniformly 
%on $f\in H^{1}(\mathbb R)$. Let 
for all $f\in {\cal S}(\mathbb R)$ in the ball of $H^{1}(\mathbb R)$ with mean zero and support in a dyadic interval $I$. 
%We start by proving that $P_{M}^{\perp }(f)$ can be decomposed into a sum of functions with 
%compact support and mean zero. 
For this, we consider the projection operator $P_{M}$ constructed with a wavelet basis 
$(\psi_{J})_{J\in {\mathcal D}}$ of $H^{1}(\mathbb R)$ such that every function $\psi_{J}$ is smooth, $L^{2}$-adapted and compactly supported in $J$.  Since 
$$
P_{M}(f)
=\sum_{J\in \mathcal D_{M}}\langle f,\psi_{J}\rangle \psi_{J}
=\sum_{J\in \mathcal D_{M}}|J|^{1/2}\langle f,\psi_{J}\rangle |J|^{-1/2}\psi_{J}
$$
is defined by a finite linear combination of $1$-atoms $|J|^{-1/2}\psi_{J}$ , we have that $P_{M}(f)\in H^{1}(\mathbb R)$. Moreover, from the equality 
$f=P_{M}(f)+P_{M}^{\perp}(f)$, we also deduce that $P_{M}^{\perp}(f)\in H^{1}(\mathbb R)$.

%On the other hand, despite that in general $P_{M}^{\perp}(f)$ has non-compact support, we can decompose the function as
%$$
%P_{M}^{\perp}(f)=\sum_{j\in\mathbb N}f_{j}
%$$
%with $f_{j}=(P_{M}^{\perp}(f)-c_{j}){\cal T}_{j}\Phi $ with 
%$$
%c_{j}=\| \Phi \|_{1}^{-1}\int P_{M}^{\perp}(f)(x){\cal T}_{k}\Phi (x)dx 
%$$
%This way, every $f_{j}$ has compact support and mean zero. 
%Moreover, since $f_{j}$ and $f_{j+4}$ are disjoint for all $j$, we also have 
%$$
%\sum_{j\in\mathbb N}\| f_{j}\|_{1}
%=\sum_{r=0}^{3}\sum_{j=4k+r: k\in \mathbb Z}\| (P_{M}^{\perp}(f)-c_{j}){\cal T}_{j}\Phi \|_{1}
%$$
%$$
%\leq \sum_{r=0}^{3}\sum_{j=4k+r: k\in \mathbb Z}\| P_{M}^{\perp}(f){\cal T}_{j}\Phi \|_{1}
%+\sum_{r=0}^{3}\sum_{j=4k+r: k\in \mathbb Z}\|c_{j}{\cal T}_{j}\Phi \|_{1}
%$$
%$$
%\leq \sum_{r=0}^{3}\| P_{M}^{\perp}(f)\|_{1}
%+\sum_{r=0}^{3}\sum_{j=4k+r: k\in \mathbb Z}|c_{j}|\| \Phi \|_{1}
%$$
%$$
%\leq 4\| P_{M}^{\perp}(f)\|_{1}
%+\sum_{r=0}^{3}\sum_{j=4k+r: k\in \mathbb Z}\int |P_{M}^{\perp}(f)(x)||{\cal T}_{j}\Phi (x)|dx
%$$
%$$
%\leq 4\| P_{M}^{\perp}(f)\|_{1}
%+\sum_{r=0}^{3}\int |P_{M}^{\perp}(f)(x)|dx
%%\leq 2 \| \sum_{j\in\mathbb N}f_{j}\|_{1}
%= 8\| P_{M}^{\perp}(f)\|_{1}
%$$
%

Now, we can prove the desired convergence. For every $0<\epsilon $, we take $k\in \mathbb N$ so that 
$2^{-k\delta }<\epsilon $. 
Then, due to compactness of $T$, 
we can take  
$M>0$ (depending on $T$, $\delta $, $p$, $\epsilon $ and $I$) so that 
$\| P_{M}^{\perp}\circ T\|_{p\rightarrow p} 2^{k/p}<\epsilon $ and $2^{M}>|I|>2^{-M}$, $I\subset {\mathbb B}_{2^{M}}$. 

We first note that, by the previous discussion of ${\cal L}$ and the fact that $P_M^\perp$ is self-adjoint in the $L^2$-pairing, we have
$$
\langle P_{M}^{\perp}(T(1)),f\rangle 
=\langle T(1),P_{M}^{\perp}(f)\rangle ={\mathcal L}(P_{M}^{\perp}(f))
$$
The second and last expressions
are meaningful because we already know that $T(1)\in \BMO(\mathbb R)$ and $P_{M}^{\perp}(f)\in H^{1}(\mathbb R)$. 
%Meanwhile, the last equality is just the definition of ${\cal L}$.
However, $P_{M}^{\perp}(f)$ is not supported on $I$ but on the larger set 
$I \cup (\cup_{J\in {\mathcal D}_{M}:J\cap I\neq \emptyset} J)$. 
%Then a direct application of Lemma \ref{definecmo} would provide the insufficient bound $a$. 
Therefore, 
we need to do some extra work. 

By the comments after Definition \ref{lagom} of a lagom projection operator, we can write
$$
P_{M}^{\perp}(f)=\sum_{I\in {\cal D}_{M}^{c}}\langle f,\psi_{I}\rangle \psi_{I} 
$$
in a Schauder basis sense. Then, by linearity and continuity of ${\cal L}$ on $H^{1}(\mathbb R)$, we have
$$
{\mathcal L}(P_{M}^{\perp}(f))
=\sum_{J\in  {\mathcal D}_{M}^{c}}\langle f,\psi_{J}\rangle {\cal L}(\psi_{J})
$$

Since $f$ and $\psi_{J}$ have compact support, the non-null terms in the sum arise from those intervals $J$ satisfying 
$J\cap I\neq \emptyset$.
%, that is, either $J\subset I$ or $I\subset J$. 
%If we take $2^{M}>|I|>2^{-M}$, $I\subset {\mathbb B}_{2^{M}}$
%For every $J\in  {\mathcal D}_{M}^{c}$ we have the following cases: if $|J|>2^{M}$ then $I\subset J$; if $|J|<2^{-M}$ then $J\subset I$. Those $J$ such that 
 %$\rdist (I,{\mathbb B}_{2M})>M$ do not appear in the sum. 
Therefore, 
\begin{align}\label{ortoT1}
\nonumber
{\mathcal L}(P_{M}^{\perp}(f))
&=\sum_{\tiny \begin{array}{l}J\in  {\mathcal D}_{M}^{c}\\ J\subset I\end{array}}\langle f,\psi_{J}\rangle {\cal L}(\psi_{J})
+\sum_{\tiny \begin{array}{l}J\in  {\mathcal D}_{M}^{c}\\ I\subset J\end{array}}\langle f,\psi_{J}\rangle {\cal L}(\psi_{J})
\\
&={\mathcal L}(f_{I})
+\sum_{\tiny \begin{array}{l}J\in  {\mathcal D}_{M}^{c}\\ I\subset J\end{array}}\langle f,\psi_{J}\rangle {\cal L}(\psi_{J})
\end{align}
where $f_{I}={\mathcal P}_{I}(P_{M}^{\perp}(f))$ and ${\mathcal P}_{I}$ denotes the classical projection operator. We note that $f_{I}\in H^{1}(\mathbb R)$ with support on $I$ and, due to boundedness of the projections, 
$\| f_{I}\|_{H^{1}}\lesssim \|f\|_{H^{1}}\leq 1$ and $\| f_{I}\|_{L^{p'}}\lesssim \|f\|_{L^{p'}}\lesssim |I|^{-1/p}$. Then,  we proceed to bound the first term as we did before:
$$
{\cal L}(f_{I})=\langle (P_{M}^{\perp}\circ T)({\mathcal T}_{c(I)}{\mathcal D}_{2^{k}|I|}\Phi),f_{I}\rangle 
+({\cal L}(f_{I})-\langle T({\mathcal T}_{c(I)}{\mathcal D}_{2^{k}|I|}\Phi),P_{M}^{\perp}(f_{I})\rangle )
$$
Since $P_{M}^{\perp}(f_{I})=f_{I}$, the second term in the sum can be dealt with by Lemma \ref{definecmo}
$$
|{\mathcal L}(f_{I})-\langle T({\mathcal T}_{c(I)}{\mathcal D}_{2^{k}|I|}\Phi),f_{I}\rangle |
%$$
%$$
%\leq C2^{-k\delta }L(2^{k}|I|)S(2^{k}|I|)D(\max(2^{k}|I|,|\mathbb B|)^{-1}|c(I)|+1)
%%\frac{1}{( \max(2^{k}|I|,|K_{M}|)^{-1}\diam(2^{k}I\cup K_{M}))^{N}}
%\| f_{I}\|_{L^1(\mathbb R)}
%$$
%$$
%\leq C2^{-k\delta }\sum_{j\in \mathbb N}\| f_{j}\|_{L^1(\mathbb R)}
\lesssim 2^{-k\delta }\| f_{I}\|_{L^1(\mathbb R)}
%\leq C2^{-k\delta }\|f_{I}\|_{H^1(\mathbb R)}
\lesssim 2^{-k\delta }<\epsilon 
$$
%which is smaller than $\epsilon /2$.
Meanwhile, the first term can be bounded by
$$
\| P_{M}^{\perp}\circ T\| \|{\mathcal T}_{c(I)}{\mathcal D}_{2^{k}|I|}\Phi \|_{L^p(\mathbb R)}
\| f_{I}\|_{L^{p'}(\mathbb R)}
%$$
%$$
\lesssim \| P_{M}^{\perp}\circ T\|  2^{\frac{k}{p}}|I|^{\frac{1}{p}}|I|^{-\frac{1}{p}}
%=\| P_{M}^{\perp}\circ T\| 2^{k/p}
<\epsilon 
$$
%which is also smaller than $\epsilon /2$.

Now, we bound the second term in (\ref{ortoT1}) by a reiteration of a previous argument. 
We first parametrize the intervals $J$ such that $I\subset J$  by their size, 
$|J_{j}|=2^{j}|I|$ with $j\geq 0$. Then, we write this second term as 
\begin{align*}
\sum_{j\geq 0}\langle f,\psi_{J_{j}}&\rangle {\mathcal L}(\psi_{J_{j}})
=\sum_{j\geq 0}\langle f,\psi_{J_{j}}\rangle 
\langle (P_{M}^{\perp}\circ T)({\mathcal T}_{c(I)}{\mathcal D}_{2^{k+j}|I|}\Phi),\psi_{J_{j}}\rangle 
\\
&+\sum_{j\geq 0}\langle f,\psi_{J_{j}}\rangle 
({\mathcal L}(\psi_{J_{j}})-\langle T({\mathcal T}_{c(I)}{\mathcal D}_{2^{k+j}|I|}\Phi),P_{M}^{\perp}(\psi_{J_{j}})\rangle )
\end{align*}
Since $J_{j}\in {\mathcal D}_{M}^{c}$, we have 
$\psi_{J_{j}}=P_{M}^{\perp}(\psi_{J_{j}})$  and so, 
by Lemma \ref{definecmo}, we can bound the modulus  
of the second term in previous expression 
by 
\begin{align*}
\sum_{j\geq 0} |\langle f,&\psi_{J_{j}}\rangle |
|{\mathcal L}(\psi_{J_{j}})-\langle T({\mathcal T}_{c(I)}{\mathcal D}_{2^{k+j}|I|}\Phi),\psi_{J_{j}}\rangle |
\\
%Then, by Lemma \ref{definecmo}, we can bound its modulus  by a constant times
%$$
%\sum_{j\geq 0} |\langle f,\psi_{J_{j}}\rangle |
%|{\mathcal L}(\psi_{J_{j}})-\langle T({\mathcal T}_{c(I)}{\mathcal D}_{2^{k+j}|I|}\Phi),\psi_{J_{j}}\rangle |
%$$
&\lesssim \sum_{j\geq 0} \| f\|_{L^1(\mathbb R)} \|\psi_{J_{j}}\|_{L^{\infty }(\mathbb R)}
2^{-(k+j)\delta }
%L(2^{k+j}|I|)S(2^{k+j}|I|)D(\max(2^{k-j}|I|,|\mathbb B|)^{-1}|c(I)|+1)
\| \psi_{J_{j}}\|_{L^1(\mathbb R)}
\\
&\leq 2^{-k\delta }\| f\|_{L^1(\mathbb R)} 
%L(2^{k}|I|)S(2^{k}|I|)D(\max(2^{k}|I|,|\mathbb B|)^{-1}|c(I)|+1)
\sum_{j\geq 0}|J_{j}|^{-1/2}
2^{-j\delta }|J_{j}|^{1/2}
\lesssim 2^{-k\delta }<\epsilon 
\end{align*}

%which, by Lemma \ref{lowoscillation}, we can bound by
%$$
%2^{-k\delta }
%\sum_{j\geq 0} \| f\|_{1}\frac{|I|}{|J_{j}|^{3/2}}
%2^{-j\delta }|J_{j}|^{1/2}
%=2^{-k\delta }\| f\|_{1}\sum_{j\geq 0} \frac{|I|}{|2^{j}I|}
%2^{-j\delta }
%$$
%$$
%=2^{-k\delta }\| f\|_{1}\sum_{j\geq 0} \frac{1}{2^{j(1+\delta)}}
%\lesssim 2^{-k\delta }<\epsilon 
%$$
%%which is smaller than $\epsilon /2$.

Meanwhile, we bound the modulus of the first term by 
\begin{align*}
&\sum_{J\in {\mathcal D}_{M}^{c}}|\langle f,\psi_{J_{j}}\rangle | 
\| P_{M}^{\perp}\circ T\| \|{\mathcal T}_{c(I)}{\mathcal D}_{2^{k+j}|I|}\Phi \|_{L^p(\mathbb R)}\| \psi_{J_{j}}\|_{L^{p'}(\mathbb R)}
\\
&\lesssim \| P_{M}^{\perp}\circ T\| \sum_{j\geq 0}|\langle f,\psi_{J_{j}}\rangle | 2^{\frac{k}{p}}
|2^{j}I|^{\frac{1}{p}}|J_{j}|^{-\frac{1}{2}+\frac{1}{p'}}
\leq \epsilon \sum_{j\geq 0}|\langle f,\psi_{J_{j}}\rangle | 
|J_{j}|^{\frac{1}{2}} 
\end{align*}
Now, we use Lemma \ref{lowoscillation} to bound last expression by a constant times
\begin{align*}
\epsilon \sum_{j\geq 0}\| f\|_{1}
\frac{|I|}{|J_{j}|^{\frac{3}{2}}}|J_{j}|^{\frac{1}{2}} 
= \epsilon \| f\|_{1}\sum_{j\geq 0}\frac{|I|}{2^{j}|I|} 
%=\epsilon \| f\|_{1}\sum_{j\geq 0} 2^{-j}
%\lesssim \| P_{M}^{\perp}\circ T\|2^{k/p}
\lesssim \epsilon
\end{align*}

\section{The operator acting on bump functions}\label{bump}

In this section we study the action of $T$ over bump functions. In particular, 
we seek good estimates of the dual pair $\langle T(\psi_{I}),\psi_{J}\rangle $ in terms of the space and
frequency localization of the bump functions $\psi_{I},\psi_{J}$.

Before starting, we will state and prove five lemmata about localization properties of bump functions. These results will be frequently used in
Proposition \ref{symmetricspecialcancellation}, the main result of this section. 
%They can be proved by a direct application of Definition \ref{defbump} of a bump function adapted to an interval and so, we omit the proofs.
%Lemma \ref{bump5} will be used to change the interval where a bump function is adapted with appropriate control of its constant. 
%Remark \ref{bump1remark} and Lemma 
%\ref{bump3} will be used when we apply the weak compactness condition away
%from the essential support of the bump functions, while Lemma \ref{bump4} will be mostly used when we 
%apply the cancellation condition $T(1)=0$ and the weak compactness condition close to the essential support of the bump functions. Finally, Lemma \ref{bump6} will be used to obtain extra cancellation conditions for the argument functions. 

In these results, we use a smooth cut-off function $\Phi\in {\cal S}(\R)$ such that $\Phi(x)=1$ for $|x|\leq 1$, 
$0<\Phi(x)<1$ for $1<|x|<2$,   
and $\Phi(x)=0$ for $|x|>2$. Then, 
$\Phi_{I}={\mathcal T}_{c(I)}{\mathcal D}_{|I|}\Phi$ denotes an $L^\infty$-normalized function adapted to $I$ such that 
$\Phi_{I}=1$ in $2I$ and $\Phi_{I}=0$ in $(4I)^{c}$. We also recall that for all $\lambda >0$, $\lambda J$ denotes 
the interval with the same center as $J$ and length $\lambda |J|$.
We also introduce the notation $w_{I}(x)=1+|I|^{-1}|x-c(I)|$.

\begin{lemma}\label{bump5}
Let $\phi$ be a bump function adapted to $I$ with constant $C$ and order $N$. Let 
$\phi_{in}=\psi \cdot \Phi_{\lambda J}$ with $|J|=|I|$ and $\lambda >0$. Then, 
$\phi_{in}$ is adapted to $I$ with constant $C(1+\lambda^{-1})^{N}$ and order $N$. 
\end{lemma}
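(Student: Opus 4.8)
The plan is to read the displayed definition as $\phi_{in}=\phi\cdot\Phi_{\lambda J}$ (the symbol $\psi$ in the statement apparently being a misprint for $\phi$) and to verify the pointwise bounds of Definition \ref{defbump} directly via the Leibniz rule. First I would record the elementary estimate for the cut-off: since $\Phi_{\lambda J}={\mathcal T}_{c(J)}{\mathcal D}_{\lambda|J|}\Phi$, one has $(\Phi_{\lambda J})^{(m)}(x)=(\lambda|J|)^{-m}\Phi^{(m)}\big((x-c(J))/(\lambda|J|)\big)$, and hence
$$
\|(\Phi_{\lambda J})^{(m)}\|_{\infty}\le C_m(\lambda|J|)^{-m}=C_m(\lambda|I|)^{-m},\qquad 0\le m\le N,
$$
where in the last step we used $|J|=|I|$, and $C_m=\|\Phi^{(m)}\|_{\infty}$ depends only on $\Phi$ (with $C_0=1$). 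Observe that the location of $J$ plays no role: only $|J|=|I|$ enters, and the whole spatial decay in $w_I$ will be supplied by $\phi$ alone.

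Next, for $0\le n\le N$ I would expand
$$
\phi_{in}^{(n)}(x)=\sum_{k=0}^{n}\binom{n}{k}\phi^{(k)}(x)\,(\Phi_{\lambda J})^{(n-k)}(x)
$$
and estimate each summand by combining the adaptation bound $|\phi^{(k)}(x)|\le C|I|^{-1/p-k}w_I(x)^{-N}$ with the sup-bound above; after collecting the scales $|I|^{-1/p-k}(\lambda|I|)^{-(n-k)}=\lambda^{-(n-k)}|I|^{-1/p-n}$ this gives $|\phi^{(k)}(x)\,(\Phi_{\lambda J})^{(n-k)}(x)|\le C\,C_{n-k}\,\lambda^{-(n-k)}|I|^{-1/p-n}w_I(x)^{-N}$. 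Summing over $k$ and using the binomial identity $\sum_{k=0}^{n}\binom{n}{k}\lambda^{-(n-k)}=(1+\lambda^{-1})^{n}\le(1+\lambda^{-1})^{N}$ then yields
$$
|\phi_{in}^{(n)}(x)|\lesssim C(1+\lambda^{-1})^{N}|I|^{-1/p-n}w_I(x)^{-N},\qquad 0\le n\le N,
$$
with implicit constant depending only on $\Phi$ and $N$; this is precisely the assertion that $\phi_{in}$ is adapted to $I$ with constant $C(1+\lambda^{-1})^{N}$ and order $N$.

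I do not expect any genuine obstacle: the argument is a single application of the Leibniz rule, and the only point requiring care is the bookkeeping of scales — each derivative falling on $\Phi_{\lambda J}$ costs a factor $(\lambda|I|)^{-1}$, which merges with the $|I|$-powers produced by $\phi$ to reproduce exactly the normalization $|I|^{-1/p-n}$ together with the announced factor $(1+\lambda^{-1})^{N}$. It is also worth noting that the compact support of $\Phi$ is not actually used in this lemma (only boundedness of $\Phi$ and its first $N$ derivatives); the support property of $\Phi_{\lambda J}$ will instead be exploited later, when $\phi_{in}$ plays the role of the piece of $\phi$ localized near the centre of $J$.
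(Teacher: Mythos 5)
Your proposal is correct and follows essentially the same route as the paper: a Leibniz expansion, the adaptation bound for $\phi$, the scale bound $(\lambda|I|)^{-(n-k)}$ for derivatives of $\Phi_{\lambda J}$ (the paper keeps and then discards the harmless weight $w_{\lambda J}(x)^{-N}\le 1$, which you drop from the start), and the binomial sum producing the factor $(1+\lambda^{-1})^{N}$. Your reading of $\psi$ as a misprint for $\phi$ and your bookkeeping of the normalization match the paper's argument.
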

\proof For all $0\leq n\leq N$, 
$$
|\psi_{in}^{(n)}(x)|
\leq \sum_{k=0}^{n}\Big(\begin{array}{c}n\\k\end{array}\Big)
|\psi^{(k)}(x)||\Phi_{\lambda J}^{(n-k)}(x)|
$$
$$
\leq \sum_{k=0}^{n}\Big(\begin{array}{c}n\\k\end{array}\Big)
C|I|^{-(\frac{1}{2}+k)}w_{I}(x)^{-N}|\lambda I|^{-(n-k)}w_{\lambda J}(x)^{-N}
$$
%$$
%\leq C|I|^{-(\frac{1}{2}+n)}w_{I}(x)^{-N}
%\sum_{k=0}^{n}\Big(\begin{array}{c}n\\k\end{array}\Big)\lambda^{-(n-k)}
%$$
%$$
%=C|I|^{-(1/2+n)}(1+|I|^{-1}|x-c(I)|)^{-N}
%(1+\lambda^{-1})^{n}
%$$
$$
\leq C(1+\lambda^{-1})^{N}
|I|^{-(\frac{1}{2}+n)}w_{I}(x)^{-N}
$$
since $w_{\lambda J}(x)\geq 1$.

\begin{lemma}\label{bump1}
Let $I,J$ be such that $|J|\leq |I|$. Let $N\in \mathbb N$, $0<\theta <1$, 
%be such that $\theta N/4\in \mathbb N$, 
$R\geq 1$ and
$\lambda \geq R^{-1}w_{J}(c(I))^{\theta }$.
%Let $\Phi_{\lambda J}$ be a $L^\infty$-normalized function adapted to $\lambda J$ such that 
%$\Phi_{\lambda J}=1$ in $2\lambda J$ and $\Phi_{\lambda J}=0$ in $(4\lambda J)^{c}$.

Let $\phi_{J}$ be an $L^2$-normalized bump function adapted to $J$ with constant $C$ and order $N$.
Then, $\phi_{out}=\phi_{J}(1-\Phi_{\lambda J})$ is an $L^2$-normalized bump function adapted to $I$ with
order $[N/4]$ and constant comparable to
$$
CR^{\frac{N}{4}}\Big(\frac{|J|}{|I|}\Big)^{-\frac{1}{2}}\lambda^{-\frac{N}{2}}
$$
\end{lemma}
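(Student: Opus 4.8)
The plan is to read off the support of $\phi_{out}$, differentiate by the Leibniz rule, and then convert the $w_J$-decay of each resulting term into $w_I$-decay, using the hypothesis $\lambda\ge R^{-1}w_J(c(I))^{\theta}$ both to pay for the change of localization interval and to extract the gain $\lambda^{-N/2}$. It suffices to verify the pointwise bounds $|\phi_{out}^{(n)}(x)|\lesssim C\,R^{N/4}(|J|/|I|)^{-1/2}\lambda^{-N/2}\,|I|^{-1/2-n}w_I(x)^{-[N/4]}$ for $0\le n\le[N/4]$.

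First I would note that $1-\Phi_{\lambda J}$ vanishes on $2\lambda J$ (where $\Phi_{\lambda J}\equiv 1$), so $\supp\phi_{out}\subset\{x:|x-c(J)|\ge\lambda|J|\}$, and hence $w_J(x)=1+|J|^{-1}|x-c(J)|\ge 1+\lambda$ throughout $\supp\phi_{out}$; combined with the hypothesis, one also gets $w_J(x)\ge R^{-1}w_J(c(I))^{\theta}$ there, i.e. $w_J(x)^{-1}\le R\,w_J(c(I))^{-\theta}$ on the support. Next, for $0\le n\le[N/4]$, Leibniz gives $\phi_{out}^{(n)}=\sum_{k=0}^{n}\binom{n}{k}\phi_J^{(k)}(1-\Phi_{\lambda J})^{(n-k)}$. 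Using $|\phi_J^{(k)}(x)|\le C|J|^{-1/2-k}w_J(x)^{-N}$, the bounds $|(1-\Phi_{\lambda J})^{(m)}(x)|\lesssim(\lambda|J|)^{-m}\mathbf{1}_{\{\lambda|J|\le|x-c(J)|\le 2\lambda|J|\}}$ for $m\ge1$ and $|1-\Phi_{\lambda J}(x)|\le\mathbf{1}_{\{|x-c(J)|\ge\lambda|J|\}}$, and the identity $(\lambda|J|)^{-(n-k)}|J|^{-1/2-k}=\lambda^{-(n-k)}|J|^{-1/2-n}$ together with $\lambda^{-(n-k)}\le(1+\lambda^{-1})^{[N/4]}$, one arrives at
$$
|\phi_{out}^{(n)}(x)|\lesssim_N (1+\lambda^{-1})^{[N/4]}\,|J|^{-1/2-n}\,w_J(x)^{-N}\,\mathbf{1}_{\{|x-c(J)|\ge\lambda|J|\}}(x),\qquad 0\le n\le[N/4].
$$

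The remaining task is to match this with the claimed bound. Here I would use the elementary shifting inequality $w_I(x)\le 2\,w_J(c(I))\,w_J(x)$ (valid because $|J|\le|I|$, by splitting according to whether $|x-c(I)|$ exceeds $2|c(I)-c(J)|$) to peel off $w_J(x)^{-[N/4]}\le(2w_J(c(I)))^{[N/4]}w_I(x)^{-[N/4]}$, and then split the remaining $N-[N/4]$ powers of $w_J(x)^{-1}$: one block bounded via $w_J(x)^{-N/2}\le(1+\lambda)^{-N/2}\le\lambda^{-N/2}$ to produce the stated gain, and another block bounded via $w_J(x)^{-1}\le R\,w_J(c(I))^{-\theta}$ in order to swallow the factor $w_J(c(I))^{[N/4]}$ created by the shift. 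Writing the normalisation as $|J|^{-1/2-n}=(|I|/|J|)^{1/2}(|I|/|J|)^{n}|I|^{-1/2-n}$ and absorbing the surplus $(1+\lambda^{-1})^{[N/4]}$ and $(|I|/|J|)^{n}$ against what is left of the $w_J$-decay, one collects the constant $C\,R^{N/4}(|J|/|I|)^{-1/2}\lambda^{-N/2}$ and the decay order $[N/4]$ with respect to $I$.

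The main obstacle is exactly this last allocation of exponents. Only $N$ powers of $w_J$-decay are available, and they must simultaneously reconstruct decay of order $[N/4]$ relative to $I$ across the potentially enormous shift factor $w_J(c(I))$, manufacture the gain $\lambda^{-N/2}$, and absorb both the Leibniz loss $(1+\lambda^{-1})^{[N/4]}$ and the normalisation surplus $(|I|/|J|)^{n}$; furthermore, since $\theta<1$, the compensation $w_J(x)^{-1}\le R\,w_J(c(I))^{-\theta}$ is only $\theta$-efficient, and the powers of $R$ it introduces must be kept below the target $R^{N/4}$. Making this split close — exploiting the precise shape of the hypothesis $\lambda\ge R^{-1}w_J(c(I))^{\theta}$ on $\supp\phi_{out}$ and the deliberately weak target order $[N/4]$ — is where the genuine work of the proof is concentrated.
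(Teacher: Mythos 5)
Your setup (support of $\phi_{out}$, the bound $w_J(x)\ge 1+\lambda$ there, and the Leibniz step) matches the paper, but the heart of the lemma --- converting the remaining $w_J$-decay into $w_I$-decay with the stated constant --- is exactly the step you leave open, and the bookkeeping scheme you propose cannot be closed. You shift via $w_I(x)\le 2\,w_J(c(I))\,w_J(x)$, which costs a factor $w_J(c(I))^{[N/4]}$, and then hope to repay it with the remaining powers of $w_J(x)^{-1}\le R\,w_J(c(I))^{-\theta}$. But after spending $[N/4]$ powers on the shift and $N/2$ powers on the gain $\lambda^{-N/2}$, only about $N/4$ powers of $w_J$-decay remain, so the repayment yields at best $R^{N/4}w_J(c(I))^{-\theta N/4}$ and leaves an uncontrolled factor $w_J(c(I))^{(1-\theta)N/4}$ (besides, you also want these same leftover powers to absorb $(1+\lambda^{-1})^{[N/4]}$ and $(|I|/|J|)^n$). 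Since $\theta<1$ --- and in the application $\theta$ is tiny, of size between $8/N$ and $N^{-1/2}/4$ --- the deficit is enormous; repaying in full would need $[N/4]/\theta\gg N$ powers and would produce $R^{[N/4]/\theta}\gg R^{N/4}$. Indeed, aiming at a full $[N/4]$ powers of genuine $w_I$-decay with the constant $CR^{N/4}(|I|/|J|)^{1/2}\lambda^{-N/2}$ is not attainable in general: take $|I|=|J|$, $x$ with $|x-c(J)|\approx\lambda|J|$ and $c(I)$ with $w_J(c(I))\approx(R\lambda)^{1/\theta}$; then the available bound $w_J(x)^{-N}\approx\lambda^{-N}$ is far larger than $R^{N/4}\lambda^{-N/2}w_I(x)^{-N/4}\approx R^{N/4}\lambda^{-N/2}(R\lambda)^{-N/(4\theta)}$ once $\lambda$ is large and $\theta$ small.

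The paper's proof resolves this differently, and only claims (in substance) decay of order $\theta N/4$ in $w_I$: on the support one has $w_J(x)\ge\lambda$, and the hypothesis gives $w_J(c(I))\le(R\lambda)^{1/\theta}\le(Rw_J(x))^{1/\theta}$, whence the pointwise comparison $w_I(x)\le 2R^{1/\theta}w_J(x)^{1/\theta}$, i.e. $w_J(x)^{-1}\le 2^{\theta}R\,w_I(x)^{-\theta}$. The budget is then split as $w_J(x)^{-N}=w_J(x)^{-3N/4}\,w_J(x)^{-N/4}$: the first block gives $(1+\lambda)^{-3N/4}\le\lambda^{-N/2}(1+\lambda)^{-N/4}$, producing the gain and absorbing the Leibniz losses $\lambda^{-(n-k)}$ (using $\lambda\ge R^{-1}$), while the second block is converted wholesale into $2^{\theta N/4}R^{N/4}\,w_I(x)^{-\theta N/4}$. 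This conversion never tries to reconstruct full powers of $w_I$-decay, which is precisely why its $R$- and exponent-count closes; it is also how the lemma is actually used later (cf.\ Remark \ref{bump1remark}, where only the $\theta N$-proportional decay enters). So the missing allocation in your argument is not a technicality to be tightened but the point where a different inequality is needed, and as formulated your target bound is stronger than what the argument (or the stated constant) can deliver.
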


\begin{remark}\label{bump1remark} We apply the lemma with 
$\lambda =1/32(|J|^{-1}\diam (I\cup J))^{\theta }$, for which the result says that 
$\phi_{out}=\phi_{J}(1-\Phi_{\lambda J})$ is adapted to $I$ with 
order $[N/4]$ and constant comparable to
$$
C\Big(\frac{|J|}{|I|}\Big)^{\theta \frac{N}{2}-\frac{1}{2}}\rdist(I,J)^{-\theta \frac{N}{2}}
$$
We note that, when the remark is applied, the constant will depend on $N$ which, in turn, will be fixed.
\end{remark}
\proof
%We study first the decay of $\phi_{J}(x)(1-\Phi_{\lambda J})(x)$.
Due to the support of $1-\Phi_{\lambda J}$ we can assume $|x-c(J)|\geq\lambda |J|$ and so, 
$w_{J}(x)\geq \lambda $.
%and so
%$$
%|J|^{-1}|x-c(J)|\geq \lambda =(|J|^{-1}\diam (I\cup J))^{\theta }>(|I|^{-1}\diam (I\cup J))^{\theta }
%$$
Then, since $\theta \leq 1$, we have
%we have $(|J|^{-1}|c(I)-c(J)|)^{\theta }\leq \lambda \leq |J|^{-1}|x-c(J)|$.
%Therefore, 
$$
w_{I}(x)
%\leq 1+|I|^{-1}(|x-c(J)|+|c(I)-c(J)|)
\leq |J|^{-1}|x-c(J)|+1+|J|^{-1}|c(I)-c(J)|
$$
$$
\leq R|J|^{-1}|x-c(J)|+(R\lambda)^{1/\theta}
%\leq 2(R|J|^{-1}|x-c(J)|)^{1/\theta}
\leq 2\cdot R^{1/\theta }w_{J}(x)^{1/\theta }
$$
%$$
%4(1+|I|^{-1}|x-c(I)|
%\leq 5(|J|^{-1}|x-c(J)|)^{1/\theta }
%$$
%where the second last inequality holds because $R|J|^{-1}|x-c(J)|\geq R\lambda \geq 1$ and $\theta \leq 1$.
%Then,
%$$
%w_{I}(x)
%%\leq 2+|I|^{-1}|x-c(I)|
%%$$
%%$$
%\leq 1+2\cdot R^{1/\theta }(|J|^{-1}|x-c(J)|)^{1/\theta }
%$$
%$$
%\leq 2\cdot R^{1/\theta }(1+(|J|^{-1}|x-c(J)|)^{1/\theta })
%\leq 2\cdot R^{1/\theta }w_{J}(x)^{1/\theta }
%$$

%With this and the inequality $|x-c(J)|\geq \lambda |J|$ 
%%and $1+\lambda \geq 2$
%, we obtain
%$$
%|\phi_{J}(x)(1-\Phi_{\lambda J})(x)|
%\leq C|J|^{-1/2}\frac{1}{(1+|J|^{-1}|x-c(J)|)^{N}}|(1-\Phi_{\lambda J})(x)|
%$$
%$$
%\leq C|J|^{-1/2}\frac{1}{(1+\lambda)^{N/2}}\frac{1}{(1+|J|^{-1}|x-c(J)|)^{N/2}}|(1-\Phi_{\lambda J})(x)|
%$$
%$$
%\leq C\Big(\frac{|J|}{|I|}\Big)^{-1/2}\lambda^{-N/2}
%%\frac{1}{(1+\lambda)^{N/4}}
%|I|^{-1/2}\frac{2^{\theta N/2}R^{N/2}}{(1+|I|^{-1}|x-c(I)|)^{\theta N/2}}
%$$
%%$$
%%\leq C\Big(\frac{|J|}{|I|}\Big)^{-1/2}\lambda^{-N/2}\frac{2^{\theta N/4}}{(1+\lambda)^{N/4}}
%%|I|^{-1/2}\frac{1}{(1+|I|^{-1}|x-c(I)|)^{\theta N/4}}
%%$$
%%$$
%%\leq C\Big(\frac{|J|}{|I|}\Big)^{-1/2}\lambda^{-N/2}
%%|I|^{-1/2}\frac{1}{(1+|I|^{-1}|x-c(I)|)^{\theta N/2}}
%%$$
%%since $1+\lambda >2$.
%This demonstrates the desired decay. 

%We use a similar argument to deal with the derivatives. 
We also note that, since 
%$\Phi_{I}(x)=\Phi (\frac{x-c(I)}{|I|/2})$, we have 
$\Phi_{I}$ is $L^{\infty }$ -adapted to $I$, we have
$
|\Phi_{I}^{(k)}(x)|\leq C |I|^{-k}
%\chi_{4I\backslash 2I}(x)
$.
With all this, we have for all $0\leq k\leq n\leq N/4$, 
$$
|\phi_{J}^{(k)}(x)(1-\Phi_{\lambda J})^{(n-k)}(x)|
\leq C|J|^{-(\frac{1}{2}+k)}w_{J}(x)^{-N}|\lambda J|^{-(n-k)}\chi_{4\lambda J\backslash 2\lambda J}(x)
$$
%$$
%= C|J|^{-(1/2+n)}\frac{1}{(1+|J|^{-1}|x-c(J)|)^{N}}\lambda^{-(n-k)}\chi_{4\lambda J\backslash 2\lambda J}(x)
%$$
$$
\leq C\Big(\frac{|J|}{|I|}\Big)^{-(\frac{1}{2}+n)}
%\lambda^{-N/2}
|I|^{-(\frac{1}{2}+n)}
\frac{1}{(1+\lambda)^{\frac{3N}{4}}}
w_{J}(x)^{-\frac{N}{4}}
%\frac{
%%(5/4)^{\theta N/4}
%1}{(1+|J|^{-1}|x-c(J)|)^{N/4}}
\lambda^{-(n-k)}
%\chi_{4\lambda J\backslash 2\lambda J}(x)
$$
$$
\leq C\Big(\frac{|J|}{|I|}\Big)^{-(\frac{1}{2}+n)}\lambda^{-\frac{N}{2}}
|I|^{-(\frac{1}{2}+n)}
\frac{1}{(1+\lambda)^{\frac{N}{4}}}
%2^{N/4}
2^{\theta \frac{N}{4}}R^{\frac{N}{4}}w_{I}(x)^{-\theta \frac{N}{4}}\lambda^{-(n-k)}
%(1+|I|^{-1}|x-c(I)|)^{\theta N/4}}\lambda^{-(n-k)}
%\chi_{4\lambda J\backslash 2\lambda J}(x)
$$
Therefore, 
$$
|(\phi_{J}(1-\Phi_{\lambda J}))^{(n)}(x)|
\leq \sum_{k=0}^{n}\Big(\begin{array}{c}n\\k\end{array}\Big)|\phi_{J}^{(k)}(x)||(1-\Phi_{\lambda J})^{(n-k)}(x)|
$$
%$$
%\leq C2^{\theta \frac{N}{4}}R^{\frac{N}{4}}\lambda^{-\frac{N}{2}}\Big(\frac{|J|}{|I|}\Big)^{-(\frac{1}{2}+n)}
%|I|^{-(\frac{1}{2}+n)}w_{I}(x)^{-\theta \frac{N}{4}}
%\frac{1}{(1+\lambda)^{\frac{N}{4}}}
%\sum_{k=0}^{n}\Big(\begin{array}{c}n\\k\end{array}\Big)\lambda^{-(n-k)}
%%\chi_{4\lambda J\backslash 2\lambda J}(x)
%$$
$$
\leq C2^{\theta \frac{N}{4}}R^{\frac{N}{4}}\lambda^{-\frac{N}{2}}\Big(\frac{|J|}{|I|}\Big)^{-(\frac{1}{2}+n)}
|I|^{-(\frac{1}{2}+n)}w_{I}(x)^{-\theta \frac{N}{4}}
$$
where we used that, since $\lambda \geq R^{-1}$ and $n\leq N/4$, we have
$$
\frac{1}{(1+\lambda)^{\frac{N}{4}}}
\sum_{k=0}^{n}\Big(\begin{array}{c}n\\k\end{array}\Big)\lambda^{-(n-k)}
%=\frac{(1+\lambda^{-1})^{n}}{(1+\lambda)^{\frac{N}{4}}}
=\frac{(\lambda+1)^{n}}{(1+\lambda)^{\frac{N}{4}}}
\lambda^{-n}
\leq \lambda^{-n}
\leq R^{\frac{N}{4}}
$$

%$$
%%\frac{1}{(1+\lambda)^{N/4}}
%=(1+\lambda^{-1})^{n}
%%=\frac{1}{(1+\lambda)^{N/4}}(1+\lambda)^{n}\lambda^{-n}
%\leq 3^{\theta N/4}
%$$

\begin{lemma}\label{bump3}
Let $I,J$ be such that $|J|\leq |I|$. Let $N \in \mathbb N$, $0<\theta \leq 1$, 
%be such that $\theta N/4\in \mathbb N$, 
$R\geq 3$ and 
$\lambda \geq R^{-1}(|J|^{-1}\diam (I\cup J))^{\theta }$.

%Let $\Phi_{\lambda J}$ be a $L^\infty$-normalized function adapted to $\lambda J$ such that 
%$\Phi_{\lambda J}=1$ in $2\lambda J$ and $\Phi_{\lambda J}=0$ in $(4\lambda J)^{c}$.

Then, $|\lambda J|^{-1/2}\Phi_{\lambda J}$ is an $L^2$-normalized bump function adapted to $I$ with 
order $[\theta N/4]$ and constant 
$$
R^{2N}\Big(\frac{|J|}{|I|}\Big)^{\theta \frac{N}{4}-\frac{1}{2}}\lambda^{\frac{N-1}{2}}
$$
\end{lemma}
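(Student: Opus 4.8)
The plan is to estimate the derivatives $\big(|\lambda J|^{-1/2}\Phi_{\lambda J}\big)^{(n)}$ for $0\le n\le[\theta N/4]$ and show they satisfy the bump bounds adapted to $I$ with the stated constant. First I would record that $\Phi_{\lambda J}={\mathcal T}_{c(J)}{\mathcal D}_{\lambda|J|}\Phi$ is $L^\infty$-adapted to $\lambda J$, hence $|\Phi_{\lambda J}^{(n)}(x)|\le C(\lambda|J|)^{-n}w_{\lambda J}(x)^{-N}\le C(\lambda|J|)^{-n}w_{\lambda J}(x)^{-\theta N/4}$, supported in $2\lambda J$. Multiplying by $|\lambda J|^{-1/2}=(\lambda|J|)^{-1/2}$ gives $\big||\lambda J|^{-1/2}\Phi_{\lambda J}^{(n)}(x)\big|\le C(\lambda|J|)^{-1/2-n}w_{\lambda J}(x)^{-\theta N/4}$, which is exactly a bump bound adapted to $\lambda J$, with constant $C$. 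The task is therefore purely to \emph{re-adapt} this bound from the interval $\lambda J$ to the larger-or-comparable interval $I$.

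The core of the re-adaptation is the same elementary comparison used in Lemma \ref{bump1}: on the support $|x-c(J)|\le 2\lambda|J|$ we relate $|\lambda J|^{-1/2-n}$ to $|I|^{-1/2-n}$ by a factor $(|J|/|I|)^{1/2+n}\lambda^{1/2+n}$, absorbing the powers of $\lambda$ and the ratio into the constant (using $\lambda\ge R^{-1}(|J|^{-1}\diam(I\cup J))^\theta\ge R^{-1}$ and $n\le\theta N/4\le N/4$, so $\lambda^{1/2+n}\le\lambda^{(N-1)/2}\cdot(\text{something }\lesssim R^{N})$ once one checks the exponent bookkeeping). Then one must compare the weight: for $x\in 2\lambda J$ one shows $w_I(x)\lesssim R^{1/\theta}w_{\lambda J}(x)^{1/\theta}$ along the lines of the estimate in the proof of Lemma \ref{bump1}, using $\lambda|J|\ge R^{-1}\diam(I\cup J)^\theta|J|^{1-\theta}$ and $\diam(I\cup J)\gtrsim|I|$ to control $|c(I)-c(J)|$ and $|J|$ against $\lambda|J|$; raising $w_{\lambda J}(x)^{-\theta N/4}$ to reflect this gives $w_{\lambda J}(x)^{-\theta N/4}\lesssim R^{N/4}w_I(x)^{-N/4}$, with a further loss of $R$-powers. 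Keeping at most $[\theta N/4]$ derivatives is what makes the exponents of $w$ work out: one spends a fraction of the available decay $N$ on the support/weight comparison and retains enough for the adapted order.

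The main obstacle, and the only genuinely delicate point, is the bookkeeping of the $R$-powers and of the $\lambda$-powers so that the final constant is precisely $R^{2N}(|J|/|I|)^{\theta N/4-1/2}\lambda^{(N-1)/2}$: one has to track (i) the factor $\lambda^{1/2+n}(|J|/|I|)^{1/2+n}$ from rescaling the normalization, where the worst $n=[\theta N/4]$ must be bounded by $\lambda^{(N-1)/2}$ times a harmless $R$-power (this uses $\lambda\ge R^{-1}$, so negative powers of $\lambda$ cost $R$-powers), (ii) the factor $R^{1/\theta}$-type losses from $w_I\lesssim R^{1/\theta}w_{\lambda J}^{1/\theta}$ raised to the relevant power, and (iii) the combinatorial constants $\binom{n}{k}$, which are absorbed since $n$ is bounded. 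All of these are crude and generous — the statement already allows the wasteful exponent $R^{2N}$ — so no sharpness is required; the proof is a careful but routine variant of the proof of Lemma \ref{bump1}, and I would present it by first stating the rescaled bump bound adapted to $\lambda J$, then invoking the weight comparison and collecting constants.
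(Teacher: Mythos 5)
Your overall plan (restrict to the support of $\Phi_{\lambda J}$, compare the weight $w_I$ there, and collect powers of $\lambda$, $R$ and $|J|/|I|$) is the same as the paper's, but the inequality you place at the heart of the argument is false, and your bookkeeping of the $\lambda$-powers is inverted as a result. You claim that for $x\in 2\lambda J$ one has $w_I(x)\lesssim R^{1/\theta}w_{\lambda J}(x)^{1/\theta}$, hence $w_{\lambda J}(x)^{-\theta N/4}\lesssim R^{N/4}w_I(x)^{-N/4}$. On the support of $\Phi_{\lambda J}$ the weight $w_{\lambda J}(x)$ is bounded by $3$, so it carries no decay at all; meanwhile $w_I(x)$ can be as large as roughly $\frac{|J|}{|I|}(R\lambda)^{1/\theta}$ (take $I$ and $J$ far apart, so that $\lambda\geq R^{-1}(|J|^{-1}\diam(I\cup J))^{\theta}$ forces $\lambda$ large — exactly the regime in which the lemma is applied). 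Your comparison would therefore require bounding $\lambda^{1/\theta}\frac{|J|}{|I|}$ by an absolute multiple of $R^{1/\theta}$, which is not true; the analogous comparison in Lemma \ref{bump1} works only because there one is \emph{outside} $\lambda J$, where $w_J(x)\geq\lambda$ and the weight itself dominates $\lambda$.

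The correct step — and what the paper does — is to use the support bound $w_I(x)\leq R^{1+1/\theta}\frac{|J|}{|I|}\lambda^{1/\theta}$ (obtained from $|x-c(J)|\leq 2\lambda|J|$ and $w_I(c(J))\leq 2\frac{|J|}{|I|}(R\lambda)^{1/\theta}$) to insert the factor $w_I(x)^{-\theta N/2}$ for free at the price of $R^{(\theta+1)N/2}\big(\frac{|J|}{|I|}\big)^{\theta N/2}\lambda^{N/2}$. This $\lambda^{N/2}$ is the dominant source of the $\lambda^{(N-1)/2}$ in the stated constant; it does not come from rescaling the normalization, as you assert in item (i) — indeed $|\lambda J|^{-(1/2+n)}=\lambda^{-(1/2+n)}\big(\frac{|I|}{|J|}\big)^{1/2+n}|I|^{-(1/2+n)}$ contributes a \emph{negative} power of $\lambda$ together with a loss $\big(\frac{|I|}{|J|}\big)^{1/2+n}$ that must be offset by the $\big(\frac{|J|}{|I|}\big)^{\theta N/2}$ gained in the weight-insertion step (using $n\leq\theta N/4$). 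Run literally, your accounting produces neither the needed $\lambda^{N/2}$-type loss nor the correct sign of the $\lambda$-power from the normalization, so the stated constant is not reached; the fix is to replace your pointwise weight comparison by the support bound on $w_I$ and redo the exponent bookkeeping as above.
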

\proof This time we may restrict ourselves to $|x-c(J)|\leq 2\lambda |J|$. Then, 
$
|x-c(I)|\leq |x-c(J)|+|c(I)-c(J)|\leq 2\lambda |J|+ |c(I)-c(J)|
$
and so,  
$$
w_{I}(x)\leq \frac{|J|}{|I|}2\lambda +w_{I}(c(J))
$$
Moreover, 
$
w_{I}(c(J))\leq 2|I|^{-1}\diam(I\cup J)
%=2|I|^{-1}|J||J|^{-1}\diam(I\cup J)
\leq 2\frac{|J|}{|I|}(R\lambda)^{1/\theta}
%\leq |I|^{-1}|J|\lambda^{1/\theta}
$
and then,
$$
w_{I}(x)\leq \frac{|J|}{|I|}R\lambda +2\frac{|J|}{|I|}(R\lambda)^{1/\theta}
\leq  3\frac{|J|}{|I|}(R\lambda)^{\frac{1}{\theta}}
\leq R^{1+\frac{1}{\theta}}\frac{|J|}{|I|}\lambda^{\frac{1}{\theta}}
$$
because $R\lambda \geq 1$ and $0<\theta \leq 1$.
Therefore, since $|\Phi_{I}^{(n)}(x)|\lesssim |I|^{-n}$ 
%and $4\leq \lambda $
, we have for all $0\leq n\leq \theta N/4\leq N/4$, 
$$
|\lambda J|^{-\frac{1}{2}}|\Phi_{\lambda J}^{(n)}(x)|
\lesssim \lambda^{-\frac{1}{2}}|J|^{-\frac{1}{2}} 
|\lambda J|^{-n}
$$
$$
\leq \lambda^{-(\frac{1}{2}+n)}|J|^{-(\frac{1}{2}+n)} 
R^{(\theta +1)\frac{N}{2}}
\Big(\frac{|J|}{|I|}\Big)^{\theta \frac{N}{2}}\lambda^{\frac{N}{2}}w_{I}(x)^{-\theta \frac{N}{2}}
$$
$$
\leq R^{N}\Big(\frac{|J|}{|I|}\Big)^{\theta \frac{N}{2}-\frac{1}{2}-n} \lambda^{\frac{N-1}{2}-n}
|I|^{-(\frac{1}{2}+n)}w_{I}(x)^{-\theta \frac{N}{2}}
$$
$$
\leq R^{2N}\Big(\frac{|J|}{|I|}\Big)^{\theta \frac{N}{4}-\frac{1}{2}} \lambda^{\frac{N-1}{2}}|I|^{-(\frac{1}{2}+n)}
w_{I}(x)^{-\theta \frac{N}{4}}
$$

\begin{lemma}\label{bump6}
Let $J$ be an interval and $k\in \mathbb N$. We define 
$\phi_{J}(t)=|J|^{-1/2}({\mathcal T}_{c(J)}{\mathcal D}_{|J|}\Phi)(t)(t-c(J))^{k}$. 
Then, $\phi_{J} $ is adapted to $J$
with constant comparable to $2^{3k}k!|J|^{k}$ and order $k$. 
\end{lemma}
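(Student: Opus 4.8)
Write $\Phi_J=\mathcal{T}_{c(J)}\mathcal{D}_{|J|}\Phi$, so that by definition $\phi_J(t)=|J|^{-1/2}\,\Phi_J(t)\,(t-c(J))^k$. Recall that $\Phi_J$ is $L^\infty$-normalized, supported in $4J$ (in fact $\equiv 1$ on $2J$ and $\equiv 0$ off $4J$), and satisfies $|\Phi_J^{(j)}(t)|\lesssim |J|^{-j}\chi_{4J}(t)$ for every $j\ge 0$; in particular on $\supp\Phi_J$ one has $|t-c(J)|\le 2|J|$, hence $w_J(t)=1+|J|^{-1}|t-c(J)|\le 3$. The plan is entirely elementary: differentiate $n$ times by Leibniz's rule, estimate each resulting term on $\supp\Phi_J$, and then trade the support indicator for the weight $w_J(t)^{-k}$, which is where the exponential constant $2^{3k}$ enters.

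For $0\le n\le k$, Leibniz's rule gives
$$
\phi_J^{(n)}(t)=|J|^{-1/2}\sum_{j=0}^{n}\binom{n}{j}\,\Phi_J^{(j)}(t)\,\frac{k!}{(k-n+j)!}\,(t-c(J))^{k-n+j},
$$
where each exponent $k-n+j$ is $\ge 0$ precisely because $n\le k$, so no negative power of $(t-c(J))$ appears. On $\supp\Phi_J$ I would then use $|\Phi_J^{(j)}(t)|\lesssim |J|^{-j}$, the bound $|t-c(J)|^{k-n+j}\le (2|J|)^{k-n+j}$, and $k!/(k-n+j)!\le k!$, obtaining
$$
|\phi_J^{(n)}(t)|\lesssim k!\,|J|^{k-\frac12-n}\sum_{j=0}^{n}\binom{n}{j}2^{\,k-n+j}
= k!\,|J|^{k-\frac12-n}\,2^{k-n}3^{n}\lesssim 2^{3k}\,k!\,|J|^{k}\,|J|^{-\frac12-n},
$$
using $2^{k-n}3^{n}\le 3^{k}\le 2^{3k}$.

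Finally, since $w_J(t)\le 3$ on $\supp\Phi_J$ we have $1\le 3^{k}w_J(t)^{-k}$ there, while off $\supp\Phi_J$ both $\phi_J$ and all its derivatives vanish; hence for every $t\in\mathbb{R}$ and every $0\le n\le k$,
$$
|\phi_J^{(n)}(t)|\lesssim 2^{3k}\,k!\,|J|^{k}\,|J|^{-\frac12-n}\,w_J(t)^{-k},
$$
which is exactly the assertion that $\phi_J$ is adapted to $J$ with order $k$ and constant comparable to $2^{3k}k!|J|^{k}$. I do not expect any real obstacle here: the argument is a bare Leibniz computation together with the support properties of the cut-off $\Phi_J$. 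The only mildly delicate points are (i) verifying that no singular (negative-power) terms occur, which is exactly where the hypothesis $n\le k$ is used, and (ii) the bookkeeping of the exponential constants — the binomial sum contributes a factor $2^{k-n}3^{n}\le 3^k$ and converting the support indicator into the polynomial weight $w_J^{-k}$ on $4J$ contributes the remaining power, together accounting for the $2^{3k}$ in the final constant.
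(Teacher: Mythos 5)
Your proof is correct and follows essentially the same route as the paper's: Leibniz's rule, the bounds $|\Phi_J^{(j)}(t)|\lesssim |J|^{-j}$ and $|t-c(J)|\le 2|J|$ (hence $w_J(t)\le 3$) on the support of the cut-off, and then constant bookkeeping. The minor discrepancy between your $3^k\cdot 3^k=9^k$ and the stated $2^{3k}$ is the same harmless looseness present in the paper's own computation and is immaterial, since the lemma is only invoked for $k\le N$ with $N$ fixed.
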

\proof We define $h(t)=(t-c(J))^{k}$. Since 
${\mathcal T}_{c(J)}{\mathcal D}_{|J|}\Phi $
is $L^{\infty }$-adapted to $J$, 
we have
$|({\mathcal T}_{c(J)}{\mathcal D}_{|J|}\Phi)^{(j)}(t)|\lesssim |J|^{-j}$. Moreover, 
due to its support 
%of ${\mathcal T}_{c(J)}{\mathcal D}_{|J|}\Phi $
we have $|t-c(J)|\leq 2|J|$ and thus, 
$w_{J}(t)\leq 3$. 
%${\mathcal T}_{c(J)}{\mathcal D}_{|J|}\Phi \equiv 1$ on $|t-c(J)|\leq |J|$.
Then, for all $0\leq n\leq k$
$$
|\phi_{J}^{(n)}(t)|\leq \sum_{j=0}^{n}\Big(\begin{array}{c}n\\j\end{array}\Big)
|J|^{-1/2}|({\mathcal T}_{c(J)}{\mathcal D}_{|J|}\Phi)^{(j)}(t)||h^{(n-j)}(t)|
$$
$$
\leq \sum_{j=0}^{n}\Big(\begin{array}{c}n\\j\end{array}\Big)
|J|^{-\frac{1}{2}}|J|^{-j}\frac{k!}{(k-(n-j))!}|t-c(J)|^{k-(n-j)}
$$
%$$
%\lesssim \sum_{j=0}^{n}\Big(\begin{array}{c}n\\j\end{array}\Big)
%|J|^{-1/2}|J|^{-j}\frac{k!}{(k-(n-j))!}|J|^{k-(n-j)}
%$$
$$
\leq k!|J|^{k}|J|^{-(\frac{1}{2}+n)}\sum_{j=0}^{n}\Big(\begin{array}{c}n\\j\end{array}\Big)
= 2^{n}k!|J|^{k}|J|^{-(\frac{1}{2}+n)}
$$
$$
\leq 2^{k}k!|J|^{k}|J|^{-(\frac{1}{2}+n)}w_{J}(t)^{-k}3^{k}
$$

\begin{lemma}\label{bump4}
Let $I,J$ be two intervals such that $c(I)=c(J)$. Let $\phi_{J}$ be a bump function adapted to $J$ with 
order $N$ and constant $C>0$. 

Then,  $\phi_{J}$ is a bump function adapted to $I$ with order $N$ and constant 
$C\ec(I,J)^{-(N+\frac{1}{2})}$.
\end{lemma}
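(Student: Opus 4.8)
The plan is to unfold both the hypothesis and the desired conclusion into the pointwise derivative bounds of Definition \ref{defbump} and compare them directly, exploiting the fact that the two intervals share the same centre. Writing $w_I(x)=1+|I|^{-1}|x-c(I)|$ and $w_J(x)=1+|J|^{-1}|x-c(J)|$, the equality $c(I)=c(J)$ gives $|I|^{-1}|x-c(I)|=w_I(x)-1$, so, setting $\rho=|I|/|J|$,
$$
w_J(x)=1+\rho\,(w_I(x)-1)=(1-\rho)+\rho\,w_I(x)=w_I(x)+(\rho-1)(w_I(x)-1).
$$
From this identity I would record the lower bound $w_J(x)\ge\min(1,\rho)\,w_I(x)$: if $\rho\ge1$ then $(\rho-1)(w_I(x)-1)\ge0$ forces $w_J(x)\ge w_I(x)$, while if $\rho\le1$ then $(1-\rho)\ge0$ forces $w_J(x)\ge\rho\,w_I(x)$.

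Next I would rewrite the hypothesis using this bound. Since $\phi_J$ is ($L^2$-normalized and) adapted to $J$ with constant $C$ and order $N$, for every $0\le n\le N$,
$$
|\phi_J^{(n)}(x)|\le C|J|^{-1/2-n}w_J(x)^{-N}=C|I|^{-1/2-n}\,\rho^{1/2+n}\,w_J(x)^{-N}\le C|I|^{-1/2-n}\,\rho^{1/2+n}\,(\min(1,\rho))^{-N}\,w_I(x)^{-N}.
$$
It then remains only to check that $\rho^{1/2+n}(\min(1,\rho))^{-N}\le\ec(I,J)^{-(N+1/2)}$ for all $0\le n\le N$, which I would do in two cases. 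If $\rho\ge1$, then $\ec(I,J)=\rho^{-1}$ and $\min(1,\rho)=1$, so the left side equals $\rho^{1/2+n}\le\rho^{1/2+N}=\ec(I,J)^{-(N+1/2)}$. If $\rho\le1$, then $\ec(I,J)=\rho$ and $\min(1,\rho)=\rho$, so the left side equals $\rho^{1/2+n-N}$; since $0<\rho\le1$ and $1/2+n-N\ge-(N+1/2)$ (equivalently $1+n\ge0$), this is at most $\rho^{-(N+1/2)}=\ec(I,J)^{-(N+1/2)}$.

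Combining these two steps yields $|\phi_J^{(n)}(x)|\le C\,\ec(I,J)^{-(N+1/2)}\,|I|^{-1/2-n}\,w_I(x)^{-N}$ for all $0\le n\le N$, which is precisely the assertion that $\phi_J$ is a bump function adapted to $I$ with order $N$ and constant $C\,\ec(I,J)^{-(N+1/2)}$. There is no genuine obstacle here; the calculation is entirely elementary. The only point requiring care is the direction of the comparison between $w_I$ and $w_J$ — one must bound $w_J$ from \emph{below} in terms of $w_I$ — together with the bookkeeping that converts powers of $|I|/|J|$ into powers of $\ec(I,J)$ according to which interval is larger.
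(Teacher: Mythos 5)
Your proof is correct and follows essentially the same route as the paper's: a direct pointwise comparison of the weights $w_I$ and $w_J$ using the common centre, together with the exponent bookkeeping that converts powers of $|I|/|J|$ into $\ec(I,J)^{-(N+\frac{1}{2})}$, split according to which interval is larger. Your use of the unified bound $w_J(x)\ge \min(1,|I|/|J|)\,w_I(x)$ merely packages the paper's two cases into one inequality; the substance is identical.
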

\proof When $|J|\leq |I|$, we have for all $0\leq n\leq N$, 
$$
|\phi_{J}^{(n)}(x)|
\leq C|J|^{-(\frac{1}{2}+n)}\Big(1+\frac{|x-c(J)|}{|J|}\Big)^{-N}
$$
$$
\leq C\Big(\frac{|I|}{|J|}\Big)^{\frac{1}{2}+n}|I|^{-(\frac{1}{2}+n)}\Big(1+\frac{|x-c(I)|}{|I|}\Big)^{-N}
$$
%This shows that $\phi_{J}$ is a bump function adapted to $I$ with constant 
%$C\Big(\displaystyle{\frac{|I|}{|J|}}\Big)^{N+1/2}$.

On the other hand, when $|I|\leq |J|$, we have for all $0\leq n\leq N$, 
$$
|\phi_{J}^{(n)}(x)|
\leq C|J|^{-(\frac{1}{2}+n)}
\Big(1+\frac{|x-c(J)|}{|J|}\Big)^{-N}
$$
$$
= C\Big(\frac{|I|}{|J|}\Big)^{\frac{1}{2}+n}|I|^{-(\frac{1}{2}+n)}\Big(\frac{|I|}{|J|}\Big)^{-N}
\Big(\frac{|J|}{|I|}+\frac{|x-c(I)|}{|I|}\Big)^{-N}
$$
$$
\leq C\Big(\frac{|J|}{|I|}\Big)^{N-(\frac{1}{2}+n)}|I|^{-(\frac{1}{2}+n)}\Big(1+\frac{|x-c(I)|}{|I|}\Big)^{-N}
$$
%proving that $\phi_{J}$ is a bump function adapted to $I$ with constant 
%$C\Big(\displaystyle{\frac{|J|}{|I|}}\Big)^{N-1/2}$.

%%\begin{lemma}\label{bump7}Let $J$ be an interval with center $c(J)$. 
%%Let $\psi $
%%We define 
%%$\tilde{\psi}_{k}(x)=\partial_{1}^{k}\psi_{in,in}(c(J),x)$.
%%
%%$\tilde{\psi}_{k}$
%%is adapted to $J$ with constant 
%%$|I|^{-(1/2+k)}\rdist(I,J)^{-N}$.
%%
%%\end{lemma}
%%\proof
%%$$
%%|\tilde{\psi}_{k}(x)|\leq C|I|^{-(1/2+k)}(1+|I|^{-1}|c(J)-c(I)|)^{-N}\phi_{J}(x)
%%$$
%%$$
%%\leq C|I|^{-(1/2+k)}\rdist(I,J)^{-N}\phi_{J}(x)
%%$$
%%where $\phi_{J}$ is a $L^{2}$ normalized bump function adapted to $J$. 
%%Moreover, 
%%$$
%%|\tilde{\psi}_{k}^{j)}(x)|=|\partial_{2}^{j}\partial_{1}^{k}\psi_{in,in}(c(J),x)|
%%$$
%%$$
%%\leq C|I|^{-(1/2+k)}(1+|I|^{-1}|c(J)-c(I)|)^{-N}|J|^{-(1/2+k)}(1+|J|^{-1}|x-c(I)|)^{-N}
%%$$
%%$$
%%\leq C|I|^{-(1/2+k)}\rdist(I,J)^{-N}\phi_{J}(x)
%%$$
%%This shows that $\tilde{\psi}_{k}$
%%is adapted to $J$ with constant 
%%$|I|^{-(1/2+k)}\rdist(I,J)^{-N}$.
%
%
%%\vskip10pt
%%We will now study the action of the dual pair on bump functions
%%and obtain good estimates in terms of the localization in space and
%%frequency of the bump functions. 

\vskip10pt
We turn now to the main result of this section. 
Given two intervals $I$ and $J$, we denote $K_{min}=J$ and $K_{max}=I$ if $|J|\leq |I|$, while $K_{min}=I$ and $K_{max}=J$ otherwise.

\begin{proposition}\label{symmetricspecialcancellation}
%\label{twobumplemma}
Let $K$ be a compact Calder\'on-Zygmund kernel with parameter $\delta$. We choose $N\in \mathbb N$ such that $N>\max(2^{26},(2\delta)^{-2})$. 

Let $T:{\mathcal S}_{N}\rightarrow {\mathcal S}_{N}'$ be a linear operator associated with $K$ satisfying the weak compactness condition with parameter $N$ 
%(\ref{linearrestricted}) 
and the special cancellation conditions $T(1)=0$ and $T^{*}(1)=0$. 

For every $\frac{8}{N}<\theta <\frac{1}{4N^{1/2}}$, 
%we choose $\frac{8}{\theta }<N<\frac{1}{16\theta^{2}}$
%$0<\theta <1$ sufficiently small we choose  
%$N\in \mathbb N$ large enough depending on $\theta $ 
we define $\delta'=\delta-\theta (1+\delta )$. 
%Notice that $0<\theta <\min(2^{-15},\delta /(1+\delta ))$.
%large enough 
%(the values of these parameters will be given in the proof). 
%\frac{\delta}{\delta +1}\min(1,32\frac{\delta}{\delta +1})$, $\delta'=(1-(32 \theta)^{1/2}))\delta $, and $N\in \mathbb N$ large enough so that $\theta N>4(1+\delta )$...

Then,  
%Then, for any $0<\delta'<\delta$ and for every  
there exists $C_{\delta'}>0$  such that
for every $\epsilon >0$, for all intervals $I, J$ with $|I|\ge |J|$ 
and 
%$\psi\in {\cal S}(\mathbb R^2)$ be adapted to $I_1\times I_2$. Assume that 
all bump functions $\phi_{I}$, $\psi_{J}$, $L^{2}$-adapted to $I$ and $J$ respectively
with order $N$ and constant $C>0$ and 
%such that $\psi_{J}$ has 
mean zero, 
%%has mean zero in the second variable, meaning
%%$$\int\psi(x_1,x_2)\, dx_2=0$$
%%for all $x_1$.
%%we have
%%\begin{eqnarray*}\label{twobump}
%$$
%|\langle T(\phi_{I}),\psi_{J}\rangle |
%\leq C_{\delta'} C 
%%\frac{|I_{1}|^{1/p}}{(|I_1|+1)^{2}}
%\left(\frac{|J|}{|I|}\right)^{1/2+\delta'}
%%\left( |I_1|^{-1}{\rm 
%\left(\frac{\diam(I\cup J)}{|I|}\right)^{-(1+\delta')}
%$$
%$$
%\Big(F(I_{1},\ldots ,I_{6};M_{T,\epsilon })+\epsilon \Big)
%%L(|I_{i}|)S(|I_{i}|^{-1})D(\rdist(I_{i},\mathbb B_{2^{M}}))
%$$
%%\end{eqnarray*}
%%$$
%%\hskip100pt \max_{i=1,2,3}LS(|I_{i}|)D(\rdist(I_{i},K_{M}))
%%$$
%where $F$ is a function as given in the comments after Definition \ref{WB}, but possibly associated with admissible functions satisfying Definitions \ref{WB} and \ref{compactCZ} that are larger than those originally given.
%Here 
%$I_{1}=I$, $I_{2}=J$, $I_{3}=\langle I, J\rangle $,  
%$I_{4}=\lambda_{1}\tilde{I}$, 
%$I_{5}=\lambda_{2}\tilde{I}$,
%$I_{6}=\lambda_{2}J$
%with $\lambda_{1}=|I|^{-1}{\rm diam}(I\cup J)$, $\lambda_{2}=(|J|^{-1}{\rm diam}(I\cup J))^{\theta}$
%and $\tilde{I}$, the translate of $I$ centered at $c(J)$.
\begin{equation*}\label{twobump2}
|\langle T(\psi_{I}),\psi_{J}\rangle |\leq  C_{\delta'} C^{2} \, 
%\left( \max(|I|,|J|)^{-1}\diam(I\cup J) \right)^{-(1+\delta')}
\frac{\ec(I,J)^{\frac{1}{2}+\delta'}}{\rdist(I,J)^{1+\delta'}}
\Big(F(I_{1},\ldots ,I_{6};M_{T,\epsilon })+\epsilon \Big)
%\Big(\max_{i=1,2,3}F(I_{i})+\epsilon \Big)
%L(|I_{i}|)S(|I_{i}|^{-1})D(\rdist(I_{i},\mathbb B_{2^{M}}))
\end{equation*}
where
%$\delta'=(1-\epsilon')\delta $, 
$I_{1}=I$, $I_{2}=J$, $I_{3}=\langle I, J\rangle $, $I_{4}=\lambda_{1}\tilde{K}_{max}$, 
$I_{5}=\lambda_{2}\tilde{K}_{max}$,
$I_{6}=\lambda_{2}K_{min}$,
$\lambda_{1}=|K_{max}|^{-1}{\rm diam}(I\cup J)$, 
$\lambda_{2}=(|K_{min}|^{-1}{\rm diam}(I\cup J))^{\theta }$ and 
$\tilde{K}_{max}$ is the translate of $K_{max}$ with the same centre as $K_{min}$.
\end{proposition}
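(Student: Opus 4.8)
The plan is to decompose the two bump functions according to whether their supports sit close to or far from each other (relative to the scales $|I|$ and $|J|$), reducing the general estimate to the weak compactness condition and to the kernel smoothness. Write $K_{max}=I$, $K_{min}=J$ since $|I|\ge|J|$. Using the cut-off $\Phi_{\lambda J}$, split $\psi_J=\psi_J\Phi_{\lambda_2 J}+\psi_J(1-\Phi_{\lambda_2 J})=:\psi_{J}^{in}+\psi_{J}^{out}$ with $\lambda_2=(|J|^{-1}\diam(I\cup J))^{\theta}$, and similarly split $\psi_I$ using $\Phi_{\lambda_1\tilde K_{max}}$ with $\lambda_1=|I|^{-1}\diam(I\cup J)$, where $\tilde K_{max}$ is the translate of $I$ centred at $c(J)$. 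First I would dispose of the ``far'' pieces: when the supports of the two factors are disjoint, the integral representation applies and the kernel decay (Lemma \ref{smoothimpliesdecay1}, together with the smoothness bound in the form \eqref{compCZ}) gives directly a bound of the shape $\ec(I,J)^{1/2}\rdist(I,J)^{-1-\delta'}F(\cdots)$ after integrating against the bump functions; Lemmas \ref{bump1}, \ref{bump3} and Remark \ref{bump1remark} are exactly what is needed to see that the truncated pieces are again $L^2$-normalized bumps adapted to the relevant intervals $I_4,I_5,I_6$ with the gain factors $\ec(I,J)^{\theta N/2}\rdist(I,J)^{-\theta N/2}$, which (since $N$ is large and $\theta>8/N$) absorb the loss $\rdist(I,J)^{1+\delta}$ and supply the admissible-function factors $F(I_4,I_5,I_6;M)$.

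Next I would treat the ``near'' term $\langle T(\psi_I^{in}),\psi_J^{in}\rangle$, where both functions are supported in a common interval comparable to $\langle I,J\rangle$. Here I would rescale so that this common interval becomes a fixed reference, and invoke the weak compactness condition (Definition \ref{WB}): $\psi_I^{in}$ and $\psi_J^{in}$ are, after adjusting constants via Lemma \ref{bump5} and Lemma \ref{bump4}, $L^2$-normalized bumps adapted to $I_3=\langle I,J\rangle$, so $|\langle T(\psi_I^{in}),\psi_J^{in}\rangle|\lesssim C^2(F_W(I_3;M)+\epsilon)$. Keeping track of the normalizations picked up in passing from $I$ and $J$ to $\langle I,J\rangle$ produces the factor $\ec(I,J)^{1/2+\delta'}\rdist(I,J)^{-1-\delta'}$ again, at the cost of shrinking the exponent from $\delta$ to $\delta'=\delta-\theta(1+\delta)$; this is where the constraint $\theta<1/(4N^{1/2})$ and the condition $N>(2\delta)^{-2}$ are used to guarantee $\delta'>0$ and to control the various powers of $N$ entering the constant $C_{\delta'}$.

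The remaining mixed terms $\langle T(\psi_I^{in}),\psi_J^{out}\rangle$ and $\langle T(\psi_I^{out}),\psi_J^{in}\rangle$ are the technical heart of the argument. For these I would exploit the mean-zero hypotheses $T(1)=0$ and $T^*(1)=0$: e.g.\ to handle $\langle T(\psi_I^{out}),\psi_J^{in}\rangle$, write $\psi_I^{out}=\psi_I - \psi_I^{in}$ is not mean zero, but one can compensate by subtracting an appropriate multiple of a fixed bump so as to restore cancellation, then use $T(1)=0$ to rewrite the pairing as an integral of $\psi_I^{out}(t)\psi_J^{in}(x)(K(t,x)-K(t,c(J)))$ plus an error controlled by the weak compactness term; the kernel smoothness \eqref{compCZ} then gives the decay in $\rdist(I,J)$ with a gain $\ec(I,J)^{\delta'}$, while Lemmas \ref{bump1}--\ref{bump6} convert the support/size information of $\psi_I^{out}$ into the admissible-function factors $F(I_1,\dots,I_6;M)$. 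I expect this mean-zero compensation step — juggling the two cancellation conditions together with the $\CMO$-type error terms while keeping every truncated piece a genuine normalized bump — to be the main obstacle, since it requires the full strength of Lemmas \ref{bump1}, \ref{bump3}, \ref{bump5}, \ref{bump6} simultaneously and a careful bookkeeping of the exponents of $\ec(I,J)$, $\rdist(I,J)$ and $N$. Summing the four contributions and collecting the admissible functions into $F(I_1,\ldots,I_6;M_{T,\epsilon})$ finishes the proof.
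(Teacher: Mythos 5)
Your overall flavor (near/far splitting, weak compactness near the diagonal, kernel smoothness far away, $T(1)=0$ somewhere in between) matches the paper, but the way you cut the functions creates two genuine gaps. First, your ``far--far'' term $\langle T(\psi_I^{out}),\psi_J^{out}\rangle$ cannot be handled by the integral representation: $\psi_I^{out}$ lives where $|t-c(J)|\gtrsim\lambda_1|I|$ and $\psi_J^{out}$ where $|x-c(J)|\gtrsim\lambda_2|J|$, and these two regions overlap (both contain points far from $c(J)$ but arbitrarily close to the diagonal $t=x$), so the supports are not disjoint and the kernel bound $\sim|t-x|^{-1}$ is not even integrable there. The paper avoids this entirely: it decomposes the \emph{tensor product} $\psi(t,x)=\phi_I(t)\psi_J(x)$, and the whole $x$-far piece $\psi_{out}$ (including the part of $t$-space near the diagonal) is shown, via Lemma \ref{bump1}/Remark \ref{bump1remark} and Lemma \ref{bump3}, to be a bump adapted to $I\times I$ with a \emph{small} constant $\sim(|J|/|I|)^{3/2}\rdist(I,J)^{-4}$, and is then estimated by weak compactness --- the gain comes from the decay of $\psi_J$ beyond $\lambda_2J$, not from disjointness of supports. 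Moreover, the truncation destroys the mean zero of $\psi_J$ in $x$, which is exactly what the later kernel-difference argument $K(t,x)-K(t,c(J))$ needs; the paper restores it by subtracting $a(t)\Phi_{\lambda_2J}(x)$ (mean-zero correction \emph{in the $x$-variable}, with the quantitative bounds (\ref{a(t)}), (\ref{a(t)2})), whereas your proposed compensation is attached to $\psi_I^{out}$, i.e.\ to the wrong variable, and is never estimated.

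Second, your treatment of the near--near term is not correct as stated: rescaling $\psi_I^{in},\psi_J^{in}$ to bumps adapted to $\langle I,J\rangle$ and invoking Definition \ref{WB} gives a constant of order $(\diam(I\cup J)/|I|)^{1/2}(\diam(I\cup J)/|J|)^{1/2}$, i.e.\ the normalization bookkeeping goes the \emph{wrong} way and produces no factor $\ec(I,J)^{1/2+\delta'}\rdist(I,J)^{-(1+\delta')}$. The paper's mechanism for this gain is different and is the real heart of the proof: after the $t$-truncation at scale $\lambda_1|I|$ one reduces, by subtracting the Taylor polynomial of $\psi_{in,in}$ in $t$ at $c(J)$, to the case $\partial_1^k\psi_{in,in}(c(J),\cdot)=0$ for $0\le k\le N$ --- the zeroth-order subtracted term is controlled by $T(1)=0$ through the quantitative error bound of Lemma \ref{definecmo}, the higher-order ones by weak compactness via Lemma \ref{bump6} --- and this vanishing yields the factor $|t-c(J)|\,|I|^{-3/2}\rdist(I,J)^{-N}$ which, after a further split at scale $\lambda_2|J|$ in $t$ (weak compactness on $\lambda_2J\times\lambda_2J$ for the inner part, kernel smoothness for the outer part), produces the stated eccentricity and relative-distance decay. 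Note also that by symmetry only $T(1)=0$ and the mean zero of $\psi_J$ are used; $T^*(1)=0$ is not needed in this half of the argument, and the kernel decay (Lemma \ref{smoothimpliesdecay1}) is never used in the paper.
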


\begin{remark}
The function $F$ is as given in the comments after Definition \ref{WB}, but possibly defined by admissible functions satisfying Definitions \ref{WB} and \ref{compactCZ} that are larger than those originally given.
For simplicity of notation we will not distinguish between the function $F$ given after Definition \ref{WB} and the one given by the above proposition.

Notice that the value $M_{T,\epsilon }$ is given by the weak compactness property as explained in the remarks after Definition \ref{WB}.
\end{remark}

\begin{proof}

%For simplicity of notation we shall assume $I_2$ is centered at the
%origin.
%and of length $1$. We can also assume that $c(I_1)=k|I_1|$ for some integer $k$.
By symmetry, we assume $|J|\leq |I|$ and so, we only use the mean zero of 
$\psi_{J}$ and the condition $T(1)=0$. 
We remind $\Phi_{K}$ is a smooth function $L^\infty$-adapted to an interval $K$
and $w_{K} (x)=1+|K|^{-1}|x-c(K)|$. 

%such that $\Phi_{K}=1$ on $2K$, $0<\Phi_{K}<1$ on $4K\backslash 2K$  and $\Phi_{K}=0$ in $(4K)^{c}$. 

We note that the parameter $\theta $ fixed
in the statement satisfies $0<\theta <\min(2^{-15},\delta /(1+\delta ))$ and so, 
$0<\delta'<\delta $. Moreover, $N$ satisfies $\frac{8}{\theta }<N<\frac{1}{16\theta^{2}}$
(since $\frac{1}{16\theta^{2}}-\frac{8}{\theta}>1$, there is an integer in between).
%We now denote by 
%$0<\delta_{i}\leq \delta $ 
%for $i=1,\ldots ,4$ several parameters depending on $\theta $ and $N$ which exact values will be defined during the proof. The largest of these values will define the parameter $\delta'$ given in the statement. 
%%be fixed. 

Let $\psi (t,x)=\phi_{I}(t)\psi_{J}(x)$ which, by hypothesis, is adapted to $I\times J$ with constant $C^{2}$ and 
order $N$ and it has mean zero in the variable $x$. 
Then, we decompose $\psi $ in the following way:
\begin{align*}
&\psi =\psi_{out}+\psi_{in}
\\
&\psi_{in}(t,x) = (\psi(t,x) -a(t))\Phi_{\frac{1}{32}\lambda_2J}(x)
\end{align*}
where $\lambda_2=(|J|^{-1}{\rm diam}(I\cup J))^{\theta}$ and 
$a(t)$ is chosen so that $\psi_{in}$, and also $\psi_{out}$, have mean zero in the $x$ variable. 
We split again
\begin{align*}
&\psi_{in}=\psi_{in,out}+\psi_{in,in}
\\
&\psi_{in,in}(t,x)= \psi_{in}(t,x) \Phi_{\lambda_1\tilde{I}}(t)
\end{align*}
with $\tilde{I}$ the translate of $I$ centered at $c(J)$ and 
$\lambda_1=|I|^{-1}{\rm diam}(I\cup J)$.

We first explain why $\psi_{in,in}$ is also adapted to $I\times J$
with constant comparable to $C^{2}$. We know that $\psi_{in}$ is the sum of two bump functions. 
By Lemma \ref{bump5} applied to the variable $x$ 
we deduce that the first function is adapted to $I\times J$ with constant $C^{2}2^{N}$. 
By inequalities 
(\ref{a(t)}) and (\ref{a(t)2}) below, we deduce that the second term is also 
adapted to $I\times J$ with constant $C^{2}$. This shows that $\psi_{in}$ is adapted to $I\times J$ with constant $C^{2}2^{N}$.
Moreover, applying Lemma \ref{bump5} in the variable $t$, we finally have that $\psi_{in,in}$ is adapted to $I\times J$
with constant $C^{2}2^{2N}$. Since $N$ is fixed, from now being we will not keep a track of the dependence of the constants with respect $N$.

Notice that all functions previously defined are sums of at most two functions of tensor product type and have mean zero in the variable $x$.
For tensor product of bump functions
%, $\psi=\psi_{1}\otimes \psi_{2}$, 
we denote 
$
\Lambda \psi_{1}\otimes \psi_{2})=\langle T(\psi_{1}),\psi_{2}\rangle
$. 
Then, we plan to bound $\Lambda (\psi_{out})$ by the weak compactness condition and the decay of the bump function;
$\Lambda(\psi_{in,in})$ by the weak compactness condition plus the special cancellation condition $T(1)=0$;
and $\Lambda(\psi_{in,out})$ by the integral representation and the smoothness property of a compact 
Calder\'on-Zygmund kernel.

\vskip10pt
{\bf a)} We start with 
\begin{equation}\label{psi1}
\psi_{out}(t,x)= \psi(t,x)(1-\Phi_{\frac{1}{32}\lambda_2J}(x))
+a(t)\Phi_{\frac{1}{32}\lambda_2J}(x)
\end{equation}
and we work to prove that each term is a bump function $L^{2}$-adapted to $I\times I$ with a gain in the constant. 

Since $\psi $ is adapted to $I\times J$, by Remark \ref{bump1remark}, we have that the first term
$\psi(t,x)(1-\Phi_{\frac{1}{32}\lambda_2J}(x))$ is adapted to $I\times I$ with constant  
$$
%C |I_2|^{\epsilon N}{\rm diam}(I_1,I_2)^{-\epsilon N}
C \left(\frac{|J|}{|I|}\right)^{\theta \frac{N}{4}-\frac{1}{2}}\rdist(I,J)^{-\theta \frac{N}{2}}
\leq C \left(\frac{|J|}{|I|}\right)^{\frac{3}{2}}\rdist(I,J)^{-4}
%\leq C \left(\frac{|J|}{|I|}\right)^{1/2+\delta_{1}}\rdist(I,J)^{-(1+\delta_{1})}
$$
%if  $\delta_{1}$ are chosen so that $\theta N\geq 4(1+\delta_{1})$.
% ($\theta $, $N$ will be fixed later).
because $\theta $ and $N$ were chosen so that $\theta N>8$.

On the other hand, since
$\psi_{out}$ has mean zero in the variable $x$, we have for the second term that
$$
a(t)  \int \Phi_{\frac{1}{32}\lambda_{2}J}(x)\, dx
=-\int  \psi(t,x) (1-\Phi_{\frac{1}{32}\lambda_{2}J}(x))\, dx
$$
and, by the definition of $\Phi $ and $\psi $, 
$$
|a(t)| \frac{2}{32}\lambda_{2}|J|\leq \int_{|x-c(J)|\geq \frac{1}{32}\lambda_{2}|J|} |\psi(t,x)|dx
$$
$$
\leq C^{2}\phi_{I}(t)|J|^{-\frac{1}{2}}\hspace{-.2cm}
\int_{|x-c(J)|\geq \frac{1}{32}\lambda_{2}|J|}\hspace{-.2cm} w_{J}(x)^{-N}dx
%\Big(1+\frac{|x-c(J)|}{|J|}\Big)^{-N}dx
\lesssim C^{2}\phi_{I}(t)|J|^{-\frac{1}{2}}\lambda_{2}^{-(N-1)}|J|
$$
where $\phi_{I}=|I|^{-1/2}w_{I}^{-N}$ is $L^{2}$-adapted to $I$. 
This implies
\begin{equation}\label{a(t)}
|a(t)|\lesssim C62|J|^{-\frac{1}{2}}\lambda_{2}^{-N}|\phi_{I}(t)|
=C^{2}|\lambda_{2}J|^{-\frac{1}{2}}\lambda_{2}^{-N+\frac{1}{2}}|\phi_{I}(t)|
\end{equation}
and therefore,
$$
|a(t)\Phi_{\frac{1}{32}\lambda_{2}J}(x)|\lesssim
C^{2}\lambda_{2}^{-N+\frac{1}{2}}|\phi_{I}(t)||\lambda_{2}J|^{-\frac{1}{2}}\Phi_{\lambda_{2}J}(x)
$$
By Lemma  \ref{bump3}, $|\lambda_{2}J|^{-1/2}\Phi_{\lambda_{2}J}$ is a bump function adapted to $I$ with constant comparable to $(|J|/|I|)^{\theta N/4-1/2}\lambda_{2}^{(N-1)/2}$. With this,  
%by Lemma \ref{bump1} again 
we prove that $a(t)\Phi_{\frac{1}{32}\lambda_{2}J}(x)$ decays with 
a gain of constant of 
\begin{align*}
%C \lambda_{2}^{-N+1/2}\Big( \frac{|J|}{|I|}\Big)^{\theta N/4-1/2}\lambda_{2}^{N/2-1/2}
\Big( \frac{|J|}{|I|}&\Big)^{\theta \frac{N}{4}-\frac{1}{2}}\lambda_{2}^{-\frac{N}{2}}
%\left(\frac{|I_2|}{|I_1|}\right)^{\epsilon N}\left(|I_1|^{-1}{\rm diam}(I_1,I_2)\right)^{-\epsilon N}
=\Big( \frac{|J|}{|I|}\Big)^{\theta \frac{N}{4}-\frac{1}{2}}
\Big(\frac{{\rm diam}(I\cup J)}{|J|}\Big)^{-\theta \frac{N}{2}}
\\
&=\Big( \frac{|J|}{|I|}\Big)^{3\theta \frac{N}{4}-\frac{1}{2}}\rdist(I,J)^{-\theta \frac{N}{2}}
\leq \Big( \frac{|J|}{|I|}\Big)^{\frac{11}{2}}\rdist(I,J)^{-4}
\end{align*}
%which is also acceptable for $\theta N\geq 4(1+\delta')$. 
That is, we obtain the estimate
$$
|a(t)\Phi_{\frac{1}{32}\lambda_{2}J}(x)|\lesssim C^{2} \left(\frac{|J|}{|I|}\right)^{\frac{11}{2}}\rdist(I,J)^{-4}|\phi_I(t)|
|I|^{-\frac{1}{2}}w_{I}(x)^{-N}
$$
%where $\phi_1$ is a $L^2$-normalized bump function adapted to $I$.

In order to estimate higher derivatives, we proceed in a similar way. 
We denote by $\partial_{1}$ and $\partial_{2}$ the operators of partial differentiation with respect to the variables 
$t$ and $x$, respectively. 
First, we notice that since
$\partial_{1}^{k}\psi_{out}$ also has mean zero in the variable $x$, we have  
$$
a^{(k)}(t)\int \Phi_{\frac{1}{32}\lambda_{2}J}(x)\, dx
=-\int  \partial_{1}^{k}\psi(t,x) (1-\Phi_{\frac{1}{32}\lambda_{2}J}(x))\, dx
$$
Therefore, we have as in (\ref{a(t)}),
\begin{equation}\label{a(t)2}
|a^{(k)}(t)
%\Phi_{\frac{1}{32}\lambda_{2}J}(x)
|\leq C\lambda_{2}^{-N+\frac{1}{2}}|\phi_{I}^{(k)}(t)||\lambda_{2}J|^{-\frac{1}{2}}
%\Phi_{\frc{1}{32}\lambda_{2}J}(x)
\end{equation}
with $\phi_{I}$ $L^{2}$-adapted to $I$. 
By Lemma \ref{bump3} again, we have that 
%$|\frac{1}{32}\lambda_{2}J|^{-1/2}\Phi_{\frac{1}{32}\lambda_{2}J}$ is a bump function adapted to $I$ with constant comparable to\\ $(|J|/|I|)^{\theta N/4-1/2}\lambda_{2}^{(N-1)/2}$. 
%Then,  
$$
|\lambda_{2}J|^{-\frac{1}{2}}|\Phi_{\frac{1}{32}\lambda_{2} J}^{(j)}(t)|
%\lesssim |\lambda_{2}J|^{-j}
%\Phi_{\lambda_{2} J}(t)
%$$
%\begin{equation}\label{firstfactor0}
\lesssim \Big(\frac{|J|}{|I|}\Big)^{\theta \frac{N}{4}-\frac{1}{2}}\lambda_{2}^{\frac{N-1}{2}} |I|^{-\frac{1}{2}-(n-k)}w_{I}(t)^{-N}
%(1+|\lambda_{2}J|^{-1}|t-c(\lambda_{2}J)|)^{-N}
%\end{equation}
$$
This way, 
%since $|\Phi_{\lambda_{2}J}^{j)}(x)|\lesssim |\lambda_{2}J|^{-(1/2+j)}$, 
we have for all $k,j\leq [\theta N]$, 
\begin{align*}
|a^{(k)}(t)\Phi_{\frac{1}{32}\lambda_{2}J}^{(j)}(x)|
%\leq
%C\lambda_{2}^{-N+1/2}
%|\phi_{I}^{(k)}(t)||\lambda_{2}J|^{-1/2}|\Phi_{\frac{1}{32}\lambda_{2}J}^{(j)}(x)|
&\leq C^{2}\lambda_{2}^{-N+\frac{1}{2}}
%\Big( \frac{|J|}{|I|}\Big)^{(\epsilon' /2 +\theta )N}\rdist(I,J)^{-\epsilon' N/2}
|I|^{-(\frac{1}{2}+k)}w_{I} (t)^{-N}
%(1+|I|^{-1}(|x-c(I)|)^{-N}
\\
&\Big(\frac{|J|}{|I|}\Big)^{\theta \frac{N}{4}-\frac{1}{2}}\lambda_{2}^{\frac{N}{2}-\frac{1}{2}}
|I|^{-(\frac{1}{2}+j)}w_{I} (x)^{-N}
%(1+|J|^{-1}(|t-c(J)|)^{-N}
\end{align*}
and we bound the constant as we did before:
$$
C^{2}\Big(\frac{|J|}{|I|}\Big)^{\theta \frac{N}{4}-\frac{1}{2}}\lambda_{2}^{-\frac{N}{2}}
%=C\Big( \frac{|J|}{|I|}\Big)^{3\theta N/4-1/2}\rdist(I,J)^{-\theta N/2}
\leq C^{2}\Big( \frac{|J|}{|I|}\Big)^{\frac{11}{2}}\rdist(I,J)^{-4}
$$

This proves that the second function in (\ref{psi1})
%, $c(t)\Phi_{\lambda_2J}(x)$, 
is adapted to
$I\times I$  with an acceptable gain in the constant. 
%$$
%C \left(\frac{|J|}{|I|}\right)^{\theta N}\rdist(I,J)^{-\theta N}
%$$

All this work shows that $\psi_{out}$ is adapted to $I\times I$ with the stated gain of constant 
and so, by the weak compactness condition, for every $\epsilon >0$ there is $M_{T,\epsilon }$ such that
$$
|\Lambda(\psi_{out})|
\lesssim C^{2}\left(\frac{|J|}{|I|}\right)^{3/2}
\rdist(I,J)^{-4}(F_{W}(I;M_{T,\epsilon })+\epsilon )
%L_{M_{T}}(|I|)S_{M_{T}}(|I|^{-1})D(\rdist(I,K_{M_{T}}))
$$
which is better decay than the one stated.

\vskip10pt
{\bf b)} To work with $\psi_{in,in}$, we first argue that we can assume that 
$\partial_{1}^{k}\psi_{in,in}(c(J),x)=0$ for any $x$ and $0\leq k\leq N$.

The assumption comes from the substitution of $\psi_{in,in}(t,x)$ by
\begin{equation}\label{subtraction1}
%\tilde{\psi}_{in,in}(t,x)=
\psi_{in,in}(t,x)-({\mathcal T}_{c(J)}{\mathcal D}_{\frac{4}{32}\lambda_{2}|\tilde{I}|}\Phi)(t)\cdot \psi_{in,in}(c(J),x)
\end{equation}
\begin{equation}\label{subtraction2}
-\sum_{k=1}^{N} ({\mathcal T}_{c(J)}{\mathcal D}_{|J|}\Phi)(t)\frac{1}{k!}(t-c(J))^{k}\cdot \partial_{1}^{k}\psi_{in,in}(c(J),x)
\end{equation}
We need to prove that the subtracted terms satisfy the stated bounds. 
%Notice the use of different dilations of $\Phi$ in the terms in (\ref{subtraction1}) and (\ref{subtraction2}).

We denote $\tilde{\psi}(x)=\psi_{in,in}(c(J),x)$ and 
$\tilde{\psi}_{k}(x)=\partial_{1}^{k}\psi_{in,in}(c(J),x)$. Since $\psi_{in,in} $ is adapted to $I\times J$ with constant  $C^{2}$, we have
$$
|\tilde{\psi}(x)|\leq C^{2}|I|^{-\frac{1}{2}}w_{I}(c(J))^{-N}
%(1+|I|^{-1}|c(J)-c(I)|)^{-N}
\varphi_{J}(x)
$$
where $\varphi_{J}=|J|^{-1/2}w_{J}^{-N}$ is $L^{2}$ adapted to $J$. Moreover, since 
$$
w_{I}(c(J))
%1+|I|^{-1}|c(I)-c(J)|
=|I|^{-1}(|I|+|c(J)-c(I)|)
\geq \frac{\diam(I\cup J)}{|I|}=\rdist(I,J)
$$
we get 
\begin{equation}\label{psitilde}
|\tilde{\psi}(x)| \leq C^{2}|I|^{-\frac{1}{2}}\rdist(I,J)^{-N}\varphi_{J}(x)
\end{equation}

%By the remark of Lemma \ref{bump2}, the function
%$({\mathcal T}_{c(J)}{\mathcal D}_{|I|})\Phi(t)\cdot \psi_{in,in}(c(J),x)$
%is $L^\infty \times L^2$-adapted to $I\times J$  with a gain of constant of at least
%$$
%C\rdist(I, J)^{-N}
%$$

On the other hand, we have that $\tilde{\psi}$ is supported in $\frac{4}{32}\lambda_{2}J$ with mean zero.
Then, by the special cancellation condition $T(1)=0$, the explicit error of Lemma \ref{definecmo}
and the decay of $\tilde{\psi}$ just calculated, 
we can estimate the contribution of the term subtracted in (\ref{subtraction1}) by
$$
|\langle T({\cal T}_{c(J)}{\mathcal D}_{\frac{4}{32}\lambda_{2}|\tilde{I}|}\Phi),\tilde{\psi}\rangle |
=|\langle T({\cal T}_{c(J)}{\mathcal D}_{\frac{|I|}{|J|}|\frac{4}{32}\lambda_{2}J|}\Phi ),\tilde{\psi}\rangle-\langle T(1),\tilde{\psi}\rangle |
$$
$$
\leq \left(\frac{|I|}{|J|}\right)^{-\delta}\hspace{-.3cm}F_{K}(\lambda_{2}\tilde{I})
%L(|J|)S(|J|^{-1})D(\rdist(J,\mathbb B_{M_{K}})
\| \tilde{\psi}\|_{L^{1}(\mathbb R)}
%\leq C^{2}\left(\frac{|J|}{|I|}\right)^{\delta}
%\hspace{-.2cm}F_{K}(\lambda_{2}\tilde{I})
%\frac{\rdist(I,J)^{-N}}{|I|^{\frac{1}{2}}}|J|^{\frac{1}{2}}
%$$
%$$
\leq C^{2}\left(\frac{|J|}{|I|}\right)^{\frac{1}{2}+\delta }
\rdist(I,J)^{-N}F_{K}(\lambda_{2}\tilde{I})
%L(|J|)S(|J|^{-1})D(\rdist(J,\mathbb B_{M_{K}})
$$
which is better than the stated bound. 

For the remaining subtracted terms, we intend to  
apply instead the weak compactness condition. As in \eqref{psitilde}, we have 
$$
|\tilde{\psi}_{k}(x)|
%\leq C^{2}|I|^{-(\frac{1}{2}+k)}w_{I}(c(J))^{-N}
%(1+|I|^{-1}|c(J)-c(I)|)^{-N}
%\varphi_{J}(x)
%$$
%$$
\leq C^{2}|I|^{-(\frac{1}{2}+k)}\rdist(I,J)^{-N}\varphi_{J}(x)
$$
with $\varphi_{J}$ an $L^{2}$-normalized bump function adapted to $J$ as before. 
Since $\psi_{in,in}$ is adapted to $I\times J$ with constant $C^{2}$, 
we have 
\begin{align*}
|\tilde{\psi}_{k}^{(j)}(x)|&=|\partial_{2}^{j}\partial_{1}^{k}\psi_{in,in}(c(J),x)|
\\
%$$
%\leq C|I|^{-(1/2+k)}(1+|I|^{-1}|c(J)-c(I)|)^{-N}|J|^{-(1/2+j)}w_{J} (x)^{-N}
%%(1+|J|^{-1}|x-c(J)|)^{-N}
%$$
&\leq C^{2}|I|^{-(\frac{1}{2}+k)}\rdist(I,J)^{-N}|J|^{-(\frac{1}{2}+j)}w_{J} (x)^{-N}
%(1+|J|^{-1}|x-c(J)|)^{-N}
\end{align*}
showing that $\tilde{\psi}_{k}$
is adapted to $J$ with constant 
$C^{2}|I|^{-(\frac{1}{2}+k)}\rdist(I,J)^{-N}$.

Moreover, by Lemma \ref{bump6} we have that 
$|J|^{-1/2}({\mathcal T}_{c(J)}{\mathcal D}_{|J|}\Phi)(t)(t-c(J))^{k}$ is adapted to $J$
with constant $2^{3k}k!|J|^{k}$ and order $k$. 
%while $\tilde{\psi}^{k}(x)=\partial_{1}^{k}\psi_{in,in}(c(J),x)$ is adapted to $J$ with constant $|I|^{-(1/2+k)}\rdist(I,J)^{-N}$. 
Then, by the weak compactness condition we have that for every $\epsilon >0$, 
$$
|\langle T(({\mathcal T}_{c(J)}{\mathcal D}_{|J|}\Phi)(\cdot -c(J))^{k}),
\tilde{\psi}_{k}\rangle |
$$
$$
\leq C^{2}2^{3k}k!\Big(\frac{|J|}{|I|}\Big)^{\frac{1}{2}+k}
%|I|^{-(\frac{1}{2}+k)}
\rdist(I,J)^{-N}(F_{W}(J;M_{T,\epsilon })+\epsilon )
$$
This way, 
$$
\Big|\sum_{k=1}^{N}\frac{1}{k!}\langle T(({\mathcal T}_{c(J)}{\mathcal D}_{|J|}\Phi)(\cdot -c(J))^{k}),
\tilde{\psi}_{k}\rangle \Big|
$$
%$$
%\leq C^{2}\sum_{k=1}^{N}2^{3k}\left(\frac{|J|}{|I|}\right)^{\frac{1}{2}+k}
%\rdist(I,J)^{-N}(F_{W}(J;M_{T,\epsilon } )+\epsilon )
%$$
$$
\lesssim C^{2}2^{3N}\left(\frac{|J|}{|I|}\right)^{\frac{3}{2}}\rdist(I,J)^{-N}(F_{W}(J;M_{T,\epsilon } )+\epsilon )
%\sum_{k=0}^{N-1}\left(\frac{|J|}{|I|}\right)^{k}
%(e^{\frac{|J|}{|I|}}-1)
$$
%$$
%\leq C2^{N}N\left(\frac{|J|}{|I|}\right)^{3/2}\rdist(I,J)^{-N}(F_{W}(J;M_{T,\epsilon } )+\epsilon )
%$$
which is, again, no larger than the stated bound.

%Moreover, the function
%${\mathcal T}_{c(J)}D_{|I_1|}\Phi(t)\psi_{in,in}(c(J),x)$
%is adapted to $I\times J$ just like $\psi_{in}$ and, thus,
%by substraction
%$$
%\tilde{\psi}_{in,in}(t,x)=\psi_{in,in}(t,x)-{\mathcal T}_{c(I_{2)}}D_{|I_1|}\Phi(t)\psi(c(I_{2}),x)
%$$
%we can assume without
%loss of generality that $\psi_{in,in}(c(I_{2}),x)=0$ for all $x$. 

This ends the justification of the assumption 
$\partial_{1}^{k}\psi_{in,in}(c(J),x)=0$ for any $x$ and any $0\leq k\leq N$.
Now, we further decompose $\psi_{in,in}$:
\begin{align*}
&\psi_{in,in}=\psi_{in,in, out}+\psi_{in,in,in}
\\
&\psi_{in,in,in}(t,x)= \psi_{in,in}(t,x)\Phi_{\frac{8}{32}\lambda_2J}(t)
\end{align*}

%Notice that since $..$ we have that 
%$$
%\psi_{in,in,in}(t,x)= \psi_{in}(t,x)\Phi_{\lambda_1 \tilde{I}}(t)\Phi_{4\lambda_2J}(t)
%=\psi_{in}(t,x)\Phi_{4\lambda_2J}(t)
%$$
%

b1) We first prove that $\psi_{in,in,in}$ is adapted to $\lambda_{2}J\times \lambda_{2}J$ with constant
$$
%C |I|^{-3/2}|J|\, \rdist(I,J)^{-N}
C\left(\frac{|J|}{|I|}\right)^{3/4}\rdist(I,J)^{-7}
$$ 
%In order to prove this, we check that, in 
%$J\times J$, the function $\psi_{in,in}$ is bounded by this quantity, 
%As before, we prove this bound for the function, 
%while the work for the derivatives follows in a similar way. 
and order $N_{2}=[N^{1/2}]$.
First, 
\begin{equation}\label{derivofpsiininin}
\partial_{1}^{n}\partial_{2}^{j}\psi_{in,in,in}(t,x)=
\sum_{k=0}^{n}\Big(\begin{array}{c}n\\k\end{array}\Big)
\Phi_{\frac{8}{32}\lambda_{2} J}^{(n-k)}(t)\partial_{1}^{k}\partial_{2}^{j}\psi_{in,in}(t,x)
\end{equation}

On the one side, 
we have
\begin{equation}\label{firstfactor}
|\Phi_{\frac{8}{32}\lambda_{2} J}^{(n-k)}(t)|\lesssim |\lambda_{2}J|^{-(n-k)}\Phi_{\lambda_{2} J}(t)
\lesssim \frac{|\lambda_{2}J|^{\frac{1}{2}}}{|\lambda_{2}J|^{\frac{1}{2}+(n-k)}}w_{\lambda_{2}J}(t)^{-N_{2}}
%(1+|\lambda_{2}J|^{-1}|t-c(\lambda_{2}J)|)^{-N}
\end{equation}
which shows that $\Phi_{\frac{8}{32}\lambda_{2} J}$ is $L^{2}$-adapted to $\lambda_{2}J$ with constant  
$|\lambda_{2}J|^{1/2}$. 
%We can write then, 
%$$
%|\Phi_{4\lambda_{2} J}^{n-k)}(t)|
%\leq C|\lambda_{2}J|^{1/2}|\lambda_{2}J|^{-1/2-(n-k)}\phi_{\lambda_{2} J}(t)
%$$
%where $\Phi_{\lambda_{2} J}$ is a $L^{2}$-normalized bump function adapted to $\lambda_{2} J$.

On the other side, since the support of $\psi_{in,in,in}$ in the variable $t$  is in $\lambda_2J$, 
for all  $t\in \lambda_2J$ and all $x\in \mathbb R$, we have 
by the extra assumption, 
\begin{align}\label{derivofpsiinin}
%|\psi_{in,in,in}(t,x)|\leq 
\nonumber
|&\partial_{1}^{k}\partial_{2}^{j}\psi_{in,in}(t,x)|
=\Big| \int_{c(J)}^{t} \partial_1^{k+1} \partial_{2}^{j}\psi_{in,in}(r,x)\, dr\Big| 
\\
&\leq |t-c(J)|\| \partial_1^{k+1}\partial_{2}^{j}\psi_{in,in}(\cdot ,x)\|_{\infty }
\lesssim \lambda_{2}|J|\| \partial_1^{k+1}\partial_{2}^{j}\psi_{in,in}(\cdot ,x)\|_{\infty }
\end{align}
By the definition of a bump function we have
$$
| \partial_1^{k+1}\partial_{2}^{j}\psi_{in,in}(r,x)|
\leq C^{2}|I|^{-(\frac{3}{2}+k)}\Big(1+\frac{|r-c(I)|}{|I|}\Big)^{-N}|\varphi_{J}^{(j)} (x)|
$$
where $\varphi_{J}$ is a bump function adapted to $J$, because 
$\psi_{in,in} (r,\cdot )$ is 
adapted to $J$. 
Now, from the choice of $\Phi_{\frac{8}{32}\lambda_{2} J}$, we have for all $r\in \lambda_{2}J$,
%for all $r\in [c(J),t]$ we have
$$
|r-c(J)|
%\leq |t-c(J)|
\leq 1/2\lambda_{2}|J|
= 1/2|J|^{1-\theta}\diam(I\cup J)^{\theta}\leq 1/2\diam(I\cup J)
$$
%(|J|/|I|)^{1-\epsilon }\rdist(I,J)$
since $|J|\leq \diam(I\cup J)$. Then,  
$$
1+|I|^{-1}|r-c(I)|\geq 1+|I|^{-1}|c(I)-c(J)|-|I|^{-1}|r-c(J)|
$$
$$
\geq |I|^{-1}\diam(I\cup J)-1/2|I|^{-1}\diam(I\cup J)
=1/2\rdist(I,J)
$$
Therefore, 
$$
\| \partial_1^{k+1}\partial_{2}^{j}\psi_{in,in}(\cdot ,x)\|_{\infty }\lesssim C|I|^{-(\frac{3}{2}+k)}\rdist(I,J)^{-N}
|\varphi_{J}^{(j)} (x)|
$$
Moreover, as in Lemma \ref{bump4}, we have that 
$$
|\varphi_{J}^{(j)} (x)|
%\leq C|J|^{-1/2+j}(1+|J|^{-1}|x-c(J)|)^{-N}
%$$
%$$
\leq C\Big(\frac{|\lambda_{2}J|}{|J|}\Big)^{\frac{1}{2}+j}|\lambda_{2}J|^{-(\frac{1}{2}+j)}
\Big(1+\frac{|x-c(\lambda_{2}J)|}{|\lambda_{2}J|}\Big)^{-N_{2}}
$$
%$$
%\leq C\Big(\frac{|\lambda_{2}J|}{|J|}\Big)^{1/2+j}|\lambda_{2}J|^{-(1/2+j)}
%(1+|\lambda_{2}J|^{-1}|x-c(\lambda_{2}J)|)^{-N_{2}}
%$$
%with $N_{2}=[N^{1/2}]$. 
%$$
%=C\lambda_{2}^{1/2+j}|\lambda_{2}J|^{-(1/2+j)}\Phi_{\lambda_{2}J} (x)
%$$
%where $\Phi_{\lambda_{2} J}$ is again a $L^{\infty }$-normalized bump function adapted to $\lambda_{2}J$. 

With the two previous 
inequalities,
we continue the estimate of  (\ref{derivofpsiinin}):
\begin{equation*}\label{derivofpsiinin2}
|\partial_{1}^{k}\partial_{2}^{j}\psi_{in,in}(t,x)|
\lesssim C^{2}\lambda_{2}\frac{|J|}{|I|^{\frac{3}{2}+k}}\rdist(I,J)^{-N}
\lambda_{2}^{\frac{1}{2}+j}|\lambda_{2}J|^{-(\frac{1}{2}+j)}w_{\lambda_{2}J} (x)^{-N_{2}}
\end{equation*}
%$$
%= C^{2}\left(\frac{|J|}{|I|}\right)^{3/2+k}|J|^{-(1/2+k)}\rdist(I,J)^{-N}
%\lambda_{2}^{3/2+j}|\lambda_{2}J|^{-(1/2+j)}w_{\lambda_{2}J} (x)^{-N_{2}}
%$$
\begin{equation}\label{secondfactor}
= C^{2}\left(\frac{|J|}{|I|}\right)^{\frac{3}{2}+k}\hspace{-.3cm}|\lambda_{2}J|^{-(\frac{1}{2}+k)}\rdist(I,J)^{-N}
\lambda_{2}^{2+k+j}|\lambda_{2}J|^{-(\frac{1}{2}+j)}w_{\lambda_{2}J} (x)^{-N_{2}}
\end{equation}

Therefore, with inequalities (\ref{firstfactor}) and (\ref{secondfactor}), we can bound (\ref{derivofpsiininin}), for all $0\leq n,j\leq N_{2}$  
$$
|\partial_{1}^{n}\partial_{2}^{j}\psi_{in,in,in}(t,x)|
\leq C^{2}\sum_{k=0}^{n}\Big(\begin{array}{c}n\\k\end{array}\Big)
|\lambda_{2}J|^{\frac{1}{2}}|\lambda_{2}J|^{-\frac{1}{2}-(n-k)}w_{\lambda_{2}J} (t)^{-N_{2}}
$$
$$
\left(\frac{|J|}{|I|}\right)^{\frac{3}{2}+k}|\lambda_{2}J|^{-(\frac{1}{2}+k)}\rdist(I,J)^{-N}
\lambda_{2}^{2+k+j}|\lambda_{2}J|^{-(1/2+j)}w_{\lambda_{2}J} (x)^{-N_{2}}
$$
$$
=C^{2}\left(\frac{|J|}{|I|}\right)^{\frac{3}{2}}\rdist(I,J)^{-N}\lambda_{2}^{2+n+j}
\sum_{k=0}^{n}\Big(\begin{array}{c}n\\k\end{array}\Big)
\left(\frac{|J|}{|I|}\right)^{k}\lambda_{2}^{-(n-k)}
$$
$$
|\lambda_{2}J|^{-(\frac{1}{2}+n)}w_{\lambda_{2}J} (t)^{-N_{2}}
|\lambda_{2}J|^{-(\frac{1}{2}+j)}w_{\lambda_{2}J} (x)^{-N_{2}}
$$
$$
\leq C^{2}\left(\frac{|J|}{|I|}\right)^{\frac{3}{2}}\rdist(I,J)^{-N}\lambda_{2}^{2+n+j}
\Big(\frac{|J|}{|I|}+\lambda_{2}^{-1}\Big)^{n}
\varphi^{n}_{\lambda_{2}J} (t)
\varphi^{j}_{\lambda_{2}J} (x)
$$
where we denoted $\varphi^{i}_{\lambda_{2}J}=|\lambda_{2}J|^{-(1/2+i)}w_{\lambda_{2}J}^{-N_{2}}$. 
Now, 
$$
\lambda_{2}^{2+n+j}\leq \lambda_{2}^{2+2N_{2}}
=\Big(\frac{\diam(I\cup J)}{|J|}\Big)^{2\theta (1+N_{2})}
%=\Big(\frac{|J|}{|I|}\Big)^{-\theta (2+n+j)}\rdist(I,J)^{\theta (2+n+j)}
\hspace{-.3cm}\leq \Big(\frac{|I|}{|J|}\Big)^{2\theta (1+N_{2})}\hspace{-.3cm}\rdist(I,J)^{2\theta (1+N_{2})}
$$
while 
$$
\Big(\frac{|J|}{|I|}+\lambda_{2}^{-1}\Big)^{n}
=\bigg(\frac{|J|}{|I|}+\Big(\frac{|J|}{|I|}\Big)^{\theta }\Big(\frac{\diam(I\cup J)}{|I|}\Big)^{-\theta }\bigg)^{n}
$$
$$
\leq \Big(\frac{|J|}{|I|}\Big)^{\theta n}(1+\rdist(I,J)^{-\theta })^{n}
\leq 
%32^{N_{2}}\Big(\frac{|J|}{|I|}\Big)^{N_{2}\theta }2^{N_{2}}
\Big(\frac{|J|}{|I|}\Big)^{\theta n}2^{n}\leq 2^{N_{2}}
$$

Both estimates together give us 
%the constant bounded by which shows 
that $\psi_{in,in,in}$ is adapted to $\lambda_{2}J\times \lambda_{2}J$ with order $N_{2}$ and constant 
bounded by
%$$
%C\left(\frac{|J|}{|I|}\right)^{3/2}\rdist(I,J)^{-N}\lambda_{2}^{2(N_{2}+1)}
%%\Big(\frac{|J|}{|I|}+\lambda_{2}^{-1}\Big)^{N_{2}}
%$$
$$
C^{2}\left(\frac{|J|}{|I|}\right)^{\frac{3}{2}-2\theta (1+N_{2})}\hspace{-.5cm}
\rdist(I,J)^{-(N-2\theta (1+N_{2}))}
\leq C^{2}\left(\frac{|J|}{|I|}\right)^{\frac{3}{4}}\rdist(I,J)^{-7}
$$
because, from the choice of $\theta$ and $N$, we have 
$\theta^{2} N<\frac{1}{16}$ and so, $\theta N_{2}<\frac{1}{4}$. This implies 
$2\theta (1+N_{2})<2\theta +1/2<3/4$.

Then, by the weak compactness property of $T$ we get
$$
|\Lambda(\psi_{in,in,in})|
\leq C \left(\frac{|J|}{|I|}\right)^{\frac{3}{4}}\rdist(I,J)^{-7}(F_{W}(\lambda_{2}J;M_{T,\epsilon })+\epsilon )
%L_{M}(|J|)S_{M}(|J|^{-1})(\rdist(J,\mathbb B_{M_{T}})^{-N}
$$
%$$
%\leq C \left(\frac{|J|}{|I|}\right)^{3/2-2\epsilon' }\rdist(I,J)^{-N}(F_{T}(J)+\epsilon )
%$$
%by the monotonicity properties of $F$.

b2) We now work with $\psi_{in,in, out}$. When $\frac{8}{32}\lambda_2|J|>2\lambda_{1}|I|$, we have that 
$\psi_{in,in,out}$ is the zero function and so, the estimate for $|\Lambda (\psi_{in,in,out})|$ holds
trivially. Hence, we only need to work the case $\frac{8}{32}\lambda_2|J|\leq 2\lambda_{1}|I|$.
%notice that $\psi_{in,in,out}$ is non-zero as long as 
%$8\lambda_2|J|\leq 2\lambda_{1}|I|$. This condition leads to 
%$$
%4\leq \frac{\diam(I\cup J)}{|J|^{1-\theta }\diam(I\cup J)^{\theta}}=\Big(\frac{\diam(I\cup J)}{|J|}\Big)^{1-\theta }
%$$
%that is
%$$
%4^{1/(1-\theta )}\leq \frac{|I|}{|J|}\rdist(I,J)
%$$
%We can assume this condition because, for every fixed interval $I$, its negation only excludes a finite number of intervals $J$ as we rapidly see. 
%The reversed inequality implies
%$$
%4^{1/(1-\theta )}> \frac{|I|}{|J|}\rdist(I,J)\geq \max(\frac{|I|}{|J|},\rdist(I,J))
%$$
%and so, $1\leq \frac{|I|}{|J|}<4^{1/(1-\theta )}$ and $1\leq \rdist(I,J)<4^{1/(1-\theta )}$. With this we have, 
%$4^{-1/(1-\theta )}|I|<|J|\leq |I|$ and 
%$$
%4^{1/(1-\theta )}>\frac{1}{2}+\frac{|J|}{2|I|}+\frac{|c(I)-c(J)|}{|I|}\geq \frac{|c(I)-c(J)|}{|I|}
%$$
%that is,  
%$|c(I)-c(J)|<4^{1/(1-\theta )}|I|$. 
%
%

In this case, by the extra assumption again, we have 
\begin{align}\label{uno}
\nonumber
|\psi_{in,in,out}(t,x)|&\leq |\psi_{in,in}(t,x)|
=\Big| \int_{c(J)}^{t} \partial_1 \psi_{in,in}(r,x)\, dr\Big| 
\\
%and by definition of bump function we can bound by 
%\le |t-c(J)|\| \partial_1\psi_{in,in}(\cdot ,x)\|_{\infty }
&\leq C|t-c(J)||I|^{-\frac{3}{2}}\rdist(I,J)^{-N}|\varphi_{J} (x)|
\end{align}
where, as before, $\varphi_{J} $ is a adapted to $J$ with constant $C^{2}$. 
%Therefore, by lemma \ref{bump4}, $\lambda_{2}^{-1/2}\phi_{J} $ is also 
%a $L^{2}$-normalized bump function adapted to $\lambda_{2}J$ with constant $C$.

On the support of $\psi_{in,in}$, we have $|t-c(J)|\leq 2\lambda_{1}|\tilde{I}|=2\lambda_{1}|I|$. 
Moreover, on the support of $\psi_{in,in, out}$ we have  
$|t-c(J)|\geq \frac{8}{32}\lambda_2|J|$ and
$|x-c(J)|\leq \frac{2}{32}\lambda_2|J|$. 
With the last two inequalities we obtain $2|x-c(J)|<|t-c(J)|$     
and so,
%the support of $\psi_{out, in, in}$ is disjoint with the diagonal. 
%Therefore, 
we use the Calder\'on-Zygmund kernel representation
and the mean zero of $\psi_{in, in, out}$ in the variable $x$ to write
$$
\Lambda (\psi_{in, in, out})
%=\int \psi_{in,in,out}(t,x) K(t,x)\, dtdx
%$$
%$$
=\int \psi_{in, in, out}(t,x) (K(t,x)-K(t,c(J)))\, dtdx
$$

Now, we use
the smoothness property of a compact Calder\'on-Zygmund kernel.
We recall that, as described in Lemma \ref{compactCZ}, we have for some $0 < \delta < 1$ that
\begin{multline*}
|K(t,x)-K(x',t)|
\lesssim  
%\frac{1}{(|x|+|t|)^{\theta}} 
\frac{|x-x'|^{\delta}}{|t-x|^{1+\delta}}
L(|t-x|)S(|x-x'|)
D\Big(1+\frac{|x|}{1+|t-x|}\Big)
\end{multline*}
Note also the last remarks at the end of 
section \ref{kernelandadmissible}.
From this estimate and the bound for $\psi_{in, in, out}$ in (\ref{uno}), we get
\begin{align*}
|\Lambda (\psi_{in, in, out})|&\lesssim C\int |t-c(J)||I|^{-\frac{3}{2}}\rdist(I,J)^{-N}|\varphi_{J} (x)| \frac{|x-c(J)|^\delta}{|t-c(J)|^{1+\delta}}
\\
&L(|t-c(J)|)
S(|x-c(J)|)D\Big(1+\frac{|c(J)|}{1+|t-c(J)|}\Big)\, dtdx
\end{align*}

By using $2\lambda_{1}|I|>|t-c(J)|>\frac{8}{32}\lambda_2|J|$,
%>\frac{1}{4}|J|$, 
$|x-c(J)|<\frac{2}{32}\lambda_2|J|$, $c(\lambda_{2}J)=c(J)$, 
%we notice that $\max(|t-c(J)|,1)^{-1}|t|>\max(|t-c(J)|,1)^{-1}|c(I)|-1$.  
%Moreover, by the same inequalities 
the monotonicity of 
$L$, $S$ and $D$,  and the bound of $\varphi_{J}$, we get
%the last integral by
\begin{align*}
%\label{twointegrals}
%|\Lambda (&\psi_{in, in, out})|\lesssim
C^{2}&|I|^{-\frac{3}{2}}\rdist(I,J)^{-N}|J|^{-\frac{1}{2}}
L(\lambda_{2}|J|)S(\lambda_{2}|J|)D\Big(1+\frac{|c(\lambda_{1}\tilde{I})|}{1+\lambda_1|\tilde{I}|}\Big)
\\
&\int_{|x-c(J)|<2\lambda_{2}|J|}\hspace{-.1cm} |x-c(J)|^\delta dx 
\int_{\frac{8}{32}\lambda_{2}|J|<|t-c(J)|<2\lambda_{1}|I|}\hspace{-.1cm}|t-c(J)|^{-\delta}dt
%|J|^{(1-\epsilon)\delta }{\rm diam}(I,J)^{\epsilon \delta}  {\rm \diam}(I,J)^{-1-\delta}
%\|\psi_{in,out}\|_1
\end{align*}

Since $\delta <1$, the product of these integrals can be bounded by 
%a constant times
\begin{align*}
(2\lambda_{2}|J|)^{1+\delta }&((2\lambda_{1}|I|)^{1-\delta}-(\frac{8}{32}\lambda_{2}|J|)^{1-\delta})
\lesssim (\lambda_{2}|J|)^{1+\delta }
(\lambda_{1}|I|)^{1-\delta}
%=\diam(I\cup J)^{1-\delta }
\\
&=(|J|^{(1-\theta)}\diam(I\cup J))^{\theta(1+\delta )}\diam(I\cup J)^{1-\delta }
%$$
%=|J|^{(1-\theta)(1+\delta )}|I|^{1-\delta +\theta(1+\delta )}(|I|^{-1}\diam(I\cup J))^{1-\delta +\theta(1+\delta )}
%$$
\\
&=|J|^{1+\delta'}|I|^{1-\delta' }\rdist(I,J)^{1-\delta'}
\end{align*}
since $\delta'=\delta-\theta(1+\delta )$. 
%Notice that the implicit constant $C_{\delta'}$ grows to infinity as $\delta'$ tends to one. 
This way, we can bound 
\begin{align*}
|\Lambda (\psi_{in, in, out})|
&\lesssim C^{2}|I|^{-\frac{3}{2}}\rdist(I,J)^{-N}|J|^{-\frac{1}{2}}
\\
&\hspace{1cm}|J|^{1+\delta' }|I|^{1-\delta'}\rdist(I,J)^{1-\delta'}
F_{K}(\lambda_{2}J,\lambda_{1}\tilde{I})
\\
%$$
%=|J|^{1/2+\delta -\theta(1+\delta )}|I|^{-1/2-\delta +\theta(1+\delta )}\rdist(I,J)^{-(N-1+\delta -\theta(1+\delta ))}
%F_{K}(\lambda_{2}J,\lambda_{1}\tilde{I})
%$$
%$$
%= \Big(\frac{|J|}{|I|}\Big)^{1/2+\delta'} \rdist(I,J)^{-(N-1+\delta')}
%F_{K}(\lambda_{2}J,\lambda_{1}\tilde{I})
%%LS(|I_{3}|)D(\rdist(I_{3},\mathbb B_{M_{K}}))
%$$
&\leq  C^{2}\Big(\frac{|J|}{|I|}\Big)^{\frac{1}{2}+\delta' } \rdist(I,J)^{-7}
F_{K}(\lambda_{2}J,\lambda_{1}\tilde{I})
%LS(|I_{3}|)D(\rdist(I_{3},\mathbb B_{M_{K}}))
\end{align*}

%This is acceptable as long there is $0<\delta_{3}\leq 1$ such that 
%with $\delta_{3}=\delta -\theta (\delta +1)$. This is possible since $\delta >\theta (\delta +1)$. Notice that
%$$
%\delta_{3}=(1-\theta (1+\frac{1}{\delta }))\delta
%$$

\vskip10pt
{\bf c)} It only remains to discuss $\psi_{in,out}$. On the support of $\psi_{in,out}$, we have  
$|t-c(J)|>\lambda_1|I|={\rm diam}(I\cup J)$ and 
$|x-c(J)|\leq 2/32\lambda_2|J|=2/32|J|^{1-\theta }{\rm diam}(I\cup J)^{\theta}$. Since
$1\leq |J|^{-1}{\rm diam}(I\cup J)$ implies $|J|^{1-\theta}{\rm diam}(I\cup J)^{\theta}<{\rm diam}(I\cup J)$, we get $2|x-c(J)|<|t-c(J)|$. Then,
the support of $\psi_{in,out}$ is disjoint with the diagonal and 
we can use the Calder\'on-Zygmund kernel representation 
and the zero mean of $\psi_{in,out}$ in the variable $x$
to write
$$
\Lambda (\psi_{in,out})
%=\int \psi_{in,out}(t,x) K(t,x)\, dtdx
%$$
%$$
=\int \psi_{in,out}(t,x) (K(t,x)-K(t,c(J)))\, dtdx
$$
%$$\Lambda(\psi_{2,1})=\int \psi_{2,1}(x+\beta_1 t, x+\beta_2 t) K(t,x)\, dtdx$$
%We again change variables to write this integral as
%$$\int \psi_{2,1}(u, v) \tilde{K}(u,v)\, dudv$$
%with $\tilde{K}$ satisfying
%$$
%|\tilde{K}(u,v)-\tilde{K}(u',v')|\leq C|u-v|^{-(1+\delta )}(|u-u'|^{\delta }+|v-v'|^{\delta })
%$$
%for $C_\beta \|(u,v)-(u',v')\|<|u-v|$. Now, on the support of the integrand, $u$ is larger than
%$C^{-1}{\rm diam}(I_1,I_2)$, while $v$ is less than $C|I_2|^{1-\epsilon }{\rm diam}(I_1,I_2)^{\epsilon}$ which %is smaller than $u$ since
%$|I_2|^{-1}{\rm diam}(I_1,I_2)\geq 1$. 
%we can rewrite the above integral as
%$$
%\int \psi_{in,out}(t,x) (K(t,x)-K(c(J),t))\, dtdx
%$$
Moreover, we use the smoothness property of a compact Calder\'on-Zygmund kernel to estimate 
$$
|\Lambda (\psi_{in,out})|\lesssim  \int |\psi_{in,out}(t,x)| \frac{|x-c(J)|^\delta}{|t-c(J)|^{1+\delta}}L(|t-c(J)|)
$$
$$
S(|x-c(J)|)D\Big(1+\frac{|c(J)|}{1+|t-c(J)|}\Big)\, dtdx
$$
%$$
%L(|t-c(I_{2})|)\frac{1}{\max(|t-c(I_{2})|,2^{M})^{-1}|c(I_{2})|+1)^{N}}
%$$
\vskip5pt
Now, 
using $|t-c(J)|>{\rm diam}(I\cup J)$, 
$|x-c(J)|<2/32|J|^{1-\theta}{\rm diam}(I\cup J)^{\theta}<\diam(I\cup J)$ 
%we notice that $\max(|t-c(J)|,1)^{-1}|t|>\max(|t-c(J)|,1)^{-1}|c(I)|-1$. 
%Moreover, by the same inequalities 
and the monotonicity of 
$L$ and $S$, we bound by
\begin{align}\label{43}
%|\Lambda &(\psi_{in,out})|\lesssim
C^{2}&|J|^{(1-\theta)\delta }\frac{{\rm diam}(I\cup J)^{\theta \delta}}{{\rm \diam}(I\cup J)^{1+\delta}}
%\|\psi_{in,out}\|_1D(\max({\rm \diam}(I\cup J),1)^{-1}(|c(J)|)+1)
L(\diam(I\cup J) )S(\diam(I\cup J))
\\
\nonumber
&\int_{|t-c(J)|>{\rm diam}(I\cup J)} |\psi_{in,out}(t,x)|
D\Big(1+\frac{|c(J)|}{1+|t-c(J)|}\Big)\, dtdx
\end{align}
We denote $d=\diam(I\cup J)=|\langle I,J\rangle |$ and 
$\Delta_{k}=\{t\in \mathbb R: 2^{k}d\leq |t-c(J)|<2^{k+1}d\}$. By monotonicity of $D$, we bound the integral by
\begin{equation}\label{integral}
\sum_{k\geq 0}\int_{\Delta_{k}} |\psi_{in,out}(t,x)|
dtdx \, 
D\Big(1+\frac{|c(J)|}{1+2^{k+1}d}\Big)
\end{equation}
Now, since $|c(I)|-|c(J)|\leq |c(I)-c(J)|\leq \diam(I\cup J)=d$
and $\Big|c(\langle I\cup J\rangle )-\frac{|c(I)+c(J)|}{2}\Big| \leq d/2$, we have
%and $2^{k+1}\geq 2^{k}+1/2$, we have
$$
1+2^{k+1}d+|c(J)|
\geq 1+2^{k}d+d/2+|c(J)|
$$
$$
\geq 1+2^{k}d+(|c(I)|+|c(J)|)/2
\geq 1+2^{k}d+|c(\langle I, J\rangle )|
-d/2
$$
and so, 
$$
1+\frac{|c(J)|}{1+2^{k+1}d}
%\geq (1+2^{k+1}d)^{-1}(1+2^{k}d+\frac{d}{2}+|c(J)|)
%$$
%$$
\geq \frac{1}{1+2^{k+1}d}\Big(1+2^{k}d+|c(\langle I, J\rangle )|-\frac{d}{2}\Big)
$$
%$$
%\geq \frac{1}{2(1+2^{k}d)}(1+2^{k}d+\frac{|c(I)+c(J)|}{2})
%$$
%Now, since $\Big|c(\langle I\cup J\rangle )-\frac{|c(I)+c(J)|}{2}\Big| \leq \diam(I\cup J)/2$, we bound below previous expression by  
%$$
%1/2(1+2^{k}d)^{-1}(1+2^{k}d+|c(\langle I, J\rangle )|
%-\diam(I\cup J)/2)
%$$
%$$
%\geq 1/2(1+2^{k}d)^{-1}(1+2^{k+1}d+|c(I)|+|c(J)|)
%$$
%%$$
%%= 1/2(1+(1+2^{k+1}d)^{-1}(|c(I)|+|c(J)|))
%%$$
$$
\geq \frac{1}{2}\Big(1+\frac{|c(\langle I, J\rangle )|}{1+2^{k}d}-\frac{1}{2}\Big)
\geq \frac{1}{4}\Big(1+\frac{|c(2^{k}\langle I, J\rangle )|}{1+2^{k}|\langle I,J\rangle |}\Big)
=\frac{1}{4}\rdist(2^{k}\langle I,J\rangle ,\mathbb B)
$$

This way, by  the monotonicity of $D$ again, (\ref{integral}) can be bounded by 
\begin{equation}\label{integral2}
\sum_{k\geq 0}\int_{\Delta_{k}} |\psi_{in,out}(t,x)|
\, dtdx \, 
D(\rdist(2^{k}\langle I,J\rangle ,\mathbb B))
\end{equation}
We work now to bound the integral. First we note that, since 
$
2^{k}d\leq |t-c(J)|\leq |t-c(I)|+|c(I)-c(J)|\leq |t-c(I)|+d
$, 
%$$
%2^{k+1}d>|t-c(J)|\geq |t-c(I)|-|c(I)-c(J)|\geq |t-c(I)|
%$$
we also have $(2^{k}-1)d\leq |t-c(I)|<2^{k+1}d$.

Then, since $\psi_{in,out}$ is adapted to $I\times J$, we can bound as follows:
$$
\int_{\Delta_{k}} |\psi_{in,out}(t,x)|
\, dtdx 
\lesssim |I|^{-\frac{1}{2}}\int_{(2^{k}-1)d\leq |t-c(I)|} w_{I}(t)^{-N}dt \, |J|^{\frac{1}{2}}
$$ 
$$
\lesssim |I|^{-\frac{1}{2}}(1+|I|^{-1}(2^k-1)d)^{-N}|I|\, |J|^{\frac{1}{2}}
\leq  |I|^{\frac{1}{2}}|J|^{\frac{1}{2}}2^{-Nk}
$$ 
%$$
%=2|I|^{-1/2}(1+2^k\rdist(I,J))^{-N}|I|\rdist(I,J)|J|^{1/2}
%$$ 
%$$
%= |I|^{\frac{1}{2}}|J|^{\frac{1}{2}}(1+(2^k-1)\rdist(I,J))^{-N}
%\leq  |I|^{\frac{1}{2}}|J|^{\frac{1}{2}}2^{-Nk}
%$$
since $|I|^{-1}d=\rdist(I,J)\geq 1$. Then, (\ref{integral2}) can be bounded by
%$$
%|I|^{1/2}|J|^{1/2}\sum_{k\geq 0}(1+2^k\rdist(I,J))^{-N}
%D(\rdist(2^{k+1}\langle I,J\rangle ,\mathbb B))
%$$
$$
|I|^{\frac{1}{2}}|J|^{\frac{1}{2}}\sum_{k\geq 0}2^{-Nk}
D(\rdist(2^{k}\langle I,J\rangle ,\mathbb B))
\leq |I|^{\frac{1}{2}}|J|^{\frac{1}{2}}\tilde{D}(\langle I,J\rangle )
$$
%because $2^k\rdist(I,J)\geq 2^{k}\geq 1$, 
with $\displaystyle{\tilde{D}(K)=\sum_{k\geq 0}2^{-k}
D(\rdist(2^{k}K,\mathbb B))}$. 
Then, we bound \eqref{43} by 
\begin{align*}
|\Lambda (\psi_{in,out})|&\lesssim C^{2}|J|^{(1-\theta)\delta } {\rm diam}(I\cup J)^{-1-\delta+\theta \delta}
|I|^{\frac{1}{2}}|J|^{\frac{1}{2}}
\\
&\hspace{1cm}L(\diam(I\cup J))S(\diam(I\cup J))\tilde{D}(\langle I,J\rangle )
%$$
%\le \Big(\frac{|J|}{|I|}\Big)^{1/2+(1-\theta )\delta } (|I|^{-1}\diam(I\cup J))^{-1-(1-\theta )\delta }
%\tilde{F}_{K}(\langle I, J\rangle)
%$$
\\
&\leq  C^{2 }\Big(\frac{|J|}{|I|}\Big)^{\frac{1}{2}+\delta' } \rdist(I,J)^{-(1+\delta' )}
\tilde{F}_{K}(\langle I, J\rangle)
%LS(|I_{3}|)D(\rdist(I_{3},\mathbb B_{M_{K}}))
\end{align*}
since $(1-\theta )\delta >\delta -\theta (1+\delta)=\delta'$. 
%Notice that we finally take $\theta \leq \frac{\delta}{\delta +1}\min(1,32\frac{\delta}{\delta +1})$ and $N\in \mathbb N$ large enough so that $(1-2\theta -\delta_{2})\theta^{-1}>N>4(1+\delta_{1} )\theta^{-1}$.
%The condition over $\theta $ implies $\delta_{2}\leq \delta_{1},\delta_{3},\delta_{4}$ and so, we take 
%$\delta'=\delta_{2}=(1-(32 \theta)^{1/2}))\delta $,
This shows the stated bound for $\Lambda(\psi_{in,out})$
and completes the proof.

%L_{M}S_{M}(|I_{2}|)(\rdist(I_{2},K_{M})^{-N}

\end{proof}

\section{$L^{p}$ compactness}\label{L2}

We start by describing the way 
%we decompose the time-frequency plane to obtain 
%first a Littlewood-Paley decomposition and later
to choose a wavelet basis of $L^{2}(\mathbb R)$ and how we use this basis to decompose the operators under study. For this, we use the results contained in the books 
\cite{Chui} and \cite{HerWeiss}.

Given a function $\psi $ 
%$
%\sum_{k\in \mathbb Z}|\hat{g}(\xi +k)|^{2}=1
%$
%for almost all $\xi \in \mathbb R$ and 
%$|\psi^{(n)}(x)|\leq C(1+|x|)^{-N}$ for all $0\leq n\leq N$ 
and a dyadic interval $I=2^{-j}[k,k+1]$, $j,k\in \mathbb Z$, we denote $l(I)=\min\{ x: x\in I\}$ and
$$
\psi_{I}(x)
={\mathcal T}_{l(I)}\mathcal D_{|I|}^{2}\psi (x)
=2^{j/2}\psi(2^{j}x-k)
%={\mathcal T}_{k|I_{j,k}|}\mathcal D_{|I_{j,k}|}^{2}\varphi (x)
$$

\begin{theorem}
Let $\psi \in L^{2}(\mathbb R)$ with $\| \psi\|_{L^{2}(\mathbb R)}=1$. Then, 
$\{ \psi_{I}\}_{I\in {\mathcal D}}$ is an orthonormal wavelets basis of $L^{2}(\mathbb R)$
if and only if 
$$
\sum_{k\in \mathbb Z}|\hat{\psi}(\xi +k)|^{2}=1
\hskip20pt, \hskip25pt
\sum_{k\in \mathbb Z}\hat{\psi}(2^{j}(\xi +k))\overline{\hat{\psi}(\xi +k)}=0
$$
for all $\xi \in \mathbb R$ and all $j\geq 1$.
\end{theorem}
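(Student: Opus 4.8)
The plan is to reduce the statement to two Fourier-side computations of inner products $\langle\psi_I,\psi_J\rangle$, and then, for the converse, to upgrade orthonormality of the system $\{\psi_I\}$ to completeness.

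\textbf{The easy direction (basis $\Rightarrow$ the two identities).} If $\{\psi_I\}_{I\in\mathcal D}$ is an orthonormal basis it is in particular an orthonormal system. Writing $\psi_{j,k}(x)=2^{j/2}\psi(2^jx-k)$, so that $\psi_I=\psi_{j,k}$ when $I=2^{-j}[k,k+1]$, and using Plancherel together with $\widehat{\psi_{j,k}}(\xi)=2^{-j/2}e^{-2\pi i k2^{-j}\xi}\hat\psi(2^{-j}\xi)$, I would first compute $\langle\psi_{0,0},\psi_{0,k}\rangle=\int|\hat\psi(\xi)|^2e^{2\pi i k\xi}\,d\xi=\int_0^1\bigl(\sum_{l\in\mathbb Z}|\hat\psi(\xi+l)|^2\bigr)e^{2\pi i k\xi}\,d\xi$; since this must equal $\delta_{0,k}$ for every $k\in\mathbb Z$, the $1$-periodic function $\sum_l|\hat\psi(\cdot+l)|^2$ has all Fourier coefficients $0$ except the zeroth, which is $1$, i.e. the first identity holds a.e. Similarly, for $j\ge 1$ the substitution $\eta=2^{-j}\xi$ and the periodization $\int_{\mathbb R}=\int_0^1\sum_{m}$ (using $e^{2\pi i k2^j m}=1$) turn $\langle\psi_{j,l},\psi_{0,k}\rangle$ into $2^{j/2}\int_0^1 e^{2\pi i(k2^j-l)\eta}G_j(\eta)\,d\eta$, where $G_j(\eta)=\sum_{m\in\mathbb Z}\hat\psi(2^j(\eta+m))\overline{\hat\psi(\eta+m)}$; as $(k,l)$ runs over $\mathbb Z^2$ the exponent $k2^j-l$ runs over all of $\mathbb Z$, so the vanishing of all these pairings is equivalent to $G_j\equiv 0$ a.e., which is the second identity.

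\textbf{Orthonormality from the two identities.} Conversely, since the $L^2$-normalized dilation by $2$ is unitary and carries $\psi_{j,k}$ to $\psi_{j+1,k}$, every pairing $\langle\psi_{j,k},\psi_{j',k'}\rangle$ can be normalized so that one of the two scales is $0$; then the first identity handles all same-scale pairings and the second identity, applied at the scale $|j-j'|\ge 1$, handles all cross-scale pairings, simply by reversing the computations above. Hence the two identities are exactly equivalent to $\{\psi_I\}$ being an orthonormal system.

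\textbf{Completeness, the main obstacle.} What remains, and is the heart of the matter, is that such an orthonormal wavelet system is automatically complete. I would prove this in the Fourier domain through the wavelet resolution of the identity: for $f$ in the dense class with $\hat f\in L^\infty$ of compact support, expand $\langle f,\psi_{j,k}\rangle$ by Plancherel and periodization, sum over $k$ by Parseval for Fourier series on $[0,1]$, and then sum over $j$. The diagonal part of the resulting double sum equals $\int|\hat f(\xi)|^2\bigl(\sum_{j\in\mathbb Z}|\hat\psi(2^j\xi)|^2\bigr)\,d\xi$, while the off-diagonal part, after writing each lattice shift as $2^{r}q$ with $q$ odd, reorganizes into series that telescope to $0$ by virtue of the dilates of the second identity. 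To close the computation one needs the dyadic partition of unity $\sum_{j\in\mathbb Z}|\hat\psi(2^j\xi)|^2=1$ a.e.; this does \emph{not} follow formally from the first identity, which is a partition of unity over integer rather than dyadic translates, and deducing it from the two identities, for instance by an induction over scales that feeds the second identity into itself and uses the first at each step, is the genuinely nontrivial point. Granting it, one obtains $\sum_{j,k}|\langle f,\psi_{j,k}\rangle|^2=\|f\|_2^2$ first on the dense class and then, by the Bessel bound coming from orthonormality, for every $f\in L^2(\mathbb R)$; in particular no nonzero $f$ is orthogonal to all $\psi_I$, so the orthonormal system $\{\psi_I\}$ is complete and hence an orthonormal basis. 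An equivalent route is the MRA-reconstruction argument: set $V_m=\overline{\operatorname{span}}\{\psi_{j,k}:j<m\}$, observe that $\bigcap_m V_m=\{0\}$ follows from orthonormality alone, and reduce completeness to proving $\overline{\bigcup_m V_m}=L^2(\mathbb R)$, which is again precisely where the two Fourier identities enter. I expect the convergence and interchange-of-summation bookkeeping in this step, together with the dyadic partition-of-unity lemma, to absorb essentially all the work; the rest is Plancherel and periodization.
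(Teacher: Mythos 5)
A preliminary remark: the paper offers no proof of this theorem at all; it is quoted as background from the wavelet literature (\cite{Chui}, \cite{HerWeiss}), so your argument has to stand on its own rather than be measured against an in-paper proof. The portion of your proposal that reduces the two displayed identities to orthonormality of the system $\{\psi_{j,k}\}$ --- Plancherel, periodization, and reading off Fourier coefficients of the periodized products --- is correct in both directions, and you are right that this settles everything except completeness.

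The genuine gap is exactly the step you flag and then defer. The dyadic partition of unity $\sum_{j\in\mathbb Z}|\hat\psi(2^j\xi)|^2=1$ a.e.\ cannot be deduced from the two identities, by an induction over scales or by any other means, because the implication is false. Take $\hat\psi=\chi_{[1,2]}$: then $\|\psi\|_2=1$, $\sum_{k}|\hat\psi(\xi+k)|^2=1$ a.e., and for every $j\ge 1$ the sum $\sum_{k}\hat\psi(2^j(\xi+k))\overline{\hat\psi(\xi+k)}$ vanishes a.e.\ (the supports force $\xi+k\in[1,2]\cap[2^{-j},2^{1-j}]$, a null set), so $\{\psi_{j,k}\}$ is an orthonormal system; yet $\sum_{j}|\hat\psi(2^j\xi)|^2=\chi_{(0,\infty)}(\xi)$ and the system is not complete, since every $\psi_{j,k}$ has spectrum in $[2^j,2^{j+1}]\subset(0,\infty)$, so any $f$ with $\hat f$ supported in $(-\infty,0)$ is orthogonal to all of them (a real-valued variant is $\hat\psi=2^{-1/2}\chi_{[-2,-1]\cup[1,2]}$). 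In other words, the two displayed equations characterize orthonormal \emph{systems}, not orthonormal bases; the basis property requires the complementary pair of equations, $\sum_{j\in\mathbb Z}|\hat\psi(2^j\xi)|^2=1$ a.e.\ and $\sum_{j\ge 0}\hat\psi(2^j\xi)\overline{\hat\psi(2^j(\xi+q))}=0$ a.e.\ for all odd integers $q$, which is how the characterization appears in \cite{HerWeiss}; for the smooth, compactly supported wavelets the paper actually uses, completeness comes from the multiresolution-analysis construction, not from these identities. Consequently the completeness half of your plan cannot be repaired as written: the ``granting it'' conclusion, and with it the ``if'' direction as you set it up, does not follow from the stated hypotheses.
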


%\begin{definition}
%For any function $f:\mathbb R\rightarrow \mathbb C$, we say that a bounded function $W:[0,\infty )\rightarrow \mathbb R^{+}$ is a radial decreasing $L^{1}$-majorant of $f$ if $|f(x)|\leq W(|x|)$ and $W$ satisfies the following three conditions:
%%\begin{itemize}
%%\item 
%$W\in L^{1}([0,\infty ))$, 
%%\item 
%$W$ is decreasing and 
%%\item 
%$W (0)<\infty $.
%%\end{itemize}
%\end{definition}
%
%\begin{theorem}
%Let $\psi \in L^{2}(\mathbb R)$ differentiable and such that $\{ \psi_{I}\}_{I\in {\mathcal D}}$ is an orthonormal basis of $L^{2}(\mathbb R)$. We further assume that  $\psi $ and $\psi'$ have a common radial decreasing $L^{1}$-majorant $W$ satisfying
%$$
%\int_{0}^{\infty }xW(x)dx<\infty 
%$$
%Then, the system $(\psi_{I})_{I\in {\mathcal D}}$ is an unconditional basis for $L^{p}(\mathbb R)$ with $1<p<\infty $ and for $H^{1}(\mathbb R)$. 
%\end{theorem}

For our purposes, we take $\psi $ satisfying the hypotheses of previous theorems with the additional condition that 
$\psi \in C^{N}(\mathbb R)$ and it is adapted to $[-\frac{1}{2},\frac{1}{2}]$ with constant $C>0$ and order $N$. 
We remark the crucial fact that for every interval $I\in {\mathcal D}$, every wavelet function 
$\psi_{I}$ is a bump function adapted to $I$ with the same constant and order. Several examples of constructions of systems of wavelets with any required order of differentiability can also be found in \cite{HerWeiss}.

In this setting, the continuity of $T$ with respect the topology of ${\mathcal S}_{N}(\mathbb R)$, 
allows to write for every $f,g\in {\mathcal S}(\mathbb R)$, 
%This means that we can write
%\begin{eqnarray*}
%\Lambda (f,g)&=&\sum_{k,j}\Lambda (P_{j}f, P_{k}g)\\
%&=&\sum_{R,S}\langle f,\psi_{R}\rangle \langle g,\psi_{S}\rangle \Lambda (\psi_R,\psi_S)\\
%\end{eqnarray*}
$$
\langle T(f),g\rangle 
%=\sum_{i,j\in \mathbb Z}\langle T({\mathcal P}_{i}f), {\mathcal P}_{j}g\rangle 
=\sum_{I,J\in {\cal D}}\langle f,\psi_{I}\rangle \langle g, \psi_{J}\rangle \langle T(\psi_I),\psi_J\rangle 
$$
where the sums run over the family of all dyadic intervals of $\mathbb R$ and convergence is understood in 
the topology of  ${\mathcal S}_{N}(\mathbb R)$.
%For every $s\in \mathbb Z$ and $d\in \mathbb N$,
%the projection operator $P_{s,d}$ is defined in the following way 
%$$
%P_{s,d}(f)=\sum_{I\in {\cal D}_{s,d}}\langle f,\psi_{I}\rangle \psi_{I}
%$$
%where  ${\cal D}_{s,d}$ is the family of dyadic intervals such that $2^{-s}\leq |I|\leq 2^{s}$ and $|c(I)|\leq d$. 
%Notice that, whenever $d=2^{s}$, $I\in {\cal D}_{s,d}$ is equivalent to $I\subset 3[-2^{s},2^{s}]$ and 
%$2^{-s}\leq |I|$.  
Furthermore, 
%since
%$$
%T(f)=\sum_{I}\langle f,\psi_{I}\rangle T(\psi_{I})
%=\sum_{I}\langle f,\psi_{I}\rangle \sum_{J}\langle T(\psi_{I}),\psi_{J}\rangle \psi_{J}
%$$
%we have that 
%$$
%\langle P_{M}(T(f)),g\rangle 
%=\langle T(f),P_{M}g\rangle 
%=\sum_{I\in {\cal D}}\sum_{J\in {\cal D}_{M}}\langle f,\psi_{I}\rangle 
%\langle g, \psi_{J}\rangle \langle T(\psi_I), \psi_J \rangle 
%$$
%we have 
\begin{equation}\label{orthopro0}
\langle P_{M}^{\perp}(T(f)),g\rangle 
%=\langle T(f),P_{M}^{\perp}(g)\rangle
=\sum_{I\in {\cal D}}\sum_{J\in {\cal D}_{M}^{c}}\langle f,\psi_{I}\rangle \langle g, \psi_{J} \rangle 
\langle T(\psi_I),\psi_J\rangle 
\end{equation}
where the summation is performed as in equation \eqref{ortho}.

\vskip10pt
We prove now our main result about compactness on $L^{2}(\mathbb R)$ of Calder\'on-Zygmund operators
under the special cancellation conditions.
%, we will use the following 
%characterization of compactness in Banach spaces with Schauder basis

\begin{theorem}\label{L2bounds}
Let
%$K$ be a Calderon-Zygmund kernel with parameter $\delta$ and
%$T:\S(\R)\to \S'(\R)$ 
$T$ 
be a continuous linear operator 
%continuous as a mapping from $\S(\R)$ to $\S'(\R)$.
%with associated kernel $K$ and
%satisfying the mixed WB-CZ condition.
with a compact Calder\'on-Zygmund kernel such that $T$ satisfies 
the weak compactness condition
and the special cancellation conditions
$T(1)=T^{*}(1)=0$.

Then, $T$ can be extended to a compact operator on $L^2(\mathbb R)$.
\end{theorem}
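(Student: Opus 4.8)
The strategy is to apply the characterization of compactness via Schauder bases recalled in Theorem~\ref{charofcompact}: pick a smooth, compactly supported, $L^2$-normalized wavelet basis $(\psi_I)_{I\in\mathcal D}$ of order $N$ (with $N$ large enough to meet the hypotheses of Proposition~\ref{symmetricspecialcancellation}), build the lagom projections $P_M$ from it, and show that $\|P_M^\perp\circ T\|_{2\to 2}\to 0$ as $M\to\infty$. By \eqref{orthopro0} the kernel of $P_M^\perp\circ T$ in the wavelet basis is the matrix $(\langle T(\psi_I),\psi_J\rangle)_{I\in\mathcal D,\,J\in\mathcal D_M^c}$, so by the usual Schur-test / almost-orthogonality argument it suffices to estimate these matrix entries and sum them. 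The main input is Proposition~\ref{symmetricspecialcancellation}, which under the special cancellation $T(1)=T^*(1)=0$ gives, for $|I|\ge|J|$,
$$
|\langle T(\psi_I),\psi_J\rangle|\lesssim \frac{\ec(I,J)^{1/2+\delta'}}{\rdist(I,J)^{1+\delta'}}\bigl(F(I_1,\dots,I_6;M_{T,\epsilon})+\epsilon\bigr),
$$
together with the symmetric estimate for $|J|\ge|I|$ (applying the proposition to $T^*$).

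\textbf{Main decomposition.} I would split each entry into a "good" part controlled by the $\epsilon$ term and a part controlled by the admissible functions $F$. Fix $\epsilon>0$; this fixes $M_0=M_{T,\epsilon}$. For a pair $I,J$ with $J\in\mathcal D_M^c$ and $M$ large (say $M>M_0$), at least one of $I$ or $J$ has the property that its relevant sizes/eccentricities force the argument of one of the admissible functions $L_K,S_K,D_K$ (or $L_W,S_W,D_W$) to lie in a region where that function is small: indeed $J\notin\mathcal D_M$ means $J$ is either too small, too large, or too far from the origin, and the intervals $I_1,\dots,I_6$ appearing in the bound all inherit largeness/smallness/remoteness from $I$ and $J$. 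Concretely I would argue that $\sup_{J\notin\mathcal D_M}\bigl(\text{all relevant }F\text{-values}\bigr)\le \eta(M)$ with $\eta(M)\to0$ as $M\to\infty$, using the limit properties \eqref{limits} of admissible functions and the monotonicity assumptions of Section~\ref{kernelandadmissible}. This is exactly the structure displayed in the Remark after Theorem~\ref{Mainresult}.

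\textbf{Summation.} With the entrywise bound in hand, boundedness of $P_M^\perp\circ T$ on $L^2$ follows from a Schur test: I would show
$$
\sup_{I}\sum_{J}|\langle T(\psi_I),\psi_J\rangle|+\sup_{J}\sum_{I}|\langle T(\psi_I),\psi_J\rangle|<\infty,
$$
where the key summability is that $\sum_{J}\ec(I,J)^{1/2+\delta'}\rdist(I,J)^{-(1+\delta')}\|F\|_\infty\lesssim \|F\|_\infty$ (finitely many scales and a geometric sum over translates at each scale, using $\delta'>0$ and $N$ large so the $\rdist$ power from the $F$-terms in $I_3,\dots,I_6$ only helps). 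Combining with the smallness of the $F$-factors when $J\notin\mathcal D_M$, the same Schur sums over $J\in\mathcal D_M^c$ are bounded by $C(\eta(M)+\epsilon)\|F\|_\infty + C\cdot M^{-\delta}\|F\|_\infty$-type terms, i.e. they can be made $\le C\epsilon$ by first choosing $\epsilon$ small and then $M$ large. Hence $\|P_M^\perp\circ T\|_{2\to2}\to0$, and Theorem~\ref{charofcompact} yields compactness on $L^2(\mathbb R)$.

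\textbf{Expected main obstacle.} The delicate point is bookkeeping the auxiliary intervals $I_3,I_4,I_5,I_6$ (the containing interval $\langle I,J\rangle$ and the dilates by $\lambda_1,\lambda_2$): one must verify that when $J\notin\mathcal D_M$ these intervals genuinely fall into the "small $F$" regime \emph{uniformly}, and simultaneously that the extra powers of $\rdist(I,J)$ and $\ec(I,J)$ they carry do not destroy the summability of the Schur sums. This requires choosing $N$ (hence $\theta,\delta'$) carefully so that the decay $\ec(I,J)^{1/2+\delta'}\rdist(I,J)^{-(1+\delta')}$ dominates the polynomial growth in $\rdist$ hidden in $F(I_1,\dots,I_6;M)$, and then a somewhat technical splitting into the three regimes ($|J|$ large, $|J|$ small, $J$ remote) mirroring cases 1)--3) of Proposition~\ref{necessityofweakcompactness}. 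Everything else is routine: the wavelet representation, the Schur test, and the reduction to Proposition~\ref{symmetricspecialcancellation}.
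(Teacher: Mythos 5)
Your overall skeleton (wavelet basis, Theorem \ref{charofcompact}, entrywise bounds from Proposition \ref{symmetricspecialcancellation}, then summation) is the same as the paper's, but the central claim you rely on is false, and this is exactly where the real work lies. You assert that $J\notin\mathcal D_M$ forces \emph{all} the values $F(I_1,\dots,I_6;M_{T,\epsilon})$ to be uniformly small, because "the intervals $I_1,\dots,I_6$ inherit largeness/smallness/remoteness from $I$ and $J$". They do not: the sum in \eqref{orthopro0} runs over all $I\in\mathcal D$, so you can have $I$ a lagom interval (say $I=[0,1]$) paired with a non-lagom $J$ (huge, tiny, or remote). Then $I_1=I$ contributes $L_K(|I|)$, $S_K(|I|)$, $D_K(\rdist(I,\mathbb B))$ of order one to each of the three factors of $F$, so $F(I_1,\dots,I_6;M_{T,\epsilon})\approx\|F\|_\infty$, and your $\eta(M)$ does not tend to zero. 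The paper handles precisely this situation by a case analysis (its cases 2)--6)): when one of the intervals $I,\ \langle I,J\rangle,\ \lambda_1\tilde K_{max},\ \lambda_2\tilde K_{max},\ \lambda_2K_{min}$ is lagom while $J\in\mathcal D_{2M}^c$, the geometry forces the eccentricity or relative distance to be extreme ($e\leq -M$, or $n\gtrsim M$, or $n\gtrsim 2^{M-|e|}$, etc.), and the smallness then comes from summing $2^{-|e|(1/2+\delta')}n^{-(1+\delta')}$ over these restricted ranges, producing the $2^{-M\delta}$ and $M^{-\delta}$ factors (this is where the $M^{-\delta}\|F\|_\infty$ term in the Remark after Theorem \ref{Mainresult} actually comes from — it is not a byproduct of your $\eta(M)$ argument). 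Note also that the paper projects with $P_{2M}^\perp$ while running the smallness/lagom dichotomy at scale $M$, a mismatch that is needed to extract these geometric gains. Your "expected main obstacle" paragraph asks to verify uniform smallness of $F$ for $J\notin\mathcal D_M$; that verification cannot succeed, so the argument as proposed does not close.

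A secondary but genuine problem is the summation device. The plain Schur test you state, $\sup_I\sum_J|\langle T(\psi_I),\psi_J\rangle|<\infty$, is not available from the entry bounds: for fixed $I$ and $|J|=2^{e}|I|$ with $e\le 0$, there are about $2^{-e}$ intervals $J$ at each unit of relative distance, so the row sum behaves like $\sum_{e\le 0}2^{|e|(\frac12-\delta')}$, which diverges whenever $\delta'<\frac12$ (the generic case). The paper avoids this by fixing the eccentricity $e$ and relative distance $n$, applying Cauchy--Schwarz in the wavelet coefficients so that the cardinality $2^{|e|}$ of each family $J_{e,n}$ is split as $2^{|e|/2}$ against each of $\|f\|_2,\|g\|_2$, and only then summing $2^{|e|/2}\cdot 2^{-|e|(\frac12+\delta')}n^{-(1+\delta')}$ over $e$ and $n$. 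You would need this (or an equivalent weighted Schur/orthogonality argument) in place of the naive absolute row/column sums.
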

\proof

By Theorem \ref{charofcompact}, to prove compactness of $T$, we 
need to check that
$P_{M}^{\perp}(T_{b})$ converges to zero in the operator norm 
$\| \cdot \|_{L^{2}\rightarrow L^{2}}$ when $M$ tends to infinity.
For this, it is enough to prove that 
$
\langle P_{M}^{\perp}(T(f)),g\rangle 
$
tends to zero uniformly for all $f,g \in {\mathcal S}(\mathbb R)$ in the unit ball of 
$L^{2}(\mathbb R)$.

We show first that for every 
 $\epsilon >0$ there is $M_{0}\in \mathbb N$ 
%depending on $\epsilon, M_{K}, M_{T}$ 
such that for any $M>M_{0}$, 
we have $F(I_{1},\ldots ,I_{6};M_{T,\epsilon })\lesssim \epsilon $ for $I_{i}\in {\cal I}_{M}^{c}$. This will follow once we prove
the inequality $F(I;M_{T,\epsilon })\lesssim \epsilon $ for every $I\in {\cal D}_{M}^{c}$. We note that the implicit constants only depend on the admissible functions. 

For $\epsilon >0$, let that $M_{T,\epsilon }>0$ 
be the parameter appearing in the Definition \ref{WB} of the weak compactness condition.
%is a constant such that 
%for any interval $I$ and any $\phi_{I},\varphi_{I}$ bump functions adapted to $I$ with constant $C>0$ and order $N$, we have
%$$
%|\langle T(\phi_{I}),\varphi_{I}\rangle |\leq C(F_{W}(I;M_{T,\epsilon })+\epsilon)
%$$
Once fixed $M_{T,\epsilon }$,  
there is
$M_{0}\in \mathbb N$, depending on $\epsilon, M_{T,\epsilon }$, with $M_{0}>M_{T,\epsilon }$,  
such that for any $M>M_{0}$, 
\begin{align*}
L_{K}(&2^{M})+S_{K}(2^{-M})+ D_{K}(M)<\epsilon
\\
L_{W}(&2^{M-M_{T,\epsilon }})+S_{W}(2^{-(M-M_{T,\epsilon })})+D_{W}(M/M_{T,\epsilon })<\epsilon
\end{align*} 

%We can check now that $F(I;M_{T,\epsilon })<\epsilon $ for every $I\in {\cal D}_{M}^{c}$.
Let $I\in {\cal I}_{M}^{c}$. We prove the claim by considering the following cases: 
\vskip10pt
\noindent 1) If $|I|>2^{M}$ then, since $L_{K}$ and $L_{W}$ are non-increasing, we have  
 %$L_{K}(|I|)<\epsilon$ and $L_{T}(|I|/2^{M_{T,\epsilon }})\leq L_{T}(2^{M-M_{T,\epsilon }})<\epsilon $ which imply 
$$
F(I;M_{T,\epsilon})\lesssim L_{K}(|I|)+L_{W}\Big(\frac{|I|}{2^{M_{T,\epsilon }}}\Big)
\leq L_{K}(2^{M})+L_{W}(\frac{2^{M}}{2^{M_{T,\epsilon }}})<\epsilon 
$$

%$L(|I|/2^{M_{T}})S(2^{M_{T}}|I|^{-1})\leq L(2^{M-M_{T}})S(2^{-(M-M_{T}}))
%\leq L(2^{M-M_{T}})<\epsilon $. 

\noindent 2) If $|I|<2^{-M}$ then, since $S_{K}$ and $S_{W}$ are non-decreasing, we get 
%$S_{K}(|I|)<\epsilon $ and $S_{T}(2^{M_{T,\epsilon }}|I|)\leq S_{T}(2^{-(M-M_{T,\epsilon })})<\epsilon $. Then, again, 
$$
F(I;M_{T,\epsilon})\lesssim S_{K}(|I|)+S_{W}(2^{M_{T,\epsilon }}|I|)
\leq S_{K}(2^{-M})+S_{W}\Big(\frac{2^{M_{T,\epsilon }}}{2^{M}}\Big)
<\epsilon 
$$

\noindent 3) If $2^{-M}\leq |I|\leq 2^{M}$ with $\rdist (I,\mathbb B_{2^{M}})>M$ then, as we saw in the remark after 
Definition \ref{Imdef}, 
$|c(I)|>(M-1)2^{M}$. Therefore,
$$
M_{T,\epsilon }^{-1}\rdist (I,\mathbb B_{2^{M_{T,\epsilon }}})
\geq M_{T,\epsilon }^{-1}\Big(1+\frac{|c(I)|}{\max(|I|,2^{M_{T,\epsilon }})}\Big)
%\geq 1+\frac{(M-1)2^{M}}{\max(2^{M},2^{M_{T,\epsilon }})}
%\geq 1+\frac{(M-1)2^{M}}{2^{M}}
>M/M_{T,\epsilon }
$$
%\begin{enumerate}
%\item if $|I|>2^{M_{T}}$ we have 
%$$
%\rdist (I,\mathbb B_{2^{M_{T}}})\geq \frac{|c(I)|}{|I|}\geq (M-1)\frac{2^{M}}{|I|}\geq M-1
%$$
%\item while if $|I|\leq 2^{M_{T}}$ we have 
%$$
%\rdist (I,\mathbb B_{2^{M_{T}}})\geq \frac{|c(I)|}{2^{M_{T}}}\geq (M-1)2^{M-M_{T}}\geq M-1
%$$ 
%\end{enumerate}
%This way, 
%$M_{T,\epsilon }^{-1}\rdist (I,\mathbb B_{2^{M_{T,\epsilon }}})\geq M/M_{T,\epsilon }$. 
We can apply a similar reasoning to show also $\rdist (I,\mathbb B)>M$. 
Then, since $D_{W}$ is non-increasing, we have 
$$
F(I;M_{T,\epsilon})\lesssim  D_{K}(\rdist (I,\mathbb B))
+D_{W}(M_{T,\epsilon }^{-1}\rdist (I,\mathbb B_{2^{M_{T,\epsilon }}}))
$$
$$
\leq D_{K}(M)+D_{W}(M/M_{T,\epsilon })<\epsilon 
$$

%$D_{T}(M_{T,\epsilon }^{-1}\rdist (I,\mathbb B_{2^{M_{T,\epsilon }}}))\leq D_{T}(M/M_{T,\epsilon })<\epsilon $. 
%
%We apply a similar reasoning to show that we also have\\ $D_{K}(\rdist (I,\mathbb B))<\epsilon $ 
%and, therefore, we finally obtain that \\
%$F_{\epsilon}(I)\leq D_{K}(\rdist (I,\mathbb B))+D_{T}(M_{T,\epsilon }^{-1}\rdist (I,\mathbb B_{2^{M_{T,\epsilon }}}))<\epsilon $ .

%$C(I)<\epsilon $.
%$LSD(I)=LS_{M_{K}}(|I|)D(\rdist(I,\mathbb B_{M_{K}}))<\epsilon $
%and $LSD(I)=LS_{M_{T}}(|I|)D(\rdist(I,\mathbb B_{M_{T}}))<\epsilon $. 

\vskip10pt
Now, for every fixed $\epsilon >0$ and chosen $M_{0}\in \mathbb N$, we are going to prove that for all 
$M>M_{0}$ such that $M2^{-M\delta }+M^{-\delta }<\epsilon $, we have 
$$
|\langle P_{2M}^{\perp}(T(f)),g\rangle |\lesssim \epsilon
$$
with the implicit constant depending on $\delta >0$ and the wavelets basis.

By equation (\ref{orthopro0}), we have that  
\begin{equation}\label{orthopro}
\langle P_{2M}^{\perp}(T(f)),g\rangle 
=\sum_{I\in {\cal D}}\sum_{J\in {\cal D}_{2M}^{c}}\langle f,\psi_{I}\rangle \langle g, \psi_{J}\rangle 
\langle T(\psi_I),\psi_J\rangle 
\end{equation}
where $(\psi_{I})_{I}$ is the wavelet basis given in the remarks preceding the statement of the theorem. Note that there exist a fixed order $N$ and constant $C$, such that each $\psi_{I}$ is adapted to $I$ with constant $C$ and order $N$. Furthermore, we are free to choose this $N$ so as to ensure that Proposition \ref{symmetricspecialcancellation} applies.

From now on, we work to obtain bounds of (\ref{orthopro}) when the
sum runs over finite families of dyadic intervals in such way that the bounds are independent of the chosen families.

In view of Proposition \ref{symmetricspecialcancellation},
we parametrize the sums according to eccentricities and relative distances of the intervals:
\begin{equation}\label{2Mpara}
\langle P_{2M}^{\perp}(T(f)),g\rangle=\sum_{e\in \mathbb Z}\sum_{n\in \mathbb N}
\sum_{\tiny \begin{array}{c}J{\in \cal D}_{2M}^{c}\end{array}}
\sum_{I\in J_{e,n}}
\langle f,\psi_{I}\rangle \langle g, \psi_{J}\rangle \langle T(\psi_I),\psi_J\rangle 
\end{equation}
where for fixed eccentricity $e\in \mathbb Z$, relative distance $n\in \mathbb N$ and every given interval $J$,
we define the family
$$
J_{e,n}=\{ I\in {\mathcal D}:|I|=2^{e}|J|, n\leq \rdist(I,J)< n+1 \}
$$
Notice that by symmetry the family $\{ (I,J): I\in J_{e,n}\}$ can be also parameterized as 
$\{ (I,J): J\in I_{-e,n}\}$. 

By Proposition \ref{symmetricspecialcancellation}, with $\delta$ denoting $\delta'$, we have for $\epsilon>0$, $M_{T,\epsilon}\in \mathbb N$
$$
|\langle T(\psi_I),\psi_J\rangle |
\lesssim 2^{-|e|(\frac{1}{2}+\delta )}n^{-(1+\delta )}\big( F(I_{1},\ldots ,I_{6}; M_{T,\epsilon })+\epsilon \big)
%L(|I_{i}|)S(|I_{i}|)D(\rdist(I_{i},K_{M_{1}}))
$$
where $I_{1}=I$, $I_{2}=J$, $I_{3}=\langle I,J\rangle$ , $I_{4}=\lambda_{1}\tilde{K}_{M}$, 
$I_{5}=\lambda_{2}\tilde{K}_{M}$ and  $I_{6}=\lambda_{2}K_{m}$ with parameters $\lambda_{1},\lambda_{2}\geq 1$ explicitly 
provided by the mentioned corollary.  
To simplify notation, we will simply write $F(I_{i})$. 
We also note that the implicit constant might depend on $\delta $ and the wavelet basis, but it is universal otherwise. 

%The maximum in the right hand side of this expression equals 
%$$
%\max(L(|I|)S(\diam(I,K_{M})),L(|J|)S(\diam(J,K_{M})),L(n|I|)S(c(I)),L(n|J|)S(c(J)))
%$$
%$$
%=\max(L(2^{e}|J|)S(2^{e}|J|),L(|J|)S(|J|),L(n2^{e}|J|)S(2^{e}n|J|),L(n|J|)S(n|J|))
%$$
Therefore, we can bound \eqref{2Mpara} as follows:
\begin{align}\label{moduloin2}
|\langle P_{2M}^{\perp}(T(f)),g\rangle |&\lesssim
\sum_{e\in \mathbb Z}\sum_{n\in \mathbb N}2^{-|e|(\frac{1}{2}+\delta )}n^{-(1+\delta )}
\\
%\hskip90pt 
\nonumber
&\sum_{\tiny \begin{array}{c}J{\in \cal D}_{2M}^{c}\end{array}}\sum_{I\in J_{e,n}}
\big( F(I_{i})+\epsilon \big)
%\hskip8pt
|\langle f,\psi_{I}\rangle | |\langle g,\psi_{J}\rangle |
\end{align}
Now, in order to estimate this last quantity, we divide the study into six cases:
\vskip5pt
\noindent
\hspace{-.5cm}
\begin{minipage}{7cm}
\begin{enumerate}
\item $I_{i}\notin {\cal D}_{M}$ for all $i=1,\ldots ,6$
%$I,\langle I\cup J\rangle ,\lambda_{1}\tilde{K}_{max}, \lambda_{2}\tilde{K}_{max}, \lambda_{2}K_{min} \notin {\cal D}_{M}$
\item $I\in {\cal D}_{M}$
\item $\langle I, J\rangle \in {\cal I}_{M}$
%\item $I\notin {\cal D}_{M}$ but $\lambda_{1}\tilde{I}\in {\cal D}_{M}$
%\item $I\notin {\cal D}_{M}$ but $\lambda_{2}\tilde{I}\in {\cal D}_{M}$
%\item $I\notin {\cal D}_{M}$ but $\lambda_{2}J \in {\cal D}_{M}$
\end{enumerate}
\end{minipage}
\hspace{-.5cm}
\begin{minipage}{7cm}
\begin{enumerate}
\item[(4)] $I\notin {\cal D}_{M}$ but $\lambda_{1}\tilde{K}_{max}\in {\cal I}_{M}$
\item[(5)] $I\notin {\cal D}_{M}$ but $\lambda_{2}\tilde{K}_{max}\in {\cal I}_{M}$
\item[(6)] $I\notin {\cal D}_{M}$ but $\lambda_{2}K_{min} \in {\cal I}_{M}$
\end{enumerate}
\end{minipage}

\vskip10pt
1) In the first case, we have 
$F(I_{i})
%L(|I_{i}|)S(|I_{i}|)D(\rdist(I_{i},K_{M_{1}}))
<\epsilon $. Then, by Cauchy inequality, we bound the second line of (\ref{moduloin2}) 
corresponding to this case by
$$
2\epsilon 
%\sum_{e\in \mathbb Z}\sum_{n\in \mathbb N}2^{-|e|(1/2+\delta  )}n^{-(1+\delta )}
\Big( \sum_{I\in {\mathcal D}}\hspace{-.1cm} \sum_{\tiny \begin{array}{c}J\! \in \! I_{-e,n}\end{array}}|\langle f,\psi_{I}\rangle |^2\Big)^{\frac{1}{2}}
\Big( \sum_{J\in {\mathcal D}}\hspace{-.1cm} 
\sum_{\tiny \begin{array}{c}I\! \in \! J_{e,n}\end{array}}|\langle g,\psi_{J}\rangle |^2\Big)^{\frac{1}{2}}
$$

Now, for fixed $J$ and $n\in \mathbb N$ there are $2^{-\min(e,0)}$ dyadic intervals $I$ such that
$|I|=2^{e}|J|$ and  $n\leq \rdist(I, J)<n+1$. This implies that the cardinality of
$J_{e,n}$ is $2^{-\min(e,0)}$
and so, the cardinality of $I_{-e,n}$ is $2^{-\min(-e,0)}=2^{\max(e,0)}$.
Therefore, previous expression coincides with
\begin{eqnarray*}
\epsilon\Big( 2^{\max(e,0)}\sum_{I\in {\mathcal D}}|\langle f,\psi_{I}\rangle |^2\Big)^{\frac{1}{2}}
\Big( 2^{-\min(e,0)}\sum_{J\in {\mathcal D}}|\langle g,\psi_{J}\rangle |^2\Big)^{\frac{1}{2}}
\lesssim \epsilon 2^{\frac{|e|}{2}}\| f\|_2\| g\|_2
\end{eqnarray*}
since $2^{\max(e,0)}2^{-\min(e,0)}=2^{|e|}$. 
Therefore, the correspoding the terms in (\ref{moduloin2}) can be bounded by a constant times
\begin{equation*}
%\epsilon &\sum_{e\in \mathbb Z}2^{-|e|(1/2+\delta )}2^{\max(e,0)/2}2^{-\min(e,0)/2}
%\sum_{n\in \mathbb N}n^{-(1+\delta )}\| f\|_2\| g\|_2
%\\
\epsilon \Big( \sum_{e\in \mathbb Z}2^{-|e|\delta }\sum_{n\in \mathbb N}n^{-(1+\delta )}\Big)\| f\|_2\| g\|_2
\lesssim \epsilon \| f\|_2\| g\|_2
\end{equation*}
This finishes the first case.

%Since $J\notin {\cal D}_{2M}$, we have $J\notin {\cal D}_{M}$ and so 
%we just need to consider the cases when $I\in {\cal D}_{M}$ or $\langle I\cup J\rangle \in {\cal D}_{M}$.
%We start with the first one.

\vskip10pt 
In the remaining cases, we will not use the smallness of $F$. Instead, we will employ  
%the fact that it is bounded 
the geometrical features of the intervals $I$ and $J$, which make either their eccentricity or their relative distance very extreme. 
%We recall that the intervals  $I$ and $J$ in the sum (\ref{moduloin2}) satisfy $|I|=2^{e}|J|$ and 
%$n\leq \rdist(I,J)<n+1$. 

2) We deal with the case
$I\in {\cal D}_{M}$, that is,  
$2^{-M}\leq |I|\leq 2^{M}$ and $\rdist(I,\mathbb B_{2^{M}})\leq M$.
Notice that, since $F$ is bounded, we can estimate $F(I_{i})+\epsilon \lesssim 1$. 
%and the second term can be dealt in the same way as we did in the previous case. Therefore, we will assume $\max_{i}F(I_{i})\leq 1$.  

Since $J\in {\mathcal D}_{2M}^{c}$, we separate into the three usual cases: $|J|>2^{2M}$, $|J|<2^{-2M}$
and $2^{-2M}\leq |J|\leq 2^{2M}$ with $\rdist(J,\mathbb B_{2^{2M}})>2M$.

2.1) When $|J|>2^{2M}$, 
%we first notice that whenever $e\geq 0$ we have that $|I|\geq |J|\geq 2^{2M}$,  
%%and so $I\notin {\cal D}_{M}$ 
%which is contradictory with $I\in {\cal D}_{M}$. On the other hand, whenever $e\leq 0$ 
%%since $I\in {\cal D}_{M}$  
we have 
%$2^{-M}\leq |I|\leq 2^{M}$ and $\rdist(I,\mathbb B_{2^{M}})\leq M$.
%By using that $|I|=2^{e}|J|$, the first condition leads to 
%$-M-\log|J|\leq e\leq M-\log|J|$, that is, 
%$|e-\log|J|^{-1}|\leq M$.
%Then 
$2^{e}|J|=|I|\leq 2^{M}$ and so, $2^{e}\leq 2^{M}|J|^{-1}\leq 2^{-M}$, that is, 
$e\leq -M$. 
%Now, since $F_{\epsilon}(I_{i})\leq 1$, 
Then, we bound the terms in (\ref{moduloin2}) corresponding to this case by 
$$
\sum_{\tiny \begin{array}{c}e\leq -M\end{array}}
\sum_{n\geq 1}
2^{-|e|(1/2+\delta )}n^{-(1+\delta )}
\sum_{J\in {\cal D}_{2M}^{c}} \sum_{I\in J_{e,n}}|\langle f,\psi_{I}\rangle ||\langle g,\psi_{J}\rangle |
$$
%$$
%\lesssim \sum_{\tiny \begin{array}{c}e\leq -M\end{array}}
%\sum_{n\geq 1}
%2^{-|e|(1/2+\delta )}n^{-(1+\delta )}2^{|e|/2}\| f\|_2\| g\|_2
%$$
$$
\lesssim \Big(\sum_{\tiny \begin{array}{c}e\leq -M\end{array}}
2^{-|e|\delta }\sum_{n\geq 1}
n^{-(1+\delta )}\Big)\| f\|_2\| g\|_2
%$$
%$$
\lesssim 2^{-M\delta }\| f\|_2\| g\|_{2}<\epsilon \| f\|_2\| g\|_2
$$
by the choice of $M$.

2.2) The case $|J|<2^{-2M}$ is completely symmetrical and amounts to changing $e\leq -M$ by $e\geq M$
in the previous case. 

2.3) In the case $2^{-2M}\leq |J|\leq 2^{2M}$  and $\rdist(J,\mathbb B_{2^{2M}})\geq 2M$, we parametrize
by size $|J|=2^{k}$ with $-2M\leq k\leq 2M$. Moreover, by the remarks after Definition 
\ref{Imdef}, we have $|c(J)|\geq (2M-1)2^{2M}$,
%and so 
%$|c(J)|=m|J|$ with $m\geq 2^{6M}|J|^{-1}$. 
and, since $I\in {\mathcal D_{M}}$, we also get
%$$
%M\geq \rdist(I,\mathbb B_{2^{M}})=2^{-M}\diam (I\cup \mathbb B_{2^{M}})
%$$
%$$
%=2^{-M}(2^{M-1}+|I|/2+|c(I)|)
%\geq 2^{-M}(2^{M-1}+|c(I)|)
%$$
%and then, 
$|c(I)|\leq (M-1/2)2^{M}$. This implies 
$$
|c(I)-c(J)|\geq |c(J)|-|c(I)|
%\geq (2M-1)2^{2M}-(M-1/2)2^{M}
\geq M2^{2M}
$$ 
and, since $\max(|I|,|J|)\leq 2^{2M}$, we get
%for $e\leq 0$ we have
$$
n+1> \rdist(I,J)
%=\max(|I|,|J|)^{-1}\diam(I\cup J)
\geq \frac{|c(I)-c(J)|}{\max(|I|,|J|)}
%$$
%$$
%\geq 2^{-2M}M2^{2M}
\geq M
$$
%$$
%n\geq M2^{2M}|J|^{-1}\geq M
%$$
%while for $e\geq 0$ we have
%$$
%n+1>\rdist(I,J)=|I|^{-1}\diam(I\cup J)\geq |I|^{-1}|c(I)-c(J)|
%$$
%$$
%\geq 2^{-M}M2^{2M}>M
%$$
%$$
%n\geq M2^{2M}|I|^{-1}\geq M2^{2M}2^{-M}\geq M2^{M}>M
%$$

This way, we can bound the relevant terms in (\ref{moduloin2}) by 
$$
\sum_{\tiny \begin{array}{c}e\in \mathbb Z\end{array}}
\sum_{n\geq M-1}
2^{-|e|(1/2+\delta )}n^{-(1+\delta )}
\sum_{J\in {\cal D}_{2M}^{c}} \sum_{I\in J_{e,n}}|\langle f,\psi_{I}\rangle ||\langle g,\psi_{J}\rangle |
$$
$$
\lesssim \Big(\sum_{\tiny \begin{array}{c}e\in \mathbb Z\end{array}}
2^{-|e|\delta }\sum_{n\geq M-1}
n^{-(1+\delta )}\Big)\| f\|_2\| g\|_2
\lesssim M^{-\delta }\| f\|_2\| g\|_2<\epsilon \| f\|_2\| g\|_2
$$
again by the choice of $M$.

3) Now, we deal with the case when $\langle I, J\rangle \in {\mathcal I}_{M}$, that is, when 
$2^{-M}\leq |\langle I, J\rangle |\leq 2^{M}$ and $\rdist (\langle I,J\rangle ,\mathbb B_{2^{M}})\leq M$. The last inequality implies that 
%$2^{-M}\leq \diam(I\cup J)\leq 2^{M}$ and 
$|c(\langle I, J\rangle )|\leq M2^{M}$.

Since $|c(\langle I, J\rangle )-\frac{c(I)+c(J)}{2}|\leq \diam(I\cup J)/2=|\langle I,J\rangle |/2$, we have
\begin{equation}\label{centers}
|c(I)+c(J)|\leq 2|c(\langle I, J\rangle )|+|\langle I,J\rangle |
%\leq 2M2^{M}+|\langle I, J\rangle |
\leq 2(M+1)2^{M}
\end{equation}
Now, we divide into three cases:

3.1) When $|J|> 2^{2M}$ we have 
that $|\langle I, J\rangle|\geq |J|>2^{2M}$ implies $\langle I, J\rangle \notin {\cal D}_{M}$ and so, we do not
need to consider this case. 

3.2) When $2^{-2M}\leq |J|\leq 2^{2M}$ with $\rdist(J,\mathbb B_{2^{2M}})\geq 2M$, we have that
$|c(J)|>(2M-1)2^{2M}>M2^{M}$ . 
%Then,
%$$
%|c(I)|\geq |c(J)|-|c(I)+c(J)|>M2^{2M}-2(M+1)2^{M}>\frac{1}{2}M2^{M}
%$$
%for $M>3$. 

If $\sign \, c(I)=-\sign \, c(J)$ we get 
$$
|\langle I, J\rangle|
%=\diam(I,J)
\geq |c(I)-c(J)|=|c(I)|+|c(J)|>|c(J)|
%>\frac{1}{2}M2^{2M}+(2M-1)2^{2M}
>M2^{2M}
$$
which is contradictory with $\langle I, J\rangle \in {\mathcal I}_{M}$. 

Otherwise, if $\sign \, c(I)=\sign \, c(J)$ we have 
$$
|c(I)+c(J)|= |c(I)|+|c(J)|>M2^{2M}
$$
%Moroever, since $I\in {\cal D}_{M}$ we also have 
%$|c(J)|\leq M2^{M}$. Therefore, 
%$$
%|\langle I\cup J\rangle|=\diam(I,J)\geq |c(I)-c(I)|
%$$
%$$
%\geq |c(J)|-|c(I)|
%>(2M-1)2^{2M}-2^{M}>M2^{2M}
%$$
which is also contradictory with \eqref{centers}. 
So, we do not consider this case. 

%Otherwise, if $\langle I\cup J\rangle \cap \mathbb B_{2M}= \emptyset $  then $c(I)$ and $c(J)$ have the same sign. In this case, since 
%$|c(J)|>(2M-1)2^{2M}$ we have, 
%$$
%|c(I)|\geq |c(J)|-|c(I)-c(J)|\geq |c(J)|-|\diam(I,J)|
%$$
%$$
%>(2M-1)2^{2M}-2^{M}>M2^{2M}
%$$
%and 
%$$
%|c(I)+c(J)|= |c(J)|+|c(I)|>(2M-1)2^{2M}+M2^{M}>M2^{2M}
%$$
%This way, $\rdist(\langle I\cup J\rangle,\mathbb B_{2^{M}})\geq 1+\frac{|c(I)+c(J)|}{2^{M}}\geq M2^{M}$ and so $I\notin {\cal D}_{M}$ which is a contradiction with the hypothesis. 

3.3) The only remaining case is when $|J|< 2^{-2M}$.
%the cases 
%$2^{-M}\leq |\langle I\cup J\rangle |\leq 2^{M}$ and $\rdist(\langle I\cup J\rangle ,\mathbb B_{M})\leq 1$ happen %only if $2^{-M}\leq \diam(I,J)\leq 2^{M}$ and $|c(I)+c(J)|\leq 2^{M}$. 
%Now, since $\max(|I|,|J|)=\max(1,2^{e})|I|\leq \max(1,2^{e})2^{M}$
When $e\leq 0$, 
%the condition $n|J|\leq \diam(I,J)<(n+1)|J|$ implies
%$$
%2^{-M}\leq |\langle I\cup J\rangle |\leq n|J|
%$$
%and so, 
$$
n+1> \rdist(I,J)=|J|^{-1}\diam(I\cup J)\geq 2^{2M}2^{-M}=2^{M}
$$
Meanwhile, when $e\geq 0$, 
%the conditions $|I|=2^{e}|J|$ and $n|I|\leq \diam(I,J)<(n+1)|I|$ imply
%$$
%2^{-M}\leq |\langle I\cup J\rangle |\leq n|I|=2^{e}n|J|
%$$
%and so, 
$$
n+1> |I|^{-1}\diam(I\cup J)=2^{-e}|J|^{-1}|\langle I\cup J\rangle |\geq 2^{-e}2^{2M}2^{-M}=2^{M-e}
$$
%$$
%\max((n-1)|I|,2^{-M})\leq \diam(I,J)\leq \min(n|I|,2^{M})
%$$
%which is possible only if $(n-1)|I|\leq 2^{M}$ and $2^{-M}\leq n|I|$, that is, 
%$2^{-M}\leq n|I|\leq 2^{M}+|I|$ and so, 
%$$
%\frac{2^{-M}}{|J|}\leq 2^{e}n\leq \frac{2^{M}}{|J|}+2^{e}
%$$
%$$
%\max(n-1,2^{-M})\leq (2^{e}+1)|J|/2+|c(I)-c(J)|\leq \min(n,2^{M})
%$$ 
%$$
%(2^{e}+1)|J|/2+(n-1)|I|\leq 2^{M} \hskip40pt 2^{-M}\leq (2^{e}+1)|J|/2+n|I|
%$$ 
%that is 
%$$
%(2^{e}+1)/2+2^{e}(n-1)\leq \frac{2^{M}}{|J|}
%\hskip40pt \frac{2^{-M}}{|J|}\leq (2^{e}+1)/2+n2^{e}
%$$ 
%and so
%$$
%2^{e}(n-1/2)\leq \frac{2^{M}}{|J|}-1/2\leq \frac{2^{M}}{|J|}
%\hskip40pt \frac{2^{-M}}{|J|}-1/2\leq 2^{e}(n+1/2)
%$$ 

Therefore, we bound the relevant part of  (\ref{moduloin2}) by a constant times
$$
\sum_{\tiny \begin{array}{c}e\leq 0\end{array}}
\sum_{n\geq 2^{M}-1}
2^{-|e|(\frac{1}{2}+\delta )}n^{-(1+\delta )}
\sum_{J\in {\cal D}_{2M}^{c}} \sum_{I\in J_{e,n}}|\langle f,\psi_{I}\rangle ||\langle g,\psi_{J}\rangle |
$$
$$
+\sum_{\tiny \begin{array}{c}e\geq 0\end{array}}
\sum_{n\geq \max(2^{M-e}-1,1)}
2^{-|e|(\frac{1}{2}+\delta )}n^{-(1+\delta )}
\sum_{J\in {\cal D}_{2M}^{c}} \sum_{I\in J_{e,n}}|\langle f,\psi_{I}\rangle ||\langle g,\psi_{J}\rangle |
$$
%$$
%\leq \Big(\sum_{\tiny \begin{array}{c}0\leq e\leq M\end{array}}
%2^{-|e|\delta }\sum_{n\geq 2^{M-e}}
%n^{-(1+\delta )}
%+\sum_{\tiny \begin{array}{c}M\leq e\end{array}}
%2^{-|e|\delta }\sum_{n\geq 1}
%n^{-(1+\delta )}\Big)\| f\|_2\| g\|_2
%$$
$$
\leq \Big(\sum_{\tiny \begin{array}{c}e\leq 0\end{array}}
2^{-|e|\delta }\sum_{n\geq 2^{M-1}}
n^{-(1+\delta )}\Big)\| f\|_2\| g\|_2
$$
$$
+ \Big(\hspace{-.5cm} \sum_{\tiny \begin{array}{c}0\leq e\leq M-1\end{array}}
\hspace{-.5cm}2^{-|e|\delta }\sum_{n\geq 2^{M-e}-1}
n^{-(1+\delta )}
+\hspace{-.3cm}\sum_{\tiny \begin{array}{c}M\leq e\end{array}}
2^{-|e|\delta }\sum_{n\geq 1}
n^{-(1+\delta )}\Big)\| f\|_2\| g\|_2
$$
$$
\lesssim \Big( 2^{-M\delta }
+\sum_{\tiny \begin{array}{c}0\leq e\leq M-1\end{array}}
2^{-e\delta }2^{-(M-e)\delta }
+\sum_{\tiny \begin{array}{c}M\leq e\end{array}}
2^{-e\delta }\Big)\| f\|_2\| g\|_2
$$
$$
\lesssim 2^{-M\delta }\| f\|_2\| g\|_2
+(M2^{-M\delta }
+2^{-M\delta })\| f\|_2\| g\|_2
<\epsilon \| f\|_2\| g\|_2
$$

6) We deal now with the case $\lambda_{2}K_{min}\in {\cal I}_{M}$.
% being analogous the other two remaining cases.

6.1) When $|J|> 2^{2M}$, we have two cases. Whenever $e>0$
then, $K_{min}=J$ and so, 
$|\lambda_{2}J|\geq |J|\geq 2^{2M}$ which is contradictory with $\lambda_{2}J\in {\cal I}_{M}$.

On the other hand, when $e\leq 0$ we have $K_{min}=I$ and  $|I|\leq |\lambda_{2}I|\leq 2^{M}$. 
Then, $2^{e}=|I|/|J|\leq 2^{-M}$ and so, $e\leq -M$. Therefore, the arguments of the case 2.1) show that the corresponding part 
of (\ref{moduloin2}) can be bounded by $\epsilon \| f\|_2\| g\|_2$.

6.2) When $2^{-2M}\leq |J|\leq 2^{2M}$ with $\rdist(J,\mathbb B_{2^{2M}})\geq 2M$, we have  
$|c(J)|>(2M-1)2^{2M}$. Now, we divide into the same two cases. 

When $e\geq 0$, we know $K_{min}=J$ and then, 
%Then, since we also have 
$2^{-M}\leq |\lambda_{2}J|\leq 2^{M}$ with $\rdist(\lambda_{2}J,\mathbb B_{2^{M}})\leq M$. This leads to  
%we deduce that
the following contradiction:
$$
M\geq \rdist(\lambda_{2}J,\mathbb B_{2^{M}})
%=2^{-M}\diam(\lambda_{2}J\cup \mathbb B_{2^{M}})
%$$
%$$
> 
%2^{-M}|c(\lambda_{2}J)|
%+|\lambda_{2}J|
2^{-M}|c(J)|
%+2^{-M})
\geq (2M-1)2^{M}
$$
On the other hand, when $e\leq0 $ we have $K_{min}=I$ and then, 
%$\rdist(\lambda_{2}I,\mathbb B_{2^{M}})\leq M$. 
$|c(I)|=|c(\lambda_{2}I)|\leq (M-1)2^{M}$. This implies $|c(I)-c(J)|> M2^{2M}$ and
$$
n+1> \rdist(I,J)
%=\max(|I|,|J|)^{-1}\diam(I\cup J)
\geq \frac{|c(I)-c(J)|}{|J|}
%$$
%$$
%\geq 2^{-2M}M2^{2M}
\geq M
$$
Then, the same arguments of the case 2.3) provide the bound $\epsilon \| f\|_2\| g\|_2$.

6.3) When $|J|< 2^{-2M}$, we study as follows. 
If $e\geq 0$, we have $K_{\min}=J$ and so, 
$|\lambda_{2}J|\geq 2^{-M}$. This implies $\lambda_{2}\geq 2^{-M}|J|^{-1}> 2^{M}$ and
$$
2^{M}<\lambda_{2}=\Big(\frac{\diam(I\cup J)}{|J|}\Big)^{\theta}
\leq \Big(\frac{|I|}{|J|}\Big)^{\theta }\rdist(I,J)^{\theta}
%<(n+1)^{\theta}\leq 
%<2^{e\theta }(n+1)^{\theta}
<2^{e\theta }(n+1)
$$
%which implies $n>2^{M/2}$. 
Meanwhile, if $e\leq 0$, we have $K_{\min}=I$ and then, $|\lambda_{2}I|\geq 2^{-M}$. We also have 
$|I|\leq |J|\leq 2^{-2M}$. 
All this implies $\lambda_{2}\geq 2^{-M}|I|^{-1}> 2^{M}$ and
$$
2^{M}<\lambda_{2}
%=(|J|^{-1}\diam(I,J))^{\theta}
= \Big(\frac{\diam(I\cup J)}{|I|}\Big)^{\theta}
\leq \Big(\frac{|J|}{|I|}\Big)^{\theta }\rdist(I,J)^{\theta}
%<(n+1)^{\theta}\leq 
<2^{-e\theta }(n+1)
$$
%$$
%2^{e}n\geq 2^{e\theta }n^{\theta }=\lambda_{2}>2^{M}
%$$

In any case we get $n>2^{-|e|\theta}2^{M}$ and, since $\theta <1$, 
previous arguments show that  the relevant part 
of (\ref{moduloin2}) can be bounded by
$$
\Big(\sum_{\tiny \begin{array}{c}e\in \mathbb Z\end{array}}
2^{-|e|\delta }\sum_{n\geq 2^{-|e|\theta}2^{M}}
n^{-(1+\delta )}\Big)\| f\|_2\| g\|_2
$$
$$
\lesssim \Big(\sum_{\tiny \begin{array}{c}e\in \mathbb Z\end{array}}
2^{-|e|\delta }2^{|e|\theta \delta }2^{-M\delta }
\Big)\| f\|_2\| g\|_2
\lesssim 2^{-M\delta }\| f\|_2\| g\|_2
\leq \epsilon \| f\|_2\| g\|_2
$$

\vskip10pt
Finally, we note that similar type of calculations are enough to deal with the other two remaining cases 4) and 5). This finishes the proof of compactness on $L^{2}(\mathbb R)$. 

\vskip20pt
Since $T$ is bounded on $L^{2}(\mathbb R)$, from the classical theory, we know that the operator $T$ is bounded on $L^{p}(\mathbb R)$ for all $1<p<\infty $. Then, since we have proved that $T$ is 
compact on $L^{2}(\mathbb R)$, we can use classical interpolation techniques to also obtain 
compactness of $T$ on $L^{p}(\mathbb R)$. 

We refer to the following theorem, whose proof,
in a more general setting, can be found in \cite{Kra} and also in \cite{Cot}.

\begin{theorem}
Let $1\leq p_{1}, r_{1}, p_{2}, r_{2} \leq \infty$ be a set of indices with $r_{1}<\infty$. Let $T$ be a given linear operator which is continuous simultaneously as a mapping from $L^{p_{1}}$ to $L^{r_{1}}$ and from $L^{p_{2}}$ to $L^{r_{2}}$. Assume in addition that $T$ is compact as a mapping from $L^{p_{1}}$ to $L^{r_{1}}$. 
Then $T$ is compact as a mapping from $L^{p}$ to $L^{r}$, where $1/p=t/p_{1}+(1-t)/p_{2}$, 
$1/r=t/r_{1}+(1-t)/r_{2}$, $0<t<1$. 
\end{theorem}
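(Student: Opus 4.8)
The statement is a classical interpolation-of-compactness result, and the plan is to reduce it to showing that $T$ carries the unit ball $B$ of $L^{p}$ into a totally bounded (equivalently, relatively compact) subset of $L^{r}$, and then to exploit the compactness of $T$ on the endpoint pair $(L^{p_{1}},L^{r_{1}})$ through a truncation argument. Since the underlying measure space is $\sigma$-finite and $r_{1}<\infty$, I would introduce, for $N\in\mathbb N$, truncation operators $\mathcal Q_{N}$ on the target side (multiply by $\chi_{[-N,N]}$, then truncate the modulus at the level $N$) and the analogous $\mathcal P_{N}$ on the source side. These satisfy $\|\mathcal Q_{N}\|\le 1$ on every $L^{q}$, converge strongly to the identity on every $L^{q}$ with $q<\infty$, and, being pointwise $1$-Lipschitz with range of finite measure, map relatively compact subsets of $L^{r_{1}}$ into relatively compact subsets of $L^{r}$.

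The core is a level-set decomposition of the source. For $0<A<1<B$ and $f\in B$ write $f=m_{A,B}+g_{A,B}$ with $m_{A,B}=f\,\chi_{\{A<|f|\le B\}}$ and $g_{A,B}=f-m_{A,B}$. Because $\{A<|f|\le B\}$ has measure at most $A^{-p}\|f\|_{p}^{p}\le A^{-p}$, the middle piece $m_{A,B}$ is simultaneously bounded in $L^{p_{1}}$ and in $L^{p_{2}}$, with norms depending only on $A$ and $B$; hence, by compactness of $T\colon L^{p_{1}}\to L^{r_{1}}$ and boundedness of $T\colon L^{p_{2}}\to L^{r_{2}}$, the family $\{Tm_{A,B}:f\in B\}$ is relatively compact in $L^{r_{1}}$ and bounded in $L^{r_{2}}$, and therefore, using $\|h\|_{r}\le\|h\|_{r_{1}}^{t}\|h\|_{r_{2}}^{1-t}$ with $1/r=t/r_{1}+(1-t)/r_{2}$, relatively compact in $L^{r}$. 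The large-value part of $g_{A,B}$ has small $L^{p_{1}}$ norm as $B\to\infty$ and the small-value part has small $L^{p_{2}}$ norm as $A\to 0$; combining these endpoint smallness bounds with the uniform $L^{p}$ bound and the $L^{p}\to L^{r}$ boundedness of $T$ (interpolating the $L^{r}$ norm against the endpoint where smallness is available), one shows $\|Tg_{A,B}\|_{r}$ is uniformly small. Thus $T(B)$ lies within $\epsilon$ of a relatively compact set, so it is totally bounded and $T$ is compact from $L^{p}$ to $L^{r}$. In the situation of the paper, one applies this with $(p_{1},r_{1})=(2,2)$, where $T$ is compact by Theorem \ref{L2bounds}, and with $(p_{2},r_{2})=(q,q)$ for $q$ close to $1$ or to $\infty$, where $T$ is only known to be bounded, to recover compactness on every $L^{p}$, $1<p<\infty$.

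The step I expect to be the main obstacle is precisely the uniform smallness of $\|Tg_{A,B}\|_{r}$ over $f\in B$. A bounded subset of $L^{p}(\mathbb R)$ is not relatively compact -- it need be neither uniformly tight nor uniformly integrable -- so $g_{A,B}$ is \emph{not} uniformly small in $L^{p}$, and the crude bound $\|Tg_{A,B}\|_{r}\le\|T\|_{p\to r}\|g_{A,B}\|_{p}$ is of no use. One is forced to decompose $g_{A,B}$ more finely: a spatial cutoff on the low-amplitude part makes its compactly supported portion land with bounded norm in $L^{p_{1}}$ and with small norm in $L^{p_{2}}$, so that $T$ of it is small in $L^{r}$ by the same interpolation of norms, while the genuinely residual pieces must be absorbed by a diagonal/subsequence argument organized along the decomposition levels, in which the endpoint compactness of $T$ is what controls the parts that are merely bounded, rather than small, in $L^{r}$. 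This is the point at which the estimates of \cite{Kra} and \cite{Cot} enter. An equally valid alternative is to identify $L^{p}=(L^{p_{1}},L^{p_{2}})_{t,p}$ and $L^{r}=(L^{r_{1}},L^{r_{2}})_{t,r}$ as real interpolation spaces, apply the theorem that real interpolation preserves compactness of an endpoint operator, and then reconcile the mismatched Lorentz secondary indices by one further interpolation.
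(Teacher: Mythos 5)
You should first note that the paper does not prove this theorem at all: it is quoted from the literature, with the proof deferred to Krasnoselskii \cite{Kra} and Cotlar \cite{Cot}. Measured against what a complete proof requires, your sketch has a genuine gap, and it sits exactly where you place it. The middle-piece argument is fine: $m_{A,B}$ is bounded in $L^{p_1}\cap L^{p_2}$ uniformly in $f$, so $\{Tm_{A,B}\}$ is relatively compact in $L^{r_1}$, bounded in $L^{r_2}$, hence relatively compact in $L^{r}$ by $\|h\|_r\le\|h\|_{r_1}^t\|h\|_{r_2}^{1-t}$. But the claimed uniform smallness of $\|Tg_{A,B}\|_r$ does not follow from anything you state. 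For the large-value piece $h=f\chi_{\{|f|>B\}}$ one only controls $\|h\|_q$ for $q\le p$ (and even the assignment ``large values small in $L^{p_1}$, small values small in $L^{p_2}$'' presupposes $p_1<p<p_2$, which fails in half of the paper's own application, where $p<2=p_1$); consequently $Th$ is small in $L^{r_1}$ (or in some $L^{\tilde r}$ on the $r_1$-side of $r$) and merely bounded in $L^{r}$ itself, while no bound at any exponent beyond $r$ on the target side is available, because $\|h\|_q$ for $q>p$ is uncontrolled on the unit ball of $L^p$ (tall spikes). Smallness at one exponent plus boundedness at $r$ cannot be interpolated into smallness at $r$. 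The diagonal/subsequence fix does not evade this: after diagonalizing over levels $(A_k,B_k)$ you still need $\limsup_n\|Tg^{(k)}_n\|_r\to0$ as $k\to\infty$, which is precisely the missing estimate; and since such a uniform tail bound is false for a general bounded operator (take $T=\mathrm{Id}$), it must be extracted from the endpoint compactness by a mechanism your sketch does not supply — at which point you are deferring to \cite{Kra}, \cite{Cot}, exactly as the paper does.

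For completeness, the classical argument closes this by a different device, which is worth knowing because it bypasses the tail problem entirely. Since $r_1<\infty$ and $T(B_{L^{p_1}})$ is totally bounded in $L^{r_1}$, one can choose finitely many disjoint sets $E_1,\dots,E_m$ of finite measure (atoms generated by a simple-function $\varepsilon$-net) and the averaging projection $Qg=\sum_i |E_i|^{-1}\bigl(\int_{E_i}g\bigr)\chi_{E_i}$, so that $\|T-QT\|_{L^{p_1}\to L^{r_1}}\le 2\varepsilon$; since $Q$ is simultaneously a contraction on every $L^q$, $\|T-QT\|_{L^{p_2}\to L^{r_2}}\le 2\|T\|_{L^{p_2}\to L^{r_2}}$, and Riesz--Thorin applied to $T-QT$ gives $\|T-QT\|_{L^{p}\to L^{r}}\le (2\varepsilon)^t\,(2\|T\|_{L^{p_2}\to L^{r_2}})^{1-t}$. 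Thus $T$ is an operator-norm limit of the finite-rank operators $QT$ and is compact from $L^p$ to $L^r$. Your alternative via real interpolation of compactness is legitimate in principle, but it invokes the one-sided compactness theorem for the real method (Lions--Peetre/Persson/Cwikel) together with the Lorentz-index reconciliation you mention (which disappears only in the diagonal case $p_i=r_i$ relevant to the paper), i.e.\ a result at least as deep as the one to be proved, so it too amounts to a citation rather than a proof.
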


\section{Compact Paraproducts}

To prove compactness on $L^{p}(\mathbb R)$ in
the general case, that is, without the special cancellation conditions,  
%that is, when the special cancellations $T(1)=0$ are not assumed but 
%the more general ones, $T(1)\in \CMO(\mathbb R)$. 
we follow the classical scheme.
% as in the proof of the classical $T(1)$ theorem. 
When $b_1=T(1)$ and $b_2=T^{*}(1)$ are arbitrary functions in $\CMO(\mathbb R)$, we construct compact paraproducts $T_{b}$ with compact 
Calder\'on-Zymund kernels such that
$T_{b_1}(1)=b_1$, $T_{b_1}^{*}(1)=0$.
%and 
%$T_{b_2}^{*}(1)=0$, $T_{b_2}(1)=b_2$. 
Then, the operator 
$$
\tilde{T}=T-T_{b_1}-T_{b_2}^{*}
$$ 
satisfies the hypotheses of Theorem \ref{L2bounds} and so, it is compact on $L^{p}(\mathbb R)$. Furthermore, since the operators  $T_{b_1}$ and $T_{b_2}^{*}$ are compact by construction, we deduce that   $T$ is also compact on $L^{p}(\mathbb R)$. 

%Moreover, as in the classical case, the construction of the 
%this is done by means of  
%of appropriate paraproducts. 
We remark that, as we will later see in full detail, the appropriate paraproducts are exactly the same ones as in the classical setting, with the only difference that the parameter functions $b_{i}$ belong to the space $\CMO(\mathbb R)$ instead of 
$\BMO(\mathbb R)$. 

\begin{proposition}\label{paraproducts1}
Given a function $b$ in $\CMO(\mathbb R)$, there exists a linear operator $T_b$ associated with a compact Calder\'on-Zygmund kernel 
such that $T_b$  and $T_b^{*}$ are compact on $L^{p}(\mathbb R)$ for all $1<p<\infty $ and it satisfies
$
\langle T_b(1),g\rangle =\langle b,g\rangle
$
and
$
\langle T_b(f),1\rangle =0
$, 
for all $f,g\in {\mathcal S}(\mathbb R)$.
\end{proposition}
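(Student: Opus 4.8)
The plan is to construct $T_b$ explicitly as the classical paraproduct associated with $b$ and a wavelet system, and then verify that membership of $b$ in $\CMO(\mathbb R)$ (rather than merely $\BMO(\mathbb R)$) upgrades the classical boundedness statements to compactness. Concretely, fix a smooth, compactly supported, $L^2$-normalized wavelet basis $(\psi_I)_{I\in\mathcal D}$ of the relevant spaces together with the associated smooth father/averaging bumps $(\phi_I)_{I\in\mathcal D}$ with $\phi_I$ an $L^\infty$-normalized bump adapted to $I$ satisfying $\int\phi_I=1$ and $\sum_{|I|=2^{-j}}\langle f,\psi_I\rangle\psi_I$ telescoping in the usual way. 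Then define
$$
T_b(f)=\sum_{I\in\mathcal D}\langle b,\psi_I\rangle\,\langle f,\phi_I\rangle\,\psi_I,
$$
so that formally $T_b(1)=\sum_I\langle b,\psi_I\rangle\psi_I=b$, giving $\langle T_b(1),g\rangle=\langle b,g\rangle$, while $\langle T_b(f),1\rangle=\sum_I\langle b,\psi_I\rangle\langle f,\phi_I\rangle\langle\psi_I,1\rangle=0$ by the mean zero of $\psi_I$; this gives $T_b^*(1)=0$. The kernel is $K_b(t,x)=\sum_I\langle b,\psi_I\rangle\,\psi_I(t)\,\phi_I(x)$ off the diagonal.

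The first substantive step is to show $K_b$ is a compact Calder\'on-Zygmund kernel in the sense of Definition \ref{prodCZ}. For the smoothness bound one estimates $|K_b(t,x)-K_b(t',x')|$ by summing, over scales and positions, the contributions of those $I$ whose support meets the segment between $(t,x)$ and $(t',x')$; the classical argument produces the factor $(|t-t'|+|x-x'|)^\delta/|t-x|^{1+\delta}$ together with a sum of $|\langle b,\psi_I\rangle||I|^{-1/2}$ restricted to intervals $I$ with $|I|\approx|t-x|$ and $c(I)$ within $O(|t-x|)$ of $(t+x)/2$. Here is where $\CMO$ enters: by part (ii) of Lemma \ref{lem:cmochar}, $\sup_{\Omega}\big(|\Omega|^{-1}\sum_{I\subset\Omega,\,I\notin\mathcal D_M}|\langle b,\psi_I\rangle|^2\big)^{1/2}\to 0$ as $M\to\infty$; applying Cauchy-Schwarz over the $O(1)$ many relevant intervals at each scale shows that the localized Carleson sum tends to zero when $|t-x|\to\infty$, when $|t-x|\to 0$, and when $|t+x|\to\infty$ with $|t-x|$ bounded, which is exactly the decay encoded by admissible functions $L,S,D$. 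So one reads off admissible functions from the Carleson modulus of $b$. One also checks $K_b$ is bounded on compact subsets of $\mathbb R^2\setminus\Delta$ and that the integral representation of Definition \ref{intrep} holds for test functions with disjoint supports, which is immediate from the explicit kernel.

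The second step is boundedness and compactness of $T_b$ on $L^p(\mathbb R)$. Boundedness is the classical paraproduct estimate: $\|T_b f\|_p\lesssim\|b\|_{\BMO}\|f\|_p$, proved via the square function $\big(\sum_I|\langle b,\psi_I\rangle|^2|\langle f,\phi_I\rangle|^2|I|^{-1}\chi_I\big)^{1/2}$ and the Carleson embedding theorem. For compactness I would invoke Theorem \ref{charofcompact}: it suffices to prove $\|P_M^\perp\circ T_b\|_{p\to p}\to 0$. Since $T_b$ is itself diagonal in the wavelet basis in the sense that $T_b f=\sum_I\langle b,\psi_I\rangle\langle f,\phi_I\rangle\psi_I$, the composition $P_M^\perp\circ T_b$ is the same sum restricted to $I\in\mathcal D_M^c$, i.e. the paraproduct with symbol $P_M^\perp b$. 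Repeating the square-function estimate with $b$ replaced by $P_M^\perp b$ yields $\|P_M^\perp\circ T_b\|_{p\to p}\lesssim\|P_M^\perp b\|_{\mathrm{Carl}}$, the Carleson modulus over $\mathcal D_M^c$, which tends to $0$ by Lemma \ref{lem:cmochar}(ii) since $b\in\CMO$. The same argument applied to the transpose paraproduct $T_b^*(f)=\sum_I\langle b,\psi_I\rangle\langle f,\psi_I\rangle\phi_I$ gives compactness of $T_b^*$ on $L^p$. Interpolation is not needed here because the estimate works directly for every $p\in(1,\infty)$, but one could also run it at $p=2$ and interpolate as in Theorem \ref{L2bounds}.

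The main obstacle is the quantitative passage from the $\CMO$ condition on $b$ to the admissible-function decay of the kernel and to the operator-norm decay $\|P_M^\perp\circ T_b\|\to 0$: one must arrange the bookkeeping so that the Carleson sums appearing in the kernel smoothness estimate and in the square-function bound are dominated by the very quantity that Lemma \ref{lem:cmochar}(ii) forces to vanish, and so that the resulting $L,S,D$ are genuinely admissible (in particular bounded and monotone after the adjustments of Section \ref{kernelandadmissible}). Once that correspondence between the Carleson modulus of $b$ over $\mathcal D_M^c$ and the admissible functions is set up cleanly, the remaining verifications are the standard paraproduct computations.
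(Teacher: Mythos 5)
Your construction is the same as the paper's: the paraproduct $T_b(f)=\sum_I\langle b,\psi_I\rangle\langle f,\phi_I\rangle\psi_I$, the formal identities $T_b(1)=b$, $T_b^*(1)=0$, and, for compactness of the operator, the key identity $P_M^\perp\circ T_b=T_{P_M^\perp(b)}$ followed by the classical paraproduct bound with symbol $P_M^\perp(b)$, whose $\BMO$ (equivalently Carleson) norm tends to zero because $b\in\CMO$. That part of your argument is correct and essentially identical to the paper's (the paper runs it at $p=2$ and interpolates, you note the square-function estimate works for all $p$ directly; either is fine). A cosmetic point: you call $\phi_I$ an $L^\infty$-normalized bump with $\int\phi_I=1$; with integral one it is $L^1$-normalized ($\|\phi_I\|_\infty\sim|I|^{-1}$), and this normalization is what makes the homogeneity in the kernel estimates come out right.

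There is, however, a genuine gap in your verification that the kernel is a \emph{compact} Calder\'on--Zygmund kernel, specifically in the regime $|t-x|\to 0$. Since $\psi_I$ and $\phi_I$ are supported in $I$, the intervals contributing to $K(t,x)-K(t',x)$ are exactly the dyadic ancestors $I\supset I_{t,x}$, i.e.\ all scales $|I|\gtrsim|t-x|$. When $|t-x|\to\infty$, or when $|t+x|\to\infty$ with $|t-x|$ bounded, every such ancestor lies in ${\mathcal D}_M^c$, and then your Cauchy--Schwarz/Carleson argument (equivalently the paper's pairing with $P_M^\perp(b)$ in the $H^1$--$\BMO$ duality) does give the smallness. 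But when $|t-x|\to 0$ the tower of ancestors still contains the lagom intervals $2^{-M}\le|I|\le 2^M$ near the points, and for those Lemma \ref{lem:cmochar}(ii) gives no smallness at all: one only has $|\langle b,\psi_I\rangle|\lesssim\|b\|_{\BMO}|I|^{1/2}$. So the statement that ``the localized Carleson sum tends to zero when $|t-x|\to 0$'' is not true as written. The missing step (which is how the paper closes this case) is quantitative rather than qualitative: the lagom-scale ancestors contribute to the smoothness constant at most
$$
\|b\|_{\BMO}\sum_{|I_k|\ge 2^{-M}}\frac{|t-t'|}{|I_k|^{2}}\lesssim \|b\|_{\BMO}\,2^{2M}|I_{t,x}|^{2}\,\frac{|t-t'|}{|I_{t,x}|^{2}},
$$
which is $\lesssim 2^{-M}|t-t'|/|t-x|^{2}$ only under the threshold $|t-x|<2^{-3M/2}(1+\|b\|_{\BMO})^{-1/2}$; the small scales $|I|<2^{-M}$ are then handled by the $P_M^\perp(b)$ argument. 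In other words, the admissible function $S$ is not read off directly from the Carleson modulus of $b$, but from this interplay between the threshold on $|t-x|$ and $M$. Once you insert this extra step, your proof coincides with the paper's.
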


\begin{remark}
Once the proposition is proven, the required operators to finish the program are $T_{b_{1}}^{1}=T_{b_{1}}$ and 
$T_{b_{2}}^{2}=T_{b_{2}}^{*}$.
\end{remark}
\proof

Let $(\psi_I)_{I\in {\mathcal D}}$ be a wavelet basis of $L^2(\mathbb R)$ such that $\psi_{I}$ is
$L^{2}$ normalized, supported and adapted to $I$ with constant $C$ and order $N$. 
%such that every function $\widehat{\psi}_{I}$ has compact support in a set of measure comparable with
%$|I|^{-1}$ placed at a distance from the origin also comparable with $|I|^{-1}$. We could choose Meyer's wavelets basis for example (see \cite{M} for a reference).
%Let $(\psi_R)_R$ be the wavelets basis on $L^2(\mathbb R^2)$ defined by $\psi_R=\psi_{R_1}\otimes \psi_{R_2}$.

We denote by $\phi$ a positive bump function supported and adapted to $[-1/2,1/2]$ with order $N$ and integral one. Then, we have that 
$0\leq \phi (x)\leq C(1+|x|)^{-N}$ and $|\phi'(x)|\leq C(1+|x|)^{-N}$. 
%Notice we can take $C=N$. 
%$\| \psi^2\|_{1}=1$.  
Let $(\phi_{I})_{I\in {\mathcal D}}$  be
the family of bump functions defined by $\phi_I={\mathcal T}_{c(I)}{\mathcal D}_{|I|}^{1}\phi$. Then, it is clear that each 
$\phi_{I}$ is an $L^{1}$-normalized bump function adapted to $I$ and so, satisfying
 $\phi_{I}(x)\leq C|I|^{-1}w_{I}(x)^{-N}$  and $|\phi_{I}'(x)|\leq C|I|^{-2}w_{I}(x)^{-N}$.

%$\widehat{\psi^2_{I}}$ has compact support in a set of measure comparable with
%$|I|^{-1}$ with center the origin (?). Let finally $\psi_R^2=\psi_{R_1}^2\otimes \psi_{R_2}^2$.

We define the linear operator $T_b$ by 
$$
\langle T_b(f),g\rangle =\sum_{I\in\mathcal D} \langle b, \psi_{I}\rangle \langle f, \phi_{I}\rangle \langle g,\psi_{I}\rangle
%\approx \langle \Pi_{b_1}(f_1)\otimes \Pi_{b_2}(f_2),g\rangle
%=\langle (\Pi \otimes \Pi )_{b}(f),g\rangle
$$
Since $b\in\CMO(\mathbb R)\subset \BMO(\mathbb R)$, we know $T_b$ is bounded on $L^{p}(\mathbb R)$ and, at least formally, it satisfies
$
\langle T_b(1),g\rangle =\langle b,g\rangle
$
and
$
\langle T_b(f),1\rangle =0
%=\Lambda_b(f_1\otimes 1,1\otimes g_2)=\Lambda_b(1\otimes f_2,g_1\otimes 1)
$. 

We now prove that $T_b$ is compact on $L^p(\mathbb R)$ for all $1<p<\infty $. Compactness of $T_{b}^{*}$ on $L^{p}(\mathbb R)$ follows by duality. By interpolation, it suffices to verify that 
%$P_{M}^{\perp}(T_{b})$ converges to zero in the operator norm 
%$\| \cdot \|_{L^{p}\rightarrow L^{p}}$. This is equivalent to proving that 
$\langle P_{M}^{\perp}(T_b(f)),g\rangle $
tends to zero uniformly for all $f,g\in {\mathcal S}(\mathbb R)$ in the unit ball of $L^{2}(\mathbb R)$. 
%and  $L^{p'}(\mathbb R)$ respectively.  

We know that 
$
P_{M}^{\perp }(g)=\sum_{I\in {\mathcal D}_{M}^{c}}\langle g, \psi_{I}\rangle \psi_{I}
$.
Moreover, 
since $(\psi_{I})_{I\in \cal D}$ can be chosen so that it is also a wavelet basis on $\CMO(\mathbb R)$ (see the comments in Lemma \ref{lem:cmochar}), we have 
$
P_{M}^{\perp }(b)=\sum_{I\in {\mathcal D}_{M}^{c}}\langle b, \psi_{I}\rangle \psi_{I}
$.
Then,
%$$
%P_{M}^{\perp }\Big( \sum_{I\in {\mathcal D}} \langle f, \phi_{I}\rangle 
%\langle g,\psi_{I} \rangle \psi_{I}\Big)
%=\sum_{I\in {\mathcal D}_{M}^{c}} \langle f, \phi_{I}\rangle 
%\langle g,\psi_{I} \rangle \psi_{I}
%$$
%in $H^{1}(\mathbb R)$, 
%we have 
%\begin{equation}\label{compactparaproduct}
\begin{align*}\label{projofparaproduct}
\langle P_{M}^{\perp}(T_b(f)),g\rangle &
=\langle T_b(f),P_{M}^{\perp}(g)\rangle 
%=\sum_{I\in {\mathcal D}} \langle b, \psi_{I}\rangle \langle f, \phi_{I}\rangle 
%\langle P_{M}^{\perp}(g),\psi_{I}\rangle
=\sum_{I\in {\mathcal D}_{M}^{c}} \langle b, \psi_{I}\rangle \langle f, \phi_{I}\rangle \langle g,\psi_{I}\rangle
\\
&
=\sum_{I\in {\mathcal D}} \langle P_{M}^{\perp}(b), \psi_{I}\rangle \langle f, \phi_{I}\rangle \langle g,\psi_{I}\rangle
%=\Big \langle b, \sum_{I\in {\mathcal D}_{M}^{c}} \langle f, \phi_{I}\rangle \langle g,\psi_{I}\rangle \psi_{I}
%\Big\rangle 
%=\Big \langle b, P_{M}^{\perp }\Big( \sum_{I\in {\mathcal D}} \langle f, \phi_{I}\rangle 
%\langle g,\psi_{I} \rangle \psi_{I}\Big) \Big\rangle 
%\sum_{I} \langle b-P_{M}(b), \psi_{I}\rangle \langle f, \psi_{I}^2\rangle \langle g,\psi_{I}\rangle =
%=\sum_{I\in {\mathcal D}} \langle P_{M}^{\perp }(b), \psi_{I}\rangle \langle f, \psi_{I}^2\rangle \langle g,\psi_{I}\rangle
=\langle T_{P_{M}^{\perp}(b)}(f),g\rangle 
%=\Big\langle P_{M}^{\perp }(b), \sum_{I\in {\mathcal D}} \langle f, \phi_{I}\rangle \langle g,\psi_{I}\rangle \psi_{I}\Big\rangle
\end{align*}

Now, by boundedness of $T_{P_{M}^{\perp}(b)}$, we have
%duality $H^1(\mathbb R)-\BMO(\mathbb R)$, we bound 
%by the modulus of (\ref{compactparaproduct}) by  
$$
|\langle P_{M}^{\perp}(T_b(f)),g\rangle|
%\leq \| P_{M}^{\perp}(b)\|_{\BMO(\mathbb R)}
%\Big\| \sum_{I\in {\mathcal D}} \langle f, \phi_{I}\rangle \langle g,\psi_{I}\rangle \psi_{I}\Big\|_{H^1(\mathbb R)}
%$$
%$$
\lesssim \| P_{M}^{\perp }(b)\|_{\BMO(\mathbb R)}
\| f\|_{L^2(\mathbb R)}\| g\|_{L^{2}(\mathbb R)}
\leq \| P_{M}^{\perp }(b)\|_{\BMO(\mathbb R)}
$$
which tends to zero when $M$ tends to infinity. 
%, uniformly in $f$ and $g$. 
%We point out that these computations amount to a small variation of classical considerations.

%However, we include the short proof for the sake 
%of completeness.  
%By assuming the sum to be finite, we get that $\sum_I \langle f, \psi_{I}^2\rangle \langle g,\psi_{I}\rangle \psi_{I}\in H^1(\mathbb R)$ and then
%$$
%\| \sum_{I\in {\mathcal D}} \langle f, \psi_{I}^2\rangle \langle g,\psi_{I}\rangle \psi_{I}\|_{H^1(\mathbb R)}
%\approx  \| S(\sum_{I\in {\mathcal D}} \langle f, \psi_{I}^2\rangle \langle g,\psi_{I}\rangle \psi_{I})\|_{L^1(\mathbb R)}
%$$
%with implicit constants independent of the number of terms in the sum, where 
%%$$
%%S(f)(x)=\Big(\sum_{I\in {\mathcal D}} \langle f, \psi_{I}\rangle^2 \frac{\chi_{I}(x)}{|I|}\Big)^{1/2}
%%$$
%$S$ is the classical square function defined in (\ref{}). Now we have the pointwise estimate
%$$
%S\Big(\sum_{I\in {\mathcal D}}\langle f, \psi_{I}^2\rangle \langle g,\psi_{I}\rangle \psi_{I}\Big)^2
%=\sum_{I\in {\mathcal D}} |\langle f, \psi_{I}^2\rangle |^2 \langle g,\psi_{I}\rangle^2\frac{\chi_{I}}{|I|}
%$$
%$$
%\leq \sup_{I\in {\mathcal D}} |\langle f, \psi_{I}^2\rangle |^2 \hskip5pt \sum_{I\in {\mathcal D}} 
%\langle g,\psi_{I}\rangle^{2}\frac{\chi_{I}}{|I|}
%\leq M(f)^2 S(g)^2
%$$
%and therefore, 
%$$
%\| S(\sum_I \langle f, \psi_{I}^2\rangle \langle g,\psi_{I}\rangle \psi_{I})\|_{L^1(\mathbb R)}
%$$
%$$
%\leq \| M(f)\|_{L^p(\mathbb R)}\| S(g)\|_{L^{p'}(\mathbb R)}
%\leq C\| f\|_{L^p(\mathbb R)}\| g\|_{L^{p'}(\mathbb R)}
%$$

\vskip10pt
To finish the proof, we still need to show that $T_b$ belongs to the class of operators for which the theory applies.
For this, we must show that
it satisfies the integral representation of Definition \ref{intrep} with
a kernel satisfying the Definition \ref{prodCZ} of a compact Calder\'on-Zygmund kernel. 
For any $f,g\in {\mathcal S}(\mathbb R)$ with disjoint support, we have 
$$
\langle T_b(f),g\rangle 
%=\sum_{I\in {\mathcal D}} \langle b, \psi_{I}\rangle
%\langle f,\phi_{I} \rangle
%\langle g,\psi_{I}\rangle
%$$
%$$
=\int_{\mathbb R}\int_{\mathbb R} f(t)g(x)
\sum_{I\in {\mathcal D}} \langle b, \psi_{I}\rangle
\phi_{I}(t)\psi_{I}(x)dtdx
$$
As we will see, the disjointness of the supports of $f$ and $g$ implies the convergence of the infinite sum.
%and so, we obtain the integral representation regardless the disjointness of supports of the argument functions. 
The kernel of $T_b$ is hence given by
$$
K(t,x)=\Big\langle b, \sum_{I\in {\mathcal D}}\phi_{I}(t)\psi_{I}(x)\psi_{I}\Big\rangle
$$
%where the calculations to follow show the convergence of the sum. 
We now check that $K$ satisfies Definition \ref{prodCZ} of a compact Calder\'on-Zygmund kernel:
whenever $2|t-t'|<|t-x|$ we have
%of a compact Calder\'on-Zygmund kernel:
%$$
%|K(t,x)|\leq \| b\|_{\BMO(\mathbb R)} \Big\| \sum_{I} \psi_{I}^2(t)\psi_{I}(x)\psi_{I}\Big\|_{H^1(\mathbb R)}
%$$
%As before, the $H^1$-norm is equivalent to
%$$
%\Big\| S\Big(\sum_{I} \psi_{I}^2(t)\psi_{I}(x)\psi_{I}\Big)\Big\|_{L^1(\mathbb R)}
%=\int \Big(\sum_{I} \psi_{I}^2(t)^2\psi_{I}(x)^2\frac{\chi_{I}(y)}{|I|}\Big)^{1/2}dy
%$$
%Let $J_{x,t}$ be the smallest dyadic interval such that $x,t\in J_{x,t}$ and let $(J_k)_{k\geq 0}$ the family of dyadic intervals such that
%$J_{x,t}\subset J_k$ with $|J_k|=2^{k}|J_{x,t}|$.
%Moreover, since $|\psi_{I}^2|\leq |I|^{-1}\chi_{I}$ and $|\psi_{I}|\leq |I|^{-1/2}\chi_{I}$,
%both integrals previously displayed are bounded by
%$$
%\sum_{k\geq 0}\int_{J_{k+1}\backslash J_{k}}\Big(\sum_{j\geq k} \frac{1}{|J_j|^2}\frac{1}{|J_j|}\frac{1}{|J_j|}\Big)^{1/2}dy
%=\sum_{k\geq 0}\Big(\sum_{j\geq k}\frac{1}{|J_j|^4}\Big)^{1/2}|J_{k+1}\backslash J_{k}|
%$$
%$$
%=\sum_{k\geq 0}\Big(\sum_{j\geq k}\frac{1}{2^{4j}|J_{x,t}|^4}\Big)^{1/2}2^{k}|J_{x,t}|
%\lesssim \frac{1}{|J_{x,t}|}\sum_{k\geq 0}\frac{1}{2^{2k}}2^{k}
%\lesssim \frac{1}{|t-x|}
%$$
%ending the first condition. For the second one, we prove that
$$
|K(t,x)-K(t',x)|\lesssim \frac{|t-t'|}{|t-x|^{2}}L(|t-x|)S(|t-x|)D(|t+x|)
$$
The analogous statement for $|K(t,x)-K(t,x')|$
follows similarly.
%In order to prove that $K$ satisfies the Definition \ref{prodCZ} of a compact Calder\'on-Zygmund kernel, 
Actually, we will prove that $|K(t,x)-K(t',x)|$ is dominated by $|t-t'|/|t-x|^{2}$ times a bounded function that tends to zero when $|t-x|$ tends to zero or to infinity and when $|t+x|$ tends to infinity (cf. Section \ref{kernelandadmissible}). 

%For expository reasons we deal only with the first term
First of all, we have that
$$
K(t,x)-K(t',x)=
\sum_{I\in {\mathcal D}} \langle b, \psi_{I}\rangle (\phi_{I}(t)-\phi_{I}(t'))\psi_{I}(x)
$$
%$$
%\partial_{x}K(t,x)=
%\sum_{I\in {\mathcal D}} \langle b, \psi_{I}\rangle \psi_{I}^2(t)\partial_{x}\psi_{I}(x)
%$$

Moreover, we notice that 
$(\phi_{I}(t)-\phi_{I}(t'))\psi_{I}(x)\neq 0$  
%or $\psi_{I}^2(t)\partial_{x}\psi_{I}(x)\neq 0$ 
implies that 
$x,t,t'\in I$ for all intervals in the sum. By symmetry, we can assume $|t-x'|\leq |t-x|$. Let $I_{t,x}$ be the smallest dyadic interval containing $t$, $x$ and $t'$. Notice that 
$|I_{t,x}|/2\leq |t-x|\leq |I_{t,x}|$ and that
\begin{equation}\label{K-K}
K(t,x)-K(t',x)=
\sum_{\tiny \begin{array}{c}I\in {\mathcal D}\\ I_{t,x}\subset I\end{array}} 
\langle b, \psi_{I}\rangle (\phi_{I}(t)-\phi_{I}(t'))\psi_{I}(x)
\end{equation}

Since $b\in \CMO(\mathbb R)$, for every $\epsilon >0$ there is $M\in \mathbb N$ such that
$\| P_{M}^{\perp}(b)\|_{\BMO(\mathbb R)}+2^{-M}<\epsilon $. 
We are going to prove that 
$$
|K(t,x)-K(t',x)|\lesssim \frac{|t-t'|}{|t-x|^{2}}\epsilon 
$$
when $|t-x|>2^{M}$, $|t+x|>M2^{M+1}$ or $|t-x|<2^{-3M/2}(1+\| b\|_{\BMO(\mathbb R)})^{-1/2}$.

\vskip10pt
1) When $|t-x|>2^{M}$ we have that all intervals $I$ in the sum satisfy $|I|>|t-x|>2^{M}$ and so, 
$I\in {\mathcal D}_{M}^{c}$. We can rewrite equation (\ref{K-K}) as 
\begin{align*}
K(t,x)-K(t',x)&=
\sum_{I\in {\mathcal D}_{M}^{c}} \langle b, \psi_{I}\rangle (\phi_{I}(t)-\phi_{I}(t'))\psi_{I}(x)
\\
&=\Big\langle P_{M}^{\perp}(b), \sum_{I\in {\mathcal D}} (\phi_{I}(t)-\phi_{I}(t'))\psi_{I}(x)\psi_{I}
\Big\rangle
\end{align*}
Then, 
$$
|K(t,x)-K(t',x)|
\leq \| P_{M}^{\perp}(b)\|_{\BMO(\mathbb R)}
\Big\| \sum_{I\in {\mathcal D}} (\phi_{I}(t)-\phi_{I}(t'))\psi_{I}(x)
\psi_{I}\Big\|_{H^1(\mathbb R)}
$$
With the use of the square function $S$, the $H^1$-norm is equivalent to
$$
\Big\| S\Big(\sum_{I\in {\mathcal D}} (\phi_{I}(t)-\phi_{I}(t'))\psi_{I}(x)\psi_{I}\Big)\Big\|_{L^1(\mathbb R)}
$$
%$$
%=\int_{\mathbb R} \Big(\sum_{I\in {\mathcal D}} |\psi_{I}^2(t)-\psi_{I}^2(t')|^2
%|\psi_{I}(x)|^2\frac{\chi_{I}(y)}{|I|}\Big)^{1/2}dy
%$$
$$
= \int_{\mathbb R} \Big(\sum_{\tiny \begin{array}{c}I\in {\mathcal D}\\ I_{t,x}\subset I\end{array}} 
|\phi_{I}(t)-\phi_{I}(t')|^2
|\psi_{I}(x)|^2\frac{\chi_{I}(y)}{|I|}\Big)^{1/2}dy
$$
Let $(I_{k})_{k\in \mathbb N}$ be the family of dyadic intervals such that
$I_{x,t}\subset I_k$ with 
$|I_{k}|=2^{k}|I_{x,t}|$ and $I_{-1}=\emptyset $. Then, we rewrite the previous integral as
$$
\sum_{k\geq 0} \int_{I_{k}\backslash I_{k-1}}\Big(\sum_{j\geq k} |\phi_{I_{j}}(t)-\phi_{I_{j}}(t')|^2
|\psi_{I_{j}}(x)|^2\frac{\chi_{I_{j}}(y)}{|I_{j}|}\Big)^{1/2}dy
$$

%We assume that the inner sum is finite and so, we assume that any 
%dyadic interval considered $I_{k,n}$ satisfies $k\geq k_{0}$ with $k_{0}\leq 0$. 
%Then, 
%$|I_{k_{0},m}|\leq |I_{x,t}|$ which implies that, for fixed $n\in \mathbb N$, there are $2^{-k_{0}}$ of such intervals. 
%We denote ${\mathcal I}_{k_{0},n}$ that family. 
Now, since the integrand is constant on every interval $I_{k}\backslash I_{k-1}$, 
the integral can be bounded by 
\begin{equation}\label{K-Kpara}
\sum_{k\geq 0} \Big(\sum_{j\geq k} |\phi_{I_{j}}(t)-\phi_{I_{j}}(t')|^2
|\psi_{I_{j}}(x)|^2\frac{1}{|I_{j}|}\Big)^{1/2} |I_{k}\backslash I_{k-1}|
\end{equation}
%Since for fixed $y$, $y\in I_{k,m}$ we have that for $k\leq 0$, $I_{k,m}\subset I_{x,t}$ and so, $m=n$. On the other hand, for $k\geq 0$, $I_{x,t}\subset I_{k,m}$ and so, for every $k$ there is only a possible $m=m_{k}$. Therefore, 
%we have 
%$$
%=\sum_{n\geq 1}\sum_{I_{k_{0},n}\in {\mathcal I}_{k_{0},n}}\int_{I_{k_{0},n}} \Big(\sum_{k\geq k_{0}} (\partial_{t}\psi_{I_{k,m_{k}}}^2(t))^2
%\psi_{I_{k,m_{k}}}(x)^2\frac{\chi_{I_{k,m_{k}}}(y)}{|I_{k,m_{k}}|}\Big)^{1/2}dy
%$$
Since
$
|\phi_{I}(t)-\phi_{I}(t')|\leq \|\partial_{t}\phi_{I}\|_{L^{\infty }(\mathbb R)}|t-t'|
\leq C|I|^{-2}
%(1+|I|^{-1}|\xi -c(I)|)^{-N}
|t-t'|
$
and $|\psi_{I}(x)|\leq C|I|^{-1/2}$,  we bound (\ref{K-Kpara}) by a constant times
$$
\sum_{k\geq 0} \Big(\sum_{j\geq k} \frac{1}{|I_{j}|^4}
\frac{1}{|I_{j}|}\frac{1}{|I_{j}|}\Big)^{\frac{1}{2}}|I_{k}||t-t'|
%=\sum_{k\geq 0} \Big(\sum_{j\geq k} \frac{1}{|I_{t,x}|^6}
%\frac{1}{2^{6j}}\Big)^{1/2}2^{k}|I_{t,x}||t-t'|
$$
$$
=\sum_{k\geq 0} \Big(\sum_{j\geq k} 
\frac{1}{2^{6j}}\Big)^{\frac{1}{2}}2^{k}\frac{|t-t'|}{|I_{t,x}|^2}
\lesssim \sum_{k\geq 0} \frac{1}{2^{2k}}\frac{|t-t'|}{|t-x|^2}
\lesssim \frac{|t-t'|}{|t-x|^2}
$$
Therefore, we obtain
$$
|K(t,x)-K(t',x)|
\lesssim \| P_{M}^{\perp}(b)\|_{\BMO(\mathbb R)}
\frac{|t-t'|}{|t-x|^2}
\leq \epsilon \frac{|t-t'|}{|t-x|^2}
$$
by the choice of $M$.

2) We work now the case $|t+x|>M2^{M+1}$. Since
every interval $I$ in the sum satisfies $I_{t,x}\subset I$, we have $(t+x)/2\in I$ and so, 
$|c(I)-(t+x)/2|<|I|/2$. 
%This, in turn, implies
%$
%|c(I)|\geq |t+x|/2-|I|/2 
%$.

Now, if $|I|>2^{M}$ we have  $I\in {\mathcal D}_{M}^{c}$.
On the other hand, if $|I|\leq 2^{M}$ 
$$
\rdist(I,{\mathbb B}_{2^{M}})
%=\frac{\diam(I,{\mathbb B}_{2^{M}})}{2^{M}}
\geq \frac{|c(I)|+2^{M-1}+|I|/2}{2^{M}}
\geq \frac{|t+x|/2+2^{M-1}}{2^{M}}
%=\frac{|t+x|}{2^{M+1}}+\frac{1}{2}
>M
$$
and also $I\in {\mathcal D}_{M}^{c}$. Therefore, from the previous case we can conclude
$$
|K(t,x)-K(t',x)|
\lesssim \| P_{M}^{\perp}(b)\|_{\BMO(\mathbb R)}
\frac{|t-t'|}{|t-x|^2}
\leq \epsilon \frac{|t-t'|}{|t-x|^2}
$$

3) The last case, when $|t-x|<2^{-3M/2}(1+\| b\|_{\BMO(\mathbb R)})^{-1/2}$, is a more involved. 
Those intervals in the sum such that 
$|I|<2^{-M}$ or $|I|>2^{M}$ satisfy that $I\in {\mathcal D}_{M}^{c}$ and may be taken care of as in the previous cases.

However, those intervals such that $2^{-M}\leq |I|\leq 2^{M}$ may belong to ${\mathcal D}_{M}$ and the previous argument can not be used. Instead, we reason as follows. The terms under consideration 
%in the expression of 
%$
%K(t,x)-K(t',x)
%$ 
are given by those intervals $I\in {\mathcal D}$ such that $I_{t,x}\subset  I$ and  $2^{-M}\leq |I|\leq 2^{M}$ and so, they can be parametrized by size as $|I_{k}|=2^{k}|I_{t,x}|$ with 
$-M-\log|I_{t,x}|\leq k \leq M-\log|I_{t,x}|$. 
Notice that, since $|I_{t,x}|\leq 2|t-x|<2^{-3M/2+1}$, we have $-M-\log|I_{t,x}|>M/2-1>0$.
We are left with the sum
$$
\sum_{-M-\log|I_{t,x}|\leq k \leq M-\log|I_{t,x}|} \langle b, \psi_{I_{k}}\rangle (\phi_{I_{k}}(t)-\phi_{I_{k}}(t'))\psi_{I_{k}}(x)
$$
We use $|\langle b, \psi_{I}\rangle |\lesssim \| b\|_{\BMO(\mathbb R)}|I|^{1/2}$, 
$|\phi_{I}(t)-\phi_{I}(t')|\lesssim |I|^{-2}
%(1+|I|^{-1}|\xi -c(I)|)^{-N}
|t-t'|$ and 
$|\psi_{I}(x)|\lesssim |I|^{-1/2}
%(1+|I|^{-1}|x-c(I)|)^{-N}
$, to bound the sum by
$$
\| b\|_{\BMO(\mathbb R)}\hspace{-.3cm}\sum_{-M-\log|I_{t,x}|\leq k } 
\frac{|t-t'|}{|I_{k}|^{2}}
=\| b\|_{\BMO(\mathbb R)}\hspace{-.3cm}
\sum_{-M-\log|I_{t,x}|\leq k }2^{-2k}  \frac{|t-t'|}{|I_{t,x}|^{2}}
$$
$$
\lesssim \| b\|_{\BMO(\mathbb R)}2^{2M}|I_{t,x}|^{2}\frac{|t-t'|}{|I_{t,x}|^{2}}
\lesssim  2^{-M}\frac{|t-t'|}{|I_{t,x}|^{2}}\lesssim \epsilon \frac{|t-t'|}{|t-x|^2}
$$
since $|I_{t,x}|\leq 2|t-x|<2^{-3M+2}(1+\| b\|_{\BMO(\mathbb R)})^{-\frac{1}{2}}$ and the choice of $M$.

We note that in dealing with $|K(t,x)-K(t,x')|$ we use the analogue inequalities
$|\phi_{I}(t)|\lesssim |I|^{-1}
%(1+|I|^{-1}|t-c(I)|)^{-N}
$
and $|\psi_{I}(x)-\psi_{I}(x')|\lesssim |I|^{-3/2}
%(1+|I|^{-1}|\xi -c(I)|)^{-N}
|x-x'|$.

\bibliographystyle{plain}
%\bibliography{refs}

\end{document}